\documentclass[notitlepage]{amsart}
\usepackage{amssymb,amsfonts,latexsym, esint}
\usepackage{graphics,verbatim}
\usepackage{graphicx}

\newtheorem{theorem}{Theorem}[section]
\newtheorem{proposition}{Proposition}[section]
\newtheorem{lemma}{Lemma}[section]

\newtheorem{remark}{Remark}[section]
\newtheorem{definition}{Definition}[section]

\newcommand{\eps}{\varepsilon}


\newcommand{\To}{\longrightarrow}

\newcommand{\be} {\begin{equation}}
\newcommand{\ee} {\end{equation}}
\newcommand{\bea} {\begin{eqnarray}}
\newcommand{\eea} {\end{eqnarray}}
\newcommand{\Bea} {\begin{eqnarray*}}
\newcommand{\Eea} {\end{eqnarray*}}
\newcommand{\pa} {\partial}

\newcommand{\al} {\alpha}
\newcommand{\ba} {\beta}
\newcommand{\de} {\delta}
\newcommand{\na}{\nabla}
\newcommand{\ga} {\gamma}
\newcommand{\Ga} {\Gamma}
\newcommand{\Om} {\Omega}
\newcommand{\om} {\omega}
\newcommand{\De} {\Delta}
\newcommand{\la} {\lambda}

\newcommand{\nequiv} {\not\equiv}

\newcommand{\no} {\nonumber}
\newcommand{\noi} {\noindent}
\newcommand{\lab} {\label}
\newcommand{\va} {\varphi}
\newcommand{\f}{\frac}
\newcommand{\R}{\mathbb R}

\newcommand{\Rn}{\mathbb R^N}
\newcommand{\Iom}{\int_{\Omega}}
\catcode`\@=11

\makeatletter \@addtoreset{equation}{section} \makeatother

\begin{document}

\title{On singular equations with critical and supercritical exponents}

\author{ Mousomi Bhakta}
\address{M. Bhakta, Department of Mathematics, Indian Institute of Science Education and Research, Dr. Homi Bhabha Road,
Pune 411008, India}
\email{mousomi@iiserpune.ac.in}

\author{ Sanjiban Santra}
\address{S. Santra, Department of Basic Mathematics\\
Centro de Investigaci\'one en Mathematic\'as \\ Guanajuato,
M\'exico}
\email{sanjiban@cimat.mx}

\subjclass[2010]{Primary 35B08, 35B40, 35B44}
\keywords{super-critical exponent, Hardy's inequality, local estimates, gradient estimates, asymptotic behaviour, entire solution, blow-up, Green function estimates, large solutions.}
\date{}

\begin{abstract} We study the problem
\begin{equation*}
(I_{\eps})
\left\{\begin{aligned}
      -\De u- \frac{\mu u}{|x|^2}&=u^p -\eps u^q \quad\text{in }\quad \Om, \\
      u&>0 \quad\text{in }\quad \Omega,\\
      u &\in H^1_0(\Om)\cap L^{q+1}(\Om),
          \end{aligned}
  \right.
\end{equation*}
where $q>p\geq 2^*-1$, $\eps>0$,
$\Om\subseteq\Rn$ is a bounded domain with smooth boundary,  $0\in \Omega$, $N\geq 3$ and $0<\mu<\bar\mu:=\big(\f{N-2}{2}\big)^2$. 
We completely classify the singularity of solution at $0$ in the supercritical case.  Using the transformation $v=|x|^{\nu}u$, we reduce the problem $(I_{\eps})$ to $(J_{\eps})$
\begin{equation*}
(J_{\eps}) \left\{\begin{aligned}
      -div(|x|^{-2\nu} \na v)&=|x|^{-(p+1)\nu} v^p -\eps |x|^{-(q+1)\nu} v^q \quad\text{in }\quad \Om,\\    v&>0 \quad\text{in }\quad \Omega, \\   v& \in H^1_0(\Om, |x|^{-2\nu} )\cap L^{q+1}(\Om, |x|^{-(q+1)\nu} ),
          \end{aligned}
  \right.
\end{equation*}
and then formulating a variational problem for $(J_{\eps})$, we establish the existence of a variational solution $v_{\eps}$ and characterise the asymptotic behaviour of $v_{\eps}$ as $\eps\to 0$ by variational arguments and when $p=2^*-1$.

This is the first paper where the results have been established with super critical exponents for $\mu>0$. 
\end{abstract}

\maketitle

\tableofcontents

\section{ Introduction}
In this paper,  we consider the following family of singular problems:
\begin{equation}
  \label{eq:a3'}
\left\{\begin{aligned}
      -\De u- \frac{\mu u}{|x|^2}&=u^p -\eps u^q &&\text{in }\quad \Om,\\ u&>0  &&\text{ in }\quad \Om, \\
      u &\in H^1_0(\Om)\cap L^{q+1}(\Om),
          \end{aligned}
  \right.
\end{equation}
and
\begin{equation}
  \label{eq:a3}
\left\{\begin{aligned}
      -div(|x|^{-2\nu} \na v)&=|x|^{-(p+1)\nu} v^p -\eps |x|^{-(q+1)\nu} v^q \quad\text{in }\quad &&\Om,\\ v&>0 \quad \text{ in }\quad &&\Om, \\
      v & \in H^1_0(\Om, |x|^{-2\nu} )\cap L^{q+1}(\Om, |x|^{-(q+1)\nu} ),
          \end{aligned}
  \right.
\end{equation}
where $q>p\geq 2^*-1=\f{N+2}{N-2}$, $\eps>0$ is a parameter,
$\Om\subseteq\Rn$ is a star-shaped bounded domain with smooth boundary,  $0\in \Omega$, $N\geq 3$  and  $0<\mu<\bar\mu:=(\f{N-2}{2})^2$  and  $\nu\in(0, \f{N-2}{2})$.  By the  Pohozaev's identity, we know that when $\eps=\mu=\nu=0$ and $\Om$ is star shaped, \eqref{eq:a3'} and  \eqref{eq:a3}  have no solutions.  In this paper, we are mainly concerned with two issues. One of them is to classifying the nature of singularity to the  solutions of Eq.\eqref{eq:a3'} and the other one is  to study  the asymptotic behavior of solutions of the problem \eqref{eq:a3} as $\eps\to 0$ . When $\mu=0$, the asymptotic behaviour of this class of equation with supercritical exponent was studied by Merle and Peletier in \cite{MP1, MP2}.  Also see Mcleod \emph{et.al.} \cite{MMPT} for the uniqueness proof for the entire solution in the supercritical case, Han \cite{Z} and Brezis--Peletier\cite{BP} for the subcritical blow up.  As per our knowledge, there is no existing result with supercritical exponents for $\mu>0$.

We assume that \be\label{sup} v_{\eps}(0)= \max_{\Om} v_{\eps}(x).\ee

\vspace{2mm}

If we look for radial solutions of Eq. \eqref{eq:a3'}, we
would expect $u$ as a function of the radial variable $r$ to behave like $Ar^{-m}$ near $0$, where $A$ and $m$ satisfy
\be\lab{14-4-1}
A\displaystyle\left[-m(m+1)+m(N-1)-\mu\right]r^{-m-2}=-(1+o(1))A^qr^{-mq},
\ee
so that either
\be\lab{14-4-2}
m(m-N+2)+\mu>0, \quad
m+2=mq \Longrightarrow m=\f{2}{q-1} \quad\text{and}\quad q>\f{\mu+2\nu'}{\mu}
\ee
or
\be\lab{14-4-3}
m+2<mq, \quad m(m+1)-m(N-1)+\mu=0 \quad\Longrightarrow\quad m=\nu\ \text{or}\ \nu',
\ee
for \be\lab{14-nu}\nu:=\sqrt{\bar\mu}-\sqrt{\bar\mu-\mu};\quad\nu':=\sqrt{\bar\mu}+\sqrt{\bar\mu-\mu}.\ee Note that $\nu<\nu'$. Also, one can readily check $\f{\mu+2\nu'}{\mu}=\f{2+\nu}{\nu}$. In the region where $q<\f{2+\nu}{\nu}$, we have $\nu<\f{2}{q-1}$.  Therefore in this region
\be\lab{15-4-1}r^{-\nu}<c\, \min\{r^{-\f{2}{q-1}},\, r^{-\nu'}\}.\ee
  On the other hand, in the region where $q\geq\f{2+\nu}{\nu}$ we have \be\lab{15-4-2}r^{-\f{2}{q-1}}\leq c\,\min\{r^{-\nu},\, r^{-\nu'}\}.\ee
It is easy to check from \eqref{14-4-1} that for the blow up with  exponent $\f{2}{q-1}$ (see \eqref{14-4-2}) the constant $A$ would be determined, whereas for the second type of blow up it would appear to be free.

In Section 3 we prove that near $0$, any solution $u$ of Eq.\eqref{eq:a3'} satisfies
$$C_1|x|^{-\nu}\leq u(x)\leq C_2|x|^{-\nu} \quad\text{if}\quad  2^*-1\leq p<q<\f{2+\nu}{\nu},$$ and
$$C_3|x|^{-\f{2}{q-1}}\leq u(x)\leq C_4|x|^{-\f{2}{q-1}} \quad\text{if}\quad q>\max\bigg\{p, \f{2+\nu}{\nu}\bigg\},$$
 for some positive constants $C_1, C_2, C_3$ and $C_4$. Moreover when $u(x)=u(|x|)$ and $q=\f{2+\nu}{\nu}$,
 $$u(|x|)\sim |x|^{-\nu}\, |\log|x||^{-\f{\nu}{2}},\quad  \text{ as } |x| \to 0.$$
 More precisely,  if
$$-\De u- \frac{\mu u}{|x|^2}= f_{i}(u), \; i=1,2,$$ we can  classify the singularity of $u(x)$ near the origin
with the nonlinearities $f_{1}(u)=u^{p} + \eps u^q$ where $1\leq q<p=2^*-1$  or $f_{2}(u)=u^{p} -\eps u^q$ where $2^*-1\leq p<q$ in the following way.

\bigskip
\noi
\begin{tabular}{c|c|c|cl}
$ Range~ of~  (p, q) $  & $ 1\leq q<p=2^*-1 $ & $2^*-1\leq p<q<   \frac{2}{\nu}+1 $ & $q>\max\{p, \frac{2}{\nu}+1\}$   \\
\hline
$ Singularity~ at~ 0 $ & $C_1\leq |x|^{\nu} u(x)\leq C_2 $ & $ C_1\leq |x|^{\nu} u(x)\leq C_2 $ & $ C_1\leq |x|^{\frac{2}{q-1}} u(x)\leq C_2 $ \end{tabular}

\bigskip
for some $C_1>0, C_2>0.$
For the subcritical case see \cite{CP, Han}.

\vspace{3mm}

Near $0$,  Eq.\eqref{eq:a3'} can be written as $-\De u-\mu\f{u}{|x|^2}=-(1+o(1))u^q$.   Therefore, if $u$ is radial then by setting $v(r)=r^{\nu}u(r)$, the above equation reduces to
\be\lab{25-4-1}
v''+\f{N-1-2\nu}{r}v'=Ar^{-(q-1)\nu}v^q \quad\text{in}\quad (0, a),
\ee for some $a>0$ and $1-\de<A<1+\de$, for some $\de>0$ small. Using the Emden-Fowler transformation $t=(\f{\al}{r})^{\al}$ and $y(t)=\al^{-\nu}v(r)$, where $\al=N-2-2\nu$, \eqref{25-4-1} reduces to the so-called Emden-Fowler type equation
$$y''(t)=At^{\f{-(2\al+2)+(q-1)\nu}{\al}}y^q, \quad t\geq R ,$$ for some $R>0$ large. These type of equations have several interesting  applications in mathematical physics . It appears in astrophysics in the form of Emden equation and in atomic physics in the form of Thomas-Fermi statistical model of atoms. Emden-Fowler type equations appears in modelling the thermal behaviour of a spherical cloud of gas acting under
the mutual attraction of its molecules and subject to the classical laws of thermodynamics. For more details see \cite{Bell, Fow, Hille, Som}.

\vspace{2mm}

Recently, a great  deal of attention is given to the mathematical study of following class of  semilinear elliptic problem
\begin{equation}
  \label{a1}
\left\{\begin{aligned}
      \De u+\frac{\mu}{|x|^2} u+ f(u)&=0  &&\text{in } \Om,\\
      u &= 0 &&\text{on }\pa \Om,
    \end{aligned}
  \right.
\end{equation}
where $f$ is a  super-linear function;
$0\in \Om $ is a smooth bounded domain in $\mathbb{R}^N$, $0\leq \mu< \overline{\mu}=\frac{(N-2)^2}{4}$ and $N\geq 3$. This class of problems is of particular interest as this arises in mathematical models related to reaction diffusion equations and celestial mechanics. We recall the classical Hardy's inequality: if $u \in H^{1}_{0} (\Om)$, then \be \int_{\Om} |\nabla u|^2dx \geq \overline{\mu} \int_{\Om} \frac{u^2}{|x|^2}dx,\ee  where $\bar\mu$ is never achieved by any $u\in H^1_0(\Om)$.

 We denote by $D^{1,2}(\R^N)$, the closure of $C^{\infty}_{0}(\R^N)$ with respect to the norm\\ $\big(\int_{\R^N}|\nabla u|^2dx\big)^{\frac{1}{2}}$. When  $0<\mu<\bar\mu$, it is easy to check that the following is an equivalent norm for $D^{1,2}(\Rn)$:
\be \|u\|_{D^{1, 2}(\R^N)}:= \bigg(\int_{\R^N}|\nabla u|^2dx- \mu\int_{\R^N} \frac{|u|^2}{|x|^2} dx\bigg)^{\frac{1}{2}}.\ee

When $\mu=0$, define
\be \mathcal{S}(v):=\frac{\displaystyle\int_{\Om}|\nabla v|^2 dx}{ \left(\displaystyle\int_{\Om}u^{2^{\star}} dx\right)^{\frac{2}{2^{\star}}}}, \quad \mathcal{S}_{N}=\inf_{v\in D^{1, 2}(\R^N)\setminus \{0\}}\frac{\displaystyle\int_{\R^N}|\nabla v|^2 dx}{ \left(\displaystyle\int_{\R^N}v^{2^{\star}} dx\right)^{\frac{2}{2^{\star}}}}.\ee
For $N\geq 3$ and $\nu=0$,   Merle--Peletier \cite{MP2} proved :

\noi{\bf Theorem}(Merle-Peletier, \cite{MP2})
\begin{itemize}
\item[(i)] There exist  $\eps$ and $\theta_{\eps}$ with $\eps\to 0$ and $\theta_{\eps}$ is uniformly above and away from $0$, such that there exists a solution $u_{\eps}$ of Eq. \eqref{eq:a3} with $\nu=0$ and  Furthermore, if $p=2^*-1$, then $S(\theta_{\eps} u_{\eps})  \to \mathcal{S}_{N}$ as $\eps \to 0$ and there exists constants $A, B$ such that $A<\int_{\Om}u_{\eps}^{p+1}<B$.

if $p> 2^*-1,$ then $K(\theta_{\eps} u_{\eps})  \to K_{N}$ as $\eps \to 0$ and   $\displaystyle\int_{\Om} u_{\eps}^{p+1} dx\to 0$ as $\eps \to 0,$ where
\be \no \no K( u) =\frac{\displaystyle\int_{\Om} |\nabla u|^2}{\displaystyle\int_{\Om} | u|^{p+1}} +  \frac{\displaystyle\int_{\Om} |u|^{q+1}}{\bigg(\displaystyle\int_{\Om} | u|^{p+1}\bigg)^{l}} ; \quad l= \frac{2(q+1)-N(p-1)}{2(p+1)- N(p-1)}, \ee
and $$K_{N}= \inf \bigg\{ K(u): u\in D^{1, 2}(\R^N) \cap L^{q+1}(\R^N) , \int_{\Om} | u|^{p+1}=1\bigg\}.$$
\item[(ii)] Let $x_{\eps}$ be a point such that $u_{\eps}(x_{\eps})=\| u_{\eps}\|_{\infty} $ and assume that up to a subsequence  $x_{\eps}\to x_{0}$ as $\eps \to 0$. Then in the case $p=2^*-1$,
\begin{equation}
     \eps^{\frac{1}{q-p+2}}\| u_{\eps}\|_{\infty}\sim  A(q, N)\no \quad\text{as}\quad \eps \to 0.
\end{equation} and when $x\not=0$
\be   \eps^{-\frac{1}{q-p+2}}  u_{\eps} (x)\rightarrow [N(N-2)]^{\frac{N-2}{2}}\frac{ G(x, x_{0})}{ A(q, N) R(x_{0})}\no \quad\text{as}\quad \eps \to 0.
\no\ee
where
\be A(q, N)= \bigg[\frac{N^2c (q, N)}{[N(N-2)]^{\frac{N}{2}}}  B\bigg(\frac{N}{2}, q\frac{N-2}{2}-1\bigg)\bigg]; \quad c(q, N)= \frac{(N-2)q- (N+2)}{2(q+1)} \no \ee
 and
$B(a,b)$ denotes the Beta function \cite{MP2} defined by
\be\lab{eq:beta}
B(a,b)=\int_{0}^{\infty}t^{a-1}(1+t)^{-a-b}.\ee
$G$ is the Green function and $x_{0}$ is the critical point of the Robin function, see \eqref{Robin} with $\nu=0$.
Moreover, if $p>2^*-1$, then
\begin{equation}
     \eps^{\frac{1}{q-p}}\| u_{\eps}\|_{\infty}\sim  c^{*} \no \quad\text{as}\quad \eps \to 0,
\end{equation}
and when $x\not=0$,
\be \eps^{-{\theta}}  u_{\eps} (x)\rightarrow  (c^*)^{-\theta} (J_{p}- c^* J_{q})G(x, x_{0}), \quad\text{as}\quad \eps \to 0 ,\ee
where
\be \theta =  \frac{(N-2)p- N}{2(q-p)};  \quad J_{p} =\int_{\R^N} V^p dx \no \ee and  $(c^*, V)$ is the unique solution of
\begin{equation*}
  \left\{
    \begin{aligned}
      -\De V&= V^{p }- c^* V^q \quad\text{in }\ \ \R^N,\\
V &\in D^{1, 2}(\R^N) \cap L^{q+1}(\R^N)  &&\text{}.
    \end{aligned}
  \right.
\end{equation*}
\end{itemize}

\vspace{5mm}

In \cite{J, RS} the following problem with critical exponent and Hardy potential was studied:
\begin{equation}
  \label{a2b}
  \left\{
    \begin{aligned}
      -\De u -\frac{\mu}{|x|^2} u&= u^{2^*-1 }+ \eps u\; &&\text{in } \Om,\\
 u &> 0 &&\text{in }  \Om,  \\
u &= 0 &&\text{on } \pa \Om,
    \end{aligned}
  \right.
\end{equation}
where $0\in \Om \subset \R^N$; $N\geq 3$, $0<\mu< \overline{\mu}$ and $\eps>0$ is a parameter.
Jannelli \cite{J} proved the following:
 If $ 0 \leq \mu\leq \overline{\mu} -1,$ then  (\ref{a2b}) has a positive solution $u\in H^{1}_{0}(\Om)$ for all $0<\eps< \lambda_{1}.$
Furthermore,  he proved that if  $\overline{\mu}-1<\mu< \overline{\mu}$, Eq.(\ref{a2b}) has a positive solution $u\in H^{1}_{0}(\Om)$
if and only if $\eps \in (\lambda^{\star}, \lambda_1)$ for some $\lambda^{\star}\in (0, \lambda_1)$, when  $\Om$ is the ball then Eq.(\ref{a2b}) has no positive solution for all $\eps \leq \lambda^{\star}$.
Cao-Peng \cite{CP} studied  problem similar to Eq.(\ref{a2b}) for the almost critical case.  Cao-Peng \cite{CP} and Ramaswamy-Santra \cite{RS}
used the radial nature of the positive solution to obtain the global uniqueness and blow-up profile as $\eps \rightarrow 0$. It was proved in \cite{RS}, when
$N \geq 5$ and $v_{\eps}\in H^{1}_{0}( \Om, |x|^{-2\nu})$ is a  solution  of Eq. (\ref{a2b}) satisfying \be \mathcal{S}=\lim_{\eps\rightarrow 0}\frac{\displaystyle ||x|^{-\nu}|\nabla v_{\eps}||_{L^2(\Om)}}{\||x|^{-\nu}v_{\eps}\|^{2}_{L^{2^{\star}}(\Om)}}, \quad \mu<\overline{\mu}-1,\no\ee then along a subsequence
\be \lim_{\eps \rightarrow 0}\eps \|v_{\eps}\|^{\frac{2(N-2\nu-4)}{N-2-2\nu}}_{\infty}= \frac{(N-2)^2}{2N^2(N-2-2\nu) b_{n}} \mathcal{S}(\mu)^{-\frac{N}{2}}\sigma_{N}| R(0)|\ee where
$b_{n}=\displaystyle\int_{0}^{\infty}\frac{t^{N-2\nu-1}}{(1+ t^{\frac{2\al}{N-2}})^{N-2}}dt;$ and when $x\neq 0$
\begin{eqnarray*}  \lim_{\eps \rightarrow 0} v_{\eps}(x) \|v_{\eps}\|_{\infty}= \frac{N-2}{N(N-2-2\nu)}\om_{N}G(x, 0),\end{eqnarray*}
where $R(0)$ and $G(x,0)$ are as defined in \eqref{Robin} and \eqref{green} respectively.

\vspace{3mm}

Define,
\be \label{go} \displaystyle\mathcal{S}=\inf_{u\in D^{1, 2}(\R^N, |x|^{-2\nu})\setminus \{0\}}\frac{\displaystyle\int_{\R^N}|x|^{-2\nu} |\nabla u|^2 dx }{\left(\displaystyle\int_{\R^N}|x|^{-2^*\nu} u^{2^{\star}}dx\right)^{\frac{2}{2^{\star}}}},\ee
where $ \nu\in\displaystyle\left[0, \f{N-2}{2}\right)$.
Thanks to Caffarelli-Kohn-Nirenberg inequality \cite{CKN}, we have $\mathcal{S}>0$. It is also well-known that $\mathcal{S}$ in the above expression is same as
$$
\inf_{u\in H^{1}_{0}(\Om)\setminus \{0\}}\frac{\displaystyle\int_{\Om}\left(|\nabla u|^2-\mu \frac{u^2}{|x|^2}\right)dx}{\left(\displaystyle\int_{\Om}u^{2^{\star}}dx\right)^{\frac{2}{2^{\star}}}}$$
and  independent of the domain $\Om$, where $\mu\in(0,\bar\mu)$. In the above two expression of $\mathcal{S}$, the parameters $\mu$ and $\nu$ are related by
 $\nu=\sqrt{\bar\mu}-\sqrt{\bar\mu-\mu}$.
From \cite{CZ}, we know $$\mathcal{S}=\mathcal{S}_{N}\bigg(1-\frac{4\mu}{(N-2)^2}\bigg)^{\frac{N-1}{N}},$$ where $\mathcal{S}_N$ is the usual Sobolev constant. Moreover, Catrina-Wang \cite{CZ, AG} proved that $\mathcal{S}$ is achieved by
\be\label{ent-U} U(x)=\bigg(\frac{N\al^2}{N-2}\bigg)^{\frac{N-2}{4}}\big(1+|x|^\frac{2\alpha}{N-2}\big)^{-\frac{N-2}{2}},\ee where \be \label{qes} \al=N-2-2\nu .\ee
Furthermore,  by \cite{T}, $U$ is the unique solution (dilation invariant) of the following entire problem:
\begin{equation}
  \label{ent}
  \left\{
    \begin{aligned}
      - \nabla  (|x|^{-2\nu} \nabla U ) &= |x|^{-2^*\nu}   U^{2^*-1} \; &&\text{in } \R^N, \\
 U &> 0 &&\text{in }  \R^N\setminus \{0\},  \\
U &\in  D^{1,2}(\R^N, |x|^{-2\nu}).
    \end{aligned}
  \right.
\end{equation}

Define the Green's function $G$ as
\be\lab{eq:gr-H}
H(x, y)= G(x, y)+ F (x, y),
\ee
where $G(x, y)$ is  defined by
\begin{equation}
  \label{green}
  \left\{
    \begin{aligned}
      \nabla_{x} (|x|^{-2\nu} \nabla_{x} G(x, y))&=  \de_{y} \; &&\text{in } \Om, \\
G(x, y) &=0  \; &&\text{on } \pa \Om,
    \end{aligned}
  \right.
\end{equation}
and $H(x, y)$ is the regular part of the Green function
\begin{equation}
  \label{rob}
  \left\{
    \begin{aligned}
      \nabla_{x} (|x|^{-2\nu} \nabla_{x} H(x, y))&=  0 \; &&\text{in } \Om, \\
H(x, y) &=F(x, y)  \; &&\text{on } \pa \Om,
    \end{aligned}
  \right.
\end{equation}
for any fixed $y\in \Om$
and
\be\lab{eq:gr-F} F(x, y)= -\frac{1}{(N-2-2\nu) \om_{N}|x-y|^{N-2\nu-2}}
\ee
 is the fundamental solution of the  non-degenerate elliptic operator $\nabla (|x|^{-2\nu} \nabla ).$  By construction, $H(x, 0)$ is negative and H\"older continuous near the origin \cite{CW}. Define the Robin function as
\begin{equation}\label{Robin}
R(x)= H(x, x).
\end{equation}
Hence $R$ is continuous  at the origin and we can write $$\lim_{|x|\rightarrow 0}R(x)= R(0).$$

For the supercritical case ($p>2^*-1$), we define the functional
\be\label{a22} F(v, \Om)= \frac{1}{2}\frac{\displaystyle\int_{\Om} |x|^{-2\nu}|\nabla v|^2 dx}{\displaystyle\int_{\Om}|x|^{-(p+1)\nu} v^{p+1} dx}+\frac{1}{q+1}\frac{\displaystyle\int_{\Om}|x|^{-(q+1)\nu}v^{q+1} dx}{\displaystyle\left(\int_{\Om}|x|^{-(p+1)\nu} v^{p+1} dx\right)^l}, \ee
where
\be\lab{14:l}
l=\frac{2(q+1)-N(p-1)}{2(p+1)-N(p-1)}, \ee and
$v\in H^1_0(\Om, |x|^{-2\nu})\cap L^{q+1}(\Om, |x|^{-(q+1)\nu})$.\\
Also define,
\be\lab{14-K}
\mathcal{K}:= \inf\bigg\{F(v, \Rn): v\in D^{1,2}(\Rn, |x|^{-2\nu})\cap L^{q+1}(\Rn, |x|^{-(q+1)\nu}), \int_{\Rn}|x|^{(p+1)\nu} |v|^{p+1}=1\bigg\}.
\ee
For the critical case ($p=2^*-1$), we consider the usual functional
\be\lab{sep-17-1}
S(v)=\frac{\displaystyle\int_{\Om} |x|^{-2\nu}|\nabla v|^2 dx}{\displaystyle\left(\int_{\Om}|x|^{-(p+1)\nu} v^{p+1} dx\right)^\f{2}{p+1}},
\ee
where $v\in H^1_0(\Om, |x|^{-2\nu})\cap L^{q+1}(\Om, |x|^{-(q+1)\nu})$.\\
We turn now to a brief description of the results presented below. The first result concerns the non-existence result when $p=2^*-1$.
\begin{theorem}\label{1.1} Let  $0\leq \mu<\bar\mu$ and
\begin{equation}
  \label{eq:a1}
\left\{\begin{aligned}
      -\Delta u-\mu \frac{u}{|x|^2}&=u^{2^*-1} -u^q \quad\text{in }\quad \R^N,\\ u&>0, \\
      u &\in D^{1,2}(\Rn)\cap L^{q+1}(\Rn),
         \end{aligned}
  \right.
\end{equation}
where $q>2^*-1.$ Then Eq.\eqref{eq:a1} does not admit any solution.
\end{theorem}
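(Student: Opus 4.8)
The plan is to use a Pohozaev-type identity adapted to the Hardy operator together with the integrability information encoded in the function space $D^{1,2}(\R^N)\cap L^{q+1}(\R^N)$. First I would recall that for $\mu<\bar\mu$ any solution $u$ of \eqref{eq:a1} is radial about the origin and strictly decreasing in $r=|x|$; this follows from the moving-plane / symmetry results for the Hardy operator (one may also invoke the known sharp asymptotics $u(x)\sim c|x|^{-\nu}$ near $0$ recorded earlier in the introduction, which in particular gives $u\in L^{2^*}$ near $0$ since $\nu<(N-2)/2$, and near infinity $u$ decays like the fundamental solution, i.e. like $|x|^{-\nu'}$ with $\nu'>(N-2)/2$, so all the integrals below converge). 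Thus I may work with the ODE obtained by substituting $v=|x|^\nu u$, or equivalently work directly with the radial form of $-\Delta u-\mu u/|x|^2 = u^{2^*-1}-u^q$ on $(0,\infty)$.

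The heart of the argument is to multiply the equation by the Pohozaev vector field $x\cdot\nabla u + \frac{N-2}{2}u$ and integrate over $\R^N$ (or over an annulus $B_R\setminus B_\delta$ and then let $\delta\to0$, $R\to\infty$). The key point is that the Hardy term $\mu u/|x|^2$ is \emph{scale invariant}, exactly like the Laplacian and exactly like the critical nonlinearity $u^{2^*-1}$: all three contribute zero boundary-free bulk terms under the Pohozaev multiplier, because each of them corresponds to a term in the energy that is invariant under the scaling $u\mapsto \lambda^{(N-2)/2}u(\lambda\cdot)$. Concretely, after integrating by parts and discarding the boundary integrals (which vanish in the limits $\delta\to0$, $R\to\infty$ using the stated asymptotics — this is where the two-sided bounds $u\sim|x|^{-\nu}$ at $0$ and the fundamental-solution decay at $\infty$ are used, together with $\nu<(N-2)/2<\nu'$), one is left with an identity of the form
\[
0 \;=\; \Big(\frac{N}{2}-\frac{N-2}{2}\cdot\frac{2N}{N-2}\Big)\cdot 0 \;+\; \Big(\frac{N}{q+1}-\frac{N-2}{2}\Big)\int_{\R^N} u^{q+1}\,dx,
\]
more precisely the critical term $\int u^{2^*}$ drops out entirely (its coefficient $N/2^* - (N-2)/2$ is zero) and the Hardy term drops out (its coefficient is also zero by scale invariance), leaving only
\[
\Big(\frac{N-2}{2}-\frac{N}{q+1}\Big)\int_{\R^N} u^{q+1}\,dx \;=\; 0.
\]

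Since $q>2^*-1=\frac{N+2}{N-2}$ we have $q+1>\frac{2N}{N-2}$, hence $\frac{N}{q+1}<\frac{N-2}{2}$, so the coefficient $\frac{N-2}{2}-\frac{N}{q+1}$ is strictly positive. Therefore $\int_{\R^N}u^{q+1}\,dx=0$, forcing $u\equiv 0$, which contradicts $u>0$. This proves \eqref{eq:a1} has no solution.

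The main obstacle — and the only place real work is needed — is the rigorous justification of the boundary-term vanishing in the Pohozaev identity, i.e. controlling $\int_{\partial B_\delta}$ and $\int_{\partial B_R}$ of the quadratic-in-$\nabla u$ and Hardy and nonlinear fluxes as $\delta\to0$ and $R\to\infty$. Near the origin one needs not just $u\sim|x|^{-\nu}$ but also the matching gradient bound $|\nabla u|\lesssim |x|^{-\nu-1}$ (available from the gradient estimates advertised in the introduction), so that $\delta^{N-1}\cdot\delta^{-2\nu-2}\cdot\delta\to0$, which holds precisely because $2\nu<N-2$; the Hardy flux $\delta^{N-1}\cdot\delta\cdot\delta^{-2\nu}\cdot\delta^{-2}\to0$ by the same inequality; and the nonlinear fluxes are even better behaved. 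At infinity the fundamental-solution decay $u\lesssim|x|^{-(N-2)}$ (or at worst $|x|^{-\nu'}$) with the corresponding gradient decay makes every boundary flux $O(R^{-\eta})$ for some $\eta>0$. Once these limits are in hand the identity above is exact and the conclusion is immediate; no delicate estimate on the nonlinearity itself is required, only on the decay/blow-up rates already established earlier in the paper.
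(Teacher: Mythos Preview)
Your overall strategy --- derive a Pohozaev identity, observe that the Laplacian, the Hardy term, and the critical nonlinearity are all scale-invariant so only the $u^{q+1}$ term survives, and conclude from the sign of its coefficient --- is exactly what the paper does, and your final identity
\[
\Big(\frac{N-2}{2}-\frac{N}{q+1}\Big)\int_{\R^N} u^{q+1}\,dx = 0
\]
matches the paper's conclusion verbatim. The difference is in how the Pohozaev identity is justified. You integrate over an annulus $B_R\setminus B_\delta$ and propose to kill the boundary fluxes on $\partial B_\delta$ and $\partial B_R$ using pointwise asymptotics ($u\sim|x|^{-\nu}$ and $|\nabla u|\lesssim|x|^{-\nu-1}$ at the origin, fundamental-solution decay at infinity) together with radial symmetry. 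The paper instead proves the identity (Proposition~\ref{p:Poho}) by multiplying the equation by $(x\cdot\nabla u)\phi_\delta\psi_R$ with smooth cutoffs $\phi_\delta,\psi_R$, so the only limits to take are bulk integrals over thin annuli, and these go to zero by dominated convergence using nothing more than $u\in D^{1,2}(\R^N)\cap L^{q+1}(\R^N)$ (hence $|\nabla u|^2,\ u^2/|x|^2,\ u^{2^*},\ u^{q+1}\in L^1$). No radial symmetry, no pointwise blow-up rates, no gradient estimates are needed.

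Your route is not wrong, but it carries extra baggage that is both unnecessary and not fully available at this point in the paper: the moving-plane symmetry (Theorem~\ref{t:moving pl}) and the gradient bound (Theorem~\ref{t:grad}) are proved later and are stated for bounded domains; the asserted decay $u\lesssim|x|^{-\nu'}$ at infinity is nowhere established for this entire problem (and cannot be, since the theorem says no solution exists); and your near-origin asymptotics $u\sim|x|^{-\nu}$ are only valid for $q<\frac{2+\nu}{\nu}$ --- for larger $q$ the singularity is of order $|x|^{-2/(q-1)}$ instead (Theorems~\ref{t:up-est-2}, \ref{t:low-est-2}), a case you do not address. The cutoff argument sidesteps all of this and is the cleaner choice here.
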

The proof of this theorem is based on the Pohozaev identity. The difficulty in applying this identity comes from the fact that  the solution blows up at origin(see Section 3).

\vspace{2mm}

Setting the transformation $v=|x|^{\nu}u$ in \eqref{a22} and \eqref{14-K} we obtain
\be\label{a21} F(u, \Om)=  \frac{1}{2}\frac{\displaystyle\int_{\Om}\left( |\nabla u|^2 -\mu  \frac{| u|^2}{|x|^2}\right)dx}{\displaystyle\int_{\Omega} u^{p+1} dx}+\frac{1}{q+1}\frac{\displaystyle\int_{\Om}u^{q+1} dx}{\displaystyle\left(\int_{\Omega} u^{p+1} dx\right)^l},\ee
where $p>2^*-1$ and $l$ is same as in \eqref{14:l},  $u\in H_0^1(\Om)\cap L^{q+1}(\Om)$ (see \cite[Theorem 1.1]{Han}) and
\be\lab{a2}
\mathcal{K}:= \inf\bigg\{F(u, \Rn): u\in D^{1,2}(\Rn)\cap L^{q+1}(\Rn),\,  \int_{\Rn}|u|^{p+1}=1\bigg\}.
\ee
\begin{theorem}\label{1.2}
Let  $0\leq \mu<\bar{\mu}$ , $N\geq 3$  and  $q>p> 2^*-1$. Then
$\mathcal{K}$ in \eqref{a2} is achieved by a radially decreasing function  in $D^{1, 2}(\R^N) \cap L^{q+1}(\R^N)$.   Furthermore, there exists a constant $\lambda>0$
\begin{equation}
  \label{a3}
\left\{\begin{aligned}
      -\Delta u-\mu \frac{u}{|x|^2}&=\lambda u^{p} -u^q \quad\text{in }\quad \R^N,\\
       u&>0   \quad\text{in }\quad \R^N,\\
      u &\in D^{1,2}(\Rn)\cap L^{q+1}(\Rn).
         \end{aligned}
  \right.
\end{equation}
\end{theorem}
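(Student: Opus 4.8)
The plan is to realize $\mathcal{K}$ as the infimum of the functional $F(\cdot,\R^N)$ over the constraint manifold $\mathcal{M}=\{u\in D^{1,2}(\R^N)\cap L^{q+1}(\R^N): \int_{\R^N}|u|^{p+1}=1\}$ and show the infimum is attained by a minimizing sequence that, after symmetrization and rescaling, is compact. First I would note that $F$ is scaling invariant in the sense built into its definition: if $u_\lambda(x)=\lambda^{a}u(\lambda x)$ for the exponent $a$ matching $p$, then both quotients in \eqref{a21} are unchanged, and likewise $F$ is invariant under $u\mapsto tu$ by the homogeneity degrees $2-(p+1)$ and $(q+1)-l(p+1)=0$ (this is exactly why $l$ was chosen as in \eqref{14:l}). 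So there is no loss in working on $\mathcal{M}$, and on $\mathcal{M}$ the functional reduces to $F(u,\R^N)=\tfrac12\|u\|_{D^{1,2}}^2+\tfrac1{q+1}\int u^{q+1}$, with $\|\cdot\|_{D^{1,2}}$ the Hardy-corrected Dirichlet norm, which by Hardy's inequality is an equivalent norm on $D^{1,2}(\R^N)$ when $0\le\mu<\bar\mu$.

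Next, take a minimizing sequence $u_n\in\mathcal{M}$; replacing $u_n$ by its Schwarz symmetrization $u_n^*$ decreases (or preserves) the Dirichlet integral, decreases $\int u^2/|x|^2$ appropriately—here one must be a little careful since symmetrization does not obviously decrease $\|u\|_{D^{1,2}}^2=\int|\nabla u|^2-\mu\int u^2/|x|^2$ (the two terms move the wrong way). The clean route is instead to pass through the weighted formulation \eqref{eq:a3}: set $v=|x|^\nu u$, so that $\|u\|_{D^{1,2}}^2=\int_{\R^N}|x|^{-2\nu}|\nabla v|^2$ and the constraint and the $L^{q+1}$ term become the weighted integrals $\int|x|^{-(p+1)\nu}v^{p+1}$, $\int|x|^{-(q+1)\nu}v^{q+1}$; then apply the symmetrization result for monomial weights (all exponents are admissible for Caffarelli–Kohn–Nirenberg since $0<\nu<\tfrac{N-2}{2}$), under which each of the three weighted integrals behaves monotonically because the weight $|x|^{-\beta}$ is radially decreasing. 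So we may assume $u_n$ radially decreasing. Boundedness of $F(u_n)$ then gives a uniform bound on $\|u_n\|_{D^{1,2}}$ and on $\int u_n^{q+1}$; extract a weak limit $u$ in $D^{1,2}(\R^N)$, weak-$*$ in $L^{q+1}$, and a.e. Radial decreasing functions with bounded $D^{1,2}$-norm satisfy the pointwise bound $u_n(x)\le C|x|^{-(N-2)/2}$ away from the origin, which upgrades weak to strong convergence in $L^{p+1}_{\mathrm{loc}}(\R^N\setminus\{0\})$ and, combined with the $L^{q+1}$ bound to control the tails and a small-ball estimate near $0$ (using $p+1<q+1$ for the tail and $p+1>2$ near the origin, together with the $|x|^{-(N-2)/2}$ decay), gives $\int|u_n|^{p+1}\to\int|u|^{p+1}=1$, so $u\in\mathcal{M}$ and in particular $u\not\equiv0$. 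Weak lower semicontinuity of $\|\cdot\|_{D^{1,2}}^2$ and of $\int(\cdot)^{q+1}$ (Fatou) then yields $F(u,\R^N)\le\liminf F(u_n,\R^N)=\mathcal{K}$, hence equality: the infimum is achieved by the radial decreasing minimizer $u\ge0$.

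Finally, to extract the Euler–Lagrange equation \eqref{a3}: $u$ is a minimizer of $F$ on $\mathcal{M}$, so by the Lagrange multiplier rule there is $\Lambda\in\R$ with $F'(u)=\Lambda\,(p+1)\,|u|^{p-1}u$ in the distributional sense. Computing $F'(u)$ from \eqref{a21} at a point of $\mathcal{M}$ (where $\int u^{p+1}=1$) gives, after collecting the constants $A_0=\tfrac12$, $A_1=\tfrac1{q+1}$ and the derivative of the denominators, an equation of the form $-\Delta u-\mu u/|x|^2 = \lambda u^p - \beta u^q$ with $\lambda,\beta$ explicit positive combinations of $\Lambda$, $\|u\|_{D^{1,2}}^2$, $\int u^{q+1}$; positivity of $\beta$ follows because the coefficient of the $u^q$ term inherits the sign of $\tfrac1{q+1}>0$ against the (negative) derivative of the power of the denominator, and one rescales $u\mapsto cu$ to normalize $\beta=1$, changing $\lambda$ to a new positive constant (still called $\lambda$) — this uses $l=(q+1)/(p+1)$-type bookkeeping but the upshot is that both multipliers survive with the right signs. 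Then $u>0$ in $\R^N$ follows from the strong maximum principle / Harnack inequality applied on $\R^N\setminus\{0\}$ to the nonnegative nontrivial solution (the equation is locally uniformly elliptic away from $0$, and near $0$ one uses that $u\in D^{1,2}$ excludes $u$ vanishing on a set of positive measure), giving the claimed $u\in D^{1,2}(\R^N)\cap L^{q+1}(\R^N)$, $u>0$.

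The main obstacle is the compactness of the minimizing sequence, i.e. preventing loss of mass of $\int|u_n|^{p+1}$ either by concentration at the origin or by escape to infinity; the supercriticality $p>2^*-1$ means $p+1$ is above the Sobolev exponent so one cannot rely on Sobolev embedding, and the argument genuinely needs the interplay of the two norms ($D^{1,2}$ controls behavior at intermediate scales via the $|x|^{-(N-2)/2}$ decay of radial functions, while the $L^{q+1}$ bound with $q>p$ kills the tail and, together with $p+1>2$, controls the concentration near $0$) — managing these two regimes simultaneously, cleanly stated, is the technical heart of the proof.
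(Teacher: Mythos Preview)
Your outline follows the paper's proof almost step for step: restrict to the constraint manifold $\int|u|^{p+1}=1$, symmetrize the minimizing sequence, use the radial Strauss decay $|u_n(r)|\le Cr^{-(N-2)/2}$ together with the uniform $L^{q+1}$ bound to obtain strong $L^{p+1}$ convergence, then conclude by weak lower semicontinuity and Fatou, and finally apply the Lagrange multiplier rule. The strategy is correct and identical to the paper's.

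There is, however, a genuine misstep in your symmetrization discussion. You assert that ``the two terms move the wrong way'' in $\|u\|_{D^{1,2}}^2=\int|\nabla u|^2-\mu\int u^2/|x|^2$ under Schwarz rearrangement, and therefore detour through the weighted $v=|x|^\nu u$ picture. In fact both terms move the \emph{right} way: P\'olya--Szeg\H{o} gives $\int|\nabla u^*|^2\le\int|\nabla u|^2$, and Hardy--Littlewood (with the radially decreasing weight $|x|^{-2}$) gives $\int (u^*)^2/|x|^2\ge\int u^2/|x|^2$, so with $\mu\ge 0$ the subtracted term becomes \emph{more} negative. Hence $\|u^*\|_{D^{1,2}}^2\le\|u\|_{D^{1,2}}^2$ directly, and since the $L^{p+1}$ and $L^{q+1}$ norms are preserved by rearrangement, $F(u^*)\le F(u)$ on the manifold. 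This is exactly what the paper uses without further comment. Your proposed fix through the $v$-variable is actually worse: a P\'olya--Szeg\H{o} inequality for $\int|x|^{-2\nu}|\nabla v|^2$ under ordinary Schwarz symmetrization is \emph{not} a standard fact (weighted rearrangement inequalities of this type are delicate and can fail), and in any case the weighted integrals $\int|x|^{-(p+1)\nu}v^{p+1}$ and $\int|x|^{-(q+1)\nu}v^{q+1}$ both \emph{increase} under symmetrization of $v$, so you neither stay on the constraint set nor decrease $\hat F$.

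Two minor corrections. First, $F$ is not invariant under $u\mapsto tu$: the first quotient has homogeneity $2-(p+1)\ne 0$. This is harmless, since $\mathcal K$ in \eqref{a2} is already defined over the constraint $\int|u|^{p+1}=1$, so there is nothing to reduce. Second, you have swapped the roles of the two bounds in the compactness step: the Strauss decay $|u_n(r)|\le Cr^{-(N-2)/2}$ controls the tail $\int_{|x|>R}u_n^{p+1}$ (here one uses $p+1>2^*$, i.e.\ $(N-2)(p+1)/2>N$), while the $L^{q+1}$ bound with $q>p$ gives uniform integrability of $u_n^{p+1}$ on compact sets (including near the origin) via H\"older, as in Vitali's theorem. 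Finally, on the manifold the Euler--Lagrange equation is obtained directly as $-\Delta u-\mu u/|x|^2+u^q=\lambda u^p$ with the $u^q$ coefficient already equal to $1$; testing against $u$ yields $\lambda=\|u\|_{D^{1,2}}^2+\int u^{q+1}>0$, so no rescaling or sign analysis of a second multiplier is needed.
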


\begin{theorem}\label{t:low-est}
 Assume $2^*-1\leq p<q<\frac{2+\nu}{\nu}$ and  $u$ be any solution (whenever exists) of
\begin{equation}\label{a4}
\left\{\begin{aligned}
      -\Delta u-\mu \frac{u}{|x|^2}&=u^p -  u^q &&\text{in } \Om,\\ u&>0 && \text{ in } \Om, \\
      u &\in H^1_0(\Omega)\cap L^{q+1}(\Omega) ,
          \end{aligned}
  \right.
\end{equation}
with $0<\mu<\bar\mu$ and $\Om$ be any smooth domain (bounded or unbounded).
Then there exists $r_0>0$(small) and $C_1>0$ ($r_0$ and $C_1$ independent of $u$) such that $u$ satisfies
$$u(x)\geq C_1|x|^{-\nu} \quad\forall\ \ x\in B_{r_0}(0)\setminus\{0\}.$$
\end{theorem}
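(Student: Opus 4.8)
The plan is to establish the lower bound $u(x)\ge C_1|x|^{-\nu}$ by comparison with the (super-)solution behaviour dictated by the Hardy operator, exploiting the fact that the subtracted term $u^q$ is lower order near the origin when $q<\frac{2+\nu}{\nu}$. First I would pass to the transformed equation: setting $v=|x|^\nu u$, problem \eqref{a4} becomes the divergence-form equation \eqref{eq:a3} with $\eps=1$, namely $-\divergence(|x|^{-2\nu}\na v)=|x|^{-(p+1)\nu}v^p-|x|^{-(q+1)\nu}v^q$ on $\Om$, with $v>0$ and $v\in H^1_0(\Om,|x|^{-2\nu})\cap L^{q+1}(\Om,|x|^{-(q+1)\nu})$. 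In these coordinates the desired conclusion is simply $v(x)\ge C_1>0$ on a punctured ball $B_{r_0}(0)\setminus\{0\}$, i.e. $v$ is bounded below away from $0$ near the origin. The point is that the weight $|x|^{-2\nu}$ makes the operator $\divergence(|x|^{-2\nu}\na\cdot)$ degenerate/singular but still of the Caffarelli–Kohn–Nirenberg type, so a Harnack-type inequality and a weak Harnack inequality are available on annuli $A_\rho=\{\rho<|x|<2\rho\}$ (on such annuli the weight is comparable to the constant $\rho^{-2\nu}$, so after rescaling one is dealing with a uniformly elliptic equation with bounded coefficients).

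The core estimate I would carry out is a dyadic iteration. Fix a sequence of annuli $A_j=\{2^{-j-1}<|x|<2^{-j+1}\}$. On each $A_j$, after the rescaling $x=2^{-j}y$, $w_j(y)=v(2^{-j}y)$, the equation becomes $-\divergence(|y|^{-2\nu}\na w_j)=2^{-j\alpha_p}|y|^{-(p+1)\nu}w_j^p-2^{-j\alpha_q}|y|^{-(q+1)\nu}w_j^q$ on $\tfrac12<|y|<2$, where the exponents $\alpha_p=2-(p-1)\nu$ and $\alpha_q=2-(q-1)\nu$ are governed by $m+2\lessgtr mq$; the inequality $q<\frac{2+\nu}{\nu}$ is exactly the statement $\alpha_q>0$, so the $w_j^q$ coefficient $2^{-j\alpha_q}\to0$, whereas (by the assumed upper bound $v\le C_2$, which one should extract first from Theorem's companion upper estimate or from a standard Moser iteration for the transformed equation, or simply from the hypothesis that $u$ is a finite-energy solution together with elliptic regularity away from $0$) the nonlinearity $|2^{-j\alpha_p}|y|^{-(p+1)\nu}w_j^p-2^{-j\alpha_q}|y|^{-(q+1)\nu}w_j^q|$ is bounded by a constant independent of $j$. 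Thus $w_j$ solves $-\divergence(|y|^{-2\nu}\na w_j)=f_j$ with $\|f_j\|_{L^\infty(\frac12<|y|<2)}\le M$ uniformly. Applying the weak Harnack inequality for this weighted operator on $\{\frac34<|y|<\frac32\}$ gives
\[
\inf_{\{|y|=1\}} w_j \ge c_0\Big(\fint_{\{\frac34<|y|<\frac32\}} w_j\Big) - C M,
\]
for constants $c_0,C>0$ depending only on $N,\nu$. Translating back, $\min_{|x|=2^{-j}}v\ge c_0\,\overline{v}_j-CM$ where $\overline{v}_j$ is an average of $v$ over $A_j$.

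To close the argument I would feed into this the known energy/integrability of $v$: since $v\in H^1_0(\Om,|x|^{-2\nu})\cap L^{q+1}$, one cannot immediately say the averages $\overline v_j$ stay bounded below, so instead I would argue by a bootstrap: suppose, for contradiction, that $\liminf_{x\to0}v(x)=0$. Then along some sequence of radii the averages $\overline v_j\to0$; but then on those annuli the nonlinear term is in fact $o(\overline v_j)$ rather than $O(1)$ (because $|x|^{-(p+1)\nu}v^p\le C_2^{p-1}|x|^{-2\nu}\cdot(|x|^{\alpha_p}v)$ and similarly for the $q$ term, and both $|x|^{\alpha_p},|x|^{\alpha_q}\to0$), so the equation is, to leading order, $-\divergence(|x|^{-2\nu}\na v)\ge -\delta_j|x|^{-2\nu}v$ with $\delta_j\to0$; comparing $v$ with the harmonic-type barrier for the weighted operator — whose fundamental radial solutions are precisely $|x|^{0}$ and $|x|^{\alpha}$ with $\alpha=N-2-2\nu>0$ (the exponents from \eqref{14-4-3} after the $|x|^\nu$ rescaling, where the $r^{-\nu}$ solution of \eqref{a4} becomes the constant $1$) — forces $v$ to be bounded below by a positive constant on a full punctured neighbourhood, a contradiction. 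Equivalently, one constructs an explicit positive subsolution $\underline v = \kappa(1 - C|x|^{\alpha})$ of the transformed equation on $B_{r_0}\setminus\{0\}$ for $\kappa,r_0$ small, checks $\underline v\le v$ on $\partial B_{r_0}$ (using a lower bound at a single scale coming from Harnack + $v\not\equiv0$) and near $0$ (using $v>0$ and the maximum principle for the non-degenerate operator $\divergence(|x|^{-2\nu}\na\cdot)$ away from $0$, with the removable-singularity fact that $0$ contributes nothing since $v$ is bounded), and concludes $v\ge\underline v\ge\kappa/2$ on $B_{r_0/2}$. Undoing the substitution yields $u=|x|^{-\nu}v\ge C_1|x|^{-\nu}$, with $C_1,r_0$ depending only on $N,\mu,p,q$ and the \emph{a priori} bound $C_2$ — hence independent of the particular solution $u$.

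The main obstacle I anticipate is making the comparison/maximum principle rigorous at the singular point $0$: the operator $\divergence(|x|^{-2\nu}\na\cdot)$ degenerates there, so one must justify that the singularity is removable for bounded solutions and that the weak maximum principle applies on $B_{r_0}\setminus\{0\}$ with only a lower bound on $\partial B_{r_0}$ and no hypothesis at $0$ beyond positivity and boundedness. This is handled by a standard capacity argument (points have zero $|x|^{-2\nu}$-capacity since $\nu<\frac{N-2}{2}$ keeps the weight in the Muckenhoupt class $A_2$) together with a cut-off exhaustion $B_{r_0}\setminus B_\rho$, $\rho\to0$, controlling the boundary term on $\{|x|=\rho\}$ by $\|v\|_\infty\cdot\rho^{-2\nu}\cdot\rho^{N-1}=\|v\|_\infty\rho^{\alpha+1}\to0$. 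A secondary technical point is securing the uniform \emph{a priori} upper bound $v\le C_2$ with constant independent of $u$; for this I would invoke a Moser iteration for the subcritical-looking transformed equation — the exponents $(p+1)\nu,(q+1)\nu$ on the weights and $q<\frac{2+\nu}\nu$ ensure the relevant Sobolev embedding is subcritical in the weighted sense — or, if one is content with dependence on $u$ through its energy only, a concentration-compactness bound; either route suffices since the statement only requires $C_1,r_0$ to be uniform.
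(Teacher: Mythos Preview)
Your reduction to $v=|x|^{\nu}u$ and the goal of showing $v\ge C_1>0$ near the origin are exactly right, and match the paper. From there, however, your argument has two genuine gaps.

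First, the explicit barrier $\underline v=\kappa(1-C|x|^{\alpha})$ is not a subsolution. The radial kernel of $\text{div}(|x|^{-2\nu}\nabla\cdot)$ is spanned by $1$ and $|x|^{-\alpha}$, not $|x|^{\alpha}$; in fact a direct computation gives $-\text{div}(|x|^{-2\nu}\nabla\underline v)=2\kappa C\alpha^{2}|x|^{2\alpha-N}>0$. But the right–hand side you must dominate is $|x|^{-(p+1)\nu}\underline v^{\,p}-|x|^{-(q+1)\nu}\underline v^{\,q}\sim -\kappa^{q}|x|^{-(q+1)\nu}\to-\infty$ as $|x|\to0$, so the required inequality $-\text{div}(|x|^{-2\nu}\nabla\underline v)\le\text{RHS}$ fails near $0$. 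The same obstruction kills any barrier that is a finite perturbation of a constant: the absorption term $|x|^{-(q+1)\nu}v^{q}$ is genuinely singular and cannot be beaten by a term that vanishes with $|x|$. This is precisely why the paper remarks that ``standard methods of finding lower estimate \ldots\ do not work here.''

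Second, both your Harnack iteration (to bound $f_j$ uniformly) and your removability/capacity argument at $0$ (to pass to the limit $\rho\to0$ with boundary term $\|v\|_{\infty}\rho^{\alpha+1}$) presuppose $v\in L^{\infty}$ near the origin with a bound independent of $u$. That is Theorem~\ref{t:up-est}, a separate and nontrivial result which the statement of Theorem~\ref{t:low-est} does not assume and which the paper's proof does not use. Without it your scheme does not close.

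The paper's route is different: it drops the positive term and observes $v$ is a weak supersolution of $-\text{div}(|x|^{-2\nu}\nabla w)+|x|^{-(q+1)\nu}w^{q}=0$ in a small ball $B_{l}(0)$. It then \emph{constructs} (Lemmas~\ref{l:4-1}--\ref{l:4-2}) a continuous radial \emph{solution} $w_{\delta}$ of this absorption equation with $w_{\delta}(0)=\delta>0$, via the integral representation $w(r)=\delta+\int_{0}^{r}s^{2\nu+1-N}\int_{0}^{s}t^{N-1-(q+1)\nu}w^{q}(t)\,dt\,ds$ and a Banach fixed point argument (here $q<\frac{2+\nu}{\nu}$ is exactly what makes the inner integral converge); one checks $w_{\delta}\in H^{1}(B_{l},|x|^{-2\nu})$ and $w_{\delta}|_{\partial B_{l}}<m_{l}:=\min_{|x|=l}v$ for $\delta$ small. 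The weak comparison principle with test function $(v-w_{\delta})^{-}\in H^{1}_{0}(B_{l},|x|^{-2\nu})$ then gives $v\ge w_{\delta}$ on $B_{l}$, hence $\liminf_{x\to0}v(x)\ge\delta>0$. No upper bound on $v$, no Harnack iteration, and no explicit barrier inequality at $0$ are needed; the singular absorption is handled by producing an exact solution rather than a subsolution.
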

\begin{remark}
Standard methods of finding lower estimate, e.g. the methods of \cite{CP,  Han} do not work here. In Section 3, we have shown that to get the estimate $|u(x)|\geq C|x|^{-\nu}$, it is enough to show that solution of the following equation is bounded away from $0$,
$$-\text{div}(|x|^{-2\nu}\na w)+|x|^{-(q+1)\nu}w^q=0.$$ To show the existence of positive solution of this equation with suitable boundary data and which is bounded away from $0$, we have used ODE technique, Banach fixed point theorem  and  comparison principle.
\end{remark}

\vspace{2mm}

\begin{theorem}\label{t:up-est}
(i) If $p=2^*-1$, then any solution $u$ of Eq. \eqref{a4} satisfies
$$u(x)\leq C |x|^{-\nu} \quad\forall\ \ x\in B_{\rho_0}(0)\setminus\{0\},$$ where $\rho_0>0$ is sufficiently small.

\vspace{2mm}

(ii) If $p>2^*-1$ and $q>(p-1)\frac{N}{2}-1$ then the same conclusion holds as in (i).
\end{theorem}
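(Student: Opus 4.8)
\emph{Outline of the argument.} The plan is to prove the equivalent statement that $v:=|x|^{\nu}u$ is bounded in a punctured neighbourhood of $0$; by the change of variables recalled above $v$ solves $(J_{\eps})$ with $\eps=1$, and $u=|x|^{-\nu}v$. First I would record the elementary pointwise bound
\be\lab{ptwise}
t^{p}-t^{q}\;\le\; t^{2^{*}-1}\quad\text{for all }t\ge 0 ,
\ee
valid for every pair $2^{*}-1\le p<q$ (if $t\ge1$ the left side is $\le0$; if $0\le t\le1$ then $t^{p}-t^{q}\le t^{p}\le t^{2^{*}-1}$ since $p\ge 2^{*}-1$). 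Inserting \eqref{ptwise} into $-\De u-\mu|x|^{-2}u=u^{p}-u^{q}$ gives $-\De u-\mu|x|^{-2}u\le u^{2^{*}-1}$, that is, in the variable $v$ and in the weak sense,
\be\lab{subsol}
-\divergence\!\big(|x|^{-2\nu}\na v\big)\;\le\;|x|^{-2^{*}\nu}v^{2^{*}-1}\qquad\text{in }\Om .
\ee
Since $v\in H^{1}_{0}(\Om,|x|^{-2\nu})$, the Caffarelli--Kohn--Nirenberg inequality (with its domain-independent constant) gives $v\in L^{2^{*}}(\Om,|x|^{-2^{*}\nu})$, and $2^{*}\nu<N$ makes the right-hand side of \eqref{subsol} locally integrable. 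Thus $v$ is a nonnegative weak subsolution of the critical Caffarelli--Kohn--Nirenberg equation near $0$, and the theorem reduces to a local-boundedness ($\eps$-regularity) statement for such subsolutions.

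The core is a weighted Moser iteration on small balls centred at $0$. Using absolute continuity of $r\mapsto\int_{B_{r}}|x|^{-2\nu}|\na v|^{2}$, hence of $r\mapsto\int_{B_{r}}|x|^{-2^{*}\nu}v^{2^{*}}$, I would fix $\rho_{0}>0$ so small that both integrals over $B_{2\rho_{0}}$ are below a dimensional threshold $\delta_{0}=\delta_{0}(N,\nu)$. For $\beta\ge1$ and cut-offs $\eta$ supported in $B_{2\rho_{0}}$, test \eqref{subsol} with $\eta^{2}v\,v_{k}^{2(\beta-1)}$, $v_{k}=\min\{v,k\}$ (admissible thanks to $v\in L^{2^{*}}(\Om,|x|^{-2^{*}\nu})\cap L^{q+1}(\Om,|x|^{-(q+1)\nu})$), and let $k\to\infty$; then apply the weighted Sobolev inequality to $\eta v^{\beta}$, H\"older's inequality, and the smallness of $\int_{B_{2\rho_{0}}}|x|^{-2^{*}\nu}v^{2^{*}}$ to absorb the critical term into the left side. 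This produces a reverse-H\"older (Caccioppoli-type) estimate raising a weighted $L^{2\beta}$-norm of $v$ on a larger ball to a weighted $L^{2^{*}\beta}$-norm on a smaller one; iterating along $\beta_{k}=(2^{*}/2)^{k}$ with shrinking cut-offs and summing the geometric series of constants (bounded precisely because $\delta_{0}$ is small) yields
\begin{equation*}
\|v\|_{L^{\infty}(B_{\rho_{0}/2})}\;\le\;C\Big(\int_{B_{\rho_{0}}}|x|^{-2^{*}\nu}v^{2^{*}}\Big)^{1/2^{*}}<\infty .
\end{equation*}
Transforming back, $u(x)=|x|^{-\nu}v(x)\le C|x|^{-\nu}$ on $B_{\rho_{0}/2}\setminus\{0\}$, which is part (i).

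Part (ii) follows from the same argument, since \eqref{ptwise} and hence \eqref{subsol} hold for every $p\ge 2^{*}-1$; alternatively, when $p>2^{*}-1$ one may keep the genuine nonlinearity and treat $|x|^{-(p+1)\nu}v^{p}$ as a source term, estimated through $v\in L^{q+1}(\Om,|x|^{-(q+1)\nu})$ and interpolation --- and the hypothesis $q>(p-1)\tfrac{N}{2}-1$, i.e. $2(q+1)>N(p-1)$, is exactly what forces this source into a Lebesgue class strictly above the duality exponent of $\divergence(|x|^{-2\nu}\na\,\cdot\,)$ (whose effective dimension near $0$ is $N-2\nu$), so a finite De Giorgi--Nash--Moser bootstrap lifts $v$ to $L^{\infty}_{\mathrm{loc}}$ with no smallness. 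The hard parts, I expect, are: (a) justifying the truncated test functions $\eta^{2}v\,v_{k}^{2(\beta-1)}$ and the limit $k\to\infty$ in the degenerate weighted setting, and controlling the $\na\eta$-error terms with the correct powers of $|x|$; and (b) the critical absorption itself, namely choosing $\delta_{0}$ so that the critical term is absorbed at every step and the Moser constants stay summable --- this is precisely where the smallness of the energy of $v$ near $0$ is indispensable. In the supercritical range the analogous point is to check that $q>(p-1)\tfrac{N}{2}-1$ really does push $|x|^{-(p+1)\nu}v^{p}$ past the critical integrability adapted to the weight, so that the bootstrap terminates after finitely many steps.
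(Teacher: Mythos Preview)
Your reduction via the elementary pointwise bound $t^{p}-t^{q}\le t^{2^{*}-1}$ is correct and is \emph{not} the route the paper takes. The paper keeps the genuine $p$-term, drops the (negative) $q$-term, and writes
\[
\int |x|^{-(p+1)\nu}\eta^{2}v^{p+1}v_{l}^{2(s-1)}
=\int |\eta v v_{l}^{s-1}|^{2}\,u^{p-1}\,|x|^{-2\nu},
\]
then applies H\"older with $u^{p-1}\in L^{t}$ for some $t\in\big(\tfrac{N}{2},\tfrac{q+1}{p-1}\big)$; it is exactly the nonemptiness of this interval that forces the hypothesis $q>(p-1)\tfrac{N}{2}-1$ in part~(ii). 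An interpolation of $L^{2t/(t-1)}$ between $L^{2}$ and $L^{2^{*}}$ (with an $s$-dependent $\epsilon$) then absorbs the source, and the usual Moser iteration closes using only $u\in L^{q+1}(\Om)$---no smallness of the local energy is invoked. Your approach replaces this integrability input by the local smallness of $\int_{B_{2\rho_{0}}}|x|^{-2^{*}\nu}v^{2^{*}}$, which is available for free by absolute continuity; a pleasant by-product is that your argument, once made precise, dispenses with the hypothesis $q>(p-1)\tfrac{N}{2}-1$ in~(ii) altogether.

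One point needs tightening. In the Caccioppoli step the absorption constant carries a factor of $\beta$: after H\"older and the weighted Sobolev inequality one gets (schematically)
\[
\Big(\int |x|^{-2^{*}\nu}(\eta v v_{k}^{\beta-1})^{2^{*}}\Big)^{2/2^{*}}
\le C\beta\,\delta_{0}^{2/N}\Big(\int |x|^{-2^{*}\nu}(\eta v v_{k}^{\beta-1})^{2^{*}}\Big)^{2/2^{*}}
+C\beta\int |x|^{-2\nu}|\na\eta|^{2}v^{2}v_{k}^{2(\beta-1)},
\]
so a fixed $\delta_{0}$ cannot absorb for all $\beta_{k}=(2^{*}/2)^{k}$, contrary to what your ``geometric series of constants'' phrasing suggests. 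The standard remedy is to use smallness only once: for a fixed $\beta_{0}>1$ choose $\rho_{0}$ so that $C\beta_{0}\delta_{0}^{2/N}<\tfrac12$; one step then gives $v\in L^{2^{*}\beta_{0}}_{\mathrm{loc}}$, hence $v^{2^{*}-2}\in L^{t}_{\mathrm{loc}}$ with $t=\tfrac{2^{*}\beta_{0}}{2^{*}-2}>\tfrac{N}{2}$, and the remaining iteration is genuinely subcritical and proceeds exactly as in the paper (the $\epsilon$-interpolation now works with $t>\tfrac{N}{2}$ and no smallness). With this adjustment your outline is correct and, arguably, cleaner than the paper's proof.
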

\begin{remark}
Since $p=2^*-1$ implies $(p-1)\frac{N}{2}-1=2^*-1$, the condition $q>(p-1)\frac{N}{2}-1$ is readily satisfied in the case $p=2^*-1$ as $q$ is supercritical.
\end{remark}
\begin{theorem}\label{t:up-est-2}
Let $\mu\in(0,\bar\mu)$ and  $q>\max\{p, \f{2+\nu}{\nu}\}$. Then any solution of Eq.\eqref{a4} satisfies $$u(x)\leq C|x|^{-\f{2}{q-1}} \quad\forall\ \ x\in B_{\rho}(0)\setminus\{0\},$$ where $\rho>0$ is sufficiently small.
\end{theorem}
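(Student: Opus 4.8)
The plan is to prove the estimate in its stronger, \emph{universal} form, in which $C$ depends only on $N,p,q,\mu$ and not on the solution $u$: a Keller--Osserman type a priori bound. The point is that, since $q>p$, the absorption $-u^q$ dominates every other term on small balls that avoid the origin --- there the Hardy potential $\mu|x|^{-2}$ is merely a bounded coefficient. Concretely, fix $x_0$ with $0<|x_0|$ small, put $r:=|x_0|/2$, so that $\overline{B_r(x_0)}\subset\{x:r\le|x|\le 3r\}\subset\Om$ and (by standard elliptic regularity away from $0$) $u\in C^2(\overline{B_r(x_0)})$. On $B_r(x_0)$ one has $\mu|x|^{-2}\le\mu r^{-2}$, so rewriting \eqref{a4} as $-\De u+u^q=\mu|x|^{-2}u+u^p$ and using Young's inequality to bound $\mu r^{-2}u\le\frac14 u^q+c_1 r^{-2q/(q-1)}$ and $u^p\le\frac14 u^q+c_2$ (both valid since $q>\max\{p,1\}$), one obtains
\be\lab{eq:KOsketch}
-\De u+\frac12 u^q\le g_r\ \text{ on }B_r(x_0),\qquad g_r:=c_2+c_1 r^{-\frac{2q}{q-1}},
\ee
with $c_1=c_1(N,q,\mu)$, $c_2=c_2(p,q)$. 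Since $\frac{2q}{q-1}=\frac2{q-1}+2>0$, we get $g_r\le C r^{-2q/(q-1)}$ whenever $r\le r_0=r_0(N,p,q,\mu)$. (Working directly with $u$ is cleanest here; the substitution $v=|x|^\nu u$ is not needed.)

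Next I would compare $u$ with an explicit super-solution of $-\De v+\frac12 v^q=g_r$ in $B_r(x_0)$ that blows up on $\partial B_r(x_0)$. With $\beta:=\frac2{q-1}$ (so $\beta q=\beta+2$) and $d:=|x-x_0|$, set $W(x):=A(r^2-d^2)^{-\beta}+(2g_r)^{1/q}$. A direct computation gives $-\De\big(A(r^2-d^2)^{-\beta}\big)\ge -C(N,q)A r^2(r^2-d^2)^{-\beta-2}$ while $\frac12\big(A(r^2-d^2)^{-\beta}\big)^q=\frac12 A^q(r^2-d^2)^{-\beta-2}$, so choosing $A=(2C(N,q))^{1/(q-1)}r^{\beta}$ makes $A(r^2-d^2)^{-\beta}$ a positive super-solution of $-\De v+\frac12 v^q=0$ in $B_r(x_0)$; adding the constant $(2g_r)^{1/q}$ (and using $(a+b)^q\ge a^q+b^q$ for $q\ge1$) turns $W$ into a super-solution of $-\De v+\frac12 v^q=g_r$ with $W\to+\infty$ on $\partial B_r(x_0)$. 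By \eqref{eq:KOsketch}, $u$ is a sub-solution of the same equation in $B_r(x_0)$ with $u\le W$ near $\partial B_r(x_0)$, so the maximum principle on the set $\{u>W\}$ (which is compactly contained in $B_r(x_0)$, since $W$ blows up at the boundary while $u$ is bounded) forces $u\le W$ in $B_r(x_0)$. Hence
\[
u(x_0)\le W(x_0)=A r^{-2\beta}+(2g_r)^{1/q}\le C(N,q)r^{-\beta}+C(N,p,q,\mu)r^{-\beta}=C\,r^{-\frac2{q-1}},
\]
using $A r^{-2\beta}=(2C(N,q))^{1/(q-1)}r^{-\beta}$ and $g_r^{1/q}\le Cr^{-2/(q-1)}$ for $r\le r_0$. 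Since $r=|x_0|/2$ and $x_0$ is arbitrary with $0<|x_0|\le\rho$, picking $\rho$ small enough that $\frac32\rho<\mathrm{dist}(0,\pa\Om)$ and $\rho\le 2r_0$ gives $u(x)\le C|x|^{-2/(q-1)}$ on $B_\rho(0)\setminus\{0\}$, with $C=C(N,p,q,\mu)$.

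The step I expect to require the most care is the exponent bookkeeping in \eqref{eq:KOsketch}: the additive constant $g_r\sim r^{-2q/(q-1)}$ is exactly what dualizing the Hardy coefficient $\mu r^{-2}$ produces (the Young exponent $q'=q/(q-1)$ gives $r^{-2q/(q-1)}=(r^{-2/(q-1)})^q$), and this is precisely the size that keeps $g_r^{1/q}$ of the \emph{same} order $r^{-2/(q-1)}$ as the homogeneous Keller--Osserman term $Ar^{-2\beta}$ --- were it larger, the method would yield a worse power. Everything else is the classical Keller--Osserman/comparison mechanism, together with the local regularity of $u$ away from the origin. Note that the hypothesis $q>\frac{2+\nu}{\nu}$ is not actually used for this upper bound: it only guarantees $\frac{2}{q-1}<\nu$, i.e.\ that $|x|^{-2/(q-1)}$ is the \emph{sharp} blow-up rate, matching the companion lower bound.
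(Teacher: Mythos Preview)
Your proof is correct and takes a genuinely different route from the paper's, though both are Keller--Osserman barrier arguments. The paper first passes to $v=|x|^{\nu}u$, writes the right-hand side near $0$ as $-(1+o(1))|x|^{-(q+1)\nu}v^q$, rescales $w(y)=r^{-\nu+2/(q-1)}v(ry)$ to the fixed annulus $\{\tfrac14<|y|<\tfrac34\}$, and compares with the two-sided barrier $W(y)=c\big[(\tfrac{9}{16}-|y|^2)(|y|^2-\tfrac{1}{16})\big]^{-\beta}$ blowing up on both spheres. You instead stay with $u$, work on the ball $B_{|x_0|/2}(x_0)$ where $\mu|x|^{-2}$ is merely a bounded coefficient, absorb both $\mu|x|^{-2}u$ and $u^p$ into $\tfrac12 u^q$ plus the constant $g_r\sim r^{-2q/(q-1)}$ via Young, and compare with the one-sided barrier $A(r^2-|x-x_0|^2)^{-\beta}+(2g_r)^{1/q}$. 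Your version is more self-contained: it avoids the weighted divergence operator, the annular barrier, and the paper's ``$(1+o(1))$'' reduction (which tacitly assumes $u\to\infty$ near $0$ so that $|x|^{-(p+1)\nu}v^p$ is negligible against $|x|^{-(q+1)\nu}v^q$); it also makes clear, as you note, that the condition $q>(2+\nu)/\nu$ is irrelevant for the upper bound. The paper's rescaling, on the other hand, exposes the self-similar structure more directly. One minor point: your appeal to $u\in C^2(\overline{B_r(x_0)})$ presupposes local boundedness away from $0$; in the paper's logical order this comes from Theorem~\ref{t:up-est}, but if you want your argument fully independent you can simply run the comparison in weak form, testing the difference of the two inequalities against $(u-W)^+\in H^1_0(B_r(x_0))$.
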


\begin{theorem}\lab{t:low-est-2}
Let $\mu\in(0,\bar\mu)$ and  $q>\max\{p, \f{2+\nu}{\nu}\}$. Then any solution of Eq.\eqref{a4} satisfies $$u(x)\geq C|x|^{-\f{2}{q-1}} \quad\forall\ \ x\in B_{R}(0)\setminus\{0\},$$ where $R>0$ is sufficiently small.
\end{theorem}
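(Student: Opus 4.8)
The plan is to obtain the lower bound $u(x)\geq C|x|^{-2/(q-1)}$ by constructing an explicit radial subsolution of an auxiliary equation and invoking the comparison principle, mirroring the strategy used for Theorem \ref{t:low-est} but now in the regime $q>\frac{2+\nu}{\nu}$ where the relevant self-similar exponent is $\frac{2}{q-1}$ rather than $\nu$. First I would record that near the origin Eq.\eqref{a4} reads $-\Delta u-\mu\frac{u}{|x|^2}=u^p-u^q=-(1+o(1))u^q$, since the upper estimate of Theorem \ref{t:up-est-2} already gives $u(x)\leq C|x|^{-2/(q-1)}$, so both $u^p$ and $u^q$ are controlled and the sign of the right-hand side is negative for $|x|$ small; more precisely $u^p-u^q = -u^q(1-u^{p-q})$ and $u^{p-q}\to 0$ (note $p<q$) uniformly as $|x|\to 0$ because $u\to\infty$ there—this last point needs the already-established two-sided behavior, so I would first combine Theorems \ref{t:up-est-2} with the fact that $u$ blows up at $0$ (which follows since a bounded $u$ would make the origin a removable singularity, contradicting the structure, or more directly from the lower bound $u\geq c|x|^{-\nu}$ type arguments available when $\nu<\frac{2}{q-1}$, i.e. exactly the regime $q<\frac{2+\nu}{\nu}$—so here I must instead argue the blow-up directly). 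Thus the first genuine step is to show $u(x)\to\infty$ as $x\to 0$; I would do this by a Harnack/energy argument: if $u$ stayed bounded near $0$ then $\mu u/|x|^2\in L^1_{loc}$ fails unless $u$ vanishes appropriately at $0$, which is incompatible with $u>0$ and the Hardy-type criticality, forcing blow-up.

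Next, with $w:=u$ on a small ball $B_\rho$, I would note that $w$ satisfies $-\mathrm{div}(|x|^{-2\nu}\nabla(|x|^\nu w)) \geq \text{(something)}$—more cleanly, I work with $v=|x|^\nu u$ solving $(J_\eps)$-type equation $-\mathrm{div}(|x|^{-2\nu}\nabla v)=|x|^{-(p+1)\nu}v^p-|x|^{-(q+1)\nu}v^q$, and near $0$ this is $-\mathrm{div}(|x|^{-2\nu}\nabla v)\geq -(1+\delta)|x|^{-(q+1)\nu}v^q$ for $|x|<\rho$. The key step is then to produce a radial subsolution of $-\mathrm{div}(|x|^{-2\nu}\nabla \underline{v})+(1+\delta)|x|^{-(q+1)\nu}\underline{v}^q=0$ of the form $\underline{v}(r)=\kappa\, r^{\nu-2/(q-1)}$ for a suitable small constant $\kappa>0$: plugging the ansatz in and using the algebraic identities from \eqref{14-4-2}–\eqref{14-4-3} (which were exactly engineered so that the exponent $\frac{2}{q-1}$ balances the operator against the $v^q$ term, with the constant $A$ forced) shows the ODE residual has a favorable sign once $\kappa$ is chosen depending only on $N,\nu,q,\delta$. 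I then fix $\rho$ small enough that on $\partial B_\rho$ one has $v\geq \underline{v}$ (possible because $v=|x|^\nu u\geq c>0$ on the fixed sphere $\partial B_\rho$ by positivity and continuity, while $\underline{v}(\rho)=\kappa\rho^{\nu-2/(q-1)}$ can be made small by shrinking $\kappa$), and near the deleted origin $v$ is $+\infty$-like in the sense that $v\geq\underline v$ holds automatically in a pointed neighborhood or via a sweeping/approximation argument removing the singular point.

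The comparison principle for the operator $\mathcal{L}v=-\mathrm{div}(|x|^{-2\nu}\nabla v)+(1+\delta)|x|^{-(q+1)\nu}v^q$—which is of the form (divergence-form linear part) + (monotone nonlinearity), hence satisfies a weak comparison principle on $B_\rho\setminus\{0\}$ once we handle the point singularity at $0$ (e.g. by comparing on annuli $B_\rho\setminus B_\epsilon$ and letting $\epsilon\to 0$, using that $\underline v$ is integrable against $|x|^{-2\nu}$ in the relevant sense since $\nu<\frac{N-2}{2}$ and $\frac{2}{q-1}$ is subcritical for that weight)—then yields $v\geq\underline v$ on $B_\rho$, i.e. $|x|^\nu u(x)\geq \kappa|x|^{\nu-2/(q-1)}$, which is exactly $u(x)\geq\kappa|x|^{-2/(q-1)}$ on $B_R$ with $R=\rho$. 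The main obstacle I anticipate is \emph{not} the subsolution construction (that is forced by the exponent bookkeeping already laid out before \eqref{15-4-2}) but rather rigorously justifying the comparison \emph{at the singular point}: one must verify that $v$ and $\underline v$ are ordered on the inner boundary $\partial B_\epsilon$ as $\epsilon\to 0$, which requires quantitative control of how fast $v$ blows up from below—this is where I would need to bootstrap, using the crude blow-up $v\to\infty$ established in step one together with the structure of $(J_\eps)$ to upgrade to $v(x)\geq c|x|^{\nu-2/(q-1)}$ near $0$ \emph{up to a constant possibly worse than $\kappa$}, which suffices for the annulus comparison. Alternatively, and perhaps more robustly, I would use the fact that $-\mathrm{div}(|x|^{-2\nu}\nabla v)\geq -(1+\delta)|x|^{-(q+1)\nu}v^q \geq -C|x|^{-(q+1)\nu}$ on a tiny ball (using the upper bound for $v$) and compare against the Green function $G(x,0)$ of $\nabla(|x|^{-2\nu}\nabla\cdot)$, whose behavior $|x|^{-(N-2-2\nu)}$ near $0$ combined with the potential-theoretic lower bound gives the clean estimate; reconciling $N-2-2\nu$ versus $\frac{2}{q-1}-\nu$ in the regime $q>\frac{2+\nu}{\nu}$ is exactly the content of \eqref{15-4-2} and confirms the Green-function lower bound is dominated by $\kappa|x|^{-2/(q-1)}$, closing the argument.
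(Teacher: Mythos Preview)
Your overall strategy coincides with the paper's: pass to $v=|x|^{\nu}u$, build the explicit power-law subsolution $\underline v(r)=\kappa\,r^{\nu-\frac{2}{q-1}}$, verify it is a subsolution of $-\mathrm{div}(|x|^{-2\nu}\nabla w)+A|x|^{-(q+1)\nu}w^q=0$ for suitable $\kappa$, arrange $v\geq\underline v$ on $\partial B_R$, and invoke a comparison principle. That part is fine and matches the paper almost verbatim (the paper even uses the Emden--Fowler change of variables to check the subsolution inequality).

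Where you go astray is in identifying the ``main obstacle.'' You assume $v=|x|^{\nu}u$ blows up at the origin and therefore worry about ordering $v$ and $\underline v$ on an inner sphere $\partial B_\epsilon$, proposing annulus sweeping, a blow-up bootstrap, and a Green-function alternative. But in the regime $q>\frac{2+\nu}{\nu}$ one has $\nu>\frac{2}{q-1}$, so the exponent $\nu-\frac{2}{q-1}$ is \emph{positive}. The already-proved upper bound (Theorem~\ref{t:up-est-2}) gives $u(x)\leq C|x|^{-2/(q-1)}$, hence
\[
v(x)=|x|^{\nu}u(x)\leq C\,|x|^{\nu-\frac{2}{q-1}}\longrightarrow 0\quad\text{as }|x|\to 0,
\]
and likewise $\underline v(x)\to 0$. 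Both functions are bounded on $B_R(0)$; this is exactly what the paper records (``both $v$ and $V$ are bounded in $B_R(0)$'') and is the key observation that dissolves your obstacle. With boundedness, $(v-\underline v)^-\in H^1_0(B_R,|x|^{-2\nu})$ is an admissible test function and the comparison runs directly on $B_R$, no inner boundary, no sweeping, no Green function.

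A second, smaller point: your preliminary step of proving $u\to\infty$ at the origin is not needed. Since $|x|^{-(p+1)\nu}v^p\geq 0$, the full equation for $v$ already gives
\[
-\mathrm{div}(|x|^{-2\nu}\nabla v)\geq -|x|^{-(q+1)\nu}v^q,
\]
so $v$ is a supersolution of the comparison equation with $A=1$ without any asymptotic simplification. The ``$-(1+o(1))u^q$'' heuristic is convenient but not logically required for the lower bound.
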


\begin{theorem}\lab{t:critical}
Let $\mu\in(0,\bar\mu)$ and  $q=\f{2+\nu}{\nu}$. Then any radial solution $u$ of Eq.\eqref{a4} satisfies
$$\lim_{|x|\to 0}\f{u(x)}{|x|^{-\nu}\big|\log |x|\big|^{-\f{\nu}{2}}}=\displaystyle\left(\f{\al\nu}{2}\right)^\f{\nu}{2}.$$
where $\al=N-2-2\nu$.
\end{theorem}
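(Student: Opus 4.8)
\textbf{Proof proposal for Theorem \ref{t:critical}.}

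The plan is to work with the radial ODE satisfied by $u$ after the substitution $v(r)=r^{\nu}u(r)$. As noted in the introduction, near $0$ the equation $-\De u-\mu u/|x|^2 = u^p - u^q$ becomes, for radial $u$ and with $q=\frac{2+\nu}{\nu}$ (so that $(q-1)\nu = 2$), the equation
\be\lab{eq:prop-ode}
v''+\f{N-1-2\nu}{r}v' = A(r)\, r^{-2}\, v^q \quad\text{in }(0,a),
\ee
where $A(r)\to 1$ as $r\to 0$ (the lower-order term $|x|^{-(p+1)\nu}v^p$ contributes an error, since $(p+1)\nu < (q+1)\nu$ means it is lower order, and more precisely $r^{(q-p)\nu}v^{p-q}\to 0$ using Theorems \ref{t:low-est-2}--\ref{t:up-est-2}-type bounds which here would need the $q=\frac{2+\nu}{\nu}$ borderline analogue — I would first establish the two-sided bound $c_1 r^{-\nu}|\log r|^{-\nu/2} \le u \le c_2 r^{-\nu}|\log r|^{-\nu/2}$, or at least that $v$ is bounded above and below up to logarithmic factors, as a preliminary step). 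Then I would apply the Emden--Fowler change of variables $t=(\al/r)^{\al}$, $y(t)=\al^{-\nu}v(r)$ with $\al = N-2-2\nu$, which turns \eqref{eq:prop-ode} into
\be\lab{eq:prop-ef}
y''(t) = A(t)\, t^{-2}\, y(t)^q, \qquad t\ge R,
\ee
because the exponent $\frac{-(2\al+2)+(q-1)\nu}{\al}$ collapses to $-2$ precisely when $(q-1)\nu = 2$. (One should double-check the constant bookkeeping; the exponent $-2$ here is the critical case for this Emden--Fowler equation.)

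Next I would analyze the asymptotics of \eqref{eq:prop-ef} as $t\to\infty$. The equation $y'' = t^{-2}y^q$ has the explicit exact solution $y_* (t) = c_* (\log t)^{-\frac{1}{q-1}}$ for the right constant $c_*$: indeed if $y=c(\log t)^{-\beta}$ then $y'' \sim \beta c\, t^{-2}(\log t)^{-\beta-1}$ to leading order, and matching with $t^{-2} c^q (\log t)^{-\beta q}$ forces $\beta q = \beta + 1$, i.e. $\beta = \frac{1}{q-1}$, and $c^{q-1} = \beta = \frac{1}{q-1}$. With $q=\frac{2+\nu}{\nu}$ we get $q - 1 = \frac{2}{\nu}$, hence $\beta = \frac{\nu}{2}$ and $c_*^{q-1} = \frac{\nu}{2}$, i.e. $c_* = (\frac{\nu}{2})^{\nu/2}$. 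The claim then is that \emph{every} positive solution of \eqref{eq:prop-ef} that is trapped between the right powers of $\log$ is asymptotic to $y_*$. I would prove this by a barrier/sub–supersolution argument: for any $\delta>0$, the functions $(c_*\pm\delta)(\log t)^{-\nu/2}$ are a super/sub-solution of \eqref{eq:prop-ef} for $t$ large (using $A(t)\to 1$), and a comparison-principle argument on the interval $[R,\infty)$ — combined with the preliminary two-sided bound to control behaviour at $t=R$ and the decay at infinity forced by $u\in H^1_0\cap L^{q+1}$ (equivalently $v$, $y$ have the appropriate integrability) — pins down $\lim_{t\to\infty} y(t)(\log t)^{\nu/2} = c_*$. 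Translating back: $\log t = \al\log(\al/r) \sim \al |\log r|$ as $r\to 0$, so $y(t)\sim c_* (\al|\log r|)^{-\nu/2}$, hence $v(r) = \al^{\nu}y \sim \al^{\nu} c_* \al^{-\nu/2}|\log r|^{-\nu/2} = \al^{\nu/2} c_* |\log r|^{-\nu/2}$, and $u(r) = r^{-\nu}v(r) \sim \al^{\nu/2}(\frac{\nu}{2})^{\nu/2} r^{-\nu}|\log r|^{-\nu/2} = (\frac{\al\nu}{2})^{\nu/2} r^{-\nu}|\log r|^{-\nu/2}$, which is exactly the stated limit.

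The main obstacle I expect is the sharpness of the comparison argument in the critical logarithmic regime: the sub/supersolutions $(c_*\pm\delta)(\log t)^{-\nu/2}$ differ from exact solutions only at the level of lower-order $(\log t)^{-\nu/2-1}$ corrections, so one must check that the error term $A(t)-1$ (coming from the neglected $u^p$ nonlinearity and from curvature of the log) decays fast enough not to destroy the ordering — this requires the preliminary bound on $u$ to be quantitative enough, e.g. $|A(t)-1| = O(t^{-\gamma})$ or $O((\log t)^{-1})$ with a controllable constant, and possibly an iteration (bootstrap) where a crude bound $c_1(\log t)^{-\nu/2}\le y\le c_2(\log t)^{-\nu/2}$ is first obtained and then refined to the exact constant. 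A secondary technical point is justifying that a general (not a priori radial-ODE-regular) solution of \eqref{a4} with the stated $q$ genuinely satisfies \eqref{eq:prop-ef} with a well-behaved $A$; since the theorem already assumes $u$ radial this reduces to ODE regularity theory and the a priori bounds, but the borderline case $q=\frac{2+\nu}{\nu}$ is not covered by Theorems \ref{t:up-est}--\ref{t:low-est-2} directly, so the preliminary estimate must be proved separately (most naturally by running the same barrier method with the logarithmic ansatz in place of the pure-power ansatz).
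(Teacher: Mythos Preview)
Your reduction is the same as the paper's up to the Emden--Fowler step: set $v=r^\nu u$, observe that near $0$ the equation reads $v''+\frac{N-1-2\nu}{r}v'=A r^{-2}v^q$ with $A$ close to $1$, and transform via $t=(\al/r)^\al$, $y=\al^{-\nu}v$ to obtain $t^2y''=A y^q$. Your constant computation is correct and matches the paper's.

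The difference is in the asymptotic analysis of $t^2y''=Ay^q$. Instead of your barrier argument, the paper makes one further substitution $t=e^s$, $x(s)=y(t)$, which turns the equation into the autonomous-type ODE
\[
x''-x'-Ax^q=0,\qquad s\to\infty,
\]
and then invokes a result of V\'eron (essentially \cite[Lemma 3.2]{Veron}) which gives, for solutions tending to $0$, the expansion $x(s)=\big(\tfrac{1}{(q-1)s}\big)^{1/(q-1)}\big(1+O(\tfrac{\log s}{s})\big)$. Translating back yields the stated limit. This route sidesteps precisely the obstacle you flag: V\'eron's argument is an ODE energy/integration analysis rather than a comparison on $[R,\infty)$, so it does not require matching barriers to unknown boundary data at $t=R$ or a bootstrap on the logarithmic rate.

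Two smaller points. First, your worry about the preliminary bound is unnecessary: Theorem \ref{t:up-est} applies at $q=\frac{2+\nu}{\nu}$ (its hypothesis is $q>(p-1)\tfrac{N}{2}-1$, not $q<\frac{2+\nu}{\nu}$), so $v$ is bounded near $0$, hence $y$ is bounded near $\infty$; the paper then shows $y\to 0$ by a short convexity/integration argument directly from $t^2y''=Ay^q$. No lower bound on $u$ is needed. Second, your barrier scheme with $(c_*\pm\delta)(\log t)^{-\nu/2}$ is delicate because the comparison principle for $-y''+t^{-2}y^q=0$ on $[R,\infty)$ needs control of $y(R)$, which is not a priori pinned between the barriers; making that work rigorously would likely require the bootstrap you mention or a phase-plane argument, at which point you are essentially reproving V\'eron's lemma.
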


\begin{theorem}\lab{t:grad}
Let $2^*-1\leq p\leq (p-1)\f{N}{2}-1<q$ and $\tilde\rho:=\f{1}{2}\min\{\rho_0, \rho\}$, where $\rho_0$ and $\rho$ be as in Theorem \ref{t:up-est} and Theorem \ref{t:up-est-2} respectively. Then there exists $\mu^*=\mu^*(N,q)>0$ and  a constant $C$ depending on $N, p, q, \mu$ such that any solution $u$ of Eq. \eqref{a4} satisfies
\begin{center}
$\displaystyle|\na u(x)|\leq \left\{\begin{array}{lll}
C|x|^{-(\nu+1)} \quad\text{if}\quad \mu\in[0,\mu^*),\\
C|x|^{-(\f{q+1}{q-1})}\quad\text{if}\quad \mu\in [\mu^*,\bar\mu),
\end{array}
\right.$
\end{center}
for $0<|x|<\tilde\rho$.
\end{theorem}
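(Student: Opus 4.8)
The plan is to combine the pointwise upper bounds on $u$ already established (Theorem \ref{t:up-est} when $p = 2^*-1$, and Theorem \ref{t:up-est-2} for the supercritical regime $q > (p-1)\frac N2 - 1$, which is exactly the hypothesis here) with interior elliptic regularity applied on shrinking dyadic annuli around the origin, rescaled to unit size. Concretely, for $0 < |x_0| < \tilde\rho$ set $\rho = |x_0|$ and work on the annulus $A_\rho = \{\rho/2 < |x| < 2\rho\}$. On this annulus the Hardy weight $|x|^{-2\nu}$ is comparable to $\rho^{-2\nu}$ with uniformly bounded logarithmic derivative, and the potential $\mu/|x|^2$ is comparable to $\mu\rho^{-2}$; so after dividing through, Eq.~\eqref{a4} is a uniformly elliptic equation with smooth coefficients on $A_\rho$. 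Rescale by setting $u_\rho(y) = \rho^{\nu} u(\rho y)$ (or $\rho^{2/(q-1)} u(\rho y)$ in the regime where that is the relevant exponent) for $y$ in the fixed annulus $A_1 = \{1/2 < |y| < 2\}$. The upper bounds give $\|u_\rho\|_{L^\infty(A_1)} \leq C$ uniformly in $\rho$, and the rescaled equation has the schematic form $-\Delta u_\rho - c_1(y)u_\rho = c_2(y) u_\rho^p - c_3(y) u_\rho^q$ with coefficients bounded in $C^1(A_1)$ uniformly in $\rho$ (the right-hand side picks up powers of $\rho$ that are nonnegative precisely because of the conditions relating $p$, $q$, $\nu$). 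By interior $C^{1,\alpha}$ (Schauder / De Giorgi--Nash--Moser gradient) estimates, $\|\nabla u_\rho\|_{L^\infty(A_{1/2}')} \leq C$ on a slightly smaller annulus, uniformly in $\rho$. Scaling back, $|\nabla u(x_0)| \leq C \rho^{-(\nu+1)}$, which is the first alternative.

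The threshold $\mu^* = \mu^*(N,q)$ arises from checking \emph{which} of the two competing decay rates, $|x|^{-(\nu+1)}$ versus $|x|^{-(q+1)/(q-1)}$, actually controls the gradient, and this is dictated by whether $\nu$ (a function of $\mu$) is smaller or larger than $\frac{2}{q-1}$. Recall from the discussion around \eqref{15-4-1}--\eqref{15-4-2} that the crossover between the two singularity types for $u$ itself happens at $q = \frac{2+\nu}{\nu}$, equivalently $\nu = \frac{2}{q-1}$; the map $\mu \mapsto \nu = \sqrt{\bar\mu} - \sqrt{\bar\mu - \mu}$ is increasing, so there is a unique $\mu^* = \mu^*(N,q)$ with $\nu(\mu^*) = \frac{2}{q-1}$, namely $\mu^* = \frac{2}{q-1}\big(N-2-\frac{2}{q-1}\big)$. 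For $\mu < \mu^*$ we have $\nu < \frac{2}{q-1}$, the dominant singularity of $u$ is $|x|^{-\nu}$, Theorem \ref{t:up-est}/\ref{t:up-est-2} applies with that exponent, and the rescaling by $\rho^\nu$ is the right one, giving $|\nabla u| \leq C|x|^{-(\nu+1)}$. For $\mu \geq \mu^*$ the relevant exponent is $\frac{2}{q-1}$, one rescales by $\rho^{2/(q-1)}$, and the gradient bound becomes $C|x|^{-(q+1)/(q-1)}$ since $\frac{2}{q-1} + 1 = \frac{q+1}{q-1}$. In both cases one must verify that the powers of $\rho$ multiplying $u_\rho^p$ and $u_\rho^q$ in the rescaled equation are bounded (indeed tending to $0$ or staying bounded), so that the right-hand side stays uniformly bounded in $C^\alpha(A_1)$; this is where the hypothesis $p \leq (p-1)\frac N2 - 1 < q$ and the relation $\alpha = N-2-2\nu$ enter, ensuring the exponents work out with the correct sign.

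The main obstacle will be the bookkeeping of the $\rho$-powers in the rescaled nonlinearity across the two regimes, and in particular confirming that at the boundary case $\mu = \mu^*$ (and for $\mu$ slightly above or below) the coefficients $c_2(y) = |x|^{-(p+1)\nu}$, $c_3(y) = |x|^{-(q+1)\nu}$ rescale to quantities that do not blow up — using that the upper bound on $u$ from Theorems \ref{t:up-est} and \ref{t:up-est-2} is, near $0$, at worst $C\min\{|x|^{-\nu}, |x|^{-2/(q-1)}\}$ up to the logarithmic correction exactly at $q = \frac{2+\nu}{\nu}$ (which is a measure-zero case for $\mu$ and can be handled separately or absorbed). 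A secondary technical point is that the elliptic estimates must be applied to the divergence-form operator $-\mathrm{div}(|x|^{-2\nu}\nabla \cdot)$ after the change of variables $v = |x|^\nu u$, or equivalently directly to $-\Delta - \mu|x|^{-2}$ on the annulus where the potential is a bounded smooth coefficient; either way interior Schauder estimates apply once one is away from the origin, and the uniformity in $\rho$ follows from the scale-invariance of the annular geometry. Finally, one patches the estimate over $0 < |x| < \tilde\rho$ by covering this punctured ball with the countable family of annuli $A_{\rho}$, $\rho = 2^{-k}\tilde\rho$, on each of which the bound is uniform.
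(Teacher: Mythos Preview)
Your approach is correct and follows the same underlying strategy as the paper---localize to a region at distance $\rho=|x_0|$ from the origin, invoke the pointwise upper bounds on $u$ from Theorems \ref{t:up-est} and \ref{t:up-est-2}, and apply interior elliptic regularity---but the technical implementation differs. You rescale to a fixed annulus and apply Schauder estimates directly to the rescaled equation (with the key check that the powers $\rho^{2-(p-1)\nu}$, $\rho^{2-(q-1)\nu}$ in Case~1 and $\rho^{2(q-p)/(q-1)}$, $\rho^0$ in Case~2 are nonnegative, which indeed follows from $p<q$ and the case distinction $q\lessgtr\frac{2+\nu}{\nu}$). The paper instead proceeds in two steps: first a Caccioppoli-type estimate (multiply the equation by $\eta^2 u$ with $\eta$ a cutoff on $B_{|x|/2}(x)$) to obtain the averaged bound $\fint_{B_{|x|/4}(x)}|\nabla u|^2\le C|x|^{-2(\nu+1)}$ (resp.\ $C|x|^{-2(q+1)/(q-1)}$), and then a pointwise gradient lemma of Xiang for $-\Delta u=f$ with $f\in L^\infty_{loc}$, namely $\sup_{B_{R/2}}|\nabla u|\le C(\fint_{B_R}|\nabla u|^2)^{1/2}+CR\|f\|_{L^\infty(B_R)}$, applied with $f=\mu u/|x|^2+u^p-u^q$. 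Your rescaling argument is somewhat more streamlined, collapsing these two steps into one; the paper's version has the minor advantage of making the intermediate $L^2$ gradient bound explicit. Your identification of $\mu^*$ via $\nu(\mu^*)=\frac{2}{q-1}$, equivalently $\mu^*=\big(\frac{N-2}{2}\big)^2-\big(\frac{N-2}{2}-\frac{2}{q-1}\big)^2$, matches the paper exactly.
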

\begin{remark}
In the above theorem $\mu^*=\displaystyle\left(\f{N-2}{2}\right)^2-\left(\f{N-2}{2}-\f{2}{q-1}\right)^2$. It's easy to note that $\mu<\mu^*\Longleftrightarrow q<\f{2+\nu}{\nu}$. From Theorem \ref{t:low-est} and Theorem \ref{t:up-est}, it follows any solution $u$ has singularity of the order $\nu$ when $q<\f{2+\nu}{\nu}$. Therefore in this region of $q$, it is anticipated that $|\na u|\leq C|x|^{-(\nu+1)}$ near $0$. On the other hand when $q>\f{2+\nu}{\nu}$, from Theorem \ref{t:up-est-2} and Theorem \ref{t:low-est-2}, we have singularity of $u$ at $0$ is of order $\f{2}{q-1}$. Consequently in this region we expect  $|\na u|\leq C|x|^{-(\f{q+1}{q-1})}$.
\end{remark}

\begin{theorem}\lab{t:moving pl}
Let $2^*-1\leq p \leq (p-1)\frac{N}{2}-1<q $ and $0\leq\mu<(\frac{N-2}{2})^2$. Then any positive solution $u\in D^{1,2}(\Rn)\cap L^{q+1}(\Rn)$ of Eq. \eqref{a3} is radially symmetric with respect to origin and radially decreasing.
\end{theorem}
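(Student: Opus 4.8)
The plan is to run the method of moving planes on the entire problem \eqref{a3}, in its integral rather than pointwise form so as to accommodate the Hardy term, with the threshold $q>(p-1)\frac N2-1$ entering exactly through the a priori bound of Theorem \ref{t:up-est}.

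First I would record the preliminary facts that a solution $u$ of \eqref{a3} must satisfy. Since $t\mapsto\lambda t^p-t^q$ is bounded above on $[0,\infty)$ (as $q>p$), one has $-\Delta u\le \frac{\mu}{|x|^2}\,u+C_0$ away from the origin, so a standard elliptic bootstrap gives $u\in L^\infty_{loc}(\R^N\setminus\{0\})\cap C^2(\R^N\setminus\{0\})$; Theorem \ref{t:up-est} (applied on a ball after absorbing the constant $\lambda$ by scaling, using $q>(p-1)\frac N2-1$) gives $u(x)\le C|x|^{-\nu}$ near $0$; the memberships $u\in D^{1,2}(\R^N)\cap L^{q+1}(\R^N)$ give $u\in L^s(\R^N)$ for all $s\in[2^*,q+1]$, and together with the equation they force $u(x)\to0$ as $|x|\to\infty$. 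By rotational invariance it suffices to prove symmetry and monotonicity in the direction $e_1$. Fix $t\ge0$ and set $\Sigma_t=\{x_1>t\}$, $x^t=(2t-x_1,x')$, $u^t(x)=u(x^t)$, $w_t=u^t-u$. The key algebraic fact is that $|x^t|\le|x|$ for $x\in\Sigma_t$ when $t\ge0$, whence
$$-\Delta w_t-\frac{\mu}{|x^t|^2}\,w_t-c_t(x)\,w_t=\mu u\Big(\frac1{|x^t|^2}-\frac1{|x|^2}\Big)\ge0\quad\text{in }\Sigma_t,$$
where $c_t=\big[(\lambda(u^t)^p-(u^t)^q)-(\lambda u^p-u^q)\big]/w_t$; moreover on $\{w_t<0\}$ one has $u^t<u$, hence $|c_t|\le C\,(u^{p-1}+u^{q-1})$ there.

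The heart of the proof is the maximum principle in $\Sigma_t$. Testing the displayed inequality against $\phi:=\min(w_t,0)\le0$ — which belongs to $D^{1,2}$, vanishes on $\partial\Sigma_t$, decays at infinity, and vanishes near the one point $(2t,0,\dots,0)$ at which $u^t$ is singular, so no singular or boundary terms arise — gives
$$\int_{\Sigma_t}|\nabla\phi|^2\,dx\le\int_{\Sigma_t}\frac{\mu}{|x^t|^2}\,\phi^2\,dx+\int_{\Sigma_t}c_t\,\phi^2\,dx.$$
Reflecting the first integral through $\{x_1=t\}$ replaces $|x^t|^{-2}$ by $|x|^{-2}$, and Hardy's inequality bounds it by $\frac{\mu}{\bar\mu}\int|\nabla\phi|^2$ — this is the only place $\mu<\bar\mu$ is used; by Sobolev the second integral is at most $\mathcal S_N^{-1}\|c_t\|_{L^{N/2}(\mathrm{supp}\,\phi)}\int|\nabla\phi|^2$. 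Hence $\phi\equiv0$, i.e. $w_t\ge0$ in $\Sigma_t$, whenever $\|c_t\|_{L^{N/2}(\mathrm{supp}\,\phi)}<(1-\mu/\bar\mu)\mathcal S_N$. Now $u^{p-1}\in L^{N/2}(\R^N)$ since $(p-1)\frac N2\in[2^*,q+1)$ — this is precisely the content of $2^*-1\le p\le(p-1)\frac N2-1<q$ — while $u^{q-1}$ need not lie in $L^{N/2}(\R^N)$ but does lie in $L^{N/2}(\Sigma_t)$ for every $t>0$ (on $\Sigma_t$ the function $u$ is bounded, where $u\le1$ one has $u^{(q-1)N/2}\le u^{q+1}$ because $q>2^*-1$, and $\{u>1\}\cap\Sigma_t$ is bounded), with $\|u^{q-1}\|_{L^{N/2}(\Sigma_t)}\to0$ as $t\to\infty$. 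Thus $w_t\ge0$ in $\Sigma_t$ for all large $t$, and the moving plane can start.

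Finally I would set $\Lambda=\inf\{t\ge0:\ w_s\ge0\text{ in }\Sigma_s\ \text{for all }s\ge t\}$ and show $\Lambda=0$. If $\Lambda>0$, continuity gives $w_\Lambda\ge0$, and the strong maximum principle (the coefficient being locally bounded on $\Sigma_\Lambda$ minus the single singular point, a connected set) forces $w_\Lambda\equiv0$ or $w_\Lambda>0$ in the interior; the first is impossible, since it would make $u$ symmetric about $\{x_1=\Lambda\}$ and hence singular at $(2\Lambda,0,\dots)\ne0$, contradicting $u\in C^2(\R^N\setminus\{0\})$, so $w_\Lambda>0$. A standard thin-domain argument then yields $w_s\ge0$ for $s$ slightly below $\Lambda$: split $\Sigma_s$ into a compact set where $w_\Lambda$, hence $w_s$, stays bounded below, and a remainder of small measure (a thin slab near $\{x_1=s\}$, where $u$ is bounded, together with a far tail, where $\|c_s\|_{L^{N/2}}$ is small because $c_s\in L^{N/2}(\Sigma_{\Lambda/2})$), on which the smallness condition above holds; this contradicts the minimality of $\Lambda$. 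Hence $\Lambda=0$ and $w_0\ge0$, and the same argument from $x_1=-\infty$ gives the reverse inequality, so $u(-x_1,x')=u(x_1,x')$. As $e_1$ was arbitrary, $u$ is radially symmetric about $0$, and since $w_s>0$ in the interior of $\Sigma_s$ for every $s>0$ (strong maximum principle again, now that $w_s\not\equiv0$), $u$ is strictly radially decreasing. The main obstacle throughout is the Hardy term: it forces the integral formulation and the reflection-plus-Hardy absorption step, it produces the singular point $(2t,0,\dots)\in\Sigma_t$ as well as the singularity of $u$ at $0$, and it is precisely in order to control $\|c_t\|_{L^{N/2}}$ in its presence that the a priori estimate of Theorem \ref{t:up-est} and the hypothesis $q>(p-1)\frac N2-1$ are needed.
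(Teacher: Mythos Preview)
Your proof is correct and follows the same moving-plane-in-integral-form strategy as the paper. Two simplifications in the paper's version are worth noting. First, the paper splits the nonlinear term as $A_1-A_2$ with $A_1=\lambda\dfrac{u_\alpha^p-u^p}{u_\alpha-u}\ge0$ and $A_2=\dfrac{u_\alpha^q-u^q}{u_\alpha-u}\ge0$, and simply drops the $A_2$-contribution by sign after testing against $w_\alpha^-$; thus only $\|u^{p-1}\|_{L^{N/2}}$ must be controlled, and your separate argument that $u^{q-1}\in L^{N/2}$ on each half-space becomes unnecessary. Second, the paper does not invoke Theorem~\ref{t:up-est} at all: the threshold $q>(p-1)\frac N2-1$ enters only through interpolation, since $(p-1)\frac N2\in[2^*,q+1]$ gives $u\in L^{(p-1)N/2}(\R^N)$ directly from $u\in L^{2^*}\cap L^{q+1}$, and no pointwise bound near $0$ is used anywhere in the argument. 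For the Hardy term the paper keeps the potential $\mu/|x|^2$ (rather than $\mu/|x^t|^2$) in the differential inequality for $w_\alpha$ and then uses the equivalent $D^{1,2}$-norm $\int|\nabla w|^2-\mu\int\frac{w^2}{|x|^2}$ directly, in place of your reflection-plus-Hardy step; the two routes are equivalent.
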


To discuss the asymptotic behaviour of problem \eqref{eq:a3} for general domain, we first formulate a variational problem for \eqref{eq:a3}. Then we establish existence of variational solution $v_{\eps}$ for small positive values of $\eps$ and finally we derive the asymptotic behavior of $v_{\eps}$ as $\eps\to 0$, using variational arguments again. This is the first result for the problem with critical and supercritical exponent in the singular case and the case appears to be more complicated than the smooth case.

\begin{theorem}\label{main1}
There exists $\eps_n>0$ and $\la_n>0$ with $\eps_n\to 0$ as $n\to\infty$ and $\la_n$ uniformly bounded above and away from zero, such that
\begin{itemize}
\item[(i)] there exists a solution $u_n$ to Eq. \eqref{eq:a3} corresponding to $\eps=\eps_n$;
\item[(ii)] if $p>2^*-1$, then $F(\la_n u_n)\to \mathcal{K}$ and $\displaystyle\int_{\Om} |x|^{-(p+1)\nu} u_n^{p+1}dx\to 0$ as $n\to\infty$;
\item[(iii)]if $p=2^*-1$, then $S(\la_n u_n)\to\mathcal{S}$ as $n\to\infty$ and there exist  constants $A, B>0$ such that for all $n\geq 1$, it holds $A<\displaystyle\int_{\Om} |x|^{-(p+1)\nu} u_n^{p+1}dx< B$ ,
\end{itemize}
where $F(.)$,\, $\mathcal{K}$ and $S(.)$, $\mathcal{S}$ are defined as in \eqref{a22}, \eqref{14-K} and \eqref{sep-17-1}, \eqref{go} respectively.
\end{theorem}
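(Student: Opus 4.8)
\emph{Proof idea.} The solutions $u_n$ will be obtained as amplitude--rescalings of critical points of minimal level of a monotone family of functionals, and the asymptotics will be extracted from that minimality by a blow--up analysis, in the spirit of Merle--Peletier \cite{MP1,MP2} but now in the weighted (Caffarelli--Kohn--Nirenberg) framework dictated by $|x|^{-2\nu}$. Work in $X:=H^1_0(\Om,|x|^{-2\nu})\cap L^{q+1}(\Om,|x|^{-(q+1)\nu})$; by the CKN inequality and $2^*-1\le p<q$ every functional below is $C^1$ on $X$ and the lower order integrals are finite. For $\lambda\ge 1$ put
$$\Phi_\lambda(v)=\f12\int_\Om|x|^{-2\nu}|\nabla v|^2+\f1{q+1}\int_\Om|x|^{-(q+1)\nu}|v|^{q+1}-\f{\lambda}{p+1}\int_\Om|x|^{-(p+1)\nu}(v^+)^{p+1}.$$
A positive critical point $v$ of $\Phi_\lambda$ solves $-\divergence(|x|^{-2\nu}\nabla v)=\lambda|x|^{-(p+1)\nu}v^p-|x|^{-(q+1)\nu}v^q$, and then $u:=\lambda^{1/(p-1)}v$ solves \eqref{eq:a3} with $\eps=\eps_\lambda:=\lambda^{-(q-1)/(p-1)}$; thus $\eps_\lambda\downarrow0$ as $\lambda\to\infty$.

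\emph{Step 1 (the sequence of parameters).} For every $\lambda$ large, $\Phi_\lambda$ has the mountain--pass geometry about $0$: the Dirichlet norm dominates near $0$, the $L^{q+1}$--term dominates along rays at infinity (here $q+1>p+1$ is used), and the $L^{p+1}$--term produces a dip below $0$. Let $c_\lambda>0$ be the mountain--pass level. Since $\Phi_\lambda(v)$ is nonincreasing in $\lambda$ for each fixed $v$, the map $\lambda\mapsto c_\lambda$ is locally Lipschitz, hence differentiable a.e. At each point of differentiability the monotonicity trick (Struwe, Jeanjean), applied on bounded $\lambda$--intervals and then exhausting $[1,\infty)$, yields a \emph{bounded} Palais--Smale sequence for $\Phi_\lambda$ at level $c_\lambda$. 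Choosing $\lambda=\lambda_n\to\infty$ through the resulting full--measure set and $\eps_n:=\eps_{\lambda_n}$, it remains only to pass to the limit in these PS sequences.

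\emph{Step 2 (compactness --- the main obstacle).} A bounded PS sequence $v_k\rightharpoonup v$ in $X$ converges strongly in $L^s_{\mathrm{loc}}$ for $s<2^*$ but only weakly in $L^{p+1}(\Om,|x|^{-(p+1)\nu})$, since $p+1\ge 2^*$; so $v$ is a solution but compactness may be lost along concentrating bubbles. I would control this by a concentration--compactness / Brezis--Lieb decomposition: a rescaled bubble solves an entire limiting equation. In the critical case $p=2^*-1$ the only limiting profile is a dilate of $U$ from \eqref{ent-U}, and the cheapest bubble is excluded by a Brezis--Nirenberg type strict inequality $c_\lambda<$ (energy of one entire bubble), obtained by testing $\Phi_\lambda$ on a dilate of $U$ cut off near a minimum point of the Robin function $R$ of \eqref{Robin}, the interaction with $R$ providing the gap. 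In the supercritical case the analogous reference object is the entire extremal $W$ of Theorem \ref{1.2}, while an entire profile carrying the full nonlinearity $u^{2^*-1}-u^q$ is forbidden by the Pohozaev nonexistence Theorem \ref{1.1}. Hence, along $\lambda_n$, the PS sequences converge (up to subsequences) to a nontrivial nonnegative $v_n\in X$ solving $-\divergence(|x|^{-2\nu}\nabla v)=\lambda_n|x|^{-(p+1)\nu}v^p-|x|^{-(q+1)\nu}v^q$; by the strong maximum principle $v_n>0$, and $u_n:=\lambda_n^{1/(p-1)}v_n$ proves (i). This compactness step is the heart of the matter: because $p$ is critical or supercritical there is genuine loss of compactness and, unlike in Brezis--Nirenberg, the extra term $-\eps u^q$ \emph{raises} the energy, so the gap and the exclusion of bubbles must be extracted delicately, using the entire classification (Theorems \ref{1.1}, \ref{1.2}) and working only at generic parameter values.

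\emph{Step 3 (asymptotics).} Using the equation together with the Nehari identity and the Pohozaev identity (whose boundary term is negligible, $u_n$ concentrating at an interior point), one shows that after the amplitude normalisation $\la_n$ that brings $u_n$ onto its natural scale, the normalised solution $\la_n u_n$ --- up to an additional critical dilation, and in $D^{1,2}(\Rn,|x|^{-2\nu})\cap L^{q+1}$ away from the concentration point --- converges to an entire extremal: to $W$ (the minimiser of $\mathcal K$) when $p>2^*-1$, and to a dilate of $U$ when $p=2^*-1$. This forces $F(\la_n u_n)\to\mathcal K$ when $p>2^*-1$ and $S(\la_n u_n)\to\mathcal S$ when $p=2^*-1$, and $\la_n$ stays bounded above and away from $0$ because the normalisation is pinned by these fixed entire profiles. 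Finally, the size of $\int_\Om|x|^{-(p+1)\nu}u_n^{p+1}$ is governed by the exponent $2(p+1)-N(p-1)$ that appears in \eqref{14:l}: tracking the concentration rate $\lambda_n\to\infty$ through the scaling relations gives $\int_\Om|x|^{-(p+1)\nu}u_n^{p+1}\sim\lambda_n^{\kappa}$ with $\kappa=\dfrac{(q-1)\,(2(p+1)-N(p-1))}{2(p-1)(q-p)}$, so this quantity tends to $0$ when $p>2^*-1$ (then $\kappa<0$) and stays between two positive constants when $p=2^*-1$ (then $\kappa=0$), which is exactly (ii) and (iii). The remaining work --- carrying out the weighted blow--up analysis rigorously and verifying the uniform bounds on $\la_n$ --- is technically heavy but follows the established pattern.
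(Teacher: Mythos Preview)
Your overall strategy --- mountain--pass plus monotonicity trick plus concentration--compactness --- is a genuinely different route from the paper's, and the compactness step (your Step~2) contains a real gap that you yourself flag but do not close. You want to exclude bubbles by a strict energy inequality, yet you correctly observe that the $L^{q+1}$--term \emph{raises} the mountain--pass level relative to the pure bubble energy, so the Brezis--Nirenberg mechanism runs in the wrong direction; nothing in your sketch explains how to recover the gap. In the supercritical case $p>2^*-1$ the situation is worse: the relevant profiles are not CKN bubbles at all, and there is no ready classification of entire solutions of $-\divergence(|x|^{-2\nu}\nabla w)=|x|^{-(p+1)\nu}w^p$ to feed into a profile decomposition. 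Without this, your bounded PS sequence may well converge only weakly to $0$.

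The paper sidesteps all of this by a device you should know: instead of fighting non--compactness on $\Om$, rescale the domain. Writing $w(y)=\eps^{\frac{2+2\nu-(p+1)\nu}{2(q-p)}}\,v(\eps^{\frac{p-1}{2(q-p)}}y)$ turns \eqref{eq:a3} into the $\eps$--free equation
\[
-\divergence(|y|^{-2\nu}\nabla w)=|y|^{-(p+1)\nu}w^p-|y|^{-(q+1)\nu}w^q\quad\text{on }\Om_\eps:=\eps^{-\frac{p-1}{2(q-p)}}\Om,
\]
and one now \emph{minimises} $\hat F(w)=\tfrac12\int|y|^{-2\nu}|\nabla w|^2+\tfrac1{q+1}\int|y|^{-(q+1)\nu}w^{q+1}$ on the constraint $\int_{\Om_\eps}|y|^{-(p+1)\nu}w^{p+1}=1$. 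The point is that for each fixed $\eps$ the domain $\Om_\eps$ is bounded, so a minimising sequence is bounded in $H^1_0\cap L^{q+1}$, hence strongly convergent in the weighted $L^2$ by Rellich, and then \emph{by interpolation} strongly convergent in the weighted $L^{p+1}$ (here $2<p+1<q+1$ is exactly what is used). Thus the infimum $S_\eps$ is attained with no concentration analysis whatsoever; the Lagrange multiplier $\lambda_\eps$ satisfies $2S_\eps<\lambda_\eps<(q+1)S_\eps$, and since $S_\eps\to\mathcal K$ (resp.\ $\mathcal S/2$) as $\Om_\eps\nearrow\R^N$, $\lambda_\eps$ is bounded above and below. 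Undoing the rescaling and setting $u_\eps=\lambda_\eps^{1/(p-1)}v_\eps$ gives a solution of \eqref{eq:a3} at a new value of $\eps$, along a sequence tending to $0$, with $\la_n:=\lambda_{\eps_n}^{-1/(p-1)}$ the bounded normalisation of the statement. The behaviour of $\int_\Om|x|^{-(p+1)\nu}u_n^{p+1}$ then drops out of the change of variables: it equals $\lambda_{\eps_n}^{(p+1)/(p-1)}\eps_n^{\frac{p(N-2)-(N+2)}{2(q-p)}}$, which is bounded between two positive constants when $p=2^*-1$ and tends to $0$ when $p>2^*-1$.

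In short: the decisive idea you are missing is that the $L^{q+1}$--penalty, far from being an obstacle, \emph{restores compactness} of the $L^{p+1}$--constraint via interpolation on the blown--up (but still bounded) domain. Once you see this, no mountain--pass, no monotonicity trick, and no bubble analysis are needed for existence; the asymptotics in (ii)--(iii) are then a direct computation with the scaling, not a blow--up argument.
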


\begin{theorem}\label{main2}  Let  $\nu\in(0,\f{N-2}{4})$, $2^*-1=p<q<\frac{1+\nu}{\nu}$ and \\ $v_{\eps}\in H^{1}_{0}( \Om, |x|^{-2\nu})$ be a  solution  of Eq. (\ref{eq:a3})  such that
 $$S(\la_{\eps}v_{\eps}) \to \mathcal{S} \quad\text{and}\quad
 A<\displaystyle\int_{\Om} |x|^{-(p+1)\nu} v_{\eps}^{p+1}dx< B,$$ where $S(.)$, $\mathcal{S}$ are as in \eqref{sep-17-1} and \eqref{go} respectively. Moreover, assume  \eqref{sup} is satisfied. Then along a subsequence
\begin{eqnarray*} & \lim_{\eps \rightarrow 0}\eps \|v_{\eps}\|_{\infty}^{\frac{q(N-2)-(N+2)+2\al}{\al}}\\
&=\frac{\om_N|R(0)|}{C_{q,N}}\frac{(N-2-2\nu)^\frac{\big(N-(q+1)\nu\big)(N-2)-4N\nu}{\al}(N-2)^\frac{\big(N-(q+1)\nu\big)(N-2)-2\al(N-1)}{2\al}}{N^\frac{\big(N-(q+1)\nu-2\al\big)(N-2)}{2\al}}\\
&\times\bigg[B\bigg(\frac{N-2}{2\al}(N-(q+1)\nu), \frac{N-2}{2\al}\{q(N-2-\nu)-(2+\nu)\}\bigg)\bigg]^{-1},
\end{eqnarray*}
where \be\label{C_qn}C_{q,N}=\frac{(N-2)q-(N+2)}{2(q+1)},\ee $R(0)$ and $B(a,b)$ are as defined in \eqref{Robin} and \eqref{eq:beta} respectively. Furthermore, for $x\neq 0$,
\begin{equation} \label{nsd2} \lim_{\eps \rightarrow 0} v_{\eps}(x) \|v_{\eps}\|_{\infty} = \om_{N}(N-2-2\nu)^{N-1}\bigg(\frac{N}{N-2}\bigg)^\frac{N-2}{2}G(x, 0),
\end{equation}
where $G(x,0)$ is the Green function as defined in \eqref{green}.
\end{theorem}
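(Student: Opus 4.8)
The plan is to carry out the Merle--Peletier concentration analysis for the degenerate operator $-\divergence(|x|^{-2\nu}\na\cdot)$, with the extra feature that the blow-up point coincides with the singular point $0$. Write $M_\eps:=\|v_\eps\|_\infty=v_\eps(0)$ (using \eqref{sup}) and $\al:=N-2-2\nu$. First I would prove $M_\eps\to\infty$: were $M_\eps$ bounded, a subsequence of $v_\eps$ would converge weakly in $H^1_0(\Om,|x|^{-2\nu})$ and a.e.\ to some $v_0\ge 0$ solving $-\divergence(|x|^{-2\nu}\na v_0)=|x|^{-2^*\nu}v_0^{2^*-1}$ on the star-shaped domain $\Om$, and the Pohozaev identity for this operator forces $v_0\equiv 0$; then $\int_\Om|x|^{-(p+1)\nu}v_\eps^{p+1}\to 0$ by dominated convergence, contradicting $A<\int_\Om|x|^{-(p+1)\nu}v_\eps^{p+1}$. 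Next set $\de_\eps:=M_\eps^{-2/\al}\to 0$ and $w_\eps(y):=M_\eps^{-1}v_\eps(\de_\eps y)$ on $\de_\eps^{-1}\Om\uparrow\Rn$; a direct rescaling of \eqref{eq:a3} gives
\[
-\divergence(|y|^{-2\nu}\na w_\eps)=|y|^{-(p+1)\nu}w_\eps^p-c_\eps\,|y|^{-(q+1)\nu}w_\eps^q,\qquad 0\le w_\eps\le w_\eps(0)=1,
\]
with $c_\eps:=\eps\,M_\eps^{q+1}\de_\eps^{\,N-(q+1)\nu}$. Testing \eqref{eq:a3} with $v_\eps$ yields $c_\eps\int|y|^{-(q+1)\nu}w_\eps^{q+1}=\eps\int_\Om|x|^{-(q+1)\nu}v_\eps^{q+1}\le B$, so $c_\eps$ is bounded; along a subsequence $c_\eps\to\ell\ge 0$, and by weighted elliptic estimates (interior Schauder away from $0$, Caffarelli--Kohn--Nirenberg regularity and a weighted Harnack inequality near $0$) $w_\eps\to W$ in $C^1_{loc}(\Rn)$, $W(0)=1$, with $W$ solving the same equation with $c_\eps$ replaced by $\ell$. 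If $\ell>0$, a dilation normalising $\ell$ together with the substitution $u=|y|^{-\nu}W$ produces a solution of the equation in Theorem \ref{1.1}, which is impossible; hence $\ell=0$, so $W$ solves \eqref{ent} and, by the uniqueness in \cite{T}, $W=W_1$ with $W_1(y)=\big(1+\tfrac{N-2}{N\al^2}|y|^{2\al/(N-2)}\big)^{-(N-2)/2}$. Because the hypotheses $S(\la_\eps v_\eps)\to\mathcal S$ and $A<\int_\Om|x|^{-(p+1)\nu}v_\eps^{p+1}<B$ make $v_\eps$ a concentrating minimising sequence for $\mathcal S$, the convergence is moreover strong in $D^{1,2}(\Rn,|y|^{-2\nu})$ and no energy escapes to infinity.

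\emph{Behaviour away from $0$.} From the Green representation $v_\eps(x)=\int_\Om G(x,y)\big(|y|^{-(p+1)\nu}v_\eps^p-\eps|y|^{-(q+1)\nu}v_\eps^q\big)\,dy$, the rescaling gives $M_\eps\int_\Om|x|^{-(p+1)\nu}v_\eps^p\,dx\to c_*:=\int_{\Rn}|y|^{-(p+1)\nu}W_1^p\,dy$, while $M_\eps$ times the $\eps$-term equals $c_\eps\int|y|^{-(q+1)\nu}w_\eps^q\to 0$. Hence $M_\eps\,|x|^{-(p+1)\nu}v_\eps^p\rightharpoonup c_*\,\de_0$ as measures, so $M_\eps\,v_\eps(x)\to c_*\,G(x,0)$ for $x\neq 0$, with convergence in $C^1_{loc}(\overline\Om\setminus\{0\})$ by elliptic estimates. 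Computing $c_*$ (e.g.\ as the limiting flux $\lim_{R\to\infty}\int_{\partial B_R}|y|^{-2\nu}|\na W_1|$) gives $c_*=\om_N(N-2-2\nu)^{N-1}\big(\tfrac{N}{N-2}\big)^{(N-2)/2}$, which is exactly \eqref{nsd2}.

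\emph{Blow-up rate.} Apply the Pohozaev identity for $-\divergence(|x|^{-2\nu}\na\cdot)$ on $B_\rho(0)$ (or on $B_\rho\setminus B_\sigma$ with $\sigma\to 0$, the inner boundary terms vanishing since $\sigma^{\al}\big(\sup_{\partial B_\sigma}(|v_\eps|+\sigma|\na v_\eps|)\big)^2\to 0$ by the near-origin bounds, cf.\ Theorem \ref{t:grad}). Since $p+1=2^*$, the coefficient $\tfrac{N-2^*\nu}{2^*}-\tfrac{\al}{2}$ of the $v_\eps^{2^*}$ term vanishes and the identity collapses to
\[
\eps\,C_{q,N}\int_{B_\rho}|x|^{-(q+1)\nu}v_\eps^{q+1}\,dx=\mathcal B_\rho(v_\eps),
\]
with $C_{q,N}$ as in \eqref{C_qn} and $\mathcal B_\rho$ the boundary integral on $\partial B_\rho$. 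By the second step, $\mathcal B_\rho(v_\eps)=M_\eps^{-2}c_*^2\,\mathcal B_\rho(G(\cdot,0))+o(M_\eps^{-2})$, and $\mathcal B_\rho(G(\cdot,0))\to\kappa\,R(0)$ as $\rho\to 0$ for an explicit constant $\kappa=\kappa(N,\nu)$: the pure fundamental solution $F$ of \eqref{eq:gr-F} has vanishing Pohozaev flux through a sphere centred at $0$, so only the regular part $H$ contributes and $H(\cdot,0)\to H(0,0)=R(0)$ by the H\"older continuity from \cite{CW}. On the other hand, by the first step $\int_{B_\rho}|x|^{-(q+1)\nu}v_\eps^{q+1}\,dx=(1+o(1))\,c_{**}\,M_\eps^{(q(N-2)-(N+2))/\al}$, the exponent coming from $M_\eps^{q+1}\de_\eps^{\,N-(q+1)\nu}$, with $c_{**}:=\int_{\Rn}|y|^{-(q+1)\nu}W_1^{q+1}\,dy=\tfrac{\om_N(N-2)}{2\al}\big(\tfrac{N\al^2}{N-2}\big)^{a}B(a,b)$, $a=\tfrac{(N-2)(N-(q+1)\nu)}{2\al}$, $b=\tfrac{N-2}{2\al}\{q(N-2-\nu)-(2+\nu)\}$. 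Equating,
\[
\eps\,M_\eps^{\frac{q(N-2)-(N+2)}{\al}+2}=\eps\,M_\eps^{\frac{q(N-2)-(N+2)+2\al}{\al}}\ \longrightarrow\ \frac{c_*^2\,\kappa}{C_{q,N}\,c_{**}}\,R(0),
\]
and inserting the explicit $c_*$, $c_{**}$, $\kappa$ and simplifying the powers of $N$, $N-2$, $N-2-2\nu$ gives the displayed limit, the sign of $R(0)$ producing the $|R(0)|$.

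\emph{Main obstacle.} Two points carry the difficulty. The first is the weighted regularity: establishing the $C^1_{loc}(\overline\Om\setminus\{0\})$ convergence $M_\eps v_\eps\to c_* G(\cdot,0)$ and, above all, the sharp convergence $w_\eps\to W_1$ up to and near the origin, where the degeneracy of $|x|^{-2\nu}$ rules out classical Schauder theory so one must use Caffarelli--Kohn--Nirenberg-type estimates and a weighted Harnack inequality. This is the place where the restriction $2^*-1<q<\tfrac{1+\nu}{\nu}$ (together with $\nu<\tfrac{N-2}{4}$, which makes the interval nonempty) enters: it guarantees that the weighted integrals of $W_1$ that appear ($\int|y|^{-(q+1)\nu}W_1^{q}$, $\int|y|^{-(q+1)\nu}W_1^{q+1}$, and so on) converge and that the errors in matching the inner (bubble) and outer (Green) regions are of strictly lower order. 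The second is the exact evaluation of $\mathcal B_\rho(G(\cdot,0))$ as $\rho\to 0$, i.e.\ isolating $\kappa\,R(0)$ with the correct constant: the bookkeeping that separates the $F$- and $H$-contributions in the Pohozaev boundary integral of the weighted Green function and then combines $\kappa$ with $c_*^2$ and the Beta-integral $c_{**}$ is the main computational hurdle, and it is precisely what produces the algebraic factor in the statement.
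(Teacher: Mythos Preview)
Your outline follows essentially the same concentration–Pohozaev strategy as the paper: blow-up via the rescaling $z_\eps(y)=M_\eps^{-1}v_\eps(\de_\eps y)$, identification of the limit bubble $W_1$ solving \eqref{ent}, convergence $M_\eps v_\eps\to c_* G(\cdot,0)$ away from $0$, and extraction of the rate from a Pohozaev identity whose boundary term is evaluated via the regular part $H$ of the Green function. The constants you assemble ($c_*$, $c_{**}$, the Beta integral) match the paper's computations in Lemmas~\ref{A.3} and the proof of Theorem~\ref{main2}.

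There are, however, two genuine technical gaps in your sketch that the paper fills with nontrivial arguments. First, your claim that $c_\eps$ is bounded because $c_\eps\int|y|^{-(q+1)\nu}w_\eps^{q+1}\le B$ is incomplete: the product being bounded does not preclude $c_\eps\to\infty$ with the integral collapsing to $0$. The paper handles this separately (Lemma~\ref{l:Z}, Case~2): if $c_\eps\to\infty$ one rescales once more by $c_\eps^{1/(2+(q-1)\nu)}$ and obtains in the limit a nontrivial bounded nonnegative solution of $-\divergence(|x|^{-2\nu}\na\tilde Z)+|x|^{-(q+1)\nu}\tilde Z^q=0$ on $\Rn$, which is shown to be impossible by a Keller--Osserman/large-solutions argument (Appendix~A). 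Your invocation of Theorem~\ref{1.1} only covers the case $0<\ell<\infty$. Second, to pass limits in $\int_{\Om_\eps}|x|^{-(q+1)\nu}z_\eps^{q+1}$ and in the Green representation you need a \emph{global} pointwise domination $z_\eps\le C W_1$ on the expanding domains, not just strong $D^{1,2}$ convergence. The paper obtains this by a Kelvin transform plus weighted Moser iteration (Lemma~\ref{lb} and the remark following it), yielding \eqref{upps}; this is what makes the dominated-convergence steps and the $C^1_{loc}$ convergence in Lemma~\ref{p7.1} rigorous. A minor stylistic difference: the paper applies the Pohozaev identity directly on $\Om$ (where $v_\eps=0$ on $\pa\Om$ kills all but the gradient term) and then uses Lemma~\ref{dub1} to turn $\int_{\pa\Om}|x|^{-2\nu}\langle x,n\rangle|\na G(x,0)|^2\,dS$ into $(N-2-2\nu)|R(0)|$; your $B_\rho$ approach is equivalent but requires more boundary bookkeeping.
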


\begin{remark} Now we  point out the difference between the supercritical and subcritical case. First we notice there is a critical exponent $q^*:=\frac{2+\nu}{\nu}$ which plays a huge role in  determining  the singularity of solution \eqref{eq:a3'}.  This implies that there is some competition between the $\mu$ and $q$ (or equivalently between $\nu$ and $q$) which never arise in the subcritical case.
\end{remark}
\vspace{3mm}

\begin{remark}  In a forthcoming paper, we show this phenomena holds for the fractional laplacian case with $\mu=0.$\\
\end{remark}

{\bf Notation:} Throughout this paper $C$ denotes the generic constants which may vary from line to line. Below are few notations which we use throughout the paper:
\begin{itemize}
\item$\bar\mu:=\displaystyle\left(\f{N-2}{2}\right)^2$
\item $\nu:=\sqrt{\bar\mu}-\sqrt{\bar\mu-\mu}$
\item$\al:=N-2-2\nu$
\item $\om_N:=$ surface measure of unit ball.
\end{itemize}

\section{ Existence and non-existence of entire solution}
In this section, we will study the existence and non-existence result of entire problem with critical and supercritical exponents.
 We first establish the general Pohozaev identity which will also be used in the next sections.

\begin{proposition}\label{p:Poho} Let $\Om$ be a smooth domain, $0\in\Omega$,  $0\leq \mu<\bar\mu$ , $N\geq 3$, $2^*-1\leq p<q$  and $u$ be a solution of
\begin{equation}
  \label{eq:poho-1}
\left\{\begin{aligned}
      -\Delta u-\mu \frac{u}{|x|^2}&=u^{p} -\eps u^{q} \quad\text{in }\quad \Om,\\ u&>0, \\
      u &\in D^{1,2}(\Om)\cap L^{q+1}(\Om),
         \end{aligned}
  \right.
\end{equation}
Then $u$ satisfies:
\bea \label{poh1'}&&\frac{1}{2}\int_{\pa \Om}  |\nabla u|^2 \langle x, n \rangle dS + \frac{N-2}{2}\int_{ \pa \Om}  u \frac{\pa u}{\pa n}dS+\frac{\mu}{2}\int_{\pa\Om}\frac{u^2}{|x|^2} \langle x, n \rangle dS
  \no\\&= & \frac{\eps}{q+1}\int_{\pa \Om} u^{q+1} \langle x, n \rangle dS -\eps \bigg(\frac{N}{q+1}-\frac{N-2}{2}\bigg) \int_{\Om} u^{q+1} dx \no\\&+& \bigg(\frac{N}{p+1}-\frac{N-2}{2}\bigg)\int_{\Om}  u^{p+1}dx- \frac{1}{p+1}\int_{\pa \Om} u^{p+1} \langle   x, n \rangle  dS.
\eea
In particular, if $u=0$ on $ \pa \Om$ we have
\bea \label{poh2'} && \frac{1}{2} \int_{\pa \Om}  |\nabla u|^2\langle x, n \rangle dS \no \\&=& \bigg(\frac{N}{p+1}- \frac{N-2}{2}\bigg)\int_{\Om}  u^{p+1} dx+ \eps  \bigg( \frac{N-2}{2}- \frac{N}{q+1}\bigg) \int_{\Om}  u^{q+1} dx .
\eea
\end{proposition}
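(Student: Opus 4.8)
The plan is to run the standard Rellich--Pohozaev computation: multiply the equation by the Pohozaev multiplier $x\cdot\nabla u = \langle x, \nabla u\rangle$, integrate over $\Omega$, and integrate by parts, keeping careful track of all boundary terms since we do not (yet) assume $u$ vanishes on $\partial\Omega$. The one genuinely new feature here is the Hardy term $\mu u/|x|^2$ and the fact that $0\in\Omega$, so I would first fix a small $\delta>0$, work on $\Omega_\delta:=\Omega\setminus \overline{B_\delta(0)}$, and let $\delta\to 0$ at the end; this is where the regularity/integrability information from Section 3 (namely $u(x)\sim |x|^{-\nu}$ with $\nu<\tfrac{N-2}{2}$, hence $u\in D^{1,2}$ near $0$ and the relevant powers are locally integrable) is used to show the inner boundary integrals over $\partial B_\delta(0)$ tend to $0$.

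The key identities I would assemble, each by one integration by parts on $\Omega_\delta$, are the following. For the Laplacian term,
\[
-\int_{\Omega_\delta}\Delta u\,\langle x,\nabla u\rangle\,dx
= \frac{N-2}{2}\int_{\Omega_\delta}|\nabla u|^2\,dx
+ \int_{\partial\Omega_\delta}\Big(\frac{\partial u}{\partial n}\langle x,\nabla u\rangle - \frac12|\nabla u|^2\langle x,n\rangle\Big)dS.
\]
For the Hardy term, using $\nabla(|x|^{-2})=-2x/|x|^4$,
\[
-\mu\int_{\Omega_\delta}\frac{u}{|x|^2}\langle x,\nabla u\rangle\,dx
= -\frac{(N-2)\mu}{2}\int_{\Omega_\delta}\frac{u^2}{|x|^2}\,dx
- \frac{\mu}{2}\int_{\partial\Omega_\delta}\frac{u^2}{|x|^2}\langle x,n\rangle\,dS.
\]
For the two power nonlinearities, with $f(u)=\frac{1}{p+1}u^{p+1}-\frac{\eps}{q+1}u^{q+1}$ so that $f'(u)u = u^{p+1}-\eps u^{q+1}$,
\[
\int_{\Omega_\delta}\big(u^{p}-\eps u^{q}\big)\langle x,\nabla u\rangle\,dx
= \int_{\Omega_\delta}\langle x,\nabla (f(u))\rangle\,dx
= -N\int_{\Omega_\delta}f(u)\,dx + \int_{\partial\Omega_\delta}f(u)\langle x,n\rangle\,dS.
\]
Adding the first two and equating with the third gives, on $\Omega_\delta$, an identity relating $\frac{N-2}{2}\big(\int|\nabla u|^2-\mu\int u^2/|x|^2\big)$ to the bulk terms $-N\int f(u)$ and the boundary integrals over $\partial\Omega\cup\partial B_\delta(0)$. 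To eliminate the troublesome combination $\frac{N-2}{2}\big(\int|\nabla u|^2-\mu\int u^2/|x|^2\big)$, I would also use the equation tested against $u$ itself on $\Omega_\delta$, i.e.\ $\int_{\Omega_\delta}(|\nabla u|^2 - \mu u^2/|x|^2) = \int_{\Omega_\delta}(u^{p+1}-\eps u^{q+1}) + (\text{boundary term on }\partial\Omega_\delta)$, multiply by $\frac{N-2}{2}$, and substitute. Collecting the coefficients of $\int u^{p+1}$ and $\int u^{q+1}$ produces exactly $\big(\frac{N}{p+1}-\frac{N-2}{2}\big)\int u^{p+1}$ and $-\eps\big(\frac{N}{q+1}-\frac{N-2}{2}\big)\int u^{q+1}$, matching the stated right-hand side.

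The remaining point, and the one I expect to be the main obstacle, is the passage $\delta\to 0$: one must verify that $\int_{\partial B_\delta(0)}\big(|\nabla u|^2|x| + u^2/|x| + u^{p+1}|x| + u^{q+1}|x|\big)\,dS\to 0$ along a suitable sequence $\delta_k\to 0$. The pointwise bounds $u\lesssim |x|^{-\nu}$ and (from Theorem~\ref{t:grad}) $|\nabla u|\lesssim |x|^{-(\nu+1)}$ give $|\nabla u|^2|x|\lesssim |x|^{-2\nu-1}$ and $u^2/|x|\lesssim |x|^{-2\nu-1}$ on a sphere of radius $\delta$, whose area is $\sim\delta^{N-1}$, so the product is $\lesssim \delta^{N-2-2\nu}=\delta^{\alpha}\to 0$ since $\nu<\frac{N-2}{2}$; the nonlinear terms $u^{p+1}|x|,\,u^{q+1}|x|$ are handled similarly (they are integrable near $0$ by Section~3, so their sphere averages vanish along a subsequence by a standard co-area argument even without the sharpest pointwise bound). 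On $\partial\Omega$ the terms involving $\langle x,\nabla u\rangle$ must be rewritten: when $u$ is not assumed to vanish on $\partial\Omega$ one simply keeps $\int_{\partial\Omega}\frac{\partial u}{\partial n}\langle x,\nabla u\rangle\,dS$, while the testing-against-$u$ step contributes $-\frac{N-2}{2}\int_{\partial\Omega}u\frac{\partial u}{\partial n}\,dS$; combining, and noting $\langle x,\nabla u\rangle = \langle x,n\rangle\frac{\partial u}{\partial n}$ only in the tangential-free part, yields the stated boundary combination $\frac12\int_{\partial\Omega}|\nabla u|^2\langle x,n\rangle + \frac{N-2}{2}\int_{\partial\Omega}u\frac{\partial u}{\partial n}$ once one uses that on $\partial\Omega$ the regularity is classical. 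Finally, specializing to $u=0$ on $\partial\Omega$: then $\nabla u = \frac{\partial u}{\partial n}n$ on $\partial\Omega$, the term $\frac{N-2}{2}\int u\frac{\partial u}{\partial n}$ drops, the Hardy boundary term drops, and the $u^{p+1},u^{q+1}$ boundary integrals drop, leaving $\frac12\int_{\partial\Omega}|\nabla u|^2\langle x,n\rangle\,dS$ on the left and the two bulk terms on the right, which is \eqref{poh2'}.
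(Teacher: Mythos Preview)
Your approach is essentially the same as the paper's---a Pohozaev computation with the multiplier $x\cdot\nabla u$, combined with testing the equation against $u$ itself---and the algebra you outline is correct. The only substantive technical difference is how the singularity at the origin is handled: the paper multiplies by $(x\cdot\nabla u)\phi_{\delta,R}$ with a smooth radial cut-off $\phi_{\delta,R}$ (vanishing near $0$ and near $\infty$) and passes to the limit via dominated convergence, whereas you excise $B_\delta(0)$ and estimate surface integrals on $\partial B_\delta(0)$.

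That said, there is a logical-dependency issue in your version. To kill the inner boundary integrals you invoke the pointwise bounds $u\lesssim|x|^{-\nu}$ and $|\nabla u|\lesssim|x|^{-(\nu+1)}$ from Theorems~\ref{t:up-est} and~\ref{t:grad}. Those results sit in Section~3, \emph{after} this proposition, apply to the specific equation \eqref{a4} (with $\eps=1$), and Theorem~\ref{t:grad} carries the extra hypothesis $q>(p-1)\tfrac{N}{2}-1$, so you cannot cite them here without circularity or loss of generality. The paper avoids this entirely: because the cut-off localises to annuli $\{\delta\le|x|\le 2\delta\}$ rather than spheres, every error term is a volume integral controlled by $\int_\Omega|\nabla u|^2<\infty$, $\int_\Omega u^2/|x|^2<\infty$ (Hardy), or $\int_\Omega u^{q+1}<\infty$, and these are exactly the hypotheses $u\in D^{1,2}(\Omega)\cap L^{q+1}(\Omega)$ of the proposition. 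Your argument can be repaired along the lines you already use for the nonlinear terms: since $|\nabla u|^2$, $u^2/|x|^2$, $u^{p+1}$, $u^{q+1}$ are all integrable on $\Omega$, the coarea formula gives a sequence $\delta_k\to0$ along which $\delta_k\int_{\partial B_{\delta_k}}(\cdot)\,dS\to0$ for each of these densities, which is all you need. With that fix your proof is complete and equivalent to the paper's.
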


\begin{proof}
We multiply Eq. \eqref{eq:poho-1} by a suitable test function and to make the test function smooth we introduce cut-off functions and then pass to the limit.

For $\de>0$ and $R>0$, we define $\phi_{\de,R}(x)=\phi_{\de}(x)\psi_{R}(x)$ where
$\phi_{\de}(x)=\phi(\frac{|x|}{\de})$ and $\psi_R(x)=\psi(\frac{|x|}{R})$, $\phi$ and $\psi$ are smooth functions in $\R$ with the properties $0\leq\phi,\psi\leq 1$, with supports of $\phi$ and $\psi$ in $(1,\infty)$ and $(-\infty, 2)$ respectively and $\phi(t)=1$ for $t\geq 2$, and $\psi(t)=1$ for $t\leq 1$.

Let $u$ be a solution of Eq. \eqref{eq:poho-1}. Then $u$ is smooth away from the origin and hence $(x\cdot\na u)\phi_{\de,R}\in C^2_c(\Om)$. Multiplying Eq.\eqref{eq:poho-1} by this test function and integrating by parts, we obtain
\begin{eqnarray} \label{non-1}
&\displaystyle\int_{\Om}\na u\cdot\na\left((x\cdot\na u)\va_{\de,R}\right)dx-\mu\int_{\Om}\frac{u(x\cdot\na u)\va_{\de,R}}{|x|^2}dx -\int_{\pa\Om}\frac{\pa u}{\pa n}(x\cdot\na u)\phi_{\de, R} dS \no\\
&=\displaystyle\int_{\Om} (u^{p}-\eps u^q)(x\cdot\na u)\va_{\de,R}dx.
\end{eqnarray}
Now the RHS of \eqref{non-1} can be simplified as
\begin{eqnarray*}
RHS &=&-\frac{N}{p+1}\int_{\Om} u^{p+1}\va_{\de, R}dx-\frac{1}{p+1}\int_{\Om} u^{p+1}\big[x\cdot(\psi_R\na\va_\de+\va_\de \na\psi_R)]dx\\
&+&\eps\frac{N}{q+1}\int_{\Om} u^{q+1}\va_{\de, R}dx+\frac{\eps}{q+1}\int_{\Om} u^{q+1}\big[x\cdot(\psi_R\na\va_\de+\va_\de\na\psi_R)]dx\\
&+&\frac{1}{p+1}\int_{\pa\Om}u^{p+1}\langle x, n\rangle\phi_{\de, R} dS-\frac{\eps  }{q+1}\int_{\pa\Om}u^{q+1}\langle x, n\rangle\phi_{\de, R}dS.
\end{eqnarray*}
Note that $|x\cdot(\psi_R\na\va_\de+\va_\de \na\psi_R)|\le C $ and hence using the dominated convergence theorem we get,
\begin{eqnarray}\label{non-2}
 \lim_{R\rightarrow\infty}\big[\lim_{\de\rightarrow 0}RHS ] &=&-\frac{N}{p+1}\int_{\Om} u^{p+1}dx+\frac{N\eps }{q+1}\int_{\Om}u^{q+1}dx \no\\ &+&\frac{1}{p+1}\int_{\pa\Om}u^{p+1}\langle x, n\rangle dS- \frac{\eps}{q+1}\int_{\pa\Om}u^{q+1}\langle x, n\rangle dS .
\end{eqnarray}
By a direct calculation and integration by parts, LHS of \eqref{non-1} simplifies as,
\begin{eqnarray}\label{non-3}
\text{LHS} &=&\int_{\Om} |\na u|^2\va_{\de,R}+\sum_{i=1}^n\sum_{j=1}^n\frac{1}{2}\int_{\Om}((u_{x_i})^2)_{x_j} x_j \va_{\de,R}+\int_{\Om}(x\cdot\na u)(\na u\cdot\na\va_{\de,R})\no\\
&+& \frac{\mu N}{2}\int_{\Om}\frac{u^2}{|x|^2}\va_{\de, R}dx+\frac{\mu}{2}\int_{\Om}\frac{u^2}{|x|^2}(x\cdot\na\va_{\de, R})dx-\mu\int_{\Om}\frac{u^2}{|x|^2}\va_{\de, R}dx\no\\
&-&\int_{\pa\Om}|\na u|^2\langle x, n\rangle\phi_{\de, R} dS-\frac{\mu}{2}\int_{\pa\Om}\frac{u^2}{|x|^2}\langle x, n\rangle \phi_{\de, R} dS\no\\
&=& -\frac{N-2}{2}\displaystyle\left(\int_{\Om}|\na u|^2\va_{\de,R}-\mu\int_{\Om}\frac{u^2}{|x|^2}\va_{\de,R}\right)dx -\frac{1}{2}\int_{\pa\Om}|\na u|^2\langle x, n\rangle\phi_{\de, R} dS\no\\
&\ &-\frac{1}{2}\int_{\Om}\displaystyle\left(|\na u|^2-\mu\frac{u^2}{|x|^2}\right)\big[(x\cdot\na\va_{\de})\psi_R+(x\cdot\na\psi_R)\va_{\de}]dx\no\\
&\ &+\int_{\Om}(x\cdot\na u)\big[(\na u\cdot\na \va_{\de})\psi_R+(\na u\cdot\na\psi_R)\va_{\de}]dx-\frac{\mu}{2}\int_{\pa\Om}\frac{u^2}{|x|^2}\langle x, n\rangle \phi_{\de, R} dS.
\end{eqnarray}
Also we note that,
\begin{eqnarray*}
 \lim_{R\rightarrow\infty}\lim_{\de \rightarrow 0}|\int_{\Rn}|(x\cdot\na u)(\na u\cdot\na\va_{\de})\psi_R)|dx &\leq& C\lim_{\de\to 0}\int_{\de\leq|x|\leq 2\de}|\na u|^2\frac{|x|}{\de}dx\\
 &\leq& 2C\lim_{\de\to 0}\int_{\de\leq|x|\leq 2\de}|\na u|^2dx=0.
\end{eqnarray*}
Similarly
$$\lim_{R\rightarrow\infty}\lim_{\de\rightarrow 0}|\int_{\Rn}|(x\cdot\na u)(\na u\cdot\na\psi_R)\va_{\de})|dx \leq C\lim_{R\to\infty}\int_{R\leq|x|\leq 2R}|\na u|^2\frac{|x|}{R}dx=0.$$
Using the above estimates and taking the limit using dominated convergence theorem and using the fact $|x\cdot(\psi_R\na\va_\de+\va_\de \na\psi_R)|\le C $ , we get from \eqref{non-3},
\begin{eqnarray}\label{non-4}
 \lim_{R\rightarrow\infty}\displaystyle[\lim_{\de\rightarrow 0}LHS ] &=& -\frac{N-2}{2}\left(\int_{\Om}|\na u|^2-\mu\frac{u^2}{|x|^2}\right)dx \no\\
 &-& \frac{1}{2}\int_{\pa\Om}|\na u|^2\langle x, n\rangle dS
 -\frac{\mu}{2}\int_{\pa\Om}\frac{u^2}{|x|^2}\langle x, n\rangle dS.
\end{eqnarray}
Moreover, multiplying the Eq. \eqref{eq:poho-1} by $u$, we have
\begin{equation}\label{non-4'}
\int_{\Om}|\na u|^2 dx-\int_{\pa\Om}u(\na u\cdot n)dS-\mu\int_{\Om}\frac{u^2}{|x|^2}dx=\int_{\Om}(u^{p+1}-\eps u^{q+1}) dx
\end{equation}

Substituting \eqref{non-2} and \eqref{non-4} in \eqref{non-1} and using \eqref{non-4'}  we get
\begin{eqnarray}
&-&\frac{N-2}{2}\displaystyle\left(\int_{\Om}u^{p+1}dx-\eps\int_{\Om}u^{q+1}dx+\int_{\pa\Om}u
\frac{\pa u}{\pa n} dS\right)-\frac{1}{2}\int_{\pa\Om}|\na u|^2\langle x, n\rangle dS \no\\
&-&\frac{\mu}{2}\int_{\pa\Om}\frac{u^2}{|x|^2}\langle x, n\rangle dS  =-\frac{N}{p+1}\int_{\Om}u^{p+1}dx+\frac{N\eps}{q+1}\int_{\Om}u^{q+1}dx\no\\
&+& \frac{1}{p+1}\int_{\pa\Om}u^{p+1}\langle x, n\rangle dS- \frac{\eps}{q+1}\int_{\pa\Om}u^{q+1}\langle x, n\rangle dS .
\end{eqnarray}
This implies \eqref{poh1'}. If $u=0$ on $\pa\Om$, it is easy to see that \eqref{poh2'} follows from \eqref{poh1'}.
\end{proof}

\vspace{2mm}

\begin{proof}[{\bf Proof of Theorem \ref{1.1}}]
If $u$ is a  solution of Eq.\eqref{eq:a1}, then it follows from Proposition \ref{p:Poho} that
 $$\displaystyle\left(\frac{N-2}{2}-\frac{N}{q+1}\right)\int_{\Rn}u^{q+1}dx=0,$$ which is a contradiction as $q>2^*-1$ and $u>0$. This proves the theorem.
\end{proof}

\begin{proof}[\bf Proof of Theorem \ref{1.2}]
We are going to work on the manifold $$N= \bigg\{ u\in D^{1, 2}(\R^N) \cap L^{q+1}(\R^N):  \int_{\R^N} u^{p+1}dx=1 \bigg\}.$$
Then $F$ reduces to
$$F(u)=  \frac{1}{2}\int_{\R^N} |\nabla u|^2 dx -   \frac{\mu}{2} \int_{\R^N} \frac{| u|^2}{|x|^2} dx+  \frac{1}{q+1}\int_{\R^N}u^{q+1} dx . $$
Let
\be \mathcal{K}= \inf_{N} F(u).\ee
Let $u_{n} $ be a minimizing sequence in $N$ such that
$$F(u_{n}) \to \mathcal{K} \text{ with } \int_{\R^N} u_{n}^{p+1} dx =1.$$

As $\mu<\overline{\mu}$ implies $||u||:=\displaystyle\left(\int_{\Rn}|\na u|^2-\mu\frac{u^2}{|x|^2}\right)^\frac{1}{2}$ is an equivalent norm in $D^{1,2}(\Rn)$, we have $\{u_n\}$ is a bounded sequence in $D^{1,2}(\Rn)$ and $L^{q+1}(\Rn)$. Therefore there exists $u\in D^{1,2}(\Rn)$ and $L^{q+1}(\Rn)$ such that $u_n \rightharpoonup u$ in $D^{1,2}(\Rn)$ and $L^{q+1}(\Rn)$. Consequently $u_{n}\to u$ pointwise almost everywhere.

Using symmetric rearrangement technique, without loss of generality we can assume that $u_n$ is radially symmetric. Hence
 $u_{n}(x)= u_{n}(r)$, where $r=|x|$, and we can write
\be \no  u_{n}(r)=- \int_{r}^{\infty} u'_{n}(s) ds.\ee
Using a standard argument it can be shown that $u_n$ satisfies Strauss type  uniform estimate
 \be\label{strauss}  |u_{n}(r)|\leq C r^{-\frac{N-2}{2}} \ee
 for some $C>0$.
We claim that $u_{n} \to u$ in $L^{p+1}(\R^N).$\\
To see the claim, we note that $u_{n}^{p+1}\to u^{p+1}$ pointwise almost everywhere.  Since $\{u_n\}$ is uniformly bounded in $L^{q+1}(\Rn)$, using Vitali's convergence theorem, it is easy to check that $\displaystyle\int_{K} u_{n}^{p+1}dx\to \int_{K}  u^{p+1}dx$  for any compact set $K$  in $\R^N$ containing the origin.
Furthermore,  $\displaystyle\int_{\R^N \setminus K}u_{n}^{p+1} dx$ is very small  by \eqref{strauss} and hence we have strong convergence. Moreover, $\displaystyle\int_{\R^N}u_{n}^{p+1}dx=1 $  implies
$\displaystyle\int_{\R^N }u^{p+1} dx=1.$\\

Now we show that $\mathcal{K}= F(u).$

We note that $u\mapsto ||u||^2$ is weakly lower semicontinuous. Therefore using Fatou's lemma we can write

\begin{eqnarray*}\mathcal{K} &=&   \lim_{n \to \infty } \bigg[\frac{1}{2}\int_{\R^N} |\nabla u_{n}|^2 dx -   \frac{\mu}{2} \int_{\R^N} \frac{| u_{n}|^2}{|x|^2} dx+  \frac{1}{q+1}\int_{\R^N}u_{n}^{q+1} dx\bigg] \\
&=&\lim_{n \to \infty } \bigg[\frac{1}{2}||u_n||^2+\frac{1}{q+1}\int_{\R^N}u_{n}^{q+1} dx\bigg]\\
&\geq & \frac{1}{2}||u||^2+  \frac{1}{q+1}\int_{\R^N}u^{q+1} dx\bigg]\\
&\geq & F(u)   .\end{eqnarray*}
This proves $F(u)=\mathcal{K}$. Moreover, using the Schwartz symmetrisation method via. Polya-Szego inequality, it is easy to check that $u$ is radially symmetric and radially decreasing. Applying the Lagrange multiplier rule, we obtain $u$ satisfies
\be\no - \De u- \mu \frac{u}{|x|^2}+ u^q=  \lambda u^p, \ee
for some $\lambda>0$. This in turn implies
\be\no - \De u- \mu \frac{u}{|x|^2}=  \lambda u^p - u^q  ~~~~\text{ in } ~~~ \R^N.\ee
\end{proof}

\section{ Classification of singularity near $0$}
\subsection{Lower and upper estimate of solution}

In this subsection,  we study the asymptotic behavior of solutions (whenever exists) at origin of Eq.\eqref{a4}.

Following Lemma~\ref{l:4-1} and \ref{l:4-2} are crucially used to prove Theorem \ref{t:low-est}.
\begin{lemma}\lab{l:4-1}
Let $q<\f{2+\nu}{\nu}$ and $\nu\in (0, \f{N-2}{2})$. Then there exists $l>0$ (can be chosen small)  such that the following problem
\be\lab{12-4-3}
\left\{\begin{aligned}
&-\text{div}(|x|^{-2\nu}\na w)+|x|^{-(q+1)\nu}w^q =0 \quad\text{in}\quad B_l(0)\\
&w>0 \quad\text{in}\quad B_l(0)\\
&w\in H^{1}(B_l(0), |x|^{-2\nu})\cap L^{q+1}(B_l(0), |x|^{-(q+1)\nu}),
\end{aligned}
\right.
\ee
has a continuous radial solution $w_1$ such that $w_1(0)=1$.
\end{lemma}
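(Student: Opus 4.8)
The plan is to solve \eqref{12-4-3} by a radial ODE argument combined with the Banach fixed point theorem in a suitable weighted space, then upgrade to a genuine (positive, continuous, energy-class) solution via a barrier/comparison argument. Writing $w = w(r)$, $r = |x|$, equation \eqref{12-4-3} becomes
\be\no
w'' + \frac{N-1-2\nu}{r}\,w' = r^{-(q-1)\nu}\,w^q, \qquad 0<r<l,
\ee
and we seek the solution with $w(0)=1$, $w'(0)=0$. Integrating twice (using the fundamental solution of the operator $\frac{d}{dr}\big(r^{N-1-2\nu}\frac{d}{dr}\big)$, i.e. the weight $r^{N-1-2\nu}$), the initial value problem is equivalent to the integral equation
\be\no
w(r) = 1 + \int_0^r s^{-(N-1-2\nu)}\!\int_0^s t^{N-1-2\nu}\,t^{-(q-1)\nu}\,w(t)^q\,dt\,ds
=: (\mathcal{T}w)(r).
\ee
The key point making this work is the exponent count: the inner integrand behaves like $t^{N-1-2\nu-(q-1)\nu}$, and since $\alpha = N-2-2\nu > 0$ and $q < \frac{2+\nu}{\nu}$ (equivalently $(q-1)\nu < 2+\nu$, so $N-1-2\nu-(q-1)\nu > N-3-3\nu > -1$ — one should check the precise inequality $N-(q+1)\nu > 0$, which holds for $\nu < \frac{N-2}{2}$ small enough relative to $q$, and this is exactly where the hypothesis $q < \frac{2+\nu}{\nu}$, i.e. \eqref{15-4-1}, enters), the inner integral converges at $t=0$ and produces a power $s^{N-(q+1)\nu}$ with positive exponent; dividing by $s^{N-1-2\nu}$ and integrating in $s$ leaves a convergent outer integral as well. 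Thus $\mathcal{T}$ maps the ball $\{w \in C([0,l]) : \|w-1\|_\infty \le \tfrac12\}$ into itself once $l$ is small, because the double integral of the constant function is $O(l^{2})$ uniformly (more precisely $O(l^{\,N-(q+1)\nu+2-(N-2-2\nu)}) = O(l^{\,2-(q-1)\nu})$, with positive exponent), and a similar estimate using $|w_1^q - w_2^q| \le C\|w_1-w_2\|_\infty$ on this ball shows $\mathcal{T}$ is a contraction for $l$ small. Banach's fixed point theorem then gives a unique continuous $w_1$ on $[0,l]$ with $w_1(0)=1$.

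Next I would check that $w_1$ is actually a solution of \eqref{12-4-3} in the stated function spaces and is positive. Positivity near $0$ is immediate since $w_1(0)=1$ and $w_1$ is continuous, so shrinking $l$ further we may assume $\tfrac12 \le w_1 \le \tfrac32$ on $[0,l]$; in particular $w_1 > 0$ on all of $B_l(0)$. Since the right-hand side $|x|^{-(q+1)\nu} w_1^q \le C|x|^{-(q+1)\nu}$ lies in $L^1_{loc}$ (again because $(q+1)\nu < N$, from $q<\frac{2+\nu}{\nu}$ and $\nu<\frac{N-2}{2}$), elliptic regularity away from the origin shows $w_1 \in C^2(B_l(0)\setminus\{0\})$ and solves the PDE classically there; the integral representation shows the singular term is harmless, giving a distributional solution on all of $B_l(0)$. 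Finally, from $w_1' (r)= r^{-(N-1-2\nu)}\int_0^r t^{N-1-2\nu} t^{-(q-1)\nu} w_1^q\, dt \ge 0$ and the convergence exponents above, one verifies $\int_{B_l} |x|^{-2\nu}|\nabla w_1|^2 < \infty$ and $\int_{B_l}|x|^{-(q+1)\nu} w_1^{q+1} < \infty$, so $w_1$ belongs to $H^1(B_l(0),|x|^{-2\nu}) \cap L^{q+1}(B_l(0),|x|^{-(q+1)\nu})$.

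The main obstacle, and the step deserving the most care, is the exponent bookkeeping that guarantees all the iterated integrals converge at the origin and produce positive powers of $l$ — this is precisely the role of the standing hypothesis $q < \frac{2+\nu}{\nu}$, and it must be threaded through both the self-map property and the contraction estimate (and later through the finiteness of the weighted Dirichlet and $L^{q+1}$ norms). A secondary technical point is justifying that the fixed point of $\mathcal{T}$ is genuinely twice differentiable and solves the PDE rather than merely the integral equation; this is routine once $w_1$ is known to be continuous and the integrand is controlled, but it should be stated. Everything else — positivity, the bound $\tfrac12 \le w_1 \le \tfrac32$, membership in the weighted spaces — follows from continuity and the same convergence estimates.
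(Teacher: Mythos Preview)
Your approach is essentially identical to the paper's: reduce to the radial ODE, rewrite as the integral equation $w(r)=1+\int_0^r s^{2\nu+1-N}\int_0^s t^{N-1-(q+1)\nu}w^q(t)\,dt\,ds$, apply Banach's fixed point theorem on a small interval $[0,l]$ using the exponent condition $2-(q-1)\nu>0$ (i.e.\ $q<\frac{2+\nu}{\nu}$), and then verify membership in the weighted spaces.

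Two small points where the paper is more careful than your write-up. First, you impose $w'(0)=0$, but the paper explicitly remarks that $w'(0)$ is not defined when $\frac{1+\nu}{\nu}\le q<\frac{2+\nu}{\nu}$; the integral equation sidesteps this, so you should not list $w'(0)=0$ as an initial condition. Second, you call the passage from the integral equation to a weak solution of the PDE on all of $B_l(0)$ ``routine,'' but since $w_1$ need not be differentiable at $0$, the paper devotes a separate claim to this, testing against $\phi\in C_0^\infty(B_l(0))$ with a cutoff $\chi_\eta$ near the origin and showing the boundary terms vanish as $\eta\to 0$; this is exactly the step that justifies the solution across the singularity and should not be glossed over.
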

\begin{proof}
To prove this lemma, it is enough to show that the following ODE has a unique solution $w_1$ in $(0,l)$ for some $l>0$ and $w_1$ is a solution of Eq.\eqref{12-4-3},
\begin{equation}\lab{21-4-9}
\left\{\begin{aligned}
w''+\f{N-1-2\nu}{r}w'(r) &=r^{-(q-1)\nu}w^q \quad\text{in}\quad (0,1)\\
w &>0 \quad\text{in}\quad (0,1)\\
w(0) &=1
\end{aligned}
\right.
\end{equation}
We can write a solution of the above ODE as
\be\lab{21-4-10}
w(r)=1+\int_{0}^{r}s^{2\nu+1-N}\int_{0}^{s}t^{N-1-(q+1)\nu}w^q(t)dt ds.\ee
Since $q<\f{2+\nu}{\nu}$, using Banach fixed point theorem, it is easy to check that solution of the integral equation \eqref{21-4-10} exists and unique in $(0, l)$ for some $l>0$. From  \eqref{21-4-10}, it follows $w$ is continuous in $[0,l]$ and
$$w'(r)=r^{2\nu+1-N}\int_0^s t^{N-1-(q+1)\nu}w^q(t)dt \quad\text{for}\quad r>0.  $$ Therefore by a straight forward computation it follows
\be\lab{22-4-1}
\int_{0}^l w'(r)^2 r^{N-1-2\nu}dr<\infty \quad\text{and}\quad \int_0^l w^{q+1}(r)r^{N-1-(q+1)\nu}<\infty
\ee
as $q<\f{2+\nu}{\nu}$ and $\nu<\f{N-2}{2}$. Define $w_1(x):=w(r)$, where $r=|x|$.\\

{\bf Claim:} $w_1$ is a weak solution of Eq.\eqref{12-4-3}.\\

Indeed by \eqref{22-4-1},  $w_1\in H^{1}(B_l(0), |x|^{-2\nu})\cap L^{q+1}(B_l(0), |x|^{-(q+1)\nu})$. Choose $0<\eta<l$ and define  $\chi_{\eta}\in C^{\infty}_0(B_l(0))$ such that $\chi_{\eta}=1$ for $|x|\leq\f{\eta}{2}$,  $\chi_{\eta}=0$ for $|x|>\eta$ and $|\na\chi_{\eta}|\leq \f{4}{\eta}$. Let $\phi\in C^{\infty}_0(B_l(0))$ be arbitrarily chosen. Set $D_{\eta}:=B_l(0)\setminus B_{\f{\eta}{2}}(0)$.
Therefore,
\bea\lab{22-4-3}
&&\displaystyle\int_{B_l(0)}|x|^{-2\nu}\na w_1\na\phi dx+\int_{B_l(0)}|x|^{-(q+1)\nu}w_1^q\phi dx \no\\
&&=\displaystyle\lim_{\eta\to 0}\int_{B_l(0)}\chi_{\eta}|x|^{-2\nu}\na w_1\na\phi dx+ \lim_{\eta\to 0}\int_{B_l(0)}\chi_{\eta}|x|^{-(q+1)\nu}w_1^q\phi dx\no\\
&&+\displaystyle\lim_{\eta\to 0}\int_{B_l(0)}(1-\chi_{\eta})(|x|^{-2\nu}\na w_1\na\phi+|x|^{-(q+1)\nu}w_1^q\phi)dx\no\\
&&=-\displaystyle\lim_{\eta\to 0}\int_{D_{\eta}}\big(\na(1-\chi_{\eta})
\na w_1\big) |x|^{-2\nu}\phi \no\\
&&-\displaystyle\lim_{\eta\to 0}\int_{D_{\eta}}(1-\chi_{\eta})\left(\text{div}(|x|^{-2\nu}\na w_1)-|x|^{-(q+1)\nu}w_1^q\right)\phi\, dx
\eea
Since $w_1$ is a solution of the ODE \eqref{21-4-9} in $(0,l)$ and it is $C^1$ away from $0$, it easily follows that $w_1$ is a $C^1$ solution of Eq.\eqref{12-4-3} in $D_{\eta}$, for every $\eta>0$. Thus the last integral in \eqref{22-4-3} equals $0$.
Furthermore,
$$|\displaystyle\lim_{\eta\to 0}\int_{D_{\eta}}\na(1-\chi_{\eta})
\na |x|^{-2\nu} w_1\phi|\leq \lim_{\eta\to 0} C\eta^{N}\cdot\eta^{-1-2\nu+2\nu+1-(q+1)\nu}=0.$$
Hence \eqref{22-4-3} yields $$\displaystyle\int_{B_l(0)}|x|^{-2\nu}\na w_1\na\phi\, dx+\int_{B_l(0)}|x|^{-(q+1)\nu}w_1^q\phi\, dx=0,$$
which in turn proves the claim. This completes the proof of the lemma.
\end{proof}

\begin{remark}
It is easy to see that \eqref{21-4-10} is related to a 2nd order ODE and solving this ODE requires two initial/boundary conditions. In our case it is natural to have initial  values on $u(0)$ and $u'(0)$. But it is not hard to see from \eqref{21-4-10} that $u'(0)$ is not defined for $\f{1+\nu}{\nu}\leq q<\f{2+\nu}{\nu}$. Therefore a standard ODE technique does not give existence of solution here. Moreover, as the solution of the integral equation \eqref{21-4-10} is not differentiable at $0$, it does not directly follow that $w$ is a solution of the given PDE \eqref{12-4-3}.
\end{remark}

\begin{lemma}\lab{l:4-2}
Let $m>0$ ,  $q<\f{2+\nu}{\nu}$ and $\nu\in (0, \f{N-2}{2})$. Then for some
$\de\in(0, 1)$,
there exists a radial continuous solution $w_{\de}$ of Eq. \eqref{12-4-3} in $B_{l}(0)$,
where $l$ is as in Lemma \ref{l:4-1}, with the property that $w_{\de}(0)=\de$ and $w_{\de}|_{\pa B_l(0)}<m$ and $\int_0^l |w'_{\de}(r)|^2r^{N-1-2\nu}dr<\infty$.
\end{lemma}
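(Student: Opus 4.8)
The plan is to obtain $w_\delta$ by rescaling the solution $w_1$ produced in Lemma~\ref{l:4-1}, exploiting the scaling structure of the equation $-\mathrm{div}(|x|^{-2\nu}\nabla w)+|x|^{-(q+1)\nu}w^q=0$. First I would observe that this equation is invariant under the transformation $w\mapsto w_\lambda$, where for $\lambda>0$ one sets $w_\lambda(x):=\lambda^{-\f{2}{q-1}} w(\lambda x)$ (one checks directly that $-\mathrm{div}(|x|^{-2\nu}\nabla w_\lambda)$ and $|x|^{-(q+1)\nu}w_\lambda^q$ both pick up the factor $\lambda^{-\f{2}{q-1}-2}$, using $(q+1)\nu - (q-1)\cdot\f{2}{q-1}\cdot\tfrac{1}{1}$ bookkeeping — the homogeneity of the weights is exactly what makes the two terms scale the same way). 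So $w_\lambda$ solves the same PDE on $B_{l/\lambda}(0)$, and in particular on $B_l(0)$ once $\lambda\ge 1$.

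Next, since $w_1$ is continuous on $[0,l]$ with $w_1(0)=1$, for any prescribed $m>0$ I would choose $\lambda\ge 1$ large enough that $\lambda^{-\f{2}{q-1}} w_1(y) < m$ for all $y$ in the range $|y|\le l$; this is possible because $w_1$ is bounded on $[0,l]$ (it is continuous on a compact set) and $\lambda^{-\f{2}{q-1}}\to 0$. Then set $\delta := \lambda^{-\f{2}{q-1}} w_1(0) = \lambda^{-\f{2}{q-1}}\in(0,1)$ (shrinking $\lambda$ if necessary so that $\delta<1$ — note $\lambda\ge 1$ already forces $\delta\le 1$, and $\delta<1$ holds as soon as $\lambda>1$), and define $w_\delta(x):=\lambda^{-\f{2}{q-1}} w_1(\lambda x)$ for $x\in B_l(0)$. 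By construction $w_\delta$ is a radial continuous function on $B_l(0)$, positive there, with $w_\delta(0)=\delta$ and $w_\delta|_{\partial B_l(0)} = \lambda^{-\f{2}{q-1}}w_1(\lambda\cdot)\big|_{|x|=l} < m$. That $w_\delta$ is a weak solution of \eqref{12-4-3} in $B_l(0)$ follows from the scaling invariance together with the fact, established in Lemma~\ref{l:4-1}, that $w_1$ is a weak solution on $B_l(0)$ (hence on the larger ball $B_{\lambda l}(0)$ is not needed — we only restrict): the change of variables in the weak formulation $\int |x|^{-2\nu}\nabla w_\delta\cdot\nabla\phi + \int|x|^{-(q+1)\nu}w_\delta^q\phi = 0$ reduces, after substituting $y=\lambda x$ and $\tilde\phi(y)=\phi(y/\lambda)$, to the weak formulation for $w_1$ against $\tilde\phi$, which vanishes. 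Finally, the finiteness $\int_0^l |w_\delta'(r)|^2 r^{N-1-2\nu}\,dr<\infty$ is inherited from the corresponding bound \eqref{22-4-1} for $w_1$ by the same change of variables (the weighted Dirichlet integral is scale-covariant with a positive power of $\lambda$, hence stays finite), and likewise $w_\delta\in H^1(B_l(0),|x|^{-2\nu})\cap L^{q+1}(B_l(0),|x|^{-(q+1)\nu})$.

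The only real point requiring care — and the step I would expect a referee to scrutinize — is verifying that the PDE is genuinely invariant under $w\mapsto \lambda^{-2/(q-1)}w(\lambda\,\cdot)$ with these particular weights, i.e.\ that the exponents $-2\nu$ on the gradient term and $-(q+1)\nu$ on the nonlinear term are compatible with the Emden--Fowler scaling exponent $\f{2}{q-1}$. This is a short computation but it is the heart of the matter; everything else (continuity, positivity, boundary value, membership in the weighted spaces) is then automatic from Lemma~\ref{l:4-1}. An alternative, if one prefers not to invoke scaling, would be to re-run the Banach fixed point argument of Lemma~\ref{l:4-1} directly with initial datum $w(0)=\delta$ in the integral equation $w(r)=\delta+\int_0^r s^{2\nu+1-N}\int_0^s t^{N-1-(q+1)\nu}w^q(t)\,dt\,ds$, choosing $\delta$ small so that the contraction holds on all of $(0,l)$ and so that the resulting solution, which satisfies $w(r)\le \delta + o(1)$ near $0$ and stays below $\delta(1+C l^{\,2-(q-1)\nu})$ throughout, has boundary value $<m$; but the rescaling argument is cleaner and reuses Lemma~\ref{l:4-1} verbatim.
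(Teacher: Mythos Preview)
Your scaling idea is sound in spirit, but the computation is wrong, and the error propagates to the domain inclusion. For $w_\lambda(x)=\lambda^{a}w_1(\lambda x)$ one has
\[
-\mathrm{div}\bigl(|x|^{-2\nu}\nabla w_\lambda\bigr)=\lambda^{a+2+2\nu}\bigl(-\mathrm{div}(|y|^{-2\nu}\nabla w_1)\bigr)\Big|_{y=\lambda x},
\qquad
|x|^{-(q+1)\nu}w_\lambda^{\,q}=\lambda^{aq+(q+1)\nu}\bigl(|y|^{-(q+1)\nu}w_1^{\,q}\bigr)\Big|_{y=\lambda x},
\]
so invariance forces $a+2+2\nu=aq+(q+1)\nu$, i.e.\ $a=\dfrac{2}{q-1}-\nu$, \emph{not} $a=-\dfrac{2}{q-1}$. (Your exponent would be correct for the equation in the original variable $u$, where the weights are absent; the transformation $v=|x|^{\nu}u$ shifts the scaling exponent by exactly $\nu$.) Since $q<\dfrac{2+\nu}{\nu}$ we have $a>0$, so to make $w_\lambda(0)=\lambda^{a}$ small you must take $\lambda<1$, not $\lambda\ge 1$. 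This is precisely what also fixes your domain problem: $w_1(\lambda x)$ is only defined for $|x|<l/\lambda$, and $B_{l/\lambda}(0)\supset B_l(0)$ holds for $\lambda<1$, whereas your claim ``$B_{l/\lambda}(0)\supset B_l(0)$ once $\lambda\ge 1$'' is simply backwards. With these two corrections your argument goes through cleanly: take $\lambda\in(0,1)$ small, set $\delta=\lambda^{a}$, and use that $w_\lambda\big|_{\partial B_l}=\lambda^{a}w_1(\lambda l)\to 0$ as $\lambda\to 0$.

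For comparison, the paper does not rescale at all. It solves the integral equation
\[
w_\delta(r)=\delta+\int_0^r s^{2\nu+1-N}\int_0^s t^{N-1-(q+1)\nu}w_\delta^{\,q}(t)\,dt\,ds
\]
directly for each $\delta\in(0,1]$, then proves two claims: (i) $\delta\mapsto w_\delta$ is monotone on $[0,l]$ (so $w_\delta\le w_1$ and all $w_\delta$ exist on the fixed interval $[0,l]$), and (ii) $w_\delta\to 0$ uniformly on $[0,l]$ as $\delta\to 0$, by passing to the limit in the integral equation and invoking Dini's theorem. This is essentially your ``alternative'' route. The corrected scaling argument is shorter and more conceptual---it reuses Lemma~\ref{l:4-1} verbatim and makes the dependence on $\delta$ explicit---whereas the paper's monotonicity-plus-Dini argument is more self-contained and does not rely on identifying the precise similarity exponent.
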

\begin{proof}
Given $\de>0$, let $w_{\de}$ be the solution of
\begin{equation}\label{12-4-5}
\left\{\begin{aligned}
&u''+\f{N-1-2\nu}{r}u'(r)=r^{-(q-1)\nu}u^q \quad\text{in}\quad (0, l_{\de}),\\
&u>0 \quad\text{in}\quad (0,l_{\de}),\\
&u(0) =\de,
\end{aligned}
\right.
\end{equation}
where $[0, l_{\de})$ is maximum neighbourhood of $0$ where the solution exists. Due to local existence, we have $l_\de>0$ (see for instance, Lemma \ref{l:4-1}).  Moreover, we can write the solution as
\be\lab{12-4-8}
w_{\de}(r)=\de+\int_{0}^{r}s^{2\nu+1-N}\int_{0}^{s}t^{N-1-(q+1)\nu}w_{\de}^q(t)dt ds.
\ee
\paragraph{\bf Claim 1}: If $0<\de_1<\de_2\leq 1$, then $w_{\de_1}\leq w_{\de_2}\leq w_1$ in $[0, l]$, where $w_1$ and $l$ are as in Lemma \ref{l:4-1}.\\
To see the claim, let $0<\de_1<\de_2\leq 1$. Since $w_{\de_1}(0)<w_{\de_2}(0)$, there exits $r_0>0$ such that $w_{\de_1}< w_{\de_2}$ in $[0, r_0]$. Define
$$S:= \{s\in[0,l]: w_{\de_1}(s)>w_{\de_2}(s)\}.$$
If $S=\emptyset$, then we are done. Suppose $S\not=\emptyset$. We define
$$\tilde r_0:=\inf S.$$
Clearly $\tilde r_0>0$. We show that   $\tilde r_0\not<l$.
Indeed, from \eqref{12-4-8}, we have
$$w_{\de_1}'(r)-w_{\de_2}'(r)=r^{2\nu+1-N}\int_{0}^{r}t^{N-1-(q-1)\nu}[w^q_{\de_1}(t)-w^q_{\de_2}(t)]dt.$$
Therefore $(w_{\de_1}-w_{\de_2})'(r)< 0$ for $r\in[0, \tilde{r}_0]$.
This implies $w_{\de_1}(\tilde{r}_0)< w_{\de_2}(\tilde{r}_0)$,
which is a contradiction to the definition of $\tilde r_0$. Hence the claim follows.

\paragraph{\bf Claim 2}: $w_{\de}\to 0$ uniformly in $[0,l]$, as $\de\to 0$.\\
Note that $\lim_{\de\downarrow 0}w_{\de}$ exists, since $w_{\de}>0$ and Claim 1 holds. Let $w:=\lim_{\de\downarrow 0}w_{\de}$. Using monotone convergence theorem, we pass the limit in \eqref{12-4-8} to obtain $$w(r)=\int_{0}^{r}s^{2\nu+1-N}\int_{0}^{s}t^{N-1-(q+1)\nu}w^q(t)dt ds.$$  Solution of this integral equation uniquely exists in $(0,l)$ (see for instance Lemma \ref{l:4-1}). Therefore $w=0$. Hence the claim follows by Dini's theorem. \\
Combining Claim 1 and Claim 2, the lemma follows.
\end{proof}

\vspace{3mm}

\begin{proof}[\bf Proof of Theorem \ref{t:low-est}]
Define, $v=|x|^{\nu}u$. Then it follows from \cite[Theorem 1.1]{Han} that $v\in H^1_0(\Omega, |x|^{-2\nu} )$ and $v$ satisfies the following equation:
\begin{equation}\label{12-4-1}
\left\{\begin{aligned}
-\text{div}(|x|^{-2\nu}\na v)&=&|x|^{-(p+1)\nu}v^p-|x|^{-(q+1)\nu}v^q \quad\text{in}\quad\Om,\\
v&>&0 \quad\text{in}\quad\Om,\\
v&\in& H^{1}_0(\Om, |x|^{-2\nu} )\cap L^{q+1}(\Om, |x|^{-(q+1)\nu}).
\end{aligned}
\right.
\end{equation}
By elliptic regularity theory $v\in C^2(\Omega\setminus\{0\})\cap C^1(\bar\Omega\setminus\{0\})$ (see \cite{GT}, \cite{Han}). It is easy to see that  $v$ is a super-solution of the following problem
\begin{eqnarray}\lab{12-4-2}
-\text{div}(|x|^{-2\nu}\na w)+|x|^{-(q+1)\nu}w^q &=&0 \quad\text{in}\quad B_l(0),\no\\
w&=&m \quad\text{on}\quad\pa B_l(0),\no\\
w&>&0 \quad\text{in}\quad B_l(0),\no\\
w&\in& H^{1}(B_l(0), |x|^{-2\nu} )\cap L^{q+1}(B_l(0), |x|^{-(q+1)\nu}),
\end{eqnarray}
where  $l>0$ is as in Lemma \ref{l:4-1} and $0<m<m_l=\min_{|x|=l}v$.\\
{\bf Claim}: If $w$ is any solution of \eqref{12-4-2}, then  $v\geq w$ in $B_l(0)$.\\
To see the claim, we note that $(v-w)$ satisfies
$$-\text{div}(|x|^{-2\nu}\na(v-w))\geq -|x|^{-(q+1)\nu}A(x)(v-w)\quad\text{in}\quad B_l(0),$$
where $0\leq A(x):=\f{v^q(x)-w^q(x)}{v(x)-w(x)}\leq q\,\text{max}[v(x), w(x)]^{q-1}$.
Moreover, $w\leq v$ on $\pa B_l(0)$. Thus taking $(v-w)^-$ as the test function we obtain
$$\int_{B_l(0)}|x|^{-2\nu}|\na(v-w)^-|^2dx+\int_{B_l(0)}|x|^{-(q+1)\nu}A(x)|(v-w)^-|^2dx\leq 0,$$ which implies $v\geq w$ in $B_l(0)$.\\
By Lemma \ref{l:4-2}, it follows that  Eq. \eqref{12-4-2} admits a solution $w_{\de}$ with $w_{\de}(0)=\de>0$. As a result $\lim_{|x|\to 0}v(x)\geq \de$, which in turn implies $$u(x)\geq c|x|^{-\nu}, \quad x\in B_{r_0}(0)\setminus \{0\},$$ for some $r_0>0$ small.
\end{proof}

\vspace{3mm}

\begin{proof}[\bf Proof of Theorem \ref{t:up-est}]
We prove this theorem in the spirit of \cite{Han}.\\
Define,
\begin{equation}\label{v-f}
v(x)=|x|^{\nu} u(x) \quad\text{and}\quad f(x, u)=u^p-u^q.
\end{equation}

Then Eq.\eqref{a4} reduces to
\begin{equation}\label{eq-1}
-div(|x|^{-2\nu}\na v)=|x|^{-(p+1)\nu}v^p-|x|^{-(q+1\nu)}v^q\quad \forall \   \ x\in \Omega\setminus\{0\}.
\end{equation}
By elliptic regularity theory $v\in C^2(\Omega\setminus\{0\})\cap C^1(\bar\Omega\setminus\{0\})$ (see \cite{GT}, \cite{Han}). Let $\rho>0$ small enough such that $B_{\rho}(0)\Subset\Omega$. For $s, l>1$, we choose the test function $\va$ as follows:
$$\va=\eta^2 v v_l^{2(s-1)}\in H^1_0(\Omega, |x|^{-2\nu}dx),$$
$$v_l=\text{min}\{v, l\}, \ \ \eta\in C^{\infty}_0(B_{\rho}(0)),$$
with the properties $0\leq\eta\leq 1$, $\eta=1$ in $B_r(0)$, $r<\rho$ and $|\na\eta|\leq\frac{4}{\rho-r}$. Using this test function $\va$, we obtain from \eqref{eq-1},
\begin{equation}\label{eq-2}
\Iom |x|^{-2\nu}\na v\na\va dx=\Iom \big(|x|^{-(p+1)\nu}v^p-|x|^{-(q+1\nu)}v^q\big)\va dx.
\end{equation}
Substituting the function $f$, RHS of \eqref{eq-2} can be simplified as below
\begin{equation}\label{eq-3}
RHS=\Iom |x|^{-(p+1)\nu}\eta^2v^{p+1}v_l^{2(s-1)}dx-\Iom |x|^{-(q+1)\nu}\eta^2v^{q+1}v_l^{2(s-1)}dx.
\end{equation}
After doing a standard computation, the LHS of \eqref{eq-2} can be rewritten as:
\begin{equation}\label{eq-4}
\Iom |x|^{-2\nu}\times \displaystyle\left(2\eta v v_l^{2(s-1)}\na\eta\na v+\eta^2v_l^{2(s-1)}|\na v|^2+2(s-1)\eta^2v_l^{2(s-1)}|\na v_l|^2\right)dx.
\end{equation}
Using Cauchy-Schwartz inequality, for any $\epsilon>0$ we have,
\begin{eqnarray}\label{eq-5}
|2\Iom |x|^{-2\nu}\eta v v_l^{2(s-1)}\na\eta\na v dx|&\leq&\epsilon\Iom |x|^{-2\nu}\eta^2v_l^{2(s-1)}|\na v|^2 dx\no\\
&+&C(\epsilon)\Iom |x|^{-2\nu}|\na\eta|^2|v|^2v_l^{2(s-1)} dx.
\end{eqnarray}
Combining  \eqref{eq-2}-- \eqref{eq-5} we obtain,
\begin{eqnarray}\label{eq-6}
&\displaystyle\Iom |x|^{-2\nu}\left(\eta^2v_l^{2(s-1)}|\na v|^2+2(s-1)\eta^2v_l^{2(s-1)}|\na v_l|^2\right)dx \no\\
&\leq C\displaystyle\Iom |x|^{-2\nu}|\na \eta|^2v^2v_l^{2(s-1)}dx\no\\
&+\displaystyle \Iom |x|^{-(p+1)\nu}\eta^2v^{p+1}v_l^{2(s-1)}dx \no\\
&-\displaystyle\Iom |x|^{-(q+1)\nu}\eta^2v^{q+1}v_l^{2(s-1)}dx.
\end{eqnarray}
We recall here Caffarelli-Kohn-Nirenberg inequality (see \cite{CKN}):
\begin{equation}\label{CKN}
\displaystyle\left(\Iom |x|^{-br}|w|^r dx\right)^\frac{2}{r}\leq C_{a,b}\Iom |x|^{-2a}|\na w|^2 dx \quad\forall\ \ w\in H^1_0(\Omega, |x|^{-2a}dx),
\end{equation}
where $-\infty<a<\frac{N-2}{2}$, $a\leq b\leq a+1$, $r=\frac{2N}{N-2+2(b-a)}$ and $C_{a,b}$ is a positive constant.\\
Let $w=\eta v v_l^{s-1}$ and $a=b=\nu<\frac{N-2}{2}$ in \eqref{CKN}. Then $r=2^*$. Consequently we get from \eqref{CKN},
\begin{equation}\label{eq-7}
\displaystyle\left(\Iom |x|^{-2^*{\nu}}|\eta v v_l^{s-1}|^{2^*}dx\right)^\frac{2}{2^*}\leq C_{a,a}\Iom |x|^{-2{\nu}}|\na(\eta v v_l^{s-1})|^2 dx.
\end{equation}
Using \eqref{eq-6}, we simplify the RHS of \eqref{eq-7} as in \cite{Han}, i.e.,
\begin{eqnarray}\label{eq-8}
 RHS &\leq& 2C_{a,a}\displaystyle\Iom |x|^{-2{\nu}}\no\\
 &\times& \left(|\na\eta|^2v^2v_l^{2(s-1)}+\eta^2v_l^{2(s-1)}|\na v|^2+(s-1)^2\eta^2v_l^{2(s-1)}|\na v_l|^2\right)dx\no\\
&\leq& Cs\displaystyle\Iom |x|^{-2{\nu}}|\na\eta|^2v^2v_l^{2(s-1)}
+Cs\displaystyle \Iom |x|^{-(p+1){\nu}}\eta^2v^{p+1}v_l^{2(s-1)}dx \no\\
&-&\displaystyle\Iom |x|^{-(q+1){\nu}}\eta^2v^{q+1}v_l^{2(s-1)}dx\no\\
&\leq& Cs\displaystyle\Iom |x|^{-2{\nu}}|\na\eta|^2v^2v_l^{2(s-1)}
+Cs\displaystyle \Iom |x|^{-(p+1){\nu}}\eta^2v^{p+1}v_l^{2(s-1)}dx.
\end{eqnarray}
For $p\geq 2^*-1$, choose $t>1$ as follows:
\begin{equation}\label{eq:p-t}
\frac{N}{2}<t<\frac{q+1}{p-1}.
\end{equation}
Note that for $p=2^*-1$ the interval $(\f{N}{2}, \f{q+1}{p-1})$ is always a nonempty set. On the other hand, $(\f{N}{2}, \f{q+1}{p-1})\not=\emptyset$, since $q>(p-1)\f{N}{2}-1$. From \eqref{eq:p-t} we have,
$$(p-1)t<q+1 \quad\text{and}\quad 2<\frac{2t}{t-1}<2^*.$$ Consequently
\begin{eqnarray}\label{eq-9}
\displaystyle \Iom |x|^{-(p+1)\nu}\eta^2v^{p+1}v_l^{2(s-1)}dx &=&\Iom \eta^2u^{p+1}v_l^{2(s-1)} dx\no\\
&=&\Iom |\eta v v_l^{s-1}|^2u^{p-1}|x|^{-2{\nu}}dx\no\\
&\leq& |u|_{L^{(p-1)t}(\Omega)}^{p-1}||x|^{-{\nu}}\eta v v_l^{s-1}|^2_{L^\frac{2t}{t-1}(\Omega)}\no\\
&\leq&C\displaystyle\big(\epsilon||x|^{-{\nu}}\eta v v_l^{s-1}|_{L^{2^*}(\Omega)} \no\\
&+&C(N,t)\epsilon^{-\frac{N}{2t-N}}||x|^{-{\nu}}\eta v v_l^{s-1}|_{L^2(\Omega)}\big)^2\no\\
&\leq& C\epsilon^2\displaystyle\left(\Iom|x|^{-2^*{\nu}}|\eta v v_l^{s-1}|^{2^*}dx\right)^\frac{2}{2^*}\no\\
&+&C\epsilon^{-\frac{2N}{2t-N}}\displaystyle\Iom |x|^{-2\nu}|\eta v v_l^{s-1}|^2dx.
\end{eqnarray}
Plugging \eqref{eq-9} into \eqref{eq-8} and then  \eqref{eq-8} into \eqref{eq-7}, we have
\begin{eqnarray}\label{eq-10}
\displaystyle\left(\Iom |x|^{-2^*{\nu}}|\eta v v_l^{s-1}|^{2^*}dx\right)^\frac{2}{2^*} &\leq& Cs\Iom |x|^{-2{\nu}}|\na\eta|^2v^2v_l^{2(s-1)}dx\no\\
&+& Cs\epsilon^2\displaystyle\left(\Iom|x|^{-2^*{\nu}}|\eta v v_l^{s-1}|^{2^*}dx\right)^\frac{2}{2^*}\no\\
&+&Cs\epsilon^{-\frac{2N}{2t-N}}\displaystyle\Iom |x|^{-2{\nu}}|\eta v v_l^{s-1}|^2dx.
\end{eqnarray}
By choosing $\epsilon=\frac{1}{\sqrt{2Cs}}$, we obtain from \eqref{eq-10}
\begin{eqnarray}\label{eq-11}
\displaystyle\left(\Iom |x|^{-2^*{\nu}}|\eta v v_l^{s-1}|^{2^*}dx\right)^\frac{2}{2^*} &\leq& Cs\Iom |x|^{-2{\nu}}|\na\eta|^2v^2v_l^{2(s-1)}dx\no\\
&+&Cs^\frac{2t}{2t-N}\displaystyle\Iom |x|^{-2{\nu}}|\eta v v_l^{s-1}|^2dx\no\\
&\leq& Cs^{\al}\displaystyle\Iom |x|^{-2{\nu}}(\eta^2+|\na\eta|^2)v^2 v_l^{2(s-1)}dx,
\end{eqnarray}
where $\al=\frac{2t}{2t-N}$. Moreover, it is not difficult to check that \begin{equation*}
\Iom |x|^{-2^*{\nu}}|\eta v v_l^{s-1}|^{2^*}dx\geq \displaystyle\Iom |x|^{-2^*{\nu}}|\eta|^{2^*}v^2v_l^{2^*s-2}dx.
\end{equation*}
Consequently as in \cite{Han}, we have
\begin{eqnarray}\label{eq-12}
\displaystyle\left(\Iom |x|^{-2^*{\nu}}|\eta|^{2^*}v^2v_l^{2^*s-2}dx\right)^\frac{2}{2^*}
&\leq& Cs^{\al}\displaystyle\Iom |x|^{-2{\nu}}(\eta^2+|\na\eta|^2)v^2 v_l^{2(s-1)}dx\no\\
&\leq& Cs^{\al}\displaystyle\Iom |x|^{-2^{*}{\nu}}(\eta^2+|\na\eta|^2)v^2 v_l^{2(s-1)}dx.
\end{eqnarray}
Substituting $\eta$ and $\na\eta$ we deduce
\begin{equation}\label{eq-13}
\displaystyle\left(\int_{B_r(0)} |x|^{-2^*{\nu}}v^2v_l^{2^*s-2}dx\right)^\frac{2}{2^*}
\leq\frac{Cs^{\al}}{(\rho-r)^2}\int_{B_{\rho}(0)}|x|^{-2^*{\nu}}v^2v_l^{2s-2}dx.
\end{equation}
Set $s^*$ and $s_j$ as follows:
$$\frac{N}{N-2}<s^*<\frac{q+1}{2} \quad\text{and}\quad s_j=s^*\displaystyle\left(\frac{2^*}{2}\right)^j, \quad j=1,2,\cdots. $$
If we take $s=s_j$ in \eqref{eq-13}, a straight forward computation yields:
\begin{equation}\label{eq-14}
\displaystyle\left(\int_{B_r(0)} |x|^{-2^*{\nu}}v^2v_l^{2s_{j+1}-2}dx\right)^\frac{1}{2s_{j+1}}
\leq\displaystyle\left(\frac{Cs^{\al}}{(\rho-r)^2}\right)^\frac{1}{2s_j}\left(\int_{B_{\rho}(0)}|x|^{-2^*{\nu}}v^2v_l^{2s_j-2}dx\right)^\frac{1}{2s_j}.
\end{equation}
Choose $\rho_0>0$ such that $B_{2\rho_0}\Subset\Omega$ and $r_j=\rho_0(1+\rho_0^j), \ \ j=1,2, \cdots.$ By taking $\rho=r_j$, $r=r_{j+1}$ in \eqref{eq-14} and following the calculation of \cite{Han} we find:
\begin{eqnarray}\label{eq-15}
\displaystyle\left(\int_{B_{r_{j+1}}(0)} |x|^{-2^*{\nu}}v^2v_l^{2s_{j+1}-2}dx\right)^\frac{1}{2s_{j+1}}
&\leq&\displaystyle\left(\frac{C}{(1-\rho_0)\rho_0}\right)^{\sum_{j=0}^{\infty}\frac{1}{2s_j}-\sum_{j=0}^{\infty}\frac{j}{2s_j}}\times\no\\
&&\displaystyle\prod_{j=0}^{\infty}s_j^\frac{\al}{2s_j}\left(\int_{B_{r_0}(0)} |x|^{-2^*{\nu}}v^2v_l^{2s^*-2}dx\right)^\frac{1}{2s^*}.
\end{eqnarray}
By standard computation it follows that (see \cite{Han})
$$\displaystyle\sum_{j=0}^{\infty}\frac{1}{2s_j}\leq C, \quad \sum_{j=0}^{\infty}\frac{j}{2s_j}\leq C \quad\text{and}\quad \prod_{j=0}^{\infty}s_j^\frac{\al}{2s_j}\leq C. $$
Since $2^*<2s^*<q+1$, after a straight forward computation as in \cite{Han}, we obtain
\begin{equation*}
\int_{B_{r_0}(0)} |x|^{-2^*{\nu}}v^2v_l^{2s^*-2}dx
\leq(\text{diam}\,\Omega)^{(2s^*-2^*){\nu}}\Iom u^{2s^*}dx\leq C.
\end{equation*}
As a result, from \eqref{eq-15} we have
\begin{equation}\label{eq-16}
\displaystyle\left(\int_{B_{r_{j+1}}(0)} |x|^{-2^*{\nu}}v^2v_l^{2s_{j+1}-2}dx\right)^\frac{1}{2s_{j+1}}\leq C.
\end{equation}
Moreover,
\begin{eqnarray}\label{eq-17}
\text{LHS of \eqref{eq-16}}&\geq& \displaystyle\left(\int_{B_{r_{j+1}}(0)} |x|^{-2^*{\nu}}v_l^{2s_{j+1}}dx\right)^\frac{1}{2s_{j+1}}\no\\
&\geq&(\text{diam}\, \Omega)^\frac{-2^*{\nu}}{2s_{j+1}}|v_l|_{L^{2s_{j+1}}(B_{\rho_0}(0))}
\end{eqnarray}
Combining \eqref{eq-17} with \eqref{eq-16}, we obtain
$$|v_l|_{L^{2s_{j+1}}(B_{\rho_0}(0))}\leq C(\text{diam}\,\Omega)^\frac{2^*{\nu}}{2s_{j+1}}.$$
Note that $s_{j+1}\to\infty$ as $j\to\infty$. Hence $|v_l|_{L^{\infty}(B_{\rho_0}(0))}\leq C$. Finally letting $l\to\infty$ we have $|v|_{L^{\infty}(B_{\rho_0}(0))}\leq C$, which in turn implies
$$u(x)\leq C|x|^{-\nu} \quad\forall\ \ x\in B_{\rho_0}(0)\setminus\{0\}.$$
\end{proof}

\vspace{3mm}

\begin{proof}[\bf Proof of Theorem \ref{t:up-est-2}]
We use an idea from \cite{GLS}.
If $u$ is a positive solution of Eq. \eqref{a4}, then $u$ satisfies
$$-\De u-\mu\f{u}{|x|^2}=-(1+o(1))u^q, \quad\text{in}\quad B_{R}(0),$$ for some $R>0$ small. Using the transformation $v=|x|^{\nu}u$, we get $v$ satisfies
\be\lab{16-4-3'}
-\text{div}(|x|^{-2\nu}\na v)=-(1+o(1))|x|^{-(q+1)\nu}v^q, \quad\text{in}\quad  B_{R}(0).
\ee
Therefore we can write
\be\lab{16-4-3}
-\text{div}(|x|^{-2\nu}\na v)=-A|x|^{-(q+1)\nu}v^q, \quad\text{in}\quad  B_{R}(0),
\ee
where $1-\de<A<1+\de$, for some $\de>0$.\\

{\bf Claim:} $v(x)\leq C|x|^{\nu-\f{2}{q-1}} \quad\text{in}\quad  B_{\f{2R}{3}}(0)\setminus\{0\}$, for some $C=C(N,q,p,R,\mu)>0$.\\

\vspace{2mm}

To see the claim, for $0<r<R$, set $$y=\f{x}{r} \quad\text{and}\quad w(y)=r^{-\nu+\f{2}{q-1}}v(x).$$
Then $w$ satisfies Eq. \eqref{16-4-3} in $B_1(0)$.\\
Now define $$W(y):=c\displaystyle\left[\left(\f{9}{16}-|y|^2\right)\left(|y|^2-\f{1}{16}\right)\right]^{-\ba},$$
where $\ba>\f{2}{q-1}$ and $c>0$ will be chosen later. Clearly $$W=\infty \quad\text{on}\quad \pa \big(B_{\f{3}{4}}(0)\setminus B_{\f{1}{4}}(0)\big).$$
We show that $\ba$ and $c$ in the definition of $W$ can be chosen such that $$-\text{div}(|x|^{-2\nu}\na W)\geq -A|x|^{-(q+1)\nu}W^q, \quad\text{in}\quad  B_{\f{3}{4}}(0)\setminus B_{\f{1}{4}}(0).$$
Since $W$ is radial, it is enough to show that
\be\lab{16-4-4}
W''+\f{N-1-2\nu}{r}W'\leq A r^{-(q-1)\nu}W^q,   \quad \f{1}{4}<r<\f{3}{4}.
\ee
By a direct computation, when
$ \f{1}{4}<r<\f{3}{4}$, we obtain
\Bea
W''+\f{N-1-2\nu}{r}W' &=&-2W\ba\big[-\big(\f{9}{16}-r^2\big)^{-1}+\big(r^2-\f{1}{16}\big)^{-1}\big](N+2r^2-2\nu)\no\\
&+&4r^2W\ba\big[\big(\f{9}{16}-r^2\big)^{-2}+\big(r^2-\f{1}{16}\big)^{-2}\big]\no\\
&\leq&CW\ba\big[\big(\f{9}{16}-r^2\big)^{-2}+\big(r^2-\f{1}{16}\big)^{-2}\big]\no\\
&\leq&CW\ba\big[\big(\f{9}{16}-r^2\big)^{-2}\big(r^2-\f{1}{16}\big)^{-2}\big]\no\\
&=&C\ba W^{1+\f{2}{\ba}}\no
\Eea
Since $\ba>\f{2}{q-1}$ implies $1+\f{2}{\ba}<q$, \eqref{16-4-4} follows.
Therefore we obtain,
$$-\text{div}(|x|^{-2\nu}\na(W-w))\geq -A|x|^{-(q+1)\nu}B(x)(W-w)\quad\text{in}\quad B_{\f{3}{4}}(0)\setminus B_{\f{1}{4}}(0),$$
where $0\leq B(x):=\f{W^q(x)-w^q(x)}{W(x)-w(x)}\leq q\,\text{max}[W(x), w(x)]^{q-1}$.
Moreover, $(W-w)^-=0$ on $\pa(B_{\f{3}{4}}(0)\setminus B_{\f{1}{4}}(0))$. Thus taking $(W-w)^-$ as the test function we obtain
$$\int_{B_{\f{3}{4}}(0)\setminus B_{\f{1}{4}}(0)}|x|^{-2\nu}|\na(W-w)^-|^2dx+\int_{B_{\f{3}{4}}(0)\setminus B_{\f{1}{4}}(0)}A|x|^{-(q+1)\nu}B(x)|(W-w)^-|^2dx\leq 0,$$ which implies $w\leq W$ in $B_{\f{3}{4}}(0)\setminus B_{\f{1}{4}}(0)$.\\
In particular,
$$w(y)\leq \max_{\f{1}{3}<|y|<\f{2}{3}}W(y) \quad\text{in}\quad B_{\f{2}{3}}(0)\setminus B_{\f{1}{3}}(0),$$
which yields
$$\max_{\f{r}{3}<|x|<\f{2r}{3}}v(x)\leq C|x|^{\nu-\f{2}{q-1}} .$$
Since $0<r<R$ was arbitrarily chosen, the claim follows .

Hence $u(x)\leq C|x|^{-\f{2}{q-1}}$ in $B_{\f{2R}{3}}(0)\setminus\{0\}$. From Theorem \ref{t:up-est}, it also follows that $u(x)\leq C|x|^{-\nu}$. Since $q>\f{2+\nu}{\nu}$ implies $|x|^{-\f{2}{q-1}}\leq C|x|^{-\nu}$, the theorem follows.
\end{proof}

\vspace{3mm}

\begin{proof}[{\bf Proof of Theorem \ref{t:low-est-2}}]
If $u$ is a positive solution of Eq. \eqref{a4}, then as in the proof of Theorem \ref{t:up-est-2}, $u$ satisfies
$$-\De u-\mu\f{u}{|x|^2}=-(1+o(1))u^q, \quad\text{in}\quad B_{R}(0),$$ for some $R>0$ small. Using the transformation $v=|x|^{\nu}u$, we get $v$ satisfies
\be
-\text{div}(|x|^{-2\nu}\na v)=-(1+o(1))|x|^{-(q+1)\nu}v^q, \quad\text{in}\quad  B_{R}(0)\no.
\ee
Given $\de>0$, we can write
\be\lab{21-4-1}
-\text{div}(|x|^{-2\nu}\na v)=-A|x|^{-(q+1)\nu}v^q, \quad\text{in}\quad  B_{R}(0),
\ee
where $1-\de<A<1+\de$. Define
\be\lab{21-4-2}
V(x):= c|x|^{\nu-\f{2}{q-1}},
\ee
where $c<\min\{c_1, c_2\}$, $$c_1:= R^{-\nu+\f{2}{q-1}}\min_{|x|=R} v,$$
and $c_2$ is defined in \eqref{21-4-8}.
 Therefore, it is easy to see
\be\lab{21-4-3}
v\geq V \quad\text{on}\quad \pa B_R(0).
\ee

\noi {\bf Claim:} $-\text{div}(|x|^{-2\nu}\na V)\leq -A|x|^{-(q+1)\nu}V^q$ in   $B_{R}(0)$.\\
To prove the claim, we note that since $V$ is radial, it is enough to show that
$$V''+\f{N-1-2\nu}{r}V'\geq Ar^{-(q-1)\nu}V^q \quad r\in(0, R).$$
Using the Emden-Fowler transformation
\be\lab{21-4-4}
y(t)=\al^{\nu}V(r), \quad t=(\f{\al}{r})^{\al},
\ee
where $\al=N-2-2\nu$, it is equivalent to prove that
\be\lab{21-4-5}
y''(t)\geq At^\f{-(2\al+2)+(q-1)\nu}{\al}y^q(t), \quad t>(\f{\al}{R})^{\al}.
\ee
Using \eqref{21-4-2} in \eqref{21-4-4}, it is not difficult to see that $y(t)=c\al^{-\f{2}{q-1}}t^{-\f{1}{\al}(\nu-\f{2}{q-1})}$.
Consequently, by a straight forward computation we obtain,
\be\lab{21-4-6}
y''(t)=c\al^{-\f{q+1}{q-1}}\displaystyle\left(\nu-\f{2}{q-1}\right)\left[\big(\nu-\f{2}{q-1}\big)\f{1}{\al}+1\right]t^{-\big(\nu-\f{2}{q-1}\big)\f{1}{\al}-2}.
\ee
On the other hand, by direct computation it follows
\be\lab{21-4-7}
At^\f{-(2\al+2)+(q-1)\nu}{\al}y^q(t)=A(c\al^{-\f{2}{q-1}})^q t^{-\big(\nu-\f{2}{q-1}\big)\f{1}{\al}-2}.
\ee
Define
\be\lab{21-4-8}
c_2:=\displaystyle\left(\f{1}{A}\al \left(\nu-\f{2}{q-1}\right)\left[\big(\nu-\f{2}{q-1}\big)\f{1}{\al}+1\right]\right)^\f{1}{q-1}.
\ee
Note that $q>\f{2+\nu}{\nu}$ implies $\nu-\f{2}{q-1}>0$. Therefore,
since $c<\min\{c_1, c_2\}$,  comparing \eqref{21-4-6} and \eqref{21-4-7}, we
conclude \eqref{21-4-5} holds true. Hence the claim follows.

We also note that  both $v$ and $V$ are bounded in $B_R(0)$ (for $V$ it follows from Theorem \ref{t:up-est-2}). Therefore combining the Claim and \eqref{21-4-3} and using comparison principle as in previous theorem, we obtain $v\geq V$ in $B_R(0)$. This in turn implies, $u(x)\geq c|x|^{-\f{2}{q-1}}$ in $B_R(0)\setminus\{0\}$ which completes the proof.
\end{proof}

\vspace{3mm}

\subsection{The Critical Case $q=\f{2+\nu}{\nu}$}
\begin{proof}[{\bf Proof of Theorem \ref{t:critical}}]
Let $u$ be any radial solution of Eq.\eqref{a4} with $q=\f{2+\nu}{\nu}$. Note that this implies $\nu=\f{2}{q-1}$. Then as in  the proof of previous theorem, $v=r^{\nu}u$ satisfies
\be\lab{27-4-1'}
v''+\f{N-1-2\nu}{r}v'=Ar^{-2}v^q, \quad r\geq R,
\ee
where $1-\de<A<1+\de$, for some $\de>0$ (see \eqref{21-4-1}).
Using the Emden-Fowler transformation
\be\lab{27-4-1}
y(t)=\al^{\nu}v(r), \quad t=(\f{\al}{r})^{\al},
\ee
where $\al=N-2-2\nu$, \eqref{27-4-1'} reduces to
\be\lab{27-4-2}
t^2y''(t)-Ay^q(t)=0, \quad t>(\f{\al}{R})^{\al}.
\ee
{\bf Claim:} $y(t)\to 0$ as $t\to\infty$.\\
To see the claim, we note that for large $t$, $y'$ is increasing and nonnegative. From Theorem \ref{t:up-est}, we have $v$ is bounded near $0$. Therefore $y$ is bounded near infinity. Using this fact, it is easy to check that $\lim_{t\to\infty}y'(t)=0$. Consequently, $y'(t)\leq 0$ for large $t$ which implies $y$ is decreasing for large $t$. Hence
$\lim_{t\to\infty}y(t)=c<+\infty$.  If $c\neq 0$
$$y'(\theta)= -\int_{\theta}^{\infty} \frac{y^q}{s^2} ds,  \text{  implies  }  y(T)=y(t)+ \int_{T}^{t} \int_{\theta}^{\infty} \frac{y^q}{s^2} ds d\theta .$$
Hence we have
$$ y(T)\geq y(t)+ (\frac{c}{2})^q \log \frac{t}{T}. $$
Since $y$ is bounded, taking the limit $t\to\infty$ in the above expression yields a contradiction. Hence $c=0$ and the claim follows.

Setting
 \be\lab{28-4-1}
 t=e^s \quad\text{and}\quad x(s)=y(t),\ee  \eqref{27-4-2} yields
\be\lab{27-4-3}
x''(s)-x'(s)-Ax^q(s)=0 \quad s\geq R',
\ee
where $R'=\log\f{\al}{R}$. We are only interested in the solutions of \eqref{27-4-3},  $x(s) \to 0$ as $s \to \infty.$ Following an argument along the same line of \cite[Lemma 3.2]{Veron}, it can be shown that
\be\lab{27-4-4}
x(s)=\displaystyle\left(\f{1}{(q-1)s}\right)^\f{1}{q-1}\left(1+\f{q}{(q-1)^2}\f{\log s}{s}(1+o(1))\right).
\ee
Using  \eqref{27-4-1} and \eqref{28-4-1} and the fact that $\nu=\f{2}{q-1}$, we obtain
\be\lab{27-4-5}
v(r)=\displaystyle\left(\f{\al}{q-1}\right)^\f{\nu}{2}\left(\log\f{\al}{r}\right)^{-\f{\nu}{2}}\left[1+\f{q}{(q-1)^2}\f{\log(\al\log\f{\al}{r})}{\al\log\f{\al}{r}}(1+o(1))\right].
\ee
Therefore
$$u(r)=\displaystyle\left(\f{\al}{q-1}\right)^\f{\nu}{2} r^{-\nu}(-\log r)^{-\f{\nu}{2}}\left(1-
\f{\log\al}{\log r}\right)^{-\f{\nu}{2}}\left[1+\f{q}{(q-1)^2}\f{\log(\al\log\f{\al}{r})}{\al\log\f{\al}{r}}(1+o(1))\right].$$
Hence it is easy to see that
$$\lim_{|x|\to 0}\f{u(x)}{|x|^{-\nu}\big|\log |x|\big|^{-\f{\nu}{2}}}=\displaystyle\left(\f{\al\nu}{2}\right)^\f{\nu}{2}.$$

\end{proof}

\subsection{Gradient estimate}

In this subsection we establish gradient estimate of any solution of Eq. \eqref{a4} near origin. More precise we prove Theorem \ref{t:grad}. Towards this goal, first we need the following two lemmas.
\begin{lemma}\lab{l:grad}
Let $\rho_0, \rho$ be as in Theorem \ref{t:up-est} and Theorem \ref{t:up-est-2}, respectively, $u$ be a weak solution of Eq.  \eqref{a4} and $p, q$ be as in Theorem \ref{t:grad}. Then there exists $\mu^*=\mu^*(N,q)>0$ and  a constant $C$ depending on $N, p, q, \mu$ such that  $u$ satisfies
\begin{center}
$\displaystyle\fint_{B_{\f{|x|}{4}}(x)}|\na u(x)|^2dx\leq \left\{\begin{array}{lll}
C|x|^{-2(\nu+1)} \quad\text{if}\quad \mu\in[0,\mu^*),\\
C|x|^{-2(\f{q+1}{q-1})} \quad\text{if}\quad \mu\in [\mu^*,\bar\mu),
\end{array}
\right.$
\end{center}
for $0<|x|<\f{1}{2}\min\{\rho_0, \rho\}$.
\end{lemma}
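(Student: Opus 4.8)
The plan is to argue by scaling around each fixed point $x\neq 0$: one normalises $u$ on the ball $B_{r/2}(x)$ (with $r:=|x|$) so that it solves a uniformly elliptic equation with bounded coefficients and bounded right hand side on a fixed ball, applies an interior energy (Caccioppoli) estimate there, and then rescales back. To begin, note that for $r$ small one has $B_{r/2}(x)\Subset\Om\setminus\{0\}$, so $u$ is a classical solution of \eqref{a4} on $B_{r/2}(x)$ by elliptic regularity, and every $z\in B_{r/2}(x)$ satisfies $\f{r}{2}\le|z|\le\f{3r}{2}$. Set
\begin{equation*}
m:=\begin{cases}\nu & \text{if } \mu\in[0,\mu^*),\\ \f{2}{q-1} & \text{if } \mu\in[\mu^*,\bar\mu),\end{cases}
\end{equation*}
and recall that $\mu<\mu^*\iff q<\f{2+\nu}{\nu}$ while $\mu=\mu^*\iff q=\f{2+\nu}{\nu}\iff\nu=\f{2}{q-1}$. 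Theorem \ref{t:up-est} gives $u(z)\le C|z|^{-\nu}$ when $\mu\in[0,\mu^*)$, and Theorem \ref{t:up-est-2} (for $q>\f{2+\nu}{\nu}$), resp. Theorem \ref{t:up-est}(ii) (for $q=\f{2+\nu}{\nu}$, where $\nu=\f{2}{q-1}$), gives $u(z)\le C|z|^{-\f{2}{q-1}}$ when $\mu\in[\mu^*,\bar\mu)$; since $|z|\sim r$ on $B_{r/2}(x)$, in both cases $u(z)\le C r^{-m}$ there.

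Next I would rescale. Writing $e:=x/r\in S^{N-1}$ and $v_r(y):=r^{m}u(x+ry)$ for $y\in B_{1/2}(0)$, a direct computation from \eqref{a4} shows that
\begin{equation*}
-\De v_r-\f{\mu}{|e+y|^2}\,v_r=r^{\,m+2-mp}\,v_r^{\,p}-r^{\,m+2-mq}\,v_r^{\,q}\qquad\text{in } B_{1/2}(0).
\end{equation*}
On $B_{1/2}(0)$ one has $|e+y|\ge\f12$, so the Hardy coefficient is $\le 4\bar\mu$, and $\|v_r\|_{L^\infty(B_{1/2}(0))}\le C$ by the previous step. The key check is that the powers of $r$ on the right are nonnegative: if $m=\nu$ then $m+2-mq=\nu+2-\nu q>0$ exactly because $q<\f{2+\nu}{\nu}$, and $m+2-mp>m+2-mq>0$ since $p<q$; if $m=\f{2}{q-1}$ then $m+2-mq=0$ and $m+2-mp=\f{2(q-p)}{q-1}>0$. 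Hence $v_r$ solves $-\De v_r=g_r$ on $B_{1/2}(0)$ with $\|g_r\|_{L^\infty(B_{1/2}(0))}+\|v_r\|_{L^\infty(B_{1/2}(0))}\le C$, uniformly in small $r$.

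Finally, I would apply Caccioppoli's inequality on $B_{1/4}(0)\Subset B_{1/2}(0)$: testing with $\phi^2 v_r$ for a fixed cut-off $\phi\in C_c^\infty(B_{1/2}(0))$ with $\phi\equiv1$ on $B_{1/4}(0)$ gives $\int_{B_{1/4}(0)}|\na v_r|^2\le C(\|v_r\|_{L^\infty}^2+\|g_r\|_{L^\infty}\|v_r\|_{L^\infty})\le C$. Since $\na_y v_r(y)=r^{m+1}(\na u)(x+ry)$, the substitution $z=x+ry$ yields $\int_{B_{r/4}(x)}|\na u|^2=r^{\,N-2(m+1)}\int_{B_{1/4}(0)}|\na v_r|^2\le C\,r^{\,N-2(m+1)}$, and dividing by $|B_{r/4}(x)|=c_N r^N$ gives $\fint_{B_{r/4}(x)}|\na u|^2\le C r^{-2(m+1)}=C|x|^{-2(m+1)}$. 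For $m=\nu$ this is $C|x|^{-2(\nu+1)}$, and for $m=\f{2}{q-1}$ one has $-2(m+1)=-2\,\f{q+1}{q-1}$, which is the asserted bound.

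The only genuinely delicate point is choosing the scaling exponent $m$ so that $v_r$ stays bounded and the rescaled nonlinearity does not blow up: this forces $m$ to be the true order of the singularity of $u$ at $0$, namely $\nu$ when $q<\f{2+\nu}{\nu}$ and $\f{2}{q-1}$ when $q\ge\f{2+\nu}{\nu}$, which is precisely the dichotomy supplied by Theorems \ref{t:up-est} and \ref{t:up-est-2}. Once $m$ is fixed, the signs of $m+2-mp$ and $m+2-mq$ come out right automatically, and the remainder is the routine rescaling/Caccioppoli argument; I do not anticipate any further obstacle.
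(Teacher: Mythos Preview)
Your argument is correct and is essentially the same Caccioppoli-type estimate as the paper's, just packaged via rescaling rather than ``in place''. The paper fixes $x$, takes a cut-off $\eta$ supported in $B_{|x|/2}(x)$ with $\eta\equiv1$ on $B_{|x|/4}(x)$ and $|\na\eta|\le 8/|x|$, tests \eqref{a4} directly with $\eta^2 u$, and bounds the resulting right-hand side $\int_{2B}\big(u^2|\na\eta|^2+\mu u^2/|z|^2+u^{p+1}-u^{q+1}\big)$ term by term using $u\le C|z|^{-m}$ from Theorems~\ref{t:up-est}/\ref{t:up-est-2}; your rescaling $v_r(y)=r^m u(x+ry)$ simply performs the same computation in coordinates where the ball has unit size and all quantities are $O(1)$. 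The only substantive ingredients---the pointwise upper bounds on $u$ and the observation that the nonlinear exponents satisfy $m+2-mp\ge0$, $m+2-mq\ge0$ (equivalently, that $u^{p+1}$ and $u^{q+1}$ contribute at worst like $|x|^{-2(m+1)}$ after averaging)---are identical in both versions.
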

\begin{proof}
Define $\tilde\rho_0:=\f{1}{2}\min\{\rho_0, \rho\}$.  Fix $x\in\Rn$ such that $0<|x|<\tilde\rho_0$. Let $B=B_{\f{|x|}{4}}(x)$ and $2B= B_{\f{|x|}{2}}(x)$. Choose $\eta\in C^{\infty}_0(2B)$ such that $0\leq\eta\leq 1$, $\eta\equiv 1$ on $B$ and $|\na \eta |<\f{8}{|x|}$. Define $\va:=\eta^2u$. Using this test function $\va$, we obtain from Eq. \eqref{a4}
$$\displaystyle\int_{2B}\na u\na\va dx=\int_{2B}\left(\f{\mu u^2\eta^2}{|x|^2}+u^{p+1}\eta^2-u^{q+1}\eta^2\right)dx.$$
Moreover, by a straight forward computation it follows
$$\displaystyle\int_{2B}\na u\na\va dx\geq \f{1}{2}\int_{B}|\na u|^2dx-C\int_{2B}u^2|\na\eta|^2dx.$$
Therefore we have
\be\lab{20-4-1}
\displaystyle\int_{B}|\na u|^2dx\leq C\int_{2B}\left(u^2|\na\eta|^2+\f{\mu u^2\eta^2}{|x|^2}+u^{p+1}\eta^2-u^{q+1}\eta^2\right)dx.
\ee
 Define
\be\lab{mu-star}
\mu^*=\displaystyle\left(\f{N-2}{2}\right)^2-\left(\f{N-2}{2}-\f{2}{q-1}\right)^2.
\ee
We observe that $ \mu<\mu^*\Longleftrightarrow q<\f{2+\nu}{\nu}$.\\

{\bf Case 1:} $q<\f{2+\nu}{\nu}$.\\

Applying Theorem \ref{t:up-est} in \eqref{20-4-1}, we obtain
\be\lab{17-1}
\displaystyle\int_{B}|\na u|^2dx\leq C\left(|x|^{-2\nu+N}+|x|^{-2\nu-2+N}+|x|^{-(p+1)\nu+N}+|x|^{-(q+1)\nu+N}\right),
\ee
for every $x$ satisfying $0<|x|<\tilde\rho_0$ . Therefore from \eqref{17-1}, we have
\be\lab{17-2}
\displaystyle\int_{B}|\na u|^2dx\leq C |x|^{-(2\nu+2)+N} \quad\text{if}\quad\mu\in(0,\mu^*).\ee

{\bf Case 2:} $q\geq\f{2+\nu}{\nu}$.\\

In this case we have  $\mu\geq\mu^*$. Applying Theorem \ref{t:up-est-2} in \eqref{20-4-1}, we obtain
\bea\lab{17-3}
\displaystyle\int_{B}|\na u|^2dx &\leq& C\left(|x|^{-\f{4}{q-1}+N}+|x|^{-2(\f{q+1}{q-1})+N}+|x|^{-2(\f{p+1}{q-1})+N}+|x|^{-2(\f{q+1}{q-1})+N}\right)\no\\
&\leq& C|x|^{-2(\f{q+1}{q-1})+N},
\eea
for $0<|x|<\tilde\rho$.

Combining \eqref{17-2} and \eqref{17-3}, the lemma follows.
\end{proof}
The next lemma is due to Xiang, see \cite[Proposition 2.1]{Xiang}.
\begin{lemma}\lab{l:Xiang}
Let $\Om$ be a domain in $\Rn$, $f\in L^{\infty}_{loc}(\Om)$ and $u\in H^1_{loc}(\Om)$ be a weak solution of the equation $$-\De u=f \quad\text{in}\quad\Om.$$ Then for any $B_{2R}(x_0)\subseteq\Om$, it holds
$$\sup_{B_{\f{R}{2}}(x_0)}|\na u|\leq C\displaystyle\left(\fint_{B_{R}(x_0)}|\na u(x)|^2dx\right)^\f{1}{2}+CR|f|_{L^{\infty}(B_R(x_0))}.$$
\end{lemma}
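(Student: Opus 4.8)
Since this lemma is quoted from \cite{Xiang} rather than proved here, I will only indicate the route I would take: it is a classical interior Lipschitz estimate for the Poisson equation, obtained by reducing to the unit scale and splitting $u$ into a Newtonian potential plus a harmonic function. First I would rescale: replacing $u$ by $\tilde u(y)=u(x_0+Ry)$ and $f$ by $\tilde f(y)=R^2 f(x_0+Ry)$, so that $-\De\tilde u=\tilde f$ in $B_2(0)$ and $\na_y\tilde u(y)=R\,(\na u)(x_0+Ry)$, it suffices to establish
\[
\sup_{B_{1/2}(0)}|\na u|\leq C\Big(\fint_{B_1(0)}|\na u|^2\,dx\Big)^{1/2}+C\,|f|_{L^\infty(B_1(0))},
\]
because the three terms then scale as $R$, $R$ and $R^2$, so dividing the rescaled inequality by $R$ reproduces exactly the asserted estimate (the hypothesis $B_{2R}(x_0)\subseteq\Om$ is precisely what makes $\tilde u$ defined on $B_2(0)$).

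Next, on $B_1=B_1(0)$ I would write $u=w+h$, where $w(x)=\int_{B_1}\Phi(x-y)f(y)\,dy$ is the Newtonian potential of $f\mathbf{1}_{B_1}$, with $\Phi$ the fundamental solution of $-\De$, so that $-\De w=f$ weakly in $B_1$, and $h:=u-w$ is harmonic in $B_1$ since $-\De h=-\De u+f=0$. Interior elliptic regularity (from $f\in L^\infty_{loc}$) gives $u\in C^1_{loc}(\Om)$, so all gradients are meant pointwise. Since $|\na\Phi(z)|\leq C|z|^{1-N}$ is locally integrable, $w$ satisfies $\|\na w\|_{L^\infty(B_1)}\leq C\,|f|_{L^\infty(B_1)}$, which both disposes of the potential part and bounds $\big(\fint_{B_1}|\na w|^2\big)^{1/2}$ by $C\,|f|_{L^\infty(B_1)}$.

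For the harmonic part I would use that each $\pa_j h$ is again harmonic in $B_1$ and hence satisfies the mean value property; the classical interior estimate for harmonic functions then yields $\sup_{B_{1/2}}|\na h|\leq C\big(\fint_{B_1}|\na h|^2\big)^{1/2}$, and by the triangle inequality together with the bound on $\na w$ this is at most $C\big(\fint_{B_1}|\na u|^2\big)^{1/2}+C\,|f|_{L^\infty(B_1)}$. Adding $\sup_{B_{1/2}}|\na w|\leq\|\na w\|_{L^\infty(B_1)}$ gives the unit-scale estimate, and undoing the scaling completes the argument. There is no real obstacle here: the only point that needs a little care is the bookkeeping of the powers of $R$ in the rescaling, so that the inhomogeneity contributes exactly $C\,R\,|f|_{L^\infty(B_R(x_0))}$ and not some other power; the analytic input — potential estimates and the mean value property for derivatives of harmonic functions — is entirely standard, which is presumably why the authors simply cite it.
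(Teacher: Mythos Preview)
Your proposal is correct. The paper does not prove this lemma at all: it simply attributes it to Xiang \cite[Proposition 2.1]{Xiang} and moves on, so there is no proof in the paper to compare against. Your sketch --- rescale to unit scale, split $u$ into a Newtonian potential of $f\mathbf{1}_{B_1}$ plus a harmonic remainder, bound $\nabla w$ pointwise by $C|f|_{L^\infty}$ via the kernel estimate $|\nabla\Phi(z)|\le C|z|^{1-N}$, and control $\nabla h$ on $B_{1/2}$ by the $L^2$ average of $\nabla h$ on $B_1$ using the mean value property for the harmonic functions $\partial_j h$ --- is the standard route and is carried out cleanly; the scaling bookkeeping you flag is exactly right, yielding the factor $R$ in front of $|f|_{L^\infty(B_R(x_0))}$.
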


\vspace{3mm}

\begin{proof}[\bf Proof of Theorem \ref{t:grad}] Let $u$ be a weak solution of \eqref{a4} and $\tilde\rho$ be as in Lemma \ref{l:grad}. Then we can write $-\De u=f(x)$, where
$$f(x)=\mu\f{u}{|x|^2}+u^p-u^q.$$

{\bf Case 1:} $q<\f{2+\nu}{\nu}$.\\
In this case by Theorem \ref{t:up-est}, it follows $|f(x)|\leq C(|x|^{-\nu-2}+|x|^{-\nu p}+|x|^{-\nu q})$. Since $q<\f{2+\nu}{\nu}\Longleftrightarrow \mu<\mu^*$, we have
\begin{equation}\lab{17-4}
|f(x)|\leq C|x|^{-\nu-2} \quad\text{if}\quad \mu\in[0,\mu^*),
\end{equation}
for $0<|x|<\tilde\rho$.

{\bf Case 2:} $q\geq\f{2+\nu}{\nu}$.\\
In this case By Theorem \ref{t:up-est-2}, we obtain
\be\lab{17-4'}
|f(x)| \leq C(|x|^{-\f{2q}{q-1}}+|x|^{-\f{2p}{q-1}})\leq C|x|^{-\f{2q}{q-1}}, \quad \mu\in [\mu^*,\bar\mu)
\ee
for $0<|x|<\tilde\rho$.

 Consequently, in both Case 1 and Case 2,  $f\in L^{\infty}_{loc}(B_{\rho_0}(0)\setminus\{0\})$. As a result, for any $x\in B_{\tilde\rho}(0)\setminus\{0\}$, we apply Lemma \ref{l:Xiang}
on the domain $B_{\f{|x|}{2}}(x)$ to obtain that
$$\sup_{B_{\f{|x|}{8}}(x)}|\na u|\leq C\displaystyle\left(\fint_{B_{\f{|x|}{4}}(x)}|\na u(x)|^2dx\right)^\f{1}{2}+C|x||f|_{L^{\infty}(B_{\f{|x|}{4}}(x))}.$$
Combining  \eqref{17-4}, \eqref{17-4'} and Lemma \ref{l:grad}, we obtain from the above expression
\begin{center}
$\displaystyle\sup_{B_{\f{|x|}{8}}(x)}|\na u|\leq \left\{\begin{array}{lll}
C|x|^{-(\nu+1)} \quad\text{if}\quad \mu\in[0,\mu^*),\\
C|x|^{-(\f{q+1}{q-1})} \quad\text{if}\quad \mu\in [\mu^*,\bar\mu),
\end{array}
\right.$
\end{center}
for every $x$ satisfying $0<|x|<\tilde\rho$. This completes the theorem.
\end{proof}

\section{Holder continuity and Green function estimates}
\begin{lemma} \label{ghil}Let $R>0$ be given a small number. Then Green function defined in \eqref{green}  satisfies
\be\label{sanj1} \sup_{r/2<|x-y|< r } G(x, y) \leq  C\int_{r}^{R}\frac{ t^2}{w({B_{t}(x)})} \frac{dt}{t}\ee
where $w({B_{t}(x)})= \displaystyle\int_{|x-y|< t} |y|^{-2\nu}dy$  with $N-2\nu-2>0$ and $r\in (0, \f{R}{2})$ and $dist(x,\pa\Om)>R$, $dist(y,\pa\Om)>R$.
In fact, we have \be\label{sanj2} G(x, y) \leq  \frac{ C r^2}{w({B_{r}(x)})}.\ee
\end{lemma}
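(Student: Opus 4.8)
The plan is to bound $G$ by the fundamental solution of the same operator on all of $\Rn$ and then invoke the standard Green--function bound for $A_2$-weighted divergence-form operators; the only genuinely new point is passing from an integral over $(|x-y|,\infty)$ to one over $(r,R)$, and this is where the hypothesis $N-2\nu-2>0$ gets used.

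First I would record the elementary two-sided volume bound
\begin{equation*}
c_1\, t^N(|x|+t)^{-2\nu}\ \le\ w(B_t(x))\ \le\ c_2\, t^N(|x|+t)^{-2\nu},\qquad t>0,
\end{equation*}
with $c_1,c_2$ depending only on $N$ and $\nu$; one checks this by splitting into the cases $|x|\ge 2t$ (where $|y|\sim|x|$ uniformly on $B_t(x)$) and $|x|<2t$ (where $B_t(x)\subset B_{3t}(0)$ gives the upper bound, while $B_{t/2}(x)\subset B_t(x)$, on which $|y|\le \tfrac52 t$, gives the lower bound). Since $0<2\nu<N-2<N$, the weight $w(y)=|y|^{-2\nu}$ is a global $A_2$ weight on $\Rn$, and since $N-2\nu-2>0$ the integral $\int_1^\infty t^2\,w(B_t(x))^{-1}\,dt/t$ converges; hence $Lu:=-\text{div}(|y|^{-2\nu}\na u)$ is transient on $\Rn$, i.e.\ it admits a positive fundamental solution $\Gamma(x,y)$ with $\Gamma(x,\cdot)\to 0$ at infinity, which satisfies the weighted Littman--Stampacchia--Weinberger bound
\begin{equation*}
\Gamma(x,y)\ \le\ C\int_{|x-y|}^{\infty}\frac{t^2}{w(B_t(x))}\,\frac{dt}{t}.
\end{equation*}
This last estimate comes from the De Giorgi--Nash--Moser theory for $A_2$-weighted operators together with the usual capacity/level-set iteration, and I expect setting up and quoting this input correctly to be the main obstacle in the proof.

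Next, since $\mathrm{dist}(x,\pa\Om)>R$ the ball $B_R(x)$ is compactly contained in $\Om$, and $\Gamma(x,\cdot)-G(x,\cdot)$ is $L$-harmonic in $\Om$ (the two Dirac masses at $x$ cancel), nonnegative on $\pa\Om$ because $G(x,\cdot)=0$ there while $\Gamma>0$; the maximum principle for $L$ then yields $0\le G(x,y)\le \Gamma(x,y)$ for all $x,y\in\Om$. Now fix $y$ with $r/2<|x-y|<r$ and $r<R/2$ and split $\int_{|x-y|}^{\infty}=\int_{|x-y|}^{R}+\int_R^{\infty}$. Using the volume bound and $2\nu<N-2$, the tail is at most $C(N,\nu)\big(R^{2-N+2\nu}+|x|^{2\nu}R^{2-N}\big)\le C(N,\nu,R,\mathrm{diam}\,\Om)$ (here $|x|\le\mathrm{diam}\,\Om$ since $0,x\in\Om$), whereas $\int_{|x-y|}^{R}\ge\int_{R/2}^{R}t^2\,w(B_t(x))^{-1}\,dt/t\ge c(N,\nu)R^{2-N+2\nu}>0$; thus the tail is absorbed into a constant multiple of $\int_{|x-y|}^{R}$, and doubling of $w$ lets one replace the lower limit $|x-y|$ by $r$. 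This establishes \eqref{sanj1}. (Alternatively, to stay inside a bounded domain one may compare $G^{\Om}(x,\cdot)$ on $B_R(x)\Subset\Om$ with the Green function of $L$ on the ball $B_R(x)$ plus the boundary term $\sup_{\pa B_R(x)}G^{\Om}(x,\cdot)$, which is bounded uniformly in such $x$, and apply the weighted Green--function bound on the ball.)

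Finally, \eqref{sanj2} follows from \eqref{sanj1} by a one-variable estimate: by the volume bound $t^2\,w(B_t(x))^{-1}\sim t^{2-N}(|x|+t)^{2\nu}$, which, because $2\nu<N-2$, is up to constants a negative power of $t$ on each of the ranges $t\le|x|$ and $t\ge|x|$; integrating over $(r,R)$ and using $N-2\nu-2>0$ (so that in each range the integral is controlled by its value at the lower endpoint $r$) gives
\begin{equation*}
\int_r^R\frac{t^2}{w(B_t(x))}\,\frac{dt}{t}\ \le\ C(N,\nu)\, r^{2-N}(|x|+r)^{2\nu}\ \le\ \frac{C(N,\nu)\, r^2}{w(B_r(x))},
\end{equation*}
which is \eqref{sanj2}.
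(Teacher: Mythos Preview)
Your proposal is correct and follows the same overall route as the paper. In both arguments, \eqref{sanj1} is taken as a consequence of the Chanillo--Wheeden $A_2$-weighted Green-function theory (the paper simply cites \cite[p.~311]{CW}; you reconstruct it by comparing $G$ with the full-space fundamental solution $\Gamma$ via the maximum principle and then absorbing the tail $\int_R^\infty$), and \eqref{sanj2} is derived from \eqref{sanj1} by showing that the integrand $t^2/w(B_t(x))$ decays like a negative power of $t$, so the integral is dominated by its value at the lower endpoint; the hypothesis $N-2\nu-2>0$ enters exactly here.

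The only stylistic difference is in the passage from \eqref{sanj1} to \eqref{sanj2}: the paper decomposes the integral dyadically and invokes the separate growth lemma $w(B_{2^k r}(x))\ge C\,2^{k(N-2\nu)}w(B_r(x))$ (proved in an appendix), summing the resulting geometric series in $2^{k(2+2\nu-N)}$, whereas you use the explicit two-sided bound $w(B_t(x))\sim t^N(|x|+t)^{-2\nu}$ and integrate directly. The two approaches encode identical information; your explicit formula is marginally more transparent, while the paper's dyadic argument avoids the case split on $t\lessgtr|x|$.
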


\begin{proof}  This is a modification of the theorem by Chanillo-Wheeden \cite[pg. 311]{CW}.
Note that the term in \eqref{sanj1} contribute when $t$ is close to $r.$ Define $f(y)=|y|^{-2\nu}$
 As  $f$ is a Muckenhoupt weight it satisfies the doubling property:
\be\label{doub} \int_{|x-y|< \frac{r}{2}} f(y)dy\leq \int_{|x-y|<r} f(y)dy\leq C \int_{|x-y|<\frac{r}{2}} f(y)dy.\ee
Using this fact we can cut the RHS  dia-dically;  we are left simply with the term near $r$ and the above expression reduces to
\be\no \sup_{r/2<|x-y|< r }  G(x, y) \leq  C  \sum_{k\geq 0}^{m}\int_{2^{k}r}^{2^{k+1}r} \frac{ t^2}{w({B_{t}(x)})} \frac{dt}{t}.\ee
where $2^{m+1} r=R.$\\
Let \be I= \sum_{k\geq 0} \frac{2^{2k}r^2}{\int_{{B_{2^{k}r}(x)}} f(y)dy}.\ee
It follows from Lemma \ref{l:17-4} (see Appendix B) that
  \be  \label{gxx2} \int_{{B_{2^{k}r}(x)}} f(y)dy\geq C 2^{k(N-2 \nu)} \int_{{B_{r}(x)}} f(y)dy\ee
where $C>0$ is independent of $x, k$ and $r.$
Therefore
\be \no I \leq \frac{C r^2}{\int_{{B_{r}(x)}} f(y)dy} \sum_{k\geq 0} 2^{k(2+ 2\nu -N)} \leq \frac{C r^2}{\int_{{B_{r}(x)}} f(y)dy}= \frac{C r^2}{w({B_{r}(x)})}.\ee
Hence the lemma follows.
\end{proof}

\begin{lemma} \label{p} The Green function satisfies the following estimate
\be\label{GXY}  G(x, y) \leq C \bigg(\frac{|x|^{2\nu}+ |x-y|^{2\nu}}{|x-y|^{N-2}}\bigg)\ee
for any $x\neq y$ and $x, y \in \Om$ for some $C>0$ depending on $\Om.$ Moreover, $G(x, .)$ is continuous whenever $x\neq y.$
\end{lemma}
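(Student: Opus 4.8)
The plan is to feed the pointwise bound \eqref{sanj2} of Lemma~\ref{ghil} into an elementary lower estimate for the weighted measure $w(B_r(x))=\int_{B_r(x)}|y|^{-2\nu}\,dy$, distinguishing the two regimes $|x-y|\le |x|/2$ and $|x-y|>|x|/2$.

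First I would reduce to the situation in which Lemma~\ref{ghil} applies. Fix a ball $\Om'$ with $0\in\Om'$ and $\overline\Om\Subset\Om'$, and set $L:=-\text{div}(|x|^{-2\nu}\na\,\cdot\,)$. Since $L$ satisfies the maximum principle, $G_{\Om'}(\cdot,y)-G_{\Om}(\cdot,y)$ is $L$-harmonic in $\Om$ with nonnegative boundary values, so $G_{\Om}(x,y)\le G_{\Om'}(x,y)$ for all $x,y\in\Om$; hence it suffices to prove \eqref{GXY} with $\Om$ replaced by $\Om'$. On the ball there is $R_0>0$ with $\text{dist}(x,\pa\Om')\ge R_0$ and $\text{dist}(y,\pa\Om')\ge R_0$ for every $x,y\in\Om$, so Lemma~\ref{ghil} is available whenever $|x-y|<R_0/2$. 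For the complementary range $|x-y|\ge R_0/2$ the function $G_{\Om'}(\cdot,y)$ is a bounded $L$-harmonic function on $\{\,|x-y|\ge R_0/4\,\}$ vanishing on $\pa\Om'$, hence bounded above by a constant, while the right-hand side of \eqref{GXY} is bounded below there; so the inequality is trivial after enlarging $C$.

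Now assume $r:=2|x-y|<R_0/2$ and set $B=B_r(x)$, so Lemma~\ref{ghil} gives $G(x,y)\le C r^2/w(B)$. If $r\le|x|/2$ then $|x|/2\le|z|\le 3|x|/2$ for all $z\in B$, so $w(B)\ge(3|x|/2)^{-2\nu}|B|\ge c\,|x|^{-2\nu}r^{N}$ and hence
$$G(x,y)\le C\,|x|^{2\nu}r^{2-N}\le C\,\f{|x|^{2\nu}}{|x-y|^{N-2}};$$
moreover $r\le|x|/2$ forces $|x-y|^{2\nu}\le c\,|x|^{2\nu}$, so this is dominated by the claimed quantity. If instead $r>|x|/2$ then $B\subset B_{3r}(0)$, so $|z|^{-2\nu}\ge(3r)^{-2\nu}$ on $B$ and, using $N-2\nu>0$, $w(B)\ge c\,r^{N-2\nu}$; therefore
$$G(x,y)\le C\,r^{\,2\nu+2-N}=C\,\f{|x-y|^{2\nu}}{|x-y|^{N-2}}\le C\,\f{|x|^{2\nu}+|x-y|^{2\nu}}{|x-y|^{N-2}}.$$
(Note that $x=0$ falls automatically in this second case.) Combining the two cases proves \eqref{GXY}.

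For the continuity of $G(x,\cdot)$ away from $x$: because $|x|^{-2\nu}$ is a Muckenhoupt $A_2$-weight ($0<2\nu<N$), $G(x,\cdot)$ is a local weak solution of $\text{div}(|y|^{-2\nu}\na_y G)=0$ on $\Om\setminus\{x\}$, and the De Giorgi--Nash--Moser regularity for degenerate elliptic operators with $A_2$ weights (Fabes--Kenig--Serapioni, as used in Chanillo--Wheeden \cite{CW}) yields local H\"older continuity there, including across the origin; the symmetry $G(x,y)=G(y,x)$ then gives continuity in the first variable as well. I expect the main obstacle to be not the two-case computation, which is elementary once \eqref{sanj2} is in hand, but the reduction step --- ensuring \eqref{GXY} persists as $x$ or $y$ approaches $\pa\Om$ --- which is why one first passes to the larger ball $\Om'$ on which Lemma~\ref{ghil} can be invoked uniformly.
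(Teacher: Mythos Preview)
Your proposal is correct and follows essentially the same route as the paper: both invoke \eqref{sanj2} and split into the two cases $|x-y|\lesssim|x|$ and $|x-y|\gtrsim|x|$, bounding $w(B_r(x))$ from below by $c|x|^{-2\nu}r^N$ and $c\,r^{N-2\nu}$ respectively. You add two ingredients the paper omits---the monotonicity reduction to a larger ball $\Om'$ so that Lemma~\ref{ghil} applies uniformly for $x,y\in\Om$, and the FKS/De Giorgi--Nash--Moser justification of continuity---both of which make the argument more complete without changing its substance.
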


\begin{proof} Since $\f{r}{2}<|x-y|<r$, we can write $r= O(|x-y|)$. Then from \eqref{sanj2},  we have
\be \label{sanj3} G(x, y) \leq  \frac{ C |x-y|^2}{w({B_{r}(x)})}\ee
 Now we estimate the denominator $D= w({B_{r}(x)})$ in the two cases. \\
 Case 1: $|x-y|\leq\f{1}{4}|x|$.\\
 In this case we note that $$D=\int_{|x-y|<r}|y|^{-2\nu} dy=\int_{|y|<r}|x-y|^{-2\nu}dy\geq \om_N\displaystyle\left(\f{1}{4}|x|\right)^{-2\nu}r^N\geq C|x|^{-2\nu}|x-y|^N,$$
 where $\om_N$ is volume of unit ball in $\Rn$. Therefore
 $G(x, y)\leq  C\f{|x|^{2\nu}}{|x-y|^{N-2}}$.\\
 Case 2: $|x-y|>\f{1}{4}|x|$.\\
  In this case we can write $|y|\leq|x-y|+|x|\leq 5|x-y|$. Therefore
  $$D=\int_{|x-y|<r}|y|^{-2\nu} dy\geq \om_N (5|x-y|)^{-2\nu}r^N=C|x-y|^{-2\nu+N}.$$
Thus  $G(x, y)\leq C \f{|x-y|^{2\nu}}{|x-y|^{N-2}}$.
 Combining case 1 and case 2, the lemma follows.
\end{proof}

\begin{lemma} \label{regur} Consider the problem
 \begin{equation}
  \label{420}
  \left\{
    \begin{aligned}
     -div (|x|^{-2\nu} \nabla v)&= |x|^{-(p+1)\nu } v^{p}  - |x|^{-(q+1)\nu } v^{q} &&\text{in } \Om;\\
v &= 0 &&\text{on } \pa \Om,
    \end{aligned}
  \right.
\end{equation}
where $ q<\frac{2+\nu}{\nu}.$
Then $v$ is  H\"older continuous at the origin with H\"older exponent $\theta= 2+2\nu-(q+1)\nu$.
\end{lemma}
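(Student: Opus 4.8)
The plan is to show that near the origin the equation \eqref{420}, rewritten in the transformed variable, behaves like a linear equation $-\operatorname{div}(|x|^{-2\nu}\na v)=g$ with $g$ in an appropriate Morrey/Lebesgue class, and then invoke the De Giorgi--Nash--Moser type regularity available for the weighted operator $\operatorname{div}(|x|^{-2\nu}\na\cdot)$ (the weight $|x|^{-2\nu}$ is a Muckenhoupt $A_2$ weight since $0<\nu<\tfrac{N-2}{2}$, so the Fabes--Kenig--Serapioni theory applies). First I would record that, by Theorem \ref{t:up-est} (valid here since $q<\tfrac{2+\nu}{\nu}$ forces $p<\tfrac{2+\nu}{\nu}$ as well), any solution $u$ of the original problem satisfies $u(x)\le C|x|^{-\nu}$, hence $v=|x|^{\nu}u$ is bounded near $0$. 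So the right-hand side $h(x):=|x|^{-(p+1)\nu}v^p-|x|^{-(q+1)\nu}v^q$ satisfies $|h(x)|\le C|x|^{-(q+1)\nu}$ near $0$, the worse of the two exponents.

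Next I would set up the Morrey-type estimate. Writing the equation on a ball $B_r(0)$ and testing against $v$ minus its weighted average, one gets, via Caffarelli--Kohn--Nirenberg \eqref{CKN} and Cauchy--Schwarz, a decay estimate of the form
\[
\int_{B_r(0)}|x|^{-2\nu}|\na v|^2\,dx\le C r^{N-2\nu-2+2\theta}
\]
provided the forcing term contributes a quantity controlled by $r^{N-2\nu-2+2\theta}$; a direct computation using $|h(x)|\le C|x|^{-(q+1)\nu}$ and $w(B_r(0))=\int_{B_r}|x|^{-2\nu}\sim r^{N-2\nu}$ shows the borderline Morrey exponent is exactly $2\theta$ with $\theta=2+2\nu-(q+1)\nu$, and one checks $\theta>0$ precisely because $q<\tfrac{2+\nu}{\nu}$. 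Then the weighted Morrey--Campanato embedding (or, equivalently, the oscillation-decay lemma for $A_2$-weighted operators, cf. Chanillo--Wheeden \cite{CW} and Lemma \ref{ghil}) upgrades this energy decay to pointwise Hölder continuity: $|v(x)-v(0)|\le C|x|^{\theta}$. Iterating the energy inequality à la Morrou / hole-filling gives the clean decay rate rather than just a small power.

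The main obstacle I anticipate is handling the degeneracy/singularity of the weight at the very point $0$ where one wants continuity: the standard Campanato characterization of Hölder spaces has to be replaced by its weighted analogue, and one must verify the weighted Poincaré inequality on balls centered at the origin with the correct scaling, $\int_{B_r}|x|^{-2\nu}|v-\bar v_r|^2\le Cr^2\int_{B_r}|x|^{-2\nu}|\na v|^2$, which does hold for $A_2$ weights but requires care since the constant must be scale-invariant. A secondary technical point is that the nonlinear term $|x|^{-(q+1)\nu}v^q$ must be frozen as an $L^\infty_{loc}(B_\rho\setminus\{0\})$ forcing term and its contribution to the Morrey norm estimated on dyadic annuli, summing the geometric series — this converges exactly when $(q+1)\nu<2+2\nu$, i.e. again $q<\tfrac{2+\nu}{\nu}$, which is where the hypothesis is used sharply and where the exponent $\theta$ emerges. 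Once the weighted Campanato decay is in place, concluding $v\in C^{0,\theta}$ at $0$ is routine.
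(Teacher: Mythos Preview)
Your strategy is sound and would work, but it is genuinely different from what the paper does. The paper does \emph{not} proceed via weighted Campanato energy decay. Instead it writes the solution through the Green kernel,
\[
v(x)=\int_{\Om} G(x,z)\bigl[|z|^{-(p+1)\nu}v^{p}(z)-|z|^{-(q+1)\nu}v^{q}(z)\bigr]\,dz,
\]
and estimates $|v(0)-v(y)|$ directly by splitting the integration domain into $\{|z|\le|y|\}$ and $\{|z|\ge|y|\}$. On the far region it invokes the Chanillo--Wheeden Moser--Harnack oscillation bound for $G_{z}(\cdot)$ away from the pole; on the near region it uses the crude pointwise Green bound of Lemma~\ref{p}, namely $G(x,z)\le C(|x|^{2\nu}+|x-z|^{2\nu})|x-z|^{2-N}$, together with a further subdivision $\{|z-y|\le|y|/2\}$ versus $\{|z-y|\ge|y|/2\}$. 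In each piece the integral is computed explicitly, and the worst term is $\int_{|z|\le|y|}|z|^{2+2\nu-N-(q+1)\nu}\,dz=O(|y|^{\theta})$, which is exactly where $\theta=2+2\nu-(q+1)\nu$ and the hypothesis $q<\tfrac{2+\nu}{\nu}$ appear.

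What your route buys: it is more systematic and would extend to any $A_2$ weight, and it packages the argument into a single energy-decay iteration rather than several case-by-case pointwise estimates. What the paper's route buys: it avoids setting up the weighted Campanato machinery and the comparison with weighted-harmonic functions, instead reusing the Green function bounds (Lemmas~\ref{ghil} and \ref{p}) already established in the section. Note that in both approaches the H\"older exponent one actually obtains is $\min(\theta,\delta)$, where $\delta$ is the intrinsic De~Giorgi--Nash--Moser exponent for the weighted operator at the origin; the paper's Case~1 bound for the far region carries exactly this $\delta$ from the Harnack inequality, just as your freezing/iteration step would. So neither approach is sharper than the other on that point.
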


\begin{proof}  In order to prove this result we use the information on the Green function.
We know that $v$ is bounded at the origin.  Also note that near the origin $ |x|^{-(q+1)\nu}  v^{q} $ is the dominating term. Then
 \be v(x)= \int_{\Om} G(x, z) [|z|^{-(p+1)\nu } v^{p}(z)- |z|^{-(q+1)\nu}  v^{q} (z)] dz. \ee
So we have
\be \no v(x)- v(y)= \int_{\Om}[ G(x, z)- G(y, z)][|z|^{-(p+1)\nu } v^{p}(z) - |z|^{-(q+1)\nu}  v^{q} (z)]  dz. \ee
By the self adjoint-ness of the Green function, for fixed $z$ we consider
\be \no G(x, z)- G(y, z) = G_{z}(x)- G_{z}(y).\ee
Consider the ball of radius $|x-y|= \rho$ centered at $x$. We take $x=0$. then using the fact that $v$ is bounded and $|y|=\rho$,  we obtain
\bea |v(y)- v(0)|&\leq & C (A+B), \no\eea where
\be A= \int_{|z|\leq \rho} |G_{z}(0)- G_{z}(y)| |z|^{-(q+1)\nu}dz\no\ee and
\be B=\int_{|z|\geq \rho} |G_{z}(0)- G_{z}(y)| |z|^{-(q+1)\nu}dz. \no  \ee

\underline{Case 1} If $z$ lies outside the ball of radius $|y|= \rho$.  Then we can think of $z$ as the pole  and we are far away. That is, we can consider a ball $\mathcal{B}$ of radius $\frac{|z|}{4}$ centered at $0$ such that double the ball does not meet $z$. Then we can use the Moser--Harnack inequality as in Chanillo--Wheeden \cite{CW1} to obtain
\be \no  |G_{z}(0)- G_{z}(y)|\leq C \bigg( \frac{|y|}{|z|}\bigg)^{\de} \bigg( \frac{|z|^2}{\int_{B_{\rho}(0)} |\xi|^{-2\nu}}\bigg)= C \bigg( \frac{|y|}{|z|}\bigg)^{\de}\bigg ( \frac{|z|^2}{|y|^{N-2\nu}}\bigg),\ee where $\de>0.$
This is a bound from the Harnack inequality as the Green's function is non-negative. Therefore,
\bea B &\leq  &  |y|^{\de-N+2\nu}  \int_{|z|\geq \rho} \frac{|z|^2}{|z|^{(q+1)\nu+ \de}} dz\no= O(|y|^{\theta}).\eea

\underline{Case 2} In this case,  $z$ lies in the ball of radius $|y|=\rho.$ Here we use the crude bound on the Green function from Lemma \ref{p}
\be \no  | G_{z}(0)- G_{z}(y)| \leq |G_{z}(0)|+ |G_{z}(y)| \leq  \frac{C}{|z|^{N-2\nu-2}} + |G_{z}(y)|.\ee
Hence   we have
\be \no C \int_{|z|\leq \rho}   \frac{|z|^{-(q+1) \nu}}{|z|^{N-2\nu-2}} dz = O(|y|^{2+2\nu-(q+1)\nu})= O( |y|^{\theta}).\ee
Now we estimate $G_{z}(y).$  For this we divide the domain into two parts.  Whenever $|z|\leq \frac{|y|}{2}$, we have $B_{|y|/4} (y)  \subset B_{|z-y|} (y) $ and from \eqref{sanj2}
\be\no  |G_{z}(y)|\leq C\bigg( \frac{|z-y|^2}{\int_{B_{|z-y|}(y)} |\xi|^{-2\nu}}\bigg).\ee
As a result we have
\be\no  |G_{z}(y)|= O(|y|^{2+ 2\nu -N}).\ee Hence we obtain
\be \no \int_{|z|\leq \frac{|y|}{2}}  |G_{z}(y)| |z|^{-(q+1)\nu}  |v|^{q} dz \leq c |y|^{\theta}.\ee
We are now left to check what happens in the region $ \frac{|y|}{2}\leq |z|\leq |y|=\rho.$
So in this region we have
\be \label{gcw1}\int_{ \frac{|y|}{2}\leq |z| \leq |y|  }  |G_{z}(y)| |z|^{-(q+1)\nu }  |v|^{q} dz \leq  C|y|^{-(q+1)\nu }\int_{ \frac{|y|}{2}\leq |z| \leq |y|  }  |G_{z}(y)|  dz .\ee
Now suppose that $|z-y|\leq \frac{|y|}{2},$ This implies if $\xi\in B_{|z-y|}(y)$, then we have $|\xi-y| \leq |z-y|\leq \frac{|y|}{2}.$ Consequently, $||\xi|-|y||\leq \frac{|y|}{2}$ which in fact implies that $\frac{|y|}{2}\leq |\xi|\leq \frac{3}{2}|y|$. Therefore
\be\no  \int_{B_{|z-y|}(y)} |\xi|^{-2\nu} d \xi=O(|y|^{-2\nu} |z-y|^N ).\ee
Using the above expression, we obtain
\be\no  |G_{z}(y)|\leq C\bigg( \frac{|z-y|^2}{\int_{B_{|z-y|}(y)} |\xi|^{-2\nu}}\bigg)=O(|y|^{2\nu} |z-y|^{2-N} ) .\ee
When $|z-y|\geq \frac{|y|}{2}$, then since $|z|\leq |y|$ we  have
\be\no  |G_{z}(y)|= O(|y|^{2+2\nu-N}) .\ee
Hence from \eqref{gcw1}, we obtain
\bea \no \int_{ \frac{|y|}{2}\leq |z| \leq |y|  }  |G_{z}(y)| |z|^{-(q+1)\nu }  |v|^{q} dz &\leq&  |y|^{-(q+1)\nu } \int_{ \{\frac{|y|}{2}\leq |z| \leq |y|\} \cap \{|z-y|\leq \frac{|y|}{2}\}  }  G_{z}(y)dz \no\\&+&  |y|^{-(q+1)\nu } \int_{ \{\frac{|y|}{2}\leq |z| \leq |y|\} \cap \{|z-y|\geq \frac{|y|}{2}\}  }  G_{z}(y) dz \no \\&\leq & C  |y|^{2\nu-(q+1)\nu } \int_{ \{\frac{|y|}{2}\leq |z| \leq |y|\}  }  \frac{1}{|z-y|^{N-2}} dz\no\\
&+& C |y|^{2+2\nu-(q+1)\nu}\no \\&=& O(|y|^{\theta})\no.\eea
\end{proof}

\begin{lemma}\label{wpi} (Weighted Pohozaev Identity) Let $\Om$ be a smooth bounded domain and $v\in C^{1}({\overline{\Om} \setminus \{0\}})\cap C^{0}(\Om)$ be a positive solution of
\begin{equation}
  \label{pd1}
  \left\{
    \begin{aligned}
-\nabla (|x|^{-2\nu} \nabla v) + \eps  |x|^{-(q+1)\nu}  v^q &= |x|^{-(p+1)\nu}  v^{p}\quad\text{in }\quad \Om,\\
v &\in H^1(\Om, |x|^{-2\nu})\cap L^{q+1}(\Om, |x|^{-(q+1)}),
    \end{aligned}
  \right.
\end{equation}
with $2^*-1\leq p<q$, $\nu\in(0, \f{N-2}{2})$. Then $v$ satisfies
\bea \label{poh1}\frac{1}{2}\int_{\pa \Om}&&   |x|^{-2\nu}  \langle x, n \rangle |\nabla v|^2 dS + \bigg(\frac{N-2-2\nu}{2}\bigg)\int_{ \pa \Om} |x|^{-2\nu}  v \frac{\pa v}{\pa n} dS \no\\&- &  \frac{\eps}{q+1}\int_{\pa \Om}   |x|^{-(q+1)\nu}  \langle x, n \rangle v^{q+1} dS + \bigg(\frac{N}{q+1}-\frac{N-2}{2}\bigg)  \eps \int_{\Om}|x|^{-(q+1 )\nu}  v^{q+1} dx \no\\&=& \bigg(\frac{N}{p+1}-\frac{N-2}{2}\bigg)\int_{\Om}  |x|^{-(p+1)\nu} v^{p+1}dx\no \\&-& \frac{1}{p+1}\int_{\pa \Om}  |x|^{-(p+1)\nu}  \langle   x, n \rangle  v^{p+1}dS.
\eea
Moreover, if $v=0$ on $\pa\Om$, then
\bea \label{poh2}\frac{1}{2} \int_{\pa \Om}  |x|^{-2\nu}\langle x, n \rangle |\nabla v|^2 dS &=& \bigg(\frac{N}{p+1}- \frac{N-2}{2}\bigg)\int_{\Om}  |x|^{-(p+1)\nu}v^{p+1} dx \no\\&+ & \eps  \bigg( \frac{N-2}{2}- \frac{N}{q+1}\bigg) \int_{\Om}  |x|^{-(q+1)\nu} v^{q+1} dx .
\eea
\end{lemma}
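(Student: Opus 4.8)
The plan is to mimic the proof of Proposition \ref{p:Poho}. By elliptic regularity $v\in C^2(\Om\setminus\{0\})\cap C^1(\overline{\Om}\setminus\{0\})$, while $v\in C^0(\Om)$ makes $v$ bounded near the origin; since $0\notin\pa\Om$, the weights $|x|^{-2\nu},|x|^{-(p+1)\nu},|x|^{-(q+1)\nu}$ and $v,\na v$ are all continuous up to $\pa\Om$. Because $v$ need not be regular at $0$ one cannot use $x\cdot\na v$ directly as a test function, so for small $\de>0$ I would introduce the radial cut-off $\va_\de(x)=\phi(|x|/\de)$ with $\phi\in C^\infty(\R)$, $0\le\phi\le1$, $\phi\equiv0$ for $t\le1$ and $\phi\equiv1$ for $t\ge2$; then $\va_\de$ vanishes on $B_\de(0)$, equals $1$ off $B_{2\de}(0)$, equals $1$ near $\pa\Om$, and $|\na\va_\de|\le C/\de$ on the annulus $A_\de:=\{\de\le|x|\le2\de\}$. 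Both $(x\cdot\na v)\va_\de$ and $v\va_\de$ are then admissible test functions for Eq. \eqref{pd1}, supported away from $0$.

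I would first test \eqref{pd1} against $(x\cdot\na v)\va_\de$. Integrating by parts and using the elementary identities $\text{div}(|x|^{-2\nu}x)=(N-2\nu)|x|^{-2\nu}$, $\text{div}(|x|^{-(p+1)\nu}x)=(N-(p+1)\nu)|x|^{-(p+1)\nu}$, $\text{div}(|x|^{-(q+1)\nu}x)=(N-(q+1)\nu)|x|^{-(q+1)\nu}$, together with $\na v\cdot\na(x\cdot\na v)=|\na v|^2+\f12\,x\cdot\na(|\na v|^2)$, the resulting relation has: an interior Dirichlet term with coefficient $1-\f{N-2\nu}{2}=-\f{\al}{2}$, interior power terms with coefficients $-\f{N-(p+1)\nu}{p+1}$ and $\eps\,\f{N-(q+1)\nu}{q+1}$, boundary integrals over $\pa\Om$, and error integrals supported on $A_\de$ produced by the derivatives of $\va_\de$. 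Testing \eqref{pd1} against $v\va_\de$ then yields the weighted energy identity $\int_\Om|x|^{-2\nu}|\na v|^2\va_\de\,dx=\int_\Om\big(|x|^{-(p+1)\nu}v^{p+1}-\eps|x|^{-(q+1)\nu}v^{q+1}\big)\va_\de\,dx+\int_{\pa\Om}|x|^{-2\nu}v\,\f{\pa v}{\pa n}\,dS-\int_\Om|x|^{-2\nu}v\,\na v\cdot\na\va_\de\,dx$. Substituting this into the first relation converts $-\f{\al}{2}$ into the factor $\f{N-2-2\nu}{2}$ on $\int_{\pa\Om}|x|^{-2\nu}v\,\f{\pa v}{\pa n}\,dS$ and turns the interior power coefficients into $\f{N}{p+1}-\f{N-2}{2}$ and $\f{N-2}{2}-\f{N}{q+1}$, which is exactly the algebraic form of \eqref{poh1}. (All the interior integrals that survive are finite: $\int_\Om|x|^{-2\nu}|\na v|^2$ and $\int_\Om|x|^{-(q+1)\nu}v^{q+1}$ by the hypotheses on $v$, and $\int_\Om|x|^{-(p+1)\nu}v^{p+1}$ by the a priori estimates of Section 3.)

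The crux is to show that every $A_\de$-error integral tends to $0$ as $\de\to0$. On $A_\de$ one has $|x\cdot\na\va_\de|\le C$ (since $|x|\le2\de$) and $|\na\va_\de|\le C/\de$, so each error term built from a power or Dirichlet density is dominated by a constant times $\int_{A_\de}|x|^{-2\nu}|\na v|^2\,dx$, $\int_{A_\de}|x|^{-(p+1)\nu}v^{p+1}\,dx$ or $\int_{A_\de}|x|^{-(q+1)\nu}v^{q+1}\,dx$, all of which vanish by integrability near $0$. The one delicate term is the cross term $\int_{A_\de}|x|^{-2\nu}v\,\na v\cdot\na\va_\de\,dx$ coming from the energy identity; here I would use that $v$ is bounded near $0$ (this is the single place where $v\in C^0(\Om)$ enters) and Cauchy--Schwarz to bound it by $\f{C}{\de}\,\|v\|_{L^\infty(B_{2\de})}\big(\int_{A_\de}|x|^{-2\nu}dx\big)^{1/2}\big(\int_{A_\de}|x|^{-2\nu}|\na v|^2dx\big)^{1/2}\le C\,\|v\|_\infty\,\de^{\al/2}\big(\int_{A_\de}|x|^{-2\nu}|\na v|^2dx\big)^{1/2}\to0$, where $\al=N-2-2\nu>0$ because $\nu<\f{N-2}{2}$. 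Passing to the limit $\de\to0$, with dominated convergence handling the finite interior integrals, yields \eqref{poh1}. The main obstacle is precisely this control of the singular layer at the origin — $\na v$ is genuinely unbounded there — which is why the argument must rely on the weighted $L^2$-bound for $\na v$ and the $L^\infty$-bound for $v$ near $0$, rather than on pointwise estimates (the gradient bounds of Theorem \ref{t:grad} would provide an alternative route). Finally \eqref{poh2} is the special case $v=0$ on $\pa\Om$: there $\na v=\big(\f{\pa v}{\pa n}\big)n$, so $x\cdot\na v=\langle x,n\rangle\,\f{\pa v}{\pa n}$ and $|\na v|^2=\big(\f{\pa v}{\pa n}\big)^2$, the boundary integrals of $v\,\f{\pa v}{\pa n}$, $v^{p+1}$ and $v^{q+1}$ all vanish, and the surviving boundary contribution collapses to $\f12\int_{\pa\Om}|x|^{-2\nu}\langle x,n\rangle|\na v|^2\,dS$.
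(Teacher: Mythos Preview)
Your approach is correct but takes a different route from the paper. The paper does not redo the cut-off computation in the weighted setting; instead it observes that if $v$ solves \eqref{pd1} then $u(x)=|x|^{-\nu}v(x)$ solves \eqref{eq:poho-1}, applies Proposition~\ref{p:Poho} to $u$, and substitutes $u=|x|^{-\nu}v$ back into \eqref{poh1'}. A short calculation produces \eqref{poh1} plus one extra boundary term with coefficient $\nu^2-\nu(N-2)+\mu$, which vanishes because $\nu$ is precisely a root of that quadratic. This is considerably shorter, since all the cut-off work and limit-passing near the origin has already been carried out once and for all in Proposition~\ref{p:Poho}. Your direct argument, by contrast, is self-contained and makes transparent how the weight $|x|^{-2\nu}$ interacts with the Rellich--Pohozaev multiplier $x\cdot\na v$ (the divergence identities you list do exactly this), at the price of rerunning the annulus estimates. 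One minor point: you justify $\int_\Om|x|^{-(p+1)\nu}v^{p+1}<\infty$ by invoking Section~3, but it falls out more cheaply from your own energy identity tested against $v\va_\de$, since the left side is bounded uniformly in $\de$ by $\int|x|^{-2\nu}|\na v|^2+\eps\int|x|^{-(q+1)\nu}v^{q+1}$ plus boundary and $o(1)$ terms.
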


\begin{proof}
This follows from Proposition \ref{p:Poho}. Note that, $v$ is a solution of \eqref{pd1} implies $u(x)=|x|^{-\nu}v(x)$ is a solution of \eqref{eq:poho-1}.  Therefore substituting $u(x)=|x|^{-\nu}v(x)$ in \eqref{poh1'} we obtain,
\begin{eqnarray*}
&&\frac{1}{2}\int_{\pa \Om}   |x|^{-2\nu}  \langle x, n \rangle |\nabla v|^2 dS + \bigg(\frac{N-2-2\nu}{2}\bigg)\int_{ \pa \Om} |x|^{-2\nu}  v \frac{\pa v}{\pa n} dS \no\\
&+&\frac{\nu^2-\nu(N-2)+\mu}{2}\int_{\pa\Om}|x|^{-2\nu-2}v^2\langle x, n\rangle dS\no\\
&=&  \frac{\eps}{q+1}\int_{\pa \Om}   |x|^{-(q+1)\nu}  \langle x, n \rangle v^{q+1} dS - \eps\bigg(\frac{N}{q+1}-\frac{N-2}{2}\bigg) \int_{\Om}|x|^{-(q+1 )\nu}  v^{q+1} dx \no\\
&+& \bigg(\frac{N}{p+1}-\frac{N-2}{2}\bigg)\int_{\Om}  |x|^{-(p+1)\nu} v^{p+1}dx- \frac{1}{p+1}\int_{\pa \Om}  |x|^{-(p+1)\nu}  \langle   x, n \rangle  v^{p+1}dS.
\end{eqnarray*}
Since $\nu^2-\nu(N-2)+\mu=0$, the above expression reduces to \eqref{poh1}. Consequently $v=0$ on $\pa\Om$ implies \eqref{poh2}.
\end{proof}

\begin{lemma} \label{dub1} Let $\nu\in (0, \f{N-2}{2})$. Then the Green function $G(x,0)$ satisfies
\be\label{creg} \int_{\pa \Om}|x|^{-2\nu} \langle x, n\rangle |\nabla  G(x, 0)|^2 dS= (N-2-2\nu)|R(0)|.\ee
\end{lemma}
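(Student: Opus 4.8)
The plan is to run a Pohozaev--Rellich type identity for $v:=G(\cdot,0)$ on the perforated domain $\Om_\eps:=\Om\setminus\overline{B_\eps(0)}$ and then let $\eps\to 0$. Away from the pole $v$ solves the homogeneous equation $\divergence(|x|^{-2\nu}\na v)=0$, and by elliptic regularity $v\in C^{\infty}(\overline{\Om}\setminus\{0\})$ with $v=0$ on $\pa\Om$. The algebraic heart of the argument (the differential form of the identity in Proposition~\ref{p:Poho}) is that the vector field
\begin{equation*}
X:=|x|^{-2\nu}\Big(\tfrac12|\na v|^2\,x-(x\cdot\na v)\,\na v\Big)-\tfrac{N-2-2\nu}{2}\,|x|^{-2\nu}\,v\,\na v
\end{equation*}
is divergence free on $\Om\setminus\{0\}$. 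I would check this by a direct computation: using $\divergence(|x|^{-2\nu}\na v)=0$ (equivalently $\De v=2\nu|x|^{-2}(x\cdot\na v)$) together with $x\cdot\na(|x|^{-2\nu})=-2\nu|x|^{-2\nu}$, one gets $\divergence\big(|x|^{-2\nu}(\tfrac12|\na v|^2x-(x\cdot\na v)\na v)\big)=\tfrac{N-2-2\nu}{2}|x|^{-2\nu}|\na v|^2$ and $\divergence(|x|^{-2\nu}v\na v)=|x|^{-2\nu}|\na v|^2$, and their difference is $\divergence X=0$ on $\Om\setminus\{0\}$.

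Applying the divergence theorem on $\Om_\eps$, whose boundary consists of $\pa\Om$ with outward normal $n$ and of $\pa B_\eps(0)$ with (for $\Om_\eps$) outward normal $-x/\eps$, gives $\int_{\pa\Om}X\cdot n\,dS=\int_{\pa B_\eps(0)}X\cdot\tfrac{x}{\eps}\,dS$ for every small $\eps>0$. On $\pa\Om$ one has $v=0$, hence $\na v=(\pa_n v)\,n$, and substituting this collapses $X\cdot n$ to $-\tfrac12|x|^{-2\nu}\langle x,n\rangle|\na v|^2$; thus the left-hand side equals $-\tfrac12\int_{\pa\Om}|x|^{-2\nu}\langle x,n\rangle|\na G(x,0)|^2\,dS$, and it only remains to compute $\lim_{\eps\to0}\int_{\pa B_\eps(0)}X\cdot\tfrac{x}{\eps}\,dS$.

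For the inner sphere I would use the splitting $G(\cdot,0)=-F(\cdot,0)+H(\cdot,0)$, writing $v=v_s+v_r$ with $v_s(x)=\tfrac{1}{(N-2-2\nu)\om_N}|x|^{-(N-2-2\nu)}$ (explicit and radial) and $v_r=H(\cdot,0)$. On $\pa B_\eps(0)$ one has $x\cdot\na v_s=-\tfrac1{\om_N}\eps^{-(N-2-2\nu)}$ and $|\na v_s|^2=\tfrac1{\om_N^2}\eps^{-2(N-2-2\nu)-2}$, while $v_r\to R(0)$ uniformly and, by interior gradient estimates for the weighted equation (whose coefficient is comparable to a constant on balls staying away from the origin) combined with the H\"older continuity of $H(\cdot,0)$ at $0$ recorded above following \cite{CW}, $|\na v_r(x)|\le C|x|^{\ga-1}$ near $0$ for some $\ga>0$. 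Substituting $v=v_s+v_r$ into
\begin{equation*}
X\cdot\tfrac{x}{\eps}=\tfrac12|x|^{-2\nu}\eps\,|\na v|^2-|x|^{-2\nu}\eps^{-1}(x\cdot\na v)^2-\tfrac{N-2-2\nu}{2}|x|^{-2\nu}\eps^{-1}\,v\,(x\cdot\na v)
\end{equation*}
and integrating over $\pa B_\eps(0)$: the three purely-$v_s$ contributions are each of order $\eps^{-(N-2-2\nu)}$ and their coefficients sum to zero; the cross term $v_r\,(x\cdot\na v_s)$ inside the last summand contributes exactly $\tfrac{N-2-2\nu}{2}R(0)$ in the limit; and all remaining terms are $O(\eps^{\de})$ for some $\de>0$, using the bounds on $v_r$, $\na v_r$ and $N-2-2\nu>0$. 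Hence $\lim_{\eps\to0}\int_{\pa B_\eps(0)}X\cdot\tfrac{x}{\eps}\,dS=\tfrac{N-2-2\nu}{2}R(0)$.

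Combining the two computations gives $-\tfrac12\int_{\pa\Om}|x|^{-2\nu}\langle x,n\rangle|\na G(x,0)|^2\,dS=\tfrac{N-2-2\nu}{2}R(0)$; since $H(\cdot,0)<0$ in $\Om$ forces $R(0)=H(0,0)<0$, this rearranges to
\begin{equation*}
\int_{\pa\Om}|x|^{-2\nu}\langle x,n\rangle|\na G(x,0)|^2\,dS=(N-2-2\nu)\,|R(0)|.
\end{equation*}
The main obstacle is precisely the limit on $\pa B_\eps(0)$: one must verify that the $O(\eps^{-(N-2-2\nu)})$ pieces cancel exactly and that all subleading pieces vanish, which is where the fine near-origin behaviour of $H(\cdot,0)$ — continuity together with a gradient bound slightly better than $|x|^{-1}$ — is actually used.
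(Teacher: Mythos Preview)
Your proof is correct and follows essentially the same route as the paper's: apply the weighted Pohozaev identity (equivalently, integrate $\divergence X=0$) on $\Om\setminus\overline{B_\eps(0)}$, use the splitting $G(\cdot,0)=-F(\cdot,0)+H(\cdot,0)$ on the inner sphere, and let $\eps\to0$. The only difference is presentational---you build the divergence-free vector field explicitly and are more careful to isolate the needed gradient bound $|\na H(\cdot,0)|=o(|x|^{-1})$, which the paper states as ``$|x\cdot\na H(x,0)|\to0$ follows from the H\"older continuity of $H$'' without further comment.
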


\begin{proof} We apply Pohozaev identity \eqref{poh1} to \eqref{green}  on $\Om \setminus B_{r}(0)$, for $r$ sufficiently small. Then we have
\bea\lab{8-4-3}   \int_{\pa \Om} |x|^{-2\nu} \langle x, n\rangle |\nabla  G(x, 0)|^2 dS &=& \int_{\pa B_{r}} |x|^{-2\nu} \langle x, n\rangle |\nabla  G(x, 0)|^2 dS\no\\ &+& (N-2-2\nu)\int_{ \pa B_{r}} |x|^{-2\nu}  G(x, 0) \frac{\pa  G(x, 0)}{\pa n} \no \\&=&  r  \int_{\pa B_{r}} |x|^{-2\nu} |\nabla  G(x, 0)|^2 dS\no\\
&+&(N-2-2\nu)\int_{ \pa B_{r}} |x|^{-2\nu}  G(x, 0) \frac{\pa  G(x, 0)}{\pa n}\eea
Moreover, from \eqref{eq:gr-H} and \eqref{eq:gr-F} we have,
\be\lab{8-4-1}
G(x, 0)= H(x,0)+\f{1}{(N-2\nu-2)\om_N|x-y|^{N-2\nu-2}}\ee and hence
\be\lab{8-4-2} \nabla G(x, 0)= -\frac{1}{\om_{N}}|x|^{(2\nu-N)}x+ \nabla H(x, 0).\ee
Substituting $G(x,0)$ and $\na G(x,0)$ in \eqref{8-4-3}, we take the limit $r\to 0$. After simplifying the terms, we obtain
\bea\lab{12-4-7}
\lim_{r\to 0}\text{RHS of \eqref{8-4-3}}&=&\lim_{r\to 0} r^{-2\nu-1}\int_{\pa B_r}|x\cdot \na H(x,0)|^2 dS\no\\
&-&\f{r^{1-N}}{\om_N}\int_{\pa B_r}\langle x\cdot \na H(x,0)\rangle dS \no\\
&-&\f{(N-2-2\nu)}{\om_N}r^{-N+1}\int_{\pa B_r} H(x,0)dS \no\\
&+&(N-2-2\nu)r^{-2\nu-1}\int_{\pa B_r}H(x,0) \langle x\cdot \na H(x,0)\rangle dS
\eea

 Note that, as $H(x, 0)$ is Holder continuous at origin \cite{CW}, it follows  $|x\cdot \nabla H(x, 0)|\to 0$ on $\pa B_r$ as $r\to 0$. Therefore a straight forward computation yields
$$\text{RHS of \eqref{8-4-3}}=-\f{(N-2-2\nu)}{\om_N r^{N-1}}\int_{\pa B_r}H(x,0)dS .$$
Using the mean value theorem,
\be R(0)= H(0,0)= \frac{1}{\om_{N}r^{N-1}} \int_{\pa B_{r}} H(x, 0) dS.\ee
Hence the lemma follows.
\end{proof}

\section{ Symmetry and decay properties of entire problem}
In this section using moving plane method, we give the proof of Theorem \ref{t:moving pl}.
\begin{proof}[\bf Proof of Theorem \ref{t:moving pl}]
It is enough to show that $u$ is symmetric with respect to each coordinate axis. For $\al>0$, we define $$\Omega_{\al}=\{x\in\Rn : x_1>\al \},$$ and for $x\in\Omega_{\al}$, let $x_{\al}$ denote the it's reflection to the hyperplane $x_1=\al$, that is $x_{\al}=(2\al-x_1, x_2, \cdots, x_n)$. Set
$$u_{\al}(x):= u(x_{\al}), \quad x\in\Omega_{\al} \quad\text{and}\quad w_{\al}=u_{\al}-u .$$ We note that $w_{\al}$ is smooth away from the point $(2\al, 0, \cdots, 0)$ and $w_{\al}=0$ on $\pa\Omega_{\al}$. It is easy to check that $w_{\al}\in D^{1,2}(\Omega_{\al})$.

{\bf Claim 1:} $w_{\al}\geq 0$ in $\Omega_{\al}$, if $\al>0$ is large enough.

To see the claim, we note that $|x_{\al}|<|x|$ if $\al>0$. By a straight forward computation it follows that $w_{\al}$ satisfies the following equation
\begin{equation}\label{eq:rad1}
-\Delta w_{\al}-\mu\frac{w_{\al}}{|x|^2}\geq A_1(x)w_{\al}-A_2(x)w_{\al} \quad\text{in}\quad\Omega_{\al},
\end{equation}
where $$0\leq A_1(x):=\la\frac{u_{\al}^p-u^p}{u_{\al}-u}\leq \la p \big[\text{max}\{u_{\al}(x), u(x)\}\big]^{p-1}$$ and
$$0\leq A_2(x):=\frac{u_{\al}^q-u^q}{u_{\al}-u}\leq q \big[\text{max}\{u_{\al}(x), u(x)\}\big]^{q-1}.$$
Multiplying \eqref{eq:rad1} by $w_{\al}^{-}$ and integrating by parts over $\Om_{\al}$, we obtain
\begin{eqnarray}\label{eq:rad2}
\int_{\Omega_{\al}}|\na w_{\al}^-|^2 dx-\mu\frac{|w_{\al}^-|^2}{|x|^2}dx &\leq& \int_{\Omega_{\al}}(A_1(x)-A_2(x))|w_{\al}^{-}|^2 dx\no\\
&\leq& \int_{\Omega_{\al}}A_1(x)|w_{\al}^{-}|^2 dx\no\\
&\leq& \displaystyle\left(\int_{\Omega_{\al}}|w_{\al}^{-}|^{2^*}dx\right)^\frac{N-2}{N}\left(\int_{\Omega_{\al}\cap\{w_{\al}<0\}}A_1^\frac{N}{2}dx\right)^\frac{2}{N}.
\end{eqnarray}
As $\mu<(\frac{N-2}{2})^2$,  it is not difficult to check that $\displaystyle\left(\int_{\Omega_{\al}}|\na w_{\al}^{-}|^2-\mu\frac{|w_{\al}^{-}|^2}{|x|^2}\right)^\frac{1}{2}$ is an equivalent norm to $D^{1,2}(\Rn)$. Therefore there exists a positive constant $C_1$ such that $C_1\displaystyle\int_{\Omega_{\al}}|\na w_{\al}^{-}|^2\leq\displaystyle \int_{\Omega_{\al}}|\na w_{\al}^{-}|^2-\mu\frac{|w_{\al}^{-}|^2}{|x|^2}$. Applying this estimate along with Sobolev inequality, we have from \eqref{eq:rad2}:
\begin{equation}\label{eq:rad3}
C_1 \mathcal{S} \displaystyle\left(\int_{\Omega_{\al}}|w_{\al}^{-}|^{2^*}dx\right)^\frac{2}{2^*}\leq \left(\int_{\Omega_{\al}}|w_{\al}^{-}|^{2^*}dx\right)^\frac{N-2}{N}\left(\int_{\Omega_{\al}\cap\{w_{\al}<0\}}A_1^\frac{N}{2}dx\right)^\frac{2}{N},
\end{equation}
where $\mathcal{S}$ is the Sobolev constant. On the other hand, $u_{\al}<u$ on $\{w_{\al}<0\}$ implies
$$\int_{\Omega_{\al}\cap\{w_{\al}<0\}}A_1^\frac{N}{2}dx\leq C \int_{\Omega_{\al}\cap\{w_{\al}<0\}} u^{(p-1)\frac{N}{2}}.$$
We know that $u\in L^{2^*}(\Rn)\cap L^{q+1}(\Rn)$. As by the given assumption $$q>(p-1)\frac{N}{2}-1 \quad\text{and}\quad p\geq 2^*-1,$$ using interpolation theory we can show that $u\in L^{(p-1)\frac{N}{2}}$. Consequently $$\int_{\Omega_{\al}\cap\{w_{\al}<0\}} u^{(p-1)\frac{N}{2}}\to 0\quad\text{if}\quad \al \quad\text{is large enough}.$$ Hence from \eqref{eq:rad3} we conclude $w_{\al}^{-}=0$ in $\Omega_{\al}$ if $\al$ is large enough. This proves the claim.

Let $$\al_0=\text{inf}\{\al>0: u_{\al'}\geq u \ \text{in}\ \Omega_{\al'} \ \forall \ \al'>\al\}.$$
{\bf Claim 2:} $\al_0=0$.

We will prove this claim by method of contradiction. Let $\al_0>0$.  Define $w_{\al_0}=u_{\al_0}-u$. Then $w_{\al_0}\geq 0$ in $\Omega_{\al_0}$ and \\
$-\Delta w_{\al_0}+A_2(x)w_{\al_0}=\mu\frac{w_{\al_0}}{|x|^2}+A_1(x)w_{\al_0}\geq 0$ in $\Omega_{\al_0}$ and away from the point $(2\al_0, 0, \cdots, 0)$. As $A_2\geq 0$, by maximum principle we have $w_{\al_0}>0$ in this region.

Let $\epsilon>0$. We choose $R>0$ and $\delta_0>0$ such that
\begin{equation}\label{eq:rad4}
\int_{|x|>R}u^{(p-1)\frac{N}{2}}dx<\frac{\epsilon}{2}
\end{equation}
and  \begin{equation}\label{eq:rad5}
\int_{\al_0-\delta_0<x_1<\al_0+\delta_0}u^{(p-1)\frac{N}{2}}dx+\int_{2\al_0-\delta_0<x_1<2\al_0+\delta_0}u^{(p-1)\frac{N}{2}}dx<\frac{\epsilon}{2}.
\end{equation}
Define $$K:=\{x\in\Omega: \al_0+\delta_0\leq x_1\leq 2\al_0-\delta_0 \quad{or}\ x_1\geq 2\al_0+\delta_0\}\cap \{|x|\leq R\}.$$
Then $K$ is a compact set and $w_{\al_0}>0$ in $K$. Choose $\delta_1\in (0,\delta_0)$ such that $w_{\al_0-\delta}>0$ in $K \quad\forall\ \delta\in(0,\delta_1)$. Define $\al_1:=\al_0-\delta$. Next we will show that $u_{\al_1}\geq u$ in $\Omega_{\al_1}$ and this will contradict the definition of $\al_0$. Towards this goal, we define $w_{\al_1}:=u_{\al_1}-u$. We proceed as in the case of \eqref{eq:rad3} to get
$$C_1\mathcal{S} \displaystyle\left(\int_{\Omega_{\al_1}}|w_{\al_1}^{-}|^{2^*}dx\right)^\frac{2}{2^*}\leq \left(\int_{\Omega_{\al_1}}|w_{\al_1}^{-}|^{2^*}dx\right)^\frac{2}{2^*}\left(\int_{\Omega_{\al_1}\cap\{w_{\al_1}<0\}}A_1^\frac{N}{2}dx\right)^\frac{2}{N}.$$
By the choice of $\al_1$, we have $w_{\al_1}>0$ in $K$ and thus by \eqref{eq:rad4}  and \eqref{eq:rad5}, we conclude that
$$\int_{\Omega_{\al_1}\cap\{w_{\al_1}<0\}}A_1^\frac{N}{2}dx\leq\la p \int_{\Omega_{\al_1}\cap\{w_{\al_1}<0\}}u^{(p-1)\frac{N}{2}}dx<\epsilon.$$ As $\epsilon>0$ is arbitrarily chosen, we can conclude that $w_{\al_1}^{-}=0$, which contradicts the definition of $\al_0$. Hence the claim follows.

\noi Consequently we have $$u(-x_1,x_2,\cdots, x_n)\geq u(x_1,x_2,\cdots, x_n)  \quad\forall\ x_1>0.$$ Now repeating the same arguments for $\tilde{u}(x)=u(-x_1,x_2,\cdots, x_n)$, we can prove that $$u(-x_1,x_2,\cdots, x_n)\leq u(x_1,x_2,\cdots, x_n)  \quad\forall\ x_1>0.$$ Hence $$u(-x_1,x_2,\cdots, x_n)= u(x_1,x_2,\cdots, x_n)  \quad\forall\ x_1>0.$$ As a result symmetry follows since we can do the moving plane argument in any direction instead of $x_1$ direction.
\end{proof}

\begin{remark}
Doing some simple modifications to the proof of Theorem \ref{t:moving pl}, it can be shown that $u$ is a radially symmetric solution of \eqref{eq:a3'}, when $\Om=B_R(0)$, for any $R>0$.  Therefore $v$ is a radially symmetric solution of \eqref{eq:a3}, when $\Om=B_R(0)$, for any $R>0$.
\end{remark}

 From Theorem \ref{1.2}, we know $\mathcal{K}$ is achieved by a radial function $v\in D^{1, 2}(\R^N, |x|^{-2\nu}) \cap L^{q+1}(\R^N, |x|^{-(q+1)\nu})$.   Furthermore, there exists a constant $\lambda>0$ such that $v$ satisfies the following problem:
\begin{equation}
  \label{div1}
\left\{\begin{aligned}
      -div (|x|^{-2\nu}\na v)&=\lambda |x|^{-(p+1)\nu} v^{p} -|x|^{-(q+1)\nu} v^q \quad\text{in }\quad \R^N,\\ v&>0   \quad\text{in }\quad \R^N,\\
      v &\in D^{1,2}(\Rn, |x|^{-2\nu})\cap L^{q+1}(\Rn, |x|^{-(q+1)\nu}).
         \end{aligned}
  \right.
  \end{equation}

\begin{lemma}\label{rem1}
Define $\al=N-2-2\nu$. Let $2^*-1< p$ and $q> (p-1)\frac{N}{2}-1$, $0<\nu<\f{N-2}{2}$.  Suppose $v$ is a solution of \eqref{div1}. Then $v(x)=v(|x|)=v(r)$, where $|x|=r$. Moreover, $$\displaystyle \int_{0}^{\infty} (v(r))^{2^*} r^{N-1-2^*\nu} dr < + \infty.$$
Furthermore, if  $v(r)\leq Cr^{-\al}$ for  $r>>1$ then $v \sim r^{-\al}$ as $r \to \infty$.
 \end{lemma}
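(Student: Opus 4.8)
The first two assertions are quick. Via $u=|x|^{-\nu}v$, problem \eqref{div1} is equivalent to $-\De u-\mu\f{u}{|x|^2}=\la u^p-u^q$ in $\Rn$ with $u\in D^{1,2}(\Rn)\cap L^{q+1}(\Rn)$, since the norm $\big(\int|x|^{-2\nu}|\na v|^2\big)^{1/2}$ equals $\big(\int|\na u|^2-\mu\int u^2/|x|^2\big)^{1/2}$, which is equivalent to the $D^{1,2}(\Rn)$ norm for $\mu<\bar\mu$. Because $2^*-1\le p$ is equivalent to $p\le(p-1)\f N2-1$ (rearrange to $p(N-2)\ge N+2$), the hypotheses of Theorem \ref{t:moving pl} are met; hence $u$, and thus $v=|x|^{\nu}u$, is radial. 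Finally, $\int_0^\infty v^{2^*}r^{N-1-2^*\nu}\,dr$ is a constant multiple of $\int_{\Rn}|x|^{-2^*\nu}v^{2^*}$, which is finite by the Caffarelli--Kohn--Nirenberg inequality \eqref{CKN} with $a=b=\nu$ (so $r=2^*$) applied to $v\in D^{1,2}(\Rn,|x|^{-2\nu})$.

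For the asymptotics, write the radial equation as $(r^{N-1-2\nu}v')'=-\la r^{N-1-(p+1)\nu}v^p+r^{N-1-(q+1)\nu}v^q$ and put $\beta(r):=r^{N-1-2\nu}v'(r)$, so that $\beta'(r)=r^{N-1-(p+1)\nu}v^p\big(r^{-(q-p)\nu}v^{q-p}-\la\big)$. I would first extract two facts valid for $r$ large, using only the hypothesis $v\le Cr^{-\al}$. First, $r^{-(q-p)\nu}v^{q-p}=(r^{-\nu}v)^{q-p}=u^{q-p}$ with $u=r^{-\nu}v\le Cr^{-\nu-\al}\to 0$, so the bracket is eventually negative and $\beta$ is decreasing near infinity. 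Second, $|\beta'(r)|\le \la C^p r^{-(p-1)(N-2-\nu)+1}+C^q r^{-(q-1)(N-2-\nu)+1}$; the strict inequalities $p>2^*-1$ (so $p-1>\f4{N-2}$) and $\nu<\f{N-2}2$ (so $N-2-\nu>\f{N-2}2$) give $(p-1)(N-2-\nu)>2$, and $q>p$ (which holds since $q>(p-1)\f N2-1\ge p$) gives the analogous bound for the $q$-term; hence $\beta'\in L^1(R_0,\infty)$. Consequently $\beta_\infty:=\lim_{r\to\infty}\beta(r)$ exists and is finite.

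Next I would show $\beta_\infty<0$. If $\beta_\infty\ge 0$, then since $\beta$ is eventually decreasing to $\beta_\infty$ we get $\beta(r)\ge\beta_\infty\ge 0$, hence $v'(r)\ge 0$, for all large $r$; so $v$ is eventually nondecreasing, and together with $v\le Cr^{-\al}\to 0$ this forces $v\le 0$ near infinity, contradicting $v>0$. Thus $\beta_\infty<0$; in particular $v$ is eventually strictly decreasing and, being positive with $v\le Cr^{-\al}$, satisfies $v(\infty)=0$. Hence for $r$ large
\[
v(r)=\int_r^\infty\big(-v'(s)\big)\,ds=\int_r^\infty s^{-(N-1-2\nu)}|\beta(s)|\,ds,
\]
and since $|\beta(s)|\to|\beta_\infty|$ and $\int_r^\infty s^{-(N-1-2\nu)}\,ds=\f{r^{-\al}}{\al}$ (finite because $\al=N-2-2\nu>0$), sandwiching $|\beta|$ between $|\beta_\infty|\pm\eps$ for $s$ beyond some radius yields $r^{\al}v(r)\to|\beta_\infty|/\al\in(0,\infty)$, i.e.\ $v\sim r^{-\al}$ as $r\to\infty$.

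The only genuine obstacle is excluding the ``fast decay'' scenario $\beta_\infty=0$ (equivalently $v=o(r^{-\al})$); the monotonicity of $\beta$ near infinity is what makes this painless, and it is precisely here — together with the integrability of $\beta'$ — that the hypotheses $p>2^*-1$ and $\nu<\f{N-2}2$ (and the assumed upper bound $v\le Cr^{-\al}$) enter. The remaining steps are routine one-variable calculus on the radial ODE.
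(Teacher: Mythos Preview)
Your argument is correct. The first two assertions are handled exactly as in the paper (moving-plane via Theorem~\ref{t:moving pl} after the substitution $u=|x|^{-\nu}v$, and the CKN/Sobolev embedding for the $L^{2^*}$-bound).

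For the asymptotics $v\sim r^{-\al}$ the two proofs diverge. The paper first proves the \emph{lower} bound $v(r)\ge Cr^{-\al}$ independently, by comparison with the explicit function $w=|x|^{-\al}$, which satisfies $\text{div}(|x|^{-2\nu}\na w)=0$; with the two-sided bound $C_1\le r^{\al}v\le C_2$ in hand, it then observes that $-(r^{N-1-2\nu}v_r)_r=\la r^{N-1-(p+1)\nu}v^p(1+o(1))$ is positive and integrable on $(s,\infty)$, whence $r^{N-1-2\nu}v_r\to -c$ for some $c>0$, and one more integration gives the exact limit. You bypass the comparison step entirely: you show directly that $\beta(r)=r^{N-1-2\nu}v'(r)$ is eventually strictly decreasing (because $(r^{-\nu}v)^{q-p}\to 0$ makes $\beta'<0$), and that $\beta'\in L^1(R_0,\infty)$ from the upper bound alone; then $\beta_\infty\ge 0$ is ruled out in one stroke since it would force $v'\ge 0$ near infinity, contradicting $0<v\le Cr^{-\al}\to 0$. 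This is a cleaner route to the same conclusion, and it makes transparent why the ``fast decay'' case $\beta_\infty=0$ cannot occur without first knowing a lower bound. The paper's approach, on the other hand, yields the a priori lower bound $v\ge Cr^{-\al}$ as a separate byproduct (via the maximum principle), which can be of independent use; your argument recovers it only a posteriori from $v\sim r^{-\al}$.
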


\begin{proof} If $v$ is any solution of \eqref{div1}, then $u=|x|^{-\nu}v$ is a solution of \eqref{a3}. By Theorem \ref{t:moving pl},  $u$ is radially symmetric. Therefore  $v(x)=v(|x|)=v(r)$  and $v$ satisfies the following ode
\begin{equation}
  \label{eq:a4}
  \left\{
    \begin{aligned}  v_{rr}&+\frac{(N-2\nu-1)}{r} v_{ r}+\lambda r^{-(p-1)\nu} v^{p}-   r^{-(q-1)\nu} v^{q} =0 &&\text{ in  }  (0, \infty),\\
 v(r) &> 0 &&\text{ in  }  (0, \infty), \\
& \int_{0}^{\infty} (v'(r))^2 r^{N-1-2\nu} dr < +\infty .
\end{aligned}
  \right.
\end{equation}
Applying Sobolev embedding theorem it follows
$\displaystyle \int_{0}^{\infty} (v(r))^{2^*} r^{N-1-2^*\nu} dr < + \infty$. To prove the last assertion, we first show that
$v(r)\geq C r^{-\al}$  for some $C>0$ and $r\gg1.$  \\
To see this, let $w=|x|^{-\al}$. Then $div(|x|^{-2\nu}\na w)=0$ in $\Rn\setminus B_R(0)$. Hence by standard method using comparison principle it follows that
$$v\geq Cw=C|x|^{-\al}\quad \text{in}\quad \Rn\setminus B_R(0).$$
Hence there exist $C_{1}, C_2>0$ such that
\be \label{ulb} C_1 \leq v(r)r^{\al}  \leq C_2  \text{ for }  r>>1. \ee
But from \eqref{eq:a4} we have
$$- (r^{N-1-2\nu} v_{r})_{r}= v^p r^{N-1-(p+1)\nu} (1+ o(1))   \quad \text{for}\quad r>>1. $$
Since $v$ satisfies \eqref{ulb} and $p>2^*-1$, the RHS of the above expression is integrable in $(s, \infty)$ and positive. This implies that
$$\lim_{r \to \infty } v_{r}  r ^{(N-1-2\nu)} = -c. $$ for some  $c>0.$ This in fact implies that $v_{r}\sim - r^{-(N-1-2\nu)}.$  Integrating this expression from $(s,\infty)$ we obtain, $$\lim_{r \to \infty} r^{N-2-2\nu} v = a \in (0, + \infty).$$
\end{proof}

\section{ Proof of Theorem \ref{main1}}
\subsection{Auxiliary results}
Define \be \label{hatF}{\hat F}(w)=  \displaystyle\frac{1}{2}\int |x|^{-2\nu}  |\nabla w|^2 dx+  \frac{1}{q+1}\int |x|^{-(q+1)\nu}  w^{q+1} dx ,\ee
where $\nu\in(0,\f{N-2}{2})$, $q>p\geq 2^*-1$.
For $\rho>0$, set
$$N_{\rho}= \bigg\{ w\in H^{1}_{0}(\rho\Om, |x|^{-2\nu})\cap L^{q+1}(\rho\Om, |x|^{-(q+1)\nu}):
 \  \int_{\rho\Om} |x|^{-(p+1)\nu} w^{p+1}dx=1 \bigg\}.$$
Define $$S_{\rho}:=\inf_{w\in N_{\rho}} \hat{F}(w).$$

\begin{theorem}\label{A.1}
Let $p=2^{*}-1$. Then  $S_{\rho}\to \frac{\mathcal{S}}{2}$ as $\rho\to\infty$, where  $\mathcal{S}$ is as defined in \eqref{go}.
\end{theorem}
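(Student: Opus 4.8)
The plan is to establish $\liminf_{\rho\to\infty}S_\rho\ge\mathcal S/2$ and $\limsup_{\rho\to\infty}S_\rho\le\mathcal S/2$ separately. The first bound is immediate and uniform in $\rho$: given $w\in N_\rho$, extend it by zero to $\Rn$, so that $w\in D^{1,2}(\Rn,|x|^{-2\nu})$ with $\int_{\Rn}|x|^{-2^*\nu}w^{2^*}\,dx=1$; the definition \eqref{go} of $\mathcal S$ (equivalently \eqref{CKN} with $a=b=\nu$, $r=2^*$) then gives $\int_{\rho\Om}|x|^{-2\nu}|\nabla w|^2\,dx\ge\mathcal S$, and since the second term of $\hat F$ is nonnegative, $\hat F(w)\ge\mathcal S/2$. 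Hence $S_\rho\ge\mathcal S/2$ for every $\rho>0$.

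For the upper bound I would use a rescaled copy of the extremal $U$ of \eqref{ent-U}, suitably truncated near the origin and near infinity. Let $U$ be scaled so that $\int_{\Rn}|x|^{-2^*\nu}U^{2^*}\,dx=1$; then $\int_{\Rn}|x|^{-2\nu}|\nabla U|^2\,dx=\mathcal S$. Fix $\eps>0$. Cutting $U$ off smoothly inside $B_\sigma(0)$ and outside $B_R(0)$ and renormalizing the $L^{2^*}$-mass produces, for $\sigma$ small and $R$ large, a function $\check U\in C^\infty_c(\Rn\setminus\{0\})$ with $\int_{\Rn}|x|^{-2^*\nu}\check U^{2^*}\,dx=1$, $\int_{\Rn}|x|^{-2\nu}|\nabla\check U|^2\,dx<\mathcal S+\eps$, and $C_1:=\int_{\Rn}|x|^{-(q+1)\nu}\check U^{q+1}\,dx<\infty$ (finite since $\check U$ is bounded and supported in an annulus away from $0$). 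The one nonroutine point here is the control of the Dirichlet energy under the inner cutoff, which rests on $\sigma^{-2}\int_{B_{2\sigma}\setminus B_\sigma}|x|^{-2\nu}U^2\,dx\lesssim\sigma^{N-2\nu-2}\to 0$, i.e.\ on the hypothesis $\nu<\frac{N-2}{2}$.

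Next, with $\alpha=N-2-2\nu$, set $w_\lambda(x):=\lambda^{\alpha/2}\check U(\lambda x)$ for $\lambda>0$. A change of variables, using $\frac{\alpha}{2}+\nu=\frac{N-2}{2}$, shows that $\int_{\Rn}|x|^{-2\nu}|\nabla w_\lambda|^2\,dx$ and $\int_{\Rn}|x|^{-2^*\nu}w_\lambda^{2^*}\,dx$ coincide with the corresponding integrals for $\check U$ (scale invariance of the Caffarelli--Kohn--Nirenberg quotient), while
\begin{equation*}
\int_{\Rn}|x|^{-(q+1)\nu}w_\lambda^{q+1}\,dx=\lambda^{\gamma}\,C_1,\qquad \gamma:=(q+1)\tfrac{N-2}{2}-N>0 .
\end{equation*}
The strict positivity of $\gamma$ is exactly the arithmetic content of $q>p\ge 2^*-1$, i.e.\ $q+1>2^*=\tfrac{2N}{N-2}$. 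Now fix $r_0>0$ with $\overline{B_{r_0}(0)}\subset\Om$ (so $B_{\rho r_0}(0)\subset\rho\Om$) and take $\lambda=\lambda(\rho):=2R/(\rho r_0)$; then $w_\lambda\in C^\infty_c(\rho\Om\setminus\{0\})$ and has $L^{2^*}$-mass $1$, hence $w_\lambda\in N_\rho$, and
\begin{equation*}
S_\rho\le\hat F(w_\lambda)=\tfrac12\int_{\Rn}|x|^{-2\nu}|\nabla\check U|^2\,dx+\tfrac{1}{q+1}\lambda^{\gamma}C_1<\tfrac{\mathcal S+\eps}{2}+\tfrac{C_1}{q+1}\big(\tfrac{2R}{\rho r_0}\big)^{\gamma}.
\end{equation*}
Since $\gamma>0$, letting $\rho\to\infty$ and then $\eps\downarrow0$ gives $\limsup_{\rho\to\infty}S_\rho\le\mathcal S/2$, which together with the lower bound yields $S_\rho\to\mathcal S/2$.

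The main obstacle is bookkeeping. One genuinely must excise a neighbourhood of the origin \emph{before} rescaling: otherwise $\check U$, and hence $w_\lambda$, need not lie in $L^{q+1}(\Rn,|x|^{-(q+1)\nu})$ when $(q+1)\nu\ge N$, so $w_\lambda$ would fail to be admissible in $N_\rho$. Moreover the parameters have to be chosen in the order ``$\eps$ first, then $\sigma$ small and $R$ large, and only then $\lambda=\lambda(\rho)\to0$ forced by the requirement that $w_\lambda$ be supported in $\rho\Om$'', so that the sole surviving error is the supercritical term $\tfrac{1}{q+1}\lambda^{\gamma}C_1$, which vanishes as $\rho\to\infty$ because $\gamma>0$. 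Everything else is a routine change of variables together with elementary cutoff estimates.
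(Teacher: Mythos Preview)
Your proof is correct and follows the same two-inequality strategy as the paper, but with notable simplifications and one extra care.

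For the lower bound $S_\rho\ge\mathcal S/2$, you use the obvious route: extend a competitor by zero and invoke the definition \eqref{go} of $\mathcal S$ directly. The paper instead takes a near-minimizer of $S_\rho$, mollifies it, renormalizes, and only then appeals to a global bound; your argument is strictly more elementary and yields the inequality for \emph{every} $\rho$, not merely in the limit.

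For the upper bound, both you and the paper test with a rescaled, truncated copy of the extremal $U$, exploiting the scale invariance of the Dirichlet and $L^{2^*}$ weighted norms together with the strict supercriticality $q>2^*-1$ to kill the $L^{q+1}$ term. The paper rescales by $\mu\to\infty$ with $\rho=\mu^2$ and cuts off only at infinity; you rescale by $\lambda\to0$ and cut off near the origin as well. Your inner cutoff is a genuine improvement: without it, the test function need not lie in $L^{q+1}(\Rn,|x|^{-(q+1)\nu})$ when $(q+1)\nu\ge N$, a case the paper's Step~1 does not explicitly address. Your estimate $\sigma^{-2}\int_{B_{2\sigma}\setminus B_\sigma}|x|^{-2\nu}U^2\,dx\lesssim\sigma^{N-2\nu-2}\to0$ for the Dirichlet cost of the inner cutoff is exactly what is needed, and uses only $\nu<\tfrac{N-2}{2}$.
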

\begin{proof}
{\bf Step 1}: $\lim_{\rho\to\infty}S_{\rho}\leq \frac{\mathcal{S}}{2}$\\
To see this,  let $U(x)$ be as in \eqref{ent-U}. We know from \cite{CZ} that, $U$ is an extremal of $\mathcal{S}$, with $\displaystyle\int_{\Rn}|x|^{2^*\nu}U^{2^*}(x)dx=1$ and $U$ is a ground state  solution of \eqref{ent} .  It is easy to check that $\mu^{-\frac{\al}{2}}U(\frac{x}{\mu})$ is also a solution of \eqref{ent},  for any $\mu>0$, where $\al=N-2-2\nu$.

 Set $\rho :=\mu^2$. Define $$U_{\rho}(x):=\mu^{-\frac{\al}{2}}U(\frac{x}{\mu}) \quad\text{and}\quad
\phi_{\rho}(x)=\phi(\frac{x}{\rho}),$$ where $\phi\in C^{\infty}_0(\Rn)$ such that supp $\phi\in\Om$, $\phi=1$ in $\frac{\Om}{2}$, $0\leq\phi\leq 1$ and $|\na\phi|\leq\frac{2}{d}$ and $d=\text{diam}(\Om)$. We set $$v_{\rho}(x):=U_{\rho}(x)\phi_{\rho}(x) \quad\text{and}\quad
\hat{v}_{\rho}:=\frac{v_{\rho}}{||x|^{-\nu}v_{\rho}|_{L^{2^*}(\rho\Om)}}.$$ Then $\hat{v}_{\rho}\in N_{\rho}$.
\begin{eqnarray}\label{ap-1}
\displaystyle\lim_{\rho\to\infty}\int_{\rho\Om}|x|^{-2^*\nu}v_{\rho}^{2^*}dx &=&\lim_{\rho\to\infty}\mu^{-\frac{\al}{2}2^*}\int_{\Rn}|x|^{-2^*\nu}U^{2^*}(\frac{x}{\mu})\phi^{2^*}(\frac{x}{\rho})dx\no\\
&=&\lim_{\rho\to\infty}\rho^{-\frac{\al}{2}2^*+N-2^*\nu}\int_{\Rn}|x|^{-2^*\nu}U^{2^*}(x)\phi^{2^*}(\frac{x}{\sqrt{\rho}})dx\no\\
&=&\lim_{\rho\to\infty}\int_{\Rn}|x|^{-2^*\nu}U^{2^*}(x)\phi^{2^*}(\frac{x}{\sqrt{\rho}})dx\no\\
&=& \int_{\Rn}|x|^{-2^*\nu}U^{2^*}(x)dx=1.
\end{eqnarray}
Similarly we see that
\begin{equation*}
\displaystyle\int_{\rho\Om}|x|^{-(q+1)\nu}\hat{v}_{\rho}^{q+1}dx =
\frac{\rho^{-\frac{\al}{2}(q+1)+N-(q+1)\nu}}{||x|^{-\nu}v_{\rho}|_{L^{2^*}(\rho\Om)}^{q+1}}\int_{\Rn}|x|^{-(q+1)\nu}U^{q+1}(x)\phi^{q+1}(\frac{x}{\sqrt{\rho}})dx.
\end{equation*}
As before $$\displaystyle\lim_{\rho\to\infty}\int_{\Rn}|x|^{-(q+1)\nu}U^{q+1}(x)\phi^{q+1}(\frac{x}{\sqrt{\rho}})dx= \int_{\Rn}|x|^{-(q+1)\nu}U^{q+1}(x)dx.$$  Moreover  $q>2^*-1$ implies $-\frac{\al}{2}(q+1)+N-(q+1)\nu<0$.  Hence by \eqref{ap-1}, we have
\begin{equation}\label{ap-2}
\displaystyle\lim_{\rho\to\infty}\int_{\rho\Om}|x|^{-(q+1)\nu}\hat{v}_{\rho}^{q+1}dx=0.
\end{equation}
\begin{equation}\label{ap-3}
\displaystyle \int_{\rho\Om}|x|^{-2\nu}|\na\hat{v}_{\rho}|^2dx =\frac{\displaystyle\int_{\rho\Om}|x|^{-2\nu}|\na v_{\rho}|^2dx}{||x|^{-\nu}v_{\rho}|^2_{L^{2^*}(\rho\Om)}}; \quad\text{and}\quad  \int_{\rho\Om}|x|^{-2\nu}|\na v_{\rho}|^2dx= I_{\rho}^1+I_{\rho}^2+I_{\rho}^3,
\end{equation}
where $$\displaystyle I_{\rho}^1=\mu^{-(\al+2)}\int_{\Rn}|x|^{-2\nu}|\na U(\frac{x}{\mu})|^2\phi^2(\frac{x}{\rho});$$
$$\displaystyle I_{\rho}^2=\mu^{-(\al+4)}\int_{\rho\Om\setminus\rho\frac{\Om}{2}}|x|^{-2\nu}U^2(\frac{x}{\mu})|\na\phi(\frac{x}{\rho})|^2dx;$$
$$I_{\rho}^3=2\mu^{-(\al+3)}\displaystyle\int_{\rho\Om\setminus\rho\frac{\Om}{2}}|x|^{-2\nu}U(\frac{x}{\mu})\phi(\frac{x}{\rho})\na U(\frac{x}{\mu})\na\phi(\frac{x}{\rho})dx.$$
By straight forward computation we see that
\begin{equation}\label{ap-4}
\lim_{\rho\to\infty}I_{\rho}^1=\lim_{\rho\to\infty}\int_{\Rn}|x|^{-2\nu}|\na U(x)|^2\phi^2(\frac{x}{\sqrt{\rho}})dx=\int_{\Rn}|x|^{-2\nu}|\na U(x)|^2 dx=\mathcal{S}.
\end{equation}
\begin{eqnarray}\label{ap-5}
\lim_{\rho\to\infty}I_{\rho}^2 &\leq& \lim_{\rho\to\infty} \frac{4}{d^2}\mu^{-(\al+4)}\int_{\rho\Om\setminus\rho\frac{\Om}{2}}|x|^{-2\nu}U^2(\frac{x}{\mu})dx\no\\
&=&\lim_{\rho\to\infty}\frac{4}{d^2}\mu^{-(\al+4)-2\nu+N}\int_{\sqrt{\rho}\Om\setminus\sqrt{\rho}\frac{\Om}{2}}|x|^{-2\nu}U^2(x)dx\no\\
&\leq&\lim_{\rho\to\infty}\frac{4}{d^2}\mu^{-(\al+4)-2\nu+N}\bigg(\int_{\Rn}|x|^{-2*\nu}U^{2^*}(x)dx\bigg)^\frac{2}{2^*}|\sqrt{\rho}(\Om\setminus\frac{\Om}{2})|^{1-\frac{2}{2^*}}\no\\
&\leq&\lim_{\mu\to\infty} C\mu^{-(\al+4)-2\nu+N+(1-\frac{2}{2^*})}=\lim_{\mu\to\infty}C\mu^{-1-\frac{2}{2^*}}=0.
\end{eqnarray}
Similarly,
\begin{eqnarray*}
\lim_{\rho\to\infty}I_{\rho}^3 &\leq\lim_{\rho\to\infty}\mu^{-(\al+3)}\bigg(\int_{\Rn}|x|^{-2\nu}|\na U(\frac{x}{\mu})|^2\phi^2(\frac{x}{\rho})dx\bigg)^\frac{1}{2}\times\\
&\bigg(\int_{\rho\Om\setminus\rho\frac{\Om}{2}}|x|^{-2\nu}|\na\phi(\frac{x}{\rho})|^2\phi^2(\frac{x}{\mu})dx\bigg)^\frac{1}{2}.
\end{eqnarray*}
Consequently,
\begin{equation}\label{ap-6}
\lim_{\rho\to\infty}I_{\rho}^3\leq \lim_{\mu\to\infty}  C\mu^{-\frac{2}{2^*}}=0.
\end{equation}
Combining \eqref{ap-2}, \eqref{ap-4}, \eqref{ap-5}, \eqref{ap-6} and \eqref{ap-1} we obtain
$$S_{\rho}\leq F(\hat{v_{\rho}}) \quad\text{and}\quad  F(\hat{v_{\rho}})\to\frac{\mathcal{S}}{2} \quad\text{as}\quad \rho\to\infty.$$
Hence \be\label{ap-7} \lim_{\rho\to\infty}S_{\rho}\leq \frac{\mathcal{S}}{2}.\ee

\vspace{2mm} {\bf Step 2}: $\frac{\mathcal{S}}{2}\leq \lim_{\rho\to\infty}S_{\rho}$.\\
This is standard to prove. Therefore we just give here a sketch of the proof.
Let $\eps>0$.  Then there exists $u_{\rho,\eps}\in N_{\rho}$ such that
\be\label{ap-8}\hat{F}(u_{\rho,\eps})< S_{\rho}+\eps.\ee Extend $u_{\rho,\eps}$ by $0$ outside $\rho\Om$ and we denote it by $u_{\rho,\eps}$ too. Let $\eta(x)=C\exp(\frac{1}{|x|^2-1})$ if $|x|<1$ and $0$ otherwise. Set $\eta_{\de}(x)=\de^{-N}\eta(\frac{x}{\de})$.

Define $u_{\rho,\eps}^{\de}:=u_{\rho,\eps}*\eta_{\de}$ and $v_{\rho,\eps}^{\de}=\frac{u_{\rho,\eps}^{\de}}{|u_{\rho,\eps}^{\de}|_{L^{2^*}(\Rn)}}.$ Thus $v_{\rho,\eps}^{\de}\in C^{\infty}_0(\Rn)\cap N$, where
$$N :=\bigg\{ w\in D^{1,2}(\Rn, |x|^{-2\nu}dx): w\in L^{q+1}(\Rn, |x|^{-(q+1)\nu}dx),
 \  \int_{\Rn} |x|^{-2^*\nu} w^{2^*}dx=1 \bigg\}.$$
Moreover,
$$v_{\rho,\eps}^{\de}\to u_{\rho,\eps} \quad\text{in}\quad D^{1,2}(\Rn, |x|^{-2\nu}dx)\cap L^{q+1}(\Rn,|x|^{-(q+1)}dx) \quad\text{as}\quad\de\to 0.$$
Hence $$\frac{\mathcal{S}}{2}\leq\hat{F}(v_{\rho,\eps}^{\de})\to \hat{F}(u_{\rho,\eps}) \quad\text{as}\quad\de\to 0.$$ Combining this with \eqref{ap-8}, we conclude $\frac{\mathcal{S}}{2}<S_{\rho}+\eps$. As $\eps>0$ is arbitrary, this proves Step 2.

Combining Step 1 and Step 2, theorem follows.
\end{proof}

\begin{theorem}\label{A.2}
Let $p>2^{*}-1$. Then $S_{\rho}\to \mathcal{K}$ as $\rho\to\infty$, where $\mathcal{K}$ is as defined in \eqref{a22}.
\end{theorem}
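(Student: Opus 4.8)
The plan is to prove the two inequalities $\liminf_{\rho\to\infty}S_\rho\ge\mathcal K$ and $\limsup_{\rho\to\infty}S_\rho\le\mathcal K$ separately, following the same two-step scheme as in the proof of Theorem~\ref{A.1}. Note first that on the constraint set $\{\int_{\Rn}|x|^{-(p+1)\nu}w^{p+1}\,dx=1\}$ one has $F(w,\Rn)=\hat F(w)$, so $\mathcal K$ in \eqref{14-K} is precisely $\inf\{\hat F(w):w\in D^{1,2}(\Rn,|x|^{-2\nu})\cap L^{q+1}(\Rn,|x|^{-(q+1)\nu}),\ \int_{\Rn}|x|^{-(p+1)\nu}w^{p+1}\,dx=1\}$, i.e.\ the natural analogue of $S_\rho$ with $\rho\Om$ replaced by $\Rn$. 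The first inequality is then immediate: given $w\in N_\rho$, extend it by zero to all of $\Rn$; the extension (still called $w$) lies in $D^{1,2}(\Rn,|x|^{-2\nu})\cap L^{q+1}(\Rn,|x|^{-(q+1)\nu})$, satisfies the $\Rn$-constraint, and $\hat F(w)=F(w,\Rn)\ge\mathcal K$. Taking the infimum over $N_\rho$ gives $S_\rho\ge\mathcal K$ for every $\rho$, hence $\liminf_\rho S_\rho\ge\mathcal K$.

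For the reverse inequality I would fix $\eps>0$ and choose $v\in D^{1,2}(\Rn,|x|^{-2\nu})\cap L^{q+1}(\Rn,|x|^{-(q+1)\nu})$ with $\int_{\Rn}|x|^{-(p+1)\nu}v^{p+1}\,dx=1$ and $\hat F(v)<\mathcal K+\eps$ — one may take the radial minimiser provided by Theorem~\ref{1.2} after undoing the change of variables $v=|x|^{\nu}u$, but any almost-minimiser suffices. With $\phi\in C^\infty_0(B_2(0))$, $\phi\equiv1$ on $B_1(0)$, $0\le\phi\le1$, $|\na\phi|\le2$, set $\phi_R(x)=\phi(x/R)$ and $w_R:=\phi_R v$, which is supported in $B_{2R}(0)$ and still belongs to $D^{1,2}(\Rn,|x|^{-2\nu})\cap L^{q+1}(\Rn,|x|^{-(q+1)\nu})$. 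The heart of the argument is to show, as $R\to\infty$,
\[
\int_{\Rn}|x|^{-(s+1)\nu}w_R^{s+1}\,dx\to\int_{\Rn}|x|^{-(s+1)\nu}v^{s+1}\,dx\quad(s=p,q),\qquad \int_{\Rn}|x|^{-2\nu}|\na w_R|^2\,dx\to\int_{\Rn}|x|^{-2\nu}|\na v|^2\,dx.
\]
The first two limits follow from dominated convergence, since $0\le w_R\le v$ and $w_R\to v$ pointwise. For the Dirichlet term one expands $\na w_R=\phi_R\na v+v\na\phi_R$: the piece $\int|x|^{-2\nu}\phi_R^2|\na v|^2$ converges by dominated convergence, while, by Cauchy--Schwarz, the remaining pieces are controlled by $R^{-2}\int_{R\le|x|\le 2R}|x|^{-2\nu}v^{2}\,dx$ and its square root.

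Here one uses the pointwise identity $|x|^{-2\nu}v^{2}=\big(|x|^{-2^{*}\nu}v^{2^{*}}\big)^{2/2^{*}}$ — the weight exponents match because $\tfrac{2}{2^{*}}\cdot 2^{*}\nu=2\nu$ — together with H\"older's inequality on the annulus $\{R\le|x|\le2R\}$ (with conjugate exponents $2^{*}/2$ and $N/2$) and the Caffarelli--Kohn--Nirenberg/Sobolev inequality \eqref{go}, which guarantees $v\in L^{2^{*}}(\Rn,|x|^{-2^{*}\nu})$. This gives
\[
\frac{1}{R^{2}}\int_{R\le|x|\le2R}|x|^{-2\nu}v^{2}\,dx\le C\Big(\int_{R\le|x|\le2R}|x|^{-2^{*}\nu}v^{2^{*}}\,dx\Big)^{2/2^{*}}\longrightarrow 0\qquad(R\to\infty),
\]
since $|B_{2R}(0)\setminus B_{R}(0)|^{\,1-2/2^{*}}\le CR^{2}$ and the right-hand integral is the tail of a convergent one. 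Consequently $\hat F(w_R)\to\hat F(v)$ and $\int_{\Rn}|x|^{-(p+1)\nu}w_R^{p+1}\,dx\to1$. Renormalising, set $\tilde w_R:=w_R\big/\big(\int_{\Rn}|x|^{-(p+1)\nu}w_R^{p+1}\,dx\big)^{1/(p+1)}$; this satisfies the constraint, is supported in $B_{2R}(0)$, and obeys $\hat F(\tilde w_R)\to\hat F(v)<\mathcal K+\eps$. Fix $R$ with $\hat F(\tilde w_R)<\mathcal K+\eps$; since $0\in\Om$ there is $r_0>0$ with $B_{r_0}(0)\subset\Om$, so $B_{2R}(0)\subset\rho\Om$ whenever $\rho r_0>2R$, and for such $\rho$ we have $\tilde w_R\in N_\rho$, whence $S_\rho\le\hat F(\tilde w_R)<\mathcal K+\eps$. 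Thus $\limsup_{\rho\to\infty}S_\rho\le\mathcal K+\eps$, and letting $\eps\downarrow0$ and combining with the first inequality yields $S_\rho\to\mathcal K$.

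I expect the only genuinely delicate point to be the annulus estimate above: everything hinges on the Hardy-type weights being tuned so that $|x|^{-2\nu}v^{2}$ is exactly a power of the weighted $L^{2^{*}}$ density, after which the volume factor $R^{N}$ of the dyadic annulus is precisely absorbed by the H\"older exponent $1-2/2^{*}=2/N$ — the same cancellation that makes $I_\rho^{2}$ and $I_\rho^{3}$ vanish in the proof of Theorem~\ref{A.1}. The dominated-convergence passages, the appeal to \eqref{go}, and the renormalisation are routine, and the hypothesis $p>2^{*}-1$ is not really used beyond guaranteeing, via \eqref{14:l}, that $F(\cdot,\Rn)$ and $\mathcal K$ are well defined and that the relevant integrals are finite.
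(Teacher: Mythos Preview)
Your proof is correct and follows the paper's two-step scheme: truncate an (almost-)minimiser of $\mathcal K$ by a cut-off for the upper bound $\limsup_\rho S_\rho\le\mathcal K$, and compare competitors for the lower bound $S_\rho\ge\mathcal K$. The differences are cosmetic---the paper simply cites Step~1 of Theorem~\ref{A.1} for the truncation estimates you spell out (your annulus computation is exactly the mechanism behind \eqref{ap-5}--\eqref{ap-6}), and for the lower bound it routes through a mollification as in Step~2 of Theorem~\ref{A.1}, which your direct extension-by-zero argument shows to be unnecessary.
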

\begin{proof}
Let $w\in D^{1,2}(\Rn, |x|^{-2\nu}dx)\cap L^{q+1}(\Rn, |x|^{-(q+1)\nu}dx)$
 be a minimizer of $\mathcal{K}$ (which exists by Theorem \ref{1.2}) with $\displaystyle\int_{\Rn}|x|^{-(p+1)\nu}w^{p+1}dx=1$.  Define $\phi_{\rho}$ as in Step 1 of the proof of Theorem \ref{A.1}. Set $w_{\rho}=w\phi_{\rho}$ and $\hat{w_{\rho}}=\frac{w_{\rho}}{||x|^{-\nu}w_{\rho}|_{L^{p+1}(\Rn)}}$. Then $\hat{w_{\rho}}\in N_{\rho}$ and consequently $S_{\rho}\leq F(\hat{w_{\rho}})$. Proceeding the same way as in  Step 1 of Theorem \ref{A.1}, we  obtain  $F(\hat{w_{\rho}})\to \mathcal{K}$ as $\rho\to\infty$. Hence $\lim_{\rho\to\infty}S_{\rho}\leq \mathcal{K}$. To get the other sided inequality we use the same idea as in Step 2 of Theorem \ref{A.2}. This completes the proof.
\end{proof}

\subsection{Asymptotic Behavior} For $v\in H^1_0(\Om, |x|^{-2\nu})\cap L^{q+1}(\Om, |x|^{-(q+1)\nu})$, we recall the definition of the functional $F(.,\Om)$ from \eqref{a22} for $p>2^*-1$ and $S(.,\Om)$ from \eqref{sep-17-1} for $p=2^*-1$:
\be F(v, \Om)= \frac{1}{2}\frac{\displaystyle\int_{\Om} |x|^{-2\nu}|\nabla v|^2 dx}{\displaystyle\int_{\Om}|x|^{-(p+1)\nu} v^{p+1} dx}+\frac{1}{q+1}\frac{\displaystyle\int_{\Om}|x|^{-(q+1)\nu}v^{q+1} dx}{\displaystyle\left(\int_{\Om}|x|^{-(p+1)\nu} v^{p+1} dx\right)^l},\no \ee
where
\be
l=\frac{2(q+1)-N(p-1)}{2(p+1)-N(p-1)}, \quad q>p> 2^*-1.\no\ee
 $$S(v)=\frac{\displaystyle\int_{\Om} |x|^{-2\nu}|\nabla v|^2 dx}{\displaystyle\left(\int_{\Om}|x|^{-(p+1)\nu} v^{p+1} dx\right)^\f{2}{p+1}}, \quad p=2^*-1.$$
Using the transform
\begin{equation}\label{v-w_eps}
v(x)= \eps^{-\frac{2+2\nu-(p+1)\nu}{2(q-p)}} w(\eps^{-\frac{p-1}{2(q-p)}}x),
\end{equation} Eq. \eqref{eq:a3} reduces to
\begin{equation}
  \label{eq:a3a}
\left\{\begin{aligned}
      -div(|x|^{-2\nu} \na w) &=|x|^{-(p+1)\nu} w^p - |x|^{-(q+1)\nu} w^q  &&\text{in } \Om_{\eps},\\  w &>0 && \text{ in } \Om_{\eps}, \\
      w(x) & =0 &&\text{ on } \pa \Om_{\eps},
          \end{aligned}
  \right.
\end{equation}
where $\Om_{\eps}= \frac{\Om}{\eps^{\frac{p-1}{2(q-p)}}}.$  Clearly $\Om_{\eps} \mapsto \R^N$ as $\eps \to 0.$

\begin{proposition}\label{p:main1}
Let $2^*-1\leq p<q$ and  $\nu\in (0,\f{N-2}{2})$. Then there exists $\eps_{0}>0$ such that  for  all $\eps\in (0, \eps_{0})$,  the problem
\begin{equation}
  \label{eq:eps_la}
\left\{\begin{aligned}
      -div(|x|^{-2\nu} \na v)&=\la_{\eps}|x|^{-(p+1)\nu} v^p -\eps |x|^{-(q+1)\nu} v^q &&\text{in } \Om,\\ v&>0 && \text{ in } \Om, \\
      v(x) & =0 &&\text{ on } \pa \Om,
          \end{aligned}
  \right.
\end{equation}
admits a solution $v_{\eps}$, with the property that
$$A<\la_{\eps}<B,$$ for some constants $A,B>0$, independent of $n$. In addition
\begin{itemize}
\item[(i)] if $p>2^*-1$, then $F(v_\eps)\to \mathcal{K}$ and $\displaystyle\int_{\Om} |x|^{-(p+1)\nu} v_{\eps}^{p+1}dx\to 0$ as $\eps\to 0$;
\item[(ii)]if $p=2^*-1$, then $S(v_{\eps})\to \mathcal{S}$ as $\eps\to 0$ and  $\displaystyle\int_{\Om} |x|^{-(p+1)\nu} v_{\eps}^{p+1}dx=1$,
\end{itemize}
where $\mathcal{K}$ and $\mathcal{S}$ are defined as in \eqref{14-K} and \eqref{go} respectively.
\end{proposition}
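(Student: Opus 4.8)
The plan is to reduce \eqref{eq:eps_la} to the constrained minimization problem $S_\rho=\inf_{N_\rho}\hat F$ of the previous subsection, to solve the latter for every sufficiently large $\rho$, and to transport the minimizer back by the scaling \eqref{v-w_eps}. Concretely, I would set $\rho=\rho(\eps):=\eps^{-\f{p-1}{2(q-p)}}$, so that $\rho\to\infty$ as $\eps\to0$, and choose $\eps_0$ small enough that $\rho(\eps)$ lies in the range where the estimates below hold. A routine (if slightly tedious) change of variables shows: if $w_\rho$ minimizes $\hat F$ over $N_\rho$, it satisfies the Euler--Lagrange equation $-\divergence(|x|^{-2\nu}\na w_\rho)=\Lambda_\rho|x|^{-(p+1)\nu}w_\rho^{p}-|x|^{-(q+1)\nu}w_\rho^{q}$ on $\rho\Om$ for some Lagrange multiplier $\Lambda_\rho$, and then $v_\eps(x):=\eps^{-\f{2+2\nu-(p+1)\nu}{2(q-p)}}\,w_\rho\!\left(\eps^{-\f{p-1}{2(q-p)}}x\right)$ solves exactly \eqref{eq:eps_la} on $\Om$ with $\la_\eps=\Lambda_\rho$. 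So the work splits into: (a) existence of the minimizer $w_\rho$; (b) the uniform-in-$\rho$ bounds $A<\Lambda_\rho<B$; (c) reading off (i)--(ii) from Theorems \ref{A.1}--\ref{A.2} together with the scale invariance of $F(\cdot)$ and $S(\cdot)$.

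Step (a) is the heart of the matter and the main obstacle. Given a minimizing sequence $\{w_k\}\subset N_\rho$ with $\hat F(w_k)\to S_\rho$, both terms of $\hat F$ being nonnegative, $\{w_k\}$ is bounded in $H^1_0(\rho\Om,|x|^{-2\nu})\cap L^{q+1}(\rho\Om,|x|^{-(q+1)\nu})$, so along a subsequence $w_k\rightharpoonup w$ in both spaces and $w_k\to w$ a.e.\ on $\rho\Om$ (the weighted Rellich embedding $H^1_0(\rho\Om,|x|^{-2\nu})\hookrightarrow L^{r}(\rho\Om,|x|^{-r\nu})$ is compact for $r<2^*$, the weight being bounded away from $0$ on the bounded set $\rho\Om$). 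The crucial observation is that the $L^{q+1}$ bound forbids loss of mass in the constraint, including the critical case $p+1=2^*$: for measurable $E\subset\rho\Om$, H\"older's inequality with exponents $\f{q+1}{p+1}$ and $\f{q+1}{q-p}$ yields
\[
\int_E |x|^{-(p+1)\nu}w_k^{p+1}\,dx\le\Big(\int_E |x|^{-(q+1)\nu}w_k^{q+1}\,dx\Big)^{\f{p+1}{q+1}}|E|^{\f{q-p}{q+1}}\le C\,|E|^{\f{q-p}{q+1}},
\]
so $\{|x|^{-(p+1)\nu}w_k^{p+1}\}$ is uniformly integrable; as $\rho\Om$ has finite measure, Vitali's theorem gives $\int_{\rho\Om}|x|^{-(p+1)\nu}w_k^{p+1}\to\int_{\rho\Om}|x|^{-(p+1)\nu}w^{p+1}=1$, hence $w\in N_\rho$ (after replacing $w_k$ by $|w_k|$ we may take $w\ge0$). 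Weak lower semicontinuity of the weighted Dirichlet energy together with Fatou's lemma give $\hat F(w)\le\liminf_k\hat F(w_k)=S_\rho$, so $w=:w_\rho$ is a minimizer. The Lagrange multiplier rule (the constraint map is $C^1$ on $H^1_0\cap L^{q+1}$ since $2^*\le p+1<q+1$) produces the Euler--Lagrange equation; the strong maximum principle (the operator being uniformly elliptic away from $0$) gives $w_\rho>0$, and the Moser iteration of the proof of Theorem \ref{t:up-est} gives $w_\rho\in L^\infty$.

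For step (b), testing the Euler--Lagrange equation against $w_\rho$ and using $\int|x|^{-(p+1)\nu}w_\rho^{p+1}=1$ gives
\[
\Lambda_\rho=\int_{\rho\Om}|x|^{-2\nu}|\na w_\rho|^2\,dx+\int_{\rho\Om}|x|^{-(q+1)\nu}w_\rho^{q+1}\,dx,
\]
and both summands are bounded above because $S_\rho=\hat F(w_\rho)$ converges (Theorems \ref{A.1}, \ref{A.2}); hence $\Lambda_\rho\le B$. For the lower bound, \eqref{go} gives $\Lambda_\rho\ge\int|x|^{-2\nu}|\na w_\rho|^2\ge\mathcal S\big(\int|x|^{-2^*\nu}w_\rho^{2^*}\big)^{2/2^*}$, and $\int|x|^{-2^*\nu}w_\rho^{2^*}$ is bounded below: it equals $1$ when $p=2^*-1$, while for $p>2^*-1$ the interpolation $1=\int|x|^{-(p+1)\nu}w_\rho^{p+1}\le\big(\int|x|^{-2^*\nu}w_\rho^{2^*}\big)^{\theta}\big(\int|x|^{-(q+1)\nu}w_\rho^{q+1}\big)^{1-\theta}$ with $\theta=\f{q-p}{q+1-2^*}\in(0,1)$, combined with the upper bound on the $L^{q+1}$ term, does it; so $A<\Lambda_\rho$ uniformly. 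Finally, step (c): $F(\cdot)$ and $S(\cdot)$ are invariant under \eqref{v-w_eps}, so if $p>2^*-1$ then $F(v_\eps)=F(w_\rho)=\hat F(w_\rho)=S_\rho\to\mathcal K$, and a direct computation of the scaling exponent gives $\int_\Om|x|^{-(p+1)\nu}v_\eps^{p+1}dx=\eps^{\f{N(p-1)-2(p+1)}{2(q-p)}}\to0$, the exponent being positive exactly because $p>2^*-1$; while if $p=2^*-1$ the same exponent vanishes, so $\int_\Om|x|^{-2^*\nu}v_\eps^{2^*}dx=1$, and setting $A_\rho=\int|x|^{-2\nu}|\na w_\rho|^2\ge\mathcal S$, $B_\rho=\int|x|^{-(q+1)\nu}w_\rho^{q+1}\ge0$, the identity $\tfrac12A_\rho+\tfrac1{q+1}B_\rho=S_\rho\to\mathcal S/2$ forces $A_\rho\to\mathcal S$, i.e.\ $S(v_\eps)=A_\rho\to\mathcal S$. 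The only delicate point is step (a): without the $L^{q+1}$ control the constraint $\int|x|^{-(p+1)\nu}w^{p+1}=1$ would be lost to concentration when $p=2^*-1$ and would not even be variationally meaningful when $p>2^*-1$, so the whole argument hinges on the H\"older/Vitali step that converts the $L^{q+1}$ bound into strong convergence of the constraint.
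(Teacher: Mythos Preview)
Your proof is correct and follows essentially the same route as the paper: rescale $\Om$ to $\Om_\eps=\rho\Om$ with $\rho=\eps^{-\frac{p-1}{2(q-p)}}$, minimize $\hat F$ on the constraint manifold $N_\rho$, obtain the Euler--Lagrange equation with multiplier $\Lambda_\rho=\la_\eps$, and transport back via \eqref{v-w_eps}; the asymptotics then come from Theorems \ref{A.1}--\ref{A.2}.

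Two minor differences are worth noting. For the compactness of the constraint, the paper argues by interpolation between the compact embedding into $L^2(\Om_\eps,|x|^{-2\nu})$ and the uniform $L^{q+1}$ bound, while you use H\"older plus Vitali; these are equivalent in spirit. More significantly, for the lower bound on $\la_\eps$ the paper observes directly that
\[
\la_\eps=\int_{\Om_\eps}|x|^{-2\nu}|\na w_\eps|^2+\int_{\Om_\eps}|x|^{-(q+1)\nu}w_\eps^{q+1}\ge 2\Big(\tfrac12\int|x|^{-2\nu}|\na w_\eps|^2+\tfrac1{q+1}\int|x|^{-(q+1)\nu}w_\eps^{q+1}\Big)=2S_\eps,
\]
and similarly $\la_\eps\le(q+1)S_\eps$; since $S_\eps$ converges to a positive limit, both bounds follow at once. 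This is considerably shorter than your route through the Caffarelli--Kohn--Nirenberg inequality and the $L^{2^*}$--$L^{q+1}$ interpolation, which is correct but unnecessary.
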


\begin{proof} Let $\Om_{\eps}=\frac{\Om}{\eps^\frac{p-1}{2(q-p)}}$.
We are going to work on the manifold $$N_{\eps}= \bigg\{ w\in H^{1}_{0}(\Om_{\eps}, |x|^{-2\nu}) \cap L^{q+1}(\Om_{\eps}, |x|^{-(q+1)\nu}): \int_{\Om_{\eps} } |x|^{-(p+1)\nu} w^{p+1}=1 \bigg\}.$$
Then $F $ on $N_{\eps}$ reduces to $\hat{F}$ (defined as in Subsection 6.1)
$$F (w)=  \frac{1}{2}\int_{\Om_{\eps}} |x|^{-2\nu}  |\nabla w|^2 dx+  \frac{1}{q+1}\int_{\Om_{\eps}}|x|^{-(q+1)\nu}  w^{q+1} dx =\hat{F}(w) .$$
For every $p\geq 2^*-1$, let
\be S_{\eps}= \inf_{N_{\eps}} \hat{F}(w)=\inf_{N_{\eps}} F(w).\ee
Let $\{w_{n,\eps}\} $ be a minimising sequence in $N_{\eps}$ such that
$$\hat{F}(w_{n,\eps}) \to S_{\eps} \text{ with } \int_{\Om_{\eps}}|x|^{-(p+1)\nu} w_{n,\eps}^{p+1} dx =1.$$
Thus $\{w_{n, \eps}\}$ is bounded in $H^{1}_{0}(\Om_{\eps}, |x|^{-2\nu}) \cap L^{q+1}(\Om_{\eps}, |x|^{-(q+1)\nu}).$ Hence $w_{n,\eps}\rightharpoonup w_{\eps}$ in $H^{1}_{0}(\Om_{\eps},|x|^{-2\nu})$ and $w_{n,\eps} \to w_{\eps}$ in $L^2(\Om_{\eps}, |x|^{-2\nu})$. As a result,  $w_{n,\eps}\to w_{\eps}$ point-wise almost everywhere. By the interpolation inequality,  we have $w_{n,\eps} \to w$ on $ L^{p+1}(\Om_{\eps}, |x|^{-(p+1)\nu}).$
Consequently $\displaystyle\int_{\Om_{\eps} }|x|^{-2\nu}w_{\eps}^{p+1} dx=1.$\\

Now we show that $S_{\eps}= \hat{F}(w_{\eps}).$ Clearly
$S_{\eps}\leq \hat{F}(w_{\eps})$. Furthermore,
applying Fatou's Lemma and the fact that $w\mapsto||w||^2_{H_0^1(\Om_{\eps}, |x|^{-2\nu}dx)}$ is weakly lower semicontinuous, we have
\begin{eqnarray*}S_{\eps}  &=&   \lim_{n \to \infty } \bigg[\frac{1}{2}\int_{\Om_{\eps}} |x|^{-2\nu}|\nabla w_{n,\eps}|^2 dx +  \frac{1}{q+1}\int_{\Om_{\eps}}|x|^{-(q+1)\nu} w_{n,\eps}^{q+1} dx\bigg] \\&\geq &  \bigg[\frac{1}{2}\int_{\Om_{\eps}}  |x|^{-2\nu} |\nabla w_{\eps}|^2 dx+  \frac{1 }{q+1}\int_{\Om_{\eps}}  |x|^{-(q+1)\nu}w_{\eps}^{q+1} dx\bigg]\\ &\geq & \hat{F}(w_{\eps}) .\end{eqnarray*}
Hence $S_{\eps}$ is achieved by $w_{\eps}$.\\
Using the Lagrange multiplier rule, we obtain $w_{\eps}$ satisfies
\be\lab{7-4-1} -\text{div} (|x|^{-2\nu} \nabla w_{\eps})=  \lambda_{\eps}|x|^{-(p+1)\nu} w_{\eps}^p - |x|^{-(q+1)\nu} w_{\eps}^q  ~~~~\text{ in } ~~~ \Om_{\eps},\ee
where $ \lambda_{\eps}= \lambda(\eps).$
Moreover,
$$\int_{\Om_{\eps}} |x|^{-2\nu} |\nabla w_{\eps}|^2dx=  \lambda_{\eps} \int_{\Om_{\eps}}|x|^{-(p+1)\nu} w_{\eps}^{p+1}dx -\int_{\Om_{\eps}} |x|^{-(q+1)\nu} w_{\eps}^{q+1}dx,$$ which implies that
$$ \lambda_{\eps}=\int_{\Om_{\eps}} |x|^{-2\nu} |\nabla w_{\eps}|^2dx+ \int_{\Om_{\eps}} |x|^{-(q+1)\nu} w_{\eps}^{q+1}dx.$$ This fact along with $\hat{F}(w_{\eps})=S_{\eps}$ implies
$$2S_{\eps}<\la_{\eps}<(q+1)S_{\eps}.$$
In Theorem \ref{A.1} and \ref{A.2}, if we take $\rho=\eps^{-\f{p-1}{2(q-p)}}$, then $N_{\rho}$ and $S_{\rho}$ of those theorems reduces to $N_{\eps}$ and $S_{\eps}$ defined as above. Therefore taking the limit $\eps\to 0$, it follows from  Theorem \ref{A.1} and \ref{A.2} that
\begin{equation}\label{S-eps-KS}
S_{\eps}\to \mathcal{K}  \quad{if}\  p>2^*-1 \quad\text{and}\quad S_{\eps}\to \frac{\mathcal{S}}{2} \quad{if}\  p=2^*-1.
\end{equation}
Hence there exist constants $\eps_0>0$ and $A, B>0$ such that
$$A<\la_{\eps}<B \quad\forall\quad\eps\in(0,\eps_0). $$
Using the transformation \eqref{v-w_eps}, we obtain from \eqref{7-4-1} that $v_{\eps}$ is a solution of \eqref{eq:eps_la}.
Moreover $\displaystyle\int_{\Om_{\eps}}|x|^{-(p+1)\nu}w_{\eps}^{p+1}dx=1$ implies
$\displaystyle\int_{\Om}|x|^{-(p+1)\nu}v_{\eps}^{p+1}dx=\eps^{\frac{p(N-2)-(N+2)}{2(q-p)}}.$
Hence $$\displaystyle\int_{\Om}|x|^{-(p+1)\nu}v_{\eps}^{p+1}dx=1 \quad\text{when}\quad p=2^*-1$$ and  $$\displaystyle\int_{\Om}|x|^{-(p+1)\nu}v_{\eps}^{p+1}dx\to 0 \quad\text{as}\ \eps\to 0 \quad\text{when}\quad p>2^*-1.$$
By a straight forward computation we see that $$F(w_{\eps})=\hat{F}(w_{\eps})=F(v_{\eps}),  \quad\text{when}\quad p>2^*-1$$
where $F$ and $\hat{F}$ are as in \eqref{a22} and \eqref{hatF} respectively. This along with  \eqref{S-eps-KS} and the fact that $F(w_{\eps})=S_{\eps}$ implies
$$F(v_{\eps})\to\mathcal{K} \quad\text{if}\ p>2^*-1$$
Moreover when $p=2^*-1$,
$$\mathcal{S}\leq S(v_{\eps})\leq 2\hat{F}(v_{\eps}, \Om)=2\hat{F}(w_{\eps},\Om_{\eps})=2S_{\eps}\To\mathcal{S}.$$ 
Hence 
 $$S(v_{\eps})\to\mathcal{S} \quad\text{if}\ p=2^*-1.$$ This completes the proof.
\end{proof}

\vspace{4mm}

\begin{proof} [{\bf Proof of Theorem \ref{main1}}] Let $v_{\eps}$ and $\la_{\eps}$ be as in Proposition \ref{p:main1}. Setting $u_{\eps}=\la_{\eps}^\frac{1}{p-1}v_{\eps}$, we find $u_{\eps}$ satisfies
$$-div(|x|^{-2\nu}\na u_{\eps})=|x|^{-(p+1)\nu}u_{\eps}^{p}-\eps\la_{\eps}^{-\frac{q-1}{p-1}}|x|^{-(q+1)\nu}u_{\eps}^{q} \quad\text{in}\quad\Omega.$$ Using the bounds on $\la_{\eps}$ from   Proposition \ref{p:main1}, we can conclude that there exist solutions $u_n$ of Problem \eqref{eq:a3} along a sequence $\{\eps_n\}$ of values of $\eps$ which tends to zero as $n$ tends to infinity. By setting $\la_n:=\la_{\eps_n}^{-\frac{1}{p-1}}$,  theorem follows from Proposition \ref{p:main1}.
\end{proof}

\section{The case $p=2^*-1$ and proof of Theorem \ref{main2}}
\begin{lemma}\label{bub} Let $v_{\eps}$ be as in Theorem \ref{main2}. Then $\|v_{\eps}\|_{\infty}\rightarrow +\infty$ as $\eps \rightarrow 0.$ \end{lemma}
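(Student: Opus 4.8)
The plan is to argue by contradiction: suppose that, along some sequence $\eps\to 0$, one has $\|v_{\eps}\|_{\infty}\le M$ for a fixed constant $M$. Recall from the hypotheses of Theorem \ref{main2} that $p=2^*-1$, that $S(\la_{\eps}v_{\eps})\to\mathcal{S}$, and that $A<\int_{\Om}|x|^{-(p+1)\nu}v_{\eps}^{p+1}\,dx<B$; moreover, by Proposition \ref{p:main1}(ii), in fact $\int_{\Om}|x|^{-(p+1)\nu}v_{\eps}^{p+1}\,dx=1$. Since $S$ is scale-invariant and $\la_{\eps}$ is bounded above and away from $0$, a uniform $L^{\infty}$ bound on $v_{\eps}$ forces, via Caffarelli--Kohn--Nirenberg \eqref{CKN} (with $a=b=\nu$), a uniform bound on $\int_{\Om}|x|^{-2\nu}|\na v_{\eps}|^2\,dx$; hence $\{v_{\eps}\}$ is bounded in $H^1_0(\Om,|x|^{-2\nu})$ and, up to a subsequence, $v_{\eps}\rightharpoonup v_0$ weakly in $H^1_0(\Om,|x|^{-2\nu})$, strongly in $L^{p+1}(\Om,|x|^{-(p+1)\nu})$ (the latter by compactness of the subcritical embedding once the weight is accounted for; here one uses $p+1=2^*$ is critical, so more care is needed — see below), and a.e. in $\Om$.

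The heart of the argument is to show that the limit $v_0$ would be a nonzero solution of the Dirichlet problem $-\divergence(|x|^{-2\nu}\na v_0)=|x|^{-(p+1)\nu}v_0^{p}-\eps(\cdots)\to |x|^{-(p+1)\nu}v_0^{p}$ — i.e., of the purely critical equation $-\divergence(|x|^{-2\nu}\na v_0)=|x|^{-2^*\nu}v_0^{2^*-1}$ on the bounded star-shaped domain $\Om$ with $v_0=0$ on $\pa\Om$ — and then to invoke the weighted Pohozaev identity of Lemma \ref{wpi}. Indeed, taking $\eps=0$, $p=2^*-1$ in \eqref{poh2} and using $\frac{N}{p+1}-\frac{N-2}{2}=0$ when $p+1=2^*$, the identity forces $\int_{\pa\Om}|x|^{-2\nu}\langle x,n\rangle|\na v_0|^2\,dS=0$; since $\Om$ is star-shaped with respect to $0$, $\langle x,n\rangle\ge 0$ on $\pa\Om$ (and is positive on a set of positive measure), so $\na v_0\equiv 0$ on $\pa\Om$ and standard unique-continuation/Hopf-type reasoning gives $v_0\equiv 0$. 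This contradicts $\int_{\Om}|x|^{-(p+1)\nu}v_0^{p+1}\,dx=1$, which we must establish survives the limit.

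Thus the two steps requiring real work are: (a) promoting the weak convergence $v_{\eps}\rightharpoonup v_0$ to the strong statement $\int_{\Om}|x|^{-(p+1)\nu}v_{\eps}^{p+1}\,dx\to\int_{\Om}|x|^{-(p+1)\nu}v_0^{p+1}\,dx=1$ — the difficulty being precisely that $p+1=2^*$ is the critical exponent, so a priori mass can escape via concentration; here the uniform $L^{\infty}$ bound saves us, since $|v_{\eps}|^{p+1}\le M^{p-1}|v_{\eps}|^2$ and $v_{\eps}\to v_0$ strongly in $L^2(\Om,|x|^{-2\nu})$ by the (now genuinely compact) lower-order embedding, giving the needed strong $L^{p+1}$ convergence by dominated convergence; and (b) checking that $v_0$ is a genuine weak solution up to the origin, for which one uses the a.e. convergence, the $L^{\infty}$ bound, and a cutoff argument near $0$ exactly as in the proof of Lemma \ref{l:4-1} to discard the contribution of the singularity. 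The main obstacle is step (a): without the contradiction hypothesis $\|v_{\eps}\|_{\infty}\le M$ one could not rule out concentration, and it is exactly this that makes the $L^{\infty}$ blow-up unavoidable — which is the content of the lemma.
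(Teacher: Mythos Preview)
Your approach is essentially the paper's: argue by contradiction, pass to a nontrivial limit $v_0$ solving the purely critical problem on the star-shaped domain $\Om$, then invoke Pohozaev nonexistence. The paper streamlines the convergence step by using Schauder estimates---the assumed $L^\infty$ bound gives uniform $C^{2,\alpha}$ control on compacta of $\Om\setminus\{0\}$, hence $v_\eps\to v$ in $C^2_{loc}(\Om\setminus\{0\})$ directly---and then applies dominated convergence with the integrable majorant $M^{2^*}|x|^{-2^*\nu}$ (note $2^*\nu<N$ since $\nu<\tfrac{N-2}{2}$) to retain $\int_{\Om}|x|^{-2^*\nu}v^{2^*}\in(A,B)$, so $v\not\equiv 0$.

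Two small slips in your write-up, neither fatal. First, Caffarelli--Kohn--Nirenberg bounds the weighted $L^{2^*}$ norm by the gradient, not the reverse; the $H^1_0(\Om,|x|^{-2\nu})$ bound you want follows instead by testing \eqref{eq:a3} with $v_\eps$, which gives $\int_{\Om}|x|^{-2\nu}|\na v_\eps|^2\le\int_{\Om}|x|^{-(p+1)\nu}v_\eps^{p+1}<B$. Second, Theorem~\ref{main2} only assumes $A<\int_{\Om}|x|^{-(p+1)\nu}v_\eps^{p+1}<B$, not that this integral equals~$1$; the latter is specific to the solutions of Proposition~\ref{p:main1}. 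In step~(a) you can bypass the detour through $L^2$ convergence and simply dominate $|x|^{-(p+1)\nu}v_\eps^{p+1}\le M^{p+1}|x|^{-(p+1)\nu}\in L^1(\Om)$, then apply DCT directly.
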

\begin{proof}
We have
\be \label{zig1} \displaystyle\int_{\Om} |x|^{-2^{\star}\nu}v_{\eps}^{2^{\star}} dx=c,\ee where $c\in(A, B)$.
If possible, let $\|v_{\eps}\|_{\infty}$ be uniformly bounded. Hence by the Schauder estimate $v_{\eps}\rightarrow v$ in $C^{2}_{loc}(\Om\setminus \{0\}),$ where $v$ satisfies
\begin{equation}
  \label{b1}
  \left\{
    \begin{aligned}
     -\nabla (|x|^{-2\nu} \nabla v)&= |x|^{-2^{\star}\nu} v^{2^*-1}\; &&\text{in } \Om,\\
 v &\nequiv 0 &&\text{in }  \Om,  \\
v &= 0 &&\text{on } \pa \Om.
    \end{aligned}
  \right.
\end{equation}
Moreover, by the dominated convergence theorem we have
\be \label{zig11} \displaystyle A<\int_{\Om} |x|^{-2^{\star}\nu}v^{2^{\star}} dx<B.\ee
As $A>0$, the above expression implies  $v$ is nontrivial in a star-shaped domain which is a contradiction.\end{proof}
Define
\be\lab{ga-eps}
\gamma_{\eps}:=\|v_{\eps}\|_{\infty}^{-\f{2}{\al}}.
\ee
Therefore   $\|v_{\eps}\|_{\infty}= \gamma_{\eps}^{-\frac{\al}{2}}$ and $\gamma_{\eps}\rightarrow 0$ as $\eps\rightarrow 0.$ Define
\begin{equation}\label{z-eps}
z_{\eps}(x)=\gamma_{\eps}^{\frac{\al}{2}}v_{\eps} (\gamma_{\eps} x).
\end{equation}
Then $\|z_{\eps}\|_{\infty}=1$ and satisfies
\begin{equation}
  \label{1.41}
  \left\{
    \begin{aligned}  - \nabla (|x|^{-2\nu} \nabla z_{\eps})& = |x|^{-2^{\star}\nu} z_{\eps}^{2^*-1}-  \eps \gamma_{\eps}^{\f{(N+2)-q(N-2)}{2}}  |x|^{-(q+1)\nu} z_{\eps}^{q}  &&\text{ in }    \Om_{\eps}, \\
  z_{\eps}&> 0 &&\text{ in }  \Om_{\eps},\\ z_{\eps}& =0 &&\text{ in }    \pa \Om_{\eps},
 \end{aligned}
  \right.
\end{equation}
where $\Om_{\eps}= {\gamma_{\eps}^{-1}}\Om .$

\begin{lemma}\label{l:Z}
Suppose $z_{\eps}$ is as in \eqref{z-eps}, $0<\nu<\f{N-2}{4}$, $\f{N+2}{N-2}<q<\f{1+\nu}{\nu}$ and \eqref{sup} holds. Then
\begin{itemize}
\item[(i)] $\lim_{\eps\to 0}\eps \gamma_{\eps}^{\f{(N+2)-q(N-2)}{2}} =0$\\
\item[(ii)] There exists $Z\in D^{1,2}(\Rn, |x|^{-2\nu} )$ such that  $z_{\eps} \rightarrow Z$ in $C^{2}_{loc}(\R^N\setminus \{0\})\cap L^{\infty}(\Rn)$ as $\eps \rightarrow 0$.\\
\item[(iii)] $Z$ satisfies Eq. \eqref{ent} and given  by \eqref{ent-U}.
\end{itemize}
\end{lemma}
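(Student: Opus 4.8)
The plan is to prove the three assertions in order, exploiting the normalization $\|z_\eps\|_\infty = 1$ together with the maximum-point hypothesis \eqref{sup}, which by \eqref{z-eps} forces $z_\eps(0)=1=\|z_\eps\|_\infty$. First, for (i), observe that since $v_\eps$ is obtained via Proposition \ref{p:main1} and Theorem \ref{main1}(iii) with $p=2^*-1$, the quantity $\int_\Om |x|^{-2^*\nu} v_\eps^{2^*}\,dx$ stays bounded between $A$ and $B$; by the scaling \eqref{z-eps} this is scale-invariant, so $\int_{\Om_\eps}|x|^{-2^*\nu} z_\eps^{2^*}\,dx \in (A,B)$ as well. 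Now the exponent $\tfrac{(N+2)-q(N-2)}{2}$ is negative because $q>2^*-1=\tfrac{N+2}{N-2}$, so $\gamma_\eps^{\frac{(N+2)-q(N-2)}{2}}\to+\infty$ as $\gamma_\eps\to 0$. I would test Eq. \eqref{1.41} against $z_\eps$ and integrate by parts: the Dirichlet energy equals $\int |x|^{-2^*\nu}z_\eps^{2^*} - \eps\gamma_\eps^{\frac{(N+2)-q(N-2)}{2}}\int|x|^{-(q+1)\nu}z_\eps^{q+1}$, and both the energy and the critical integral are controlled by Caffarelli--Kohn--Nirenberg and the boundedness just noted; hence $\eps\gamma_\eps^{\frac{(N+2)-q(N-2)}{2}}\int|x|^{-(q+1)\nu}z_\eps^{q+1}\,dx$ is uniformly bounded. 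Since $z_\eps(0)=1$ and $z_\eps$ is (by the upper estimates of Section 3 applied to $v_\eps$, or directly by local regularity) bounded below on a fixed small ball around $0$, the integral $\int|x|^{-(q+1)\nu}z_\eps^{q+1}\,dx$ is bounded away from $0$ uniformly in $\eps$ (here one needs $(q+1)\nu<N$, i.e. $q<\tfrac{N}{\nu}-1$, which is implied by $q<\tfrac{1+\nu}{\nu}$ under $\nu<\tfrac{N-2}{4}$). Therefore $\eps\gamma_\eps^{\frac{(N+2)-q(N-2)}{2}}$ must itself tend to $0$, proving (i).

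For (ii), I would set $c_\eps:=\eps\gamma_\eps^{\frac{(N+2)-q(N-2)}{2}}\to 0$ and view \eqref{1.41} as
$$-\nabla(|x|^{-2\nu}\nabla z_\eps) = |x|^{-2^*\nu}z_\eps^{2^*-1} - c_\eps |x|^{-(q+1)\nu}z_\eps^q \quad\text{in } \Om_\eps,$$
with $0\le z_\eps\le 1$, $z_\eps(0)=1$, and $\Om_\eps\to\Rn$. Away from the origin the operator is uniformly elliptic with smooth coefficients and the right-hand side is uniformly bounded in $L^\infty_{loc}(\Rn\setminus\{0\})$, so by standard elliptic $L^p$ and Schauder estimates (Gilbarg--Trudinger), $\{z_\eps\}$ is bounded in $C^{2,\alpha}_{loc}(\Rn\setminus\{0\})$; near the origin the Caffarelli--Kohn--Nirenberg / De Giorgi--Nash--Moser machinery used in Section 3 gives a uniform modulus of continuity, and the boundedness $\int_{\Om_\eps}|x|^{-2^*\nu}z_\eps^{2^*}<B$ together with $\|z_\eps\|_\infty=1$ gives a uniform bound in $D^{1,2}(\Rn,|x|^{-2\nu})$. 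Passing to a subsequence, $z_\eps\rightharpoonup Z$ weakly in $D^{1,2}(\Rn,|x|^{-2\nu})$ and $z_\eps\to Z$ in $C^2_{loc}(\Rn\setminus\{0\})$ and in $L^\infty(\Rn)$; in particular $Z(0)=\lim z_\eps(0)=1$ so $Z\not\equiv 0$, and $0\le Z\le 1$.

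For (iii), I would pass to the limit in the PDE: on any compact subset of $\Rn\setminus\{0\}$ the $C^2_{loc}$ convergence and $c_\eps\to0$ give
$$-\nabla(|x|^{-2\nu}\nabla Z) = |x|^{-2^*\nu}Z^{2^*-1} \quad\text{in }\Rn\setminus\{0\},$$
and since $Z\in D^{1,2}(\Rn,|x|^{-2\nu})$ the origin is a removable singularity (the argument is the same as the passage from a pointwise ODE solution to a weak PDE solution in Lemma \ref{l:4-1}, or one uses capacity), so $Z$ solves \eqref{ent} weakly. Because $Z>0$ in $\Rn\setminus\{0\}$ (strong maximum principle, using $Z\not\equiv0$ and $Z\ge0$) and $Z\in D^{1,2}(\Rn,|x|^{-2\nu})$, the uniqueness result of \cite{T} quoted after \eqref{qes} forces $Z$ to be, up to dilation, the extremal $U$ of \eqref{ent-U}; the normalization $Z(0)=1=\|Z\|_\infty$ together with $U(0)=(\tfrac{N\al^2}{N-2})^{\frac{N-2}{4}}$ — wait, one must be careful: the dilation freedom is exactly what the normalization $\|Z\|_\infty=1$ fixes, so $Z$ is the unique dilate of $U$ with sup equal to $1$, which is what the statement "given by \eqref{ent-U}" should be read as (up to the harmless overall constant, or after rescaling $U$). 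The main obstacle I anticipate is (i): one must rule out concentration of the $L^{q+1}$-mass of $z_\eps$ escaping to infinity or to the origin in a way that would let $c_\eps$ stay bounded below while the integral blows up — this is why the hypothesis \eqref{sup} (so that $z_\eps(0)=1$) and the sharp range $q<\tfrac{1+\nu}{\nu}$ with $\nu<\tfrac{N-2}{4}$ (ensuring $(q+1)\nu<N$ so the weight $|x|^{-(q+1)\nu}$ is locally integrable) are both essential, and getting the uniform lower bound on $\int |x|^{-(q+1)\nu}z_\eps^{q+1}$ near $0$ requires combining the normalization with the equicontinuity from Section 3.
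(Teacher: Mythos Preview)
Your argument for (i) has a logical gap. From the energy identity
$$\int_{\Om_\eps}|x|^{-2\nu}|\nabla z_\eps|^2 \,=\, \int_{\Om_\eps}|x|^{-2^*\nu}z_\eps^{2^*} \,-\, c_\eps \int_{\Om_\eps}|x|^{-(q+1)\nu}z_\eps^{q+1},\qquad c_\eps := \eps\,\gamma_\eps^{\frac{(N+2)-q(N-2)}{2}},$$
you correctly infer that $c_\eps \int |x|^{-(q+1)\nu}z_\eps^{q+1}$ is bounded above (the critical integral is bounded and the energy is nonnegative). Combining this with a uniform lower bound $\int |x|^{-(q+1)\nu}z_\eps^{q+1}\geq \delta>0$ yields only $c_\eps \leq C/\delta$, i.e.\ $c_\eps$ is \emph{bounded}. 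It does not give $c_\eps\to 0$. The sentence ``Therefore $\eps\gamma_\eps^{\frac{(N+2)-q(N-2)}{2}}$ must itself tend to $0$'' is an unjustified leap.

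The paper closes this gap with a Pohozaev step: once $c_\eps$ is bounded one passes to a subsequential limit $c\ge 0$ and a limit $Z$ solving
$$-\text{div}(|x|^{-2\nu}\nabla Z) = |x|^{-2^*\nu}Z^{2^*-1} - c\,|x|^{-(q+1)\nu}Z^q \quad\text{in } \Rn,$$
with $Z\in D^{1,2}(\Rn,|x|^{-2\nu})\cap L^{q+1}(\Rn,|x|^{-(q+1)\nu})$; the weighted Pohozaev identity (Lemma \ref{wpi}) then gives $c\bigl(\tfrac{N-2}{2}-\tfrac{N}{q+1}\bigr)\int|x|^{-(q+1)\nu}Z^{q+1}=0$, forcing $c=0$ since $q>2^*-1$ and $\|Z\|_\infty=1$. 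There is no analogue of this in your outline. (The paper handles the possibility $c_\eps\to\infty$ differently --- by a second rescaling leading to the Liouville problem $-\text{div}(|x|^{-2\nu}\nabla \tilde Z)+|x|^{-(q+1)\nu}\tilde Z^q=0$ in $\Rn$, dispatched via Appendix~A; your energy argument is a legitimate and simpler substitute for \emph{that} half, but it only yields boundedness, not the limit being zero.)

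A secondary issue: your uniform lower bound on $\int|x|^{-(q+1)\nu}z_\eps^{q+1}$ relies on equicontinuity of $z_\eps$ at the origin, which you justify via ``the upper estimates of Section~3 \ldots\ or directly by local regularity''. But those regularity arguments use the equation, whose right-hand side contains the factor $c_\eps$; if $c_\eps$ were unbounded the estimates would not be uniform. You should make explicit how you break this potential circularity.

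Parts (ii) and (iii), once (i) is in hand, follow essentially the paper's route (elliptic compactness away from $0$, passage to the limit in the equation, then the uniqueness result of \cite{T} and the normalization $Z(0)=1$ to pin down the dilate), and your sketch there is fine.
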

\begin{remark}
The upper bound of $q$ comes from the fact that limit of $\eps \gamma_{\eps}^{\f{(N+2)-q(N-2)}{2}}$ can be $\infty$ as $q$ is supercritical. To exclude this option we need to put this restriction on $q$. Note that,  when $q$   is critical or subcritical, the above limit is always $0$. Therefore in the subcritical case no extra restriction on the upper bound of $q$ appears.
\end{remark}

\begin{proof}
Being defined as in \eqref{z-eps}, $z_{\eps}$ satisfies Eq.\eqref{1.41}. Let $\phi\in C^{\infty}_0(\Rn)$. Thus $\phi\in C^{\infty}_0(\Omega_{\eps})$ for $\eps$ small. Taking $\phi$ as the test function, from Eq.\eqref{1.41} we have
\begin{equation}
\displaystyle\int_{\Om_{\eps}}|x|^{-2\nu}\na z_{\eps}\na\phi=\int_{\Om_{\eps}}|x|^{-2^*\nu}z_{\eps}^{2^{*}-1}\phi-\eps\ga_{\eps}^{\f{(N+2)-q(N-2)}{2}}\int_{\Om_{\eps}}|x|^{-(q+1)\nu}z_{\eps}^q\phi.
\end{equation}

{\bf Case 1}: $\eps\ga_{\eps}^{\f{(N+2)-q(N-2)}{2}}$ is bounded.\\
Therefore there exists $c\geq 0$ such that $\eps\ga_{\eps}^{\f{(N+2)-q(N-2)}{2}}\to c$ (along a subsequence).  Furthermore, by the elliptic regularity theory it follows that $z_{\eps}\to Z$ in $C^2_{loc}(\Rn\setminus\{0\})$.

Suppose $c>0$.
Since $z_{\eps}\to Z$ a.e and $||z_{\eps}||_{L^{\infty}}=1$, by dominated convergence theorem it follows
\be\lab{27-3-1}
\lim_{\eps\to 0}\eps\ga_{\eps}^{(N+2)-q(N-2)}\displaystyle\int_{\Om_{\eps}}|x|^{-(q+1)\nu}z_{\eps}^q\phi= c\int_{\Rn}|x|^{-(q+1)\nu}Z^q\phi;\ee
\be\lab{27-3-2}
\lim_{\eps\to 0}\displaystyle\int_{\Om_{\eps}}|x|^{-2^*\nu}z_{\eps}^{2^{*}-1}\phi=\int_{\Rn}|x|^{-2^*\nu}Z^{2^{*}-1}\phi.\ee

{\bf Claim}: $\||x|^{-\nu}\na z_{\eps}\|_{L^2(\Om_{\eps})}$ is uniformly bounded with respect to $\eps$.\\
Assuming the claim,
\be\lab{27-3-3}\displaystyle\lim_{\eps\to 0}\int_{\Om_{\eps}}|x|^{-2\nu}\na z_{\eps}\na\phi=\int_{\Rn}|x|^{-2\nu}\na Z\na\phi,\ee

follows from Vitali's convergence theorem, since $\na z_{\eps}\to\na Z$ a.e. in $\Rn$.

To prove the claim, we see
\begin{eqnarray}\lab{20-4-4}
\displaystyle\int_{\Om_{\eps}}|x|^{-2\nu}|\na z_{\eps}|^2 &=&\int_{\Om_{\eps}}|x|^{-2^*\nu}z_{\eps}^{2^*}-\eps\ga_{\eps}^{\f{(N+2)-q(N-2)}{2}}\int_{\Om_{\eps}}|x|^{-(q+1)\nu}z_{\eps}^{q+1}\no\\
&\leq&\int_{\Om}|x|^{-2^*\nu}v_{\eps}^{2^*}=1.
\end{eqnarray}
Combining \eqref{27-3-1}-\eqref{27-3-3}, we have
\be\lab{2-4-16}
-\text{div}(|x|^{-2\nu}\na Z)=|x|^{-2^*\nu}Z^{2^*-1}-c|x|^{-(q+1)\nu}Z^q \quad\text{in}\quad\Rn.\ee
Moreover, by Fatou's lemma
\bea\lab{20-4-3}
\displaystyle c\int_{\Rn}|x|^{-(q+1)\nu}Z^{q+1}dx&\leq&\lim \inf_{\eps\to 0}\eps\ga_{\eps}^{\f{(N+2)-q(N-2)}{2}}\int_{\Om_{\eps}}|x|^{-(q+1)\nu}z_{\eps}^{q+1}dx\no\\
&=&\lim\inf_{\eps\to 0}\big[\int_{\Om_{\eps}}|x|^{-2^*\nu}z_{\eps}^{2^*}dx-\int_{\Om_{\eps}}|x|^{-2\nu}|\na z_{\eps}|^2dx\big]\no\\
&\leq& 1.
\eea
Since $c>0$ and $z_{\eps}\to Z$ in $C^2_{loc}(\Rn\setminus\{0\})$, from \eqref{20-4-4} and \eqref{20-4-3}, it follows  that $Z\in D^{1,2}(\R^N,  |x|^{-2\nu}) \cap L^{q+1}(\R^N, |x|^{-(q+1)\nu})$. Therefore using Pohozaev identity (see \eqref{poh2}), we have
$$c\bigg( \frac{N-2}{2}- \frac{N}{q+1}\bigg) \int_{\Rn}  |x|^{-(q+1)\nu} Z^{q+1} dx=0,$$ which is a contradiction as $|Z|_{L^{\infty}}=1$. Therefore, $c=0$. Consequently,  \eqref{2-4-16} yields $Z$ satisfies
\eqref{ent}.\\
{\bf Case 2}: $\displaystyle{\lim_{\eps\to 0}}\eps\ga_{\eps}^{\f{(N+2)-q(N-2)}{2}}=\infty$.\\
Set, $\la_{\eps}:= \eps\ga_{\eps}^{\f{(N+2)-q(N-2)}{2}}$ and define
$\tilde z_{\eps}(x):= z_{\eps}(\f{x}{\la_{\eps}^m})$, where $m=\f{1}{2+(q-1)\nu}$.\\ A straight forward computation yields, for any $\psi\in C^{\infty}_0(\Rn)$, we have
\bea\lab{2-4-1}
\int_{\la_{\eps}^m\Om_{\eps}}|x|^{-2\nu}\na\tilde z_{\eps}(x)\na\psi(x)dx &=&\la_{\eps}^\f{2^*\nu-2\nu-2}{2+(q-1)\nu}\int_{\la_{\eps}^m\Om_{\eps}}|x|^{-2^*\nu}\tilde z_{\eps}^{2^*-1}(x)\psi(x)dx \no\\
&-&\int_{\la_{\eps}^m\Om_{\eps}}|x|^{-(q+1)\nu}\tilde z_{\eps}^{q}(x)\psi(x)dx.
\eea
Since $\la_{\eps}\to\infty$ as $\eps\to 0$, we obtain $\la_{\eps}^m\Om_{\eps}\to\Rn$ and $\la_{\eps}^\f{2^*\nu-2\nu-2}{2+(q-1)\nu}\to 0$. Using elliptic regularity theory we can argue as before that there exists $\tilde Z$ such that $\tilde z_{\eps}\to \tilde Z$ in $C^2_{loc}(\Rn\setminus\{0\})$. Moreover, $||z_{\eps}||_{L^{\infty}}=1$ implies $||\tilde z_{\eps}||_{L^{\infty}}=1$. Therefore arguing as in Case 1, we  can prove that $\tilde Z$ satisfies the following equation:
\be\lab{2-4-2}
-\text{div}(|x|^{-2\nu}\na \tilde Z)+|x|^{-(q+1)\nu}\tilde Z^q=0 \quad\text{in}\quad\Rn.
\ee
From Theorem \ref{t:ap-B1} (see Appendix A), it follows that $\tilde Z=0$. This is a contradiction as $||\tilde z_{\eps}||_{L^{\infty}}=1$ implies $||\tilde Z||_{L^{\infty}}=1$. Hence Case 2 can not occur. Therefore from Case 1 we conclude (i) holds
and $z_{\eps}\to Z$ in $C^{2}_{loc}(\Rn\setminus\{0\})$.

Since $Z$ satisfies \eqref{ent}, $Z$ must be of the form $\mu^{-\frac{\al}{2}}U(\frac{x}{\mu})$, where $U$ is as in \eqref{ent-U} for some $\mu>0$. By \eqref{sup}, it follows that max $z_{\eps}=z_{\eps}(0)=1$. This implies $Z(0)=1$ and $0\leq Z\leq 1$. From this it follows $z_{\eps}\to Z$ in $L^{\infty}_{loc}(\Rn)$ and $\mu=\bigg(\al\sqrt{\frac{N}{N-2}}\bigg)^\frac{N-2}{\al}$. From this, direct calculation yields that $Z(x)=\bigg(1+\frac{|x|^\frac{2\alpha}{N-2}}{\frac{N{\alpha}^2}{N-2}}\bigg)^{-\frac{N-2}{2}}$.
\end{proof}

We know the local behavior of $z_{\eps}.$ Now we need to check the behavior of $z_{\eps}$ near $\infty.$ Hence define the Kelvin transform of  $z_{\eps}$ as \be \hat{z}_{\eps}(x)=|x|^{-\al} z_{\eps}\bigg(\frac{x}{|x|^2}\bigg) \text{ in } \Om_{\eps}\setminus \{0\}.\ee
 Then from \eqref{1.41},  $\hat{z}_{\eps}$ satisfies
\begin{equation}
  \label{1.34}
  \left\{
    \begin{aligned}  -div(|x|^{-2\nu} \nabla\hat{z}_{\eps})& = |x|^{-2^{\star}\nu}\hat{z}_{\eps}^{2^*-1}-\eps \gamma_{\eps}^{\f{(N+2)-q(N-2)}{2}}   |x|^{-(q+1)\nu +\al(q- 2^*+1)}   \hat{z}_{\eps} ^{q}  \text{ in  }  \Om^{\star}_{\eps}\\
    \hat{z}_{\eps}&=0 \text{ on  }  \pa \Om^{\star}_{\eps}.
\end{aligned}
  \right.
\end{equation}
where $\Om^{\star}_{\eps}$ is the image $\Om_{\eps}$ under the Kelvin transform. Hence the behavior of $z_{\eps}$ near $\infty$ amounts to behavior of  $\hat{z}_{\eps}$ near $0.$

\begin{lemma}\label{lb}
There exist $R>0$ and $C>0$ independent of $\eps>0$ such that any solution of \eqref{1.34} satisfy
\be\label{zig2} \|\hat{z}_{\eps}\|_{L^{\infty}(B_{r})}\leq C \bigg(\int_{B_{R}} |x|^{-2^{\star}\nu} \hat{z}_{\eps}^{2^{\star}} dx\bigg)^{\frac{1}{2^{\star}}}.\ee
\end{lemma}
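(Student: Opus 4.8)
The plan is to derive \eqref{zig2} by a Moser iteration for \eqref{1.34} on a family of balls shrinking to the origin, along the lines of the proof of Theorem \ref{t:up-est}; the one genuinely new difficulty is that the leading nonlinearity $|x|^{-2^*\nu}\hat z_\eps^{\,2^*-1}$ is \emph{exactly} critical, so the iteration can only be started on a ball $B_R$ on which $\int_{B_R}|x|^{-2^*\nu}\hat z_\eps^{\,2^*}$ is small \emph{uniformly} in $\eps$. The first reduction is harmless: since $\eps\ga_\eps^{\f{(N+2)-q(N-2)}{2}}\ge 0$ and $\hat z_\eps\ge 0$, testing \eqref{1.34} against any nonnegative $\varphi\in H^1_0(\Om^\star_\eps,|x|^{-2\nu})$ gives $\int|x|^{-2\nu}\na\hat z_\eps\cdot\na\varphi\le\int|x|^{-2^*\nu}\hat z_\eps^{\,2^*-1}\varphi$, so $\hat z_\eps$ is a nonnegative weak subsolution of $-\text{div}(|x|^{-2\nu}\na w)=|x|^{-2^*\nu}w^{2^*-1}$ on $B_R$ for $R$ small (the term with the small coefficient is simply dropped; note $\hat z_\eps$ vanishes on $\pa\Om^\star_\eps\cap B_R$, so truncated test functions built from $\hat z_\eps$ are admissible, and $\hat z_\eps\in L^{2^*}(B_R,|x|^{-2^*\nu})$ by \eqref{CKN}, which gives the starting integrability).

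Next I would establish the uniform smallness of the local critical energy. Under the Kelvin change of variables $y=x/|x|^2$ one has $\int_{B_R\cap\Om^\star_\eps}|x|^{-2^*\nu}\hat z_\eps^{\,2^*}\,dx=\int_{\{|y|>1/R\}\cap\Om_\eps}|y|^{-2^*\nu}z_\eps^{\,2^*}\,dy$. From the scaling \eqref{z-eps} and the normalisation $\int_\Om|x|^{-(p+1)\nu}v_\eps^{p+1}=1$ of Proposition \ref{p:main1}(ii) one checks $\int_{\Om_\eps}|y|^{-2^*\nu}z_\eps^{\,2^*}=\int_\Om|x|^{-2^*\nu}v_\eps^{2^*}=1$ for all $\eps$; meanwhile Lemma \ref{l:Z} gives $z_\eps\to Z=U$ pointwise with $0\le z_\eps\le 1$, and $U$ is normalised so that $\int_{\Rn}|y|^{-2^*\nu}U^{2^*}=1$. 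Dominated convergence then yields $\int_{B_T}|y|^{-2^*\nu}z_\eps^{\,2^*}\to\int_{B_T}|y|^{-2^*\nu}U^{2^*}$ for each fixed $T$, so $\limsup_{\eps\to0}\int_{\Om_\eps\setminus B_T}|y|^{-2^*\nu}z_\eps^{\,2^*}=\int_{\Rn\setminus B_T}|y|^{-2^*\nu}U^{2^*}\to0$ as $T\to\infty$. Hence, given any $\de_0>0$, there exist $R>0$ and $\eps_0>0$ with $\int_{B_R\cap\Om^\star_\eps}|x|^{-2^*\nu}\hat z_\eps^{\,2^*}<\de_0$ for all $\eps\in(0,\eps_0)$.

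With such an $R$ fixed, I would run the iteration exactly as in \eqref{eq-7}--\eqref{eq-16}: test the subsolution inequality with $\varphi=\eta^2\hat z_\eps\hat z_{\eps,l}^{\,2(s-1)}$, where $\hat z_{\eps,l}=\min\{\hat z_\eps,l\}$ and $\eta$ is a cut-off for $B_r$ inside $B_R$, and apply the Caffarelli--Kohn--Nirenberg inequality \eqref{CKN} with $a=b=\nu$, $r=2^*$. The critical term is controlled by H\"older, $\int|x|^{-2^*\nu}\hat z_\eps^{\,2^*}\eta^2\hat z_{\eps,l}^{\,2(s-1)}\le\big(\int|x|^{-2^*\nu}(\eta\hat z_\eps\hat z_{\eps,l}^{\,s-1})^{2^*}\big)^{2/2^*}\big(\int_{\mathrm{supp}\,\eta}|x|^{-2^*\nu}\hat z_\eps^{\,2^*}\big)^{2/N}$, whose last factor is $<\de_0^{2/N}$; choosing $\de_0$ small enough absorbs the corresponding term into the left-hand side at the first stage, after which the effective exponent is strictly subcritical and no further smallness is needed. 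Iterating over $s_j=s^\ast(2^*/2)^j$ on a decreasing sequence of balls between $B_r$ and $B_R$, and using that the seed quantity $\int_{B_R}|x|^{-2^*\nu}\hat z_\eps^{\,2^*}$ is uniformly bounded, one arrives at $\|\hat z_\eps\|_{L^\infty(B_r)}\le C\big(\int_{B_R}|x|^{-2^*\nu}\hat z_\eps^{\,2^*}\big)^{1/2^*}$ with $C$ and $R$ independent of $\eps$.

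The main obstacle, as the argument shows, is not the iteration but the uniform non-concentration step: a single critical equation already admits arbitrarily concentrated solutions with bounded $L^{2^*}$ norm, so a $\de_0$ valid for \emph{all} $\eps$ can only come from the convergence $z_\eps\to U$ together with the absence of loss of mass at infinity — and it is precisely here that the hypothesis $q<\f{1+\nu}{\nu}$ enters, through Lemma \ref{l:Z}. A secondary, purely technical point is the bookkeeping that isolates the single Moser step in which the smallness $\de_0$ is consumed from the remaining (subcritical) ones, so that the $s$-dependent constants generated by the critical nonlinearity do not accumulate over the iteration.
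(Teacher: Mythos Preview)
Your overall strategy---drop the nonpositive supercritical term, treat $\hat z_\eps$ as a weak subsolution of the pure critical equation, and run a Moser iteration seeded by $\int_{B_R}|x|^{-2^*\nu}\hat z_\eps^{\,2^*}$---is exactly what the paper has in mind when it says ``same line of arguments as in Theorem~\ref{t:up-est}(i) with a suitable modification''; the paper gives no further detail. You are also right that the only genuine difficulty is the uniform smallness of this seed: for a general family of subsolutions of the critical equation the estimate \eqref{zig2} is false (the rescalings $\la^{-\al/2}U(\cdot/\la)$ have bounded weighted $L^{2^*}$ norm but blow up at the origin), so the first Moser step must absorb the critical term via smallness, after which the gained integrability makes $\hat z_\eps^{\,2^*-2}\in L^t_{\rm loc}$ for some $t>N/2$ and the remaining steps proceed as in \eqref{eq-9}--\eqref{eq-16}.

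There is, however, a genuine gap in your non-concentration argument. The $v_\eps$ relevant to Lemma~\ref{lb} is the one from Theorem~\ref{main2}: it solves \eqref{eq:a3} (no Lagrange multiplier) and is only assumed to satisfy $A<c_\eps:=\int_\Om|x|^{-2^*\nu}v_\eps^{\,2^*}<B$. Proposition~\ref{p:main1}(ii), which you cite for $c_\eps=1$, concerns a \emph{different} family (solutions of \eqref{eq:eps_la}). Likewise, the limit $Z$ of Lemma~\ref{l:Z} is the rescaling of $U$ normalised by $Z(0)=1$, so $c_Z:=\int_{\Rn}|y|^{-2^*\nu}Z^{2^*}=\mathcal S^{N/2}$, not $1$. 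Your subtraction therefore only yields
\[
\limsup_{\eps\to0}\int_{\Om_\eps\setminus B_T}|y|^{-2^*\nu}z_\eps^{\,2^*}\ \le\ \limsup_{\eps\to0}c_\eps\ -\ \int_{B_T}|y|^{-2^*\nu}Z^{2^*},
\]
and this tends to $0$ as $T\to\infty$ \emph{only if} $c_\eps\to c_Z$, i.e.\ only if no $L^{2^*}$-mass escapes to infinity. That statement is not free; it has to be extracted from the hypothesis $S(v_\eps)\to\mathcal S$. One clean way: since $z_\eps\to Z$ in $C^2_{\rm loc}(\Rn\setminus\{0\})$ with $\na z_\eps$ bounded in $L^2(|x|^{-2\nu})$, Brezis--Lieb gives $c_\eps=c_Z+\tilde c_\eps+o(1)$ and $d_\eps:=\int_{\Om_\eps}|x|^{-2\nu}|\na z_\eps|^2=\mathcal S\,c_Z^{2/2^*}+\tilde d_\eps+o(1)$; combining $d_\eps=(\mathcal S+o(1))c_\eps^{2/2^*}$ with the CKN lower bound $\tilde d_\eps\ge\mathcal S\,\tilde c_\eps^{2/2^*}$ and the strict subadditivity $(c_Z+\tilde c)^{2/2^*}<c_Z^{2/2^*}+\tilde c^{2/2^*}$ for $\tilde c>0$ forces $\tilde c_\eps\to0$. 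With this in place your argument goes through and yields \eqref{zig2} with constants independent of $\eps$.
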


\begin{proof} The proof of the above lemma follows along the same line of arguments as in Theorem \ref{t:up-est} (i) with a suitable modification and we skip the proof.  \end{proof}

\begin{remark}
There exists $C>0$ independent of $\eps>0$ such that $z_{\eps} \leq C Z(x) $ for all $x\in \Om_{\eps}.$
For this, note that $\|z_{\eps}\|_{\infty}=1$, this implies that $z_{\eps} \leq C Z(x) $ locally.
From \eqref{zig11} we have
\be\no  A< \int_{\Om_{\eps}} |x|^{-2^*\nu} z_{\eps}^{2^*} dx<B  .\ee But this implies that
\be\no    \int_{B_{R}} |x|^{-2^{\star}\nu} \hat{z}_{\eps}^{2^{\star}} dx\leq \int_{\Om_{\eps}} |x|^{-2^*\nu} z_{\eps}^{2^*} dx  <B \ee
and since at infinity $Z$ decays as $|x|^{-\al}$, we have $z_{\eps} \leq C Z(x) $ near infinity. Hence, we have $z_{\eps} \leq C Z(x) $ for all $x\in \Om_{\eps}.$
As a conclusion, from \eqref{z-eps} we obtain that there exists $C>0$ independent of $\eps$ such that \be\label{upps} v_{\eps}(x) \leq C \gamma_{\eps}^{-\frac{\al}{2}}Z\bigg(\frac{x}{\gamma_{\eps}}\bigg) .\ee
\end{remark}

\noi Define $w_{\eps}(x)=\|v_{\eps}\|_{\infty}v_{\eps}(x)= \gamma_{\eps}^{-\frac{\al}{2}}v_{\eps}(x).$ Then $w_{\eps}$ satisfies
\begin{equation}
  \label{1.37}
  \left\{
    \begin{aligned}  -div (|x|^{-2\nu} \nabla w_{\eps})& = \gamma_{\eps}^{-\frac{\al}{2}}  |x|^{-2^{\star}\nu}v_{\eps}^{2^*-1}-\eps   \gamma_{\eps}^{-\frac{\al}{2}}  |x|^{-(q+1)\nu} v_{\eps}^q \quad\text{ in  } \quad\Om\\
    w_{\eps}&=0 \quad\text{ on  } \quad \pa \Om.
\end{aligned}
  \right.
\end{equation}

\begin{lemma} \label{p8} Let $\nu$ and $q$ be as in Lemma \ref{l:Z} and $w_{\eps}$ be as in \eqref{1.37}. Then  there exists a constant $C>0$ such that $$\|w_{\eps}\|_{L^\infty(K)}+ \|\nabla w_{\eps}\|_{L^\infty(K)}\leq C,$$
for every compact subset $K$ of $\Om \setminus \{0\}.$\end{lemma}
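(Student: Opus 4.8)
The plan is to obtain uniform local $L^\infty$ and gradient bounds for $w_\eps$ on compact subsets of $\Om\setminus\{0\}$ by exploiting the pointwise control already available from the blow-up analysis, together with interior elliptic estimates for the weighted operator $-\mathrm{div}(|x|^{-2\nu}\na\,\cdot\,)$, which on $\Om\setminus\{0\}$ is uniformly elliptic with smooth coefficients. First I would fix a compact set $K\subset\Om\setminus\{0\}$, so that there are $0<r_1<r_2$ with $K\subset\{r_1\le |x|\le r_2\}\Subset\Om$, and all weights $|x|^{-2\nu},|x|^{-2^*\nu},|x|^{-(q+1)\nu}$ are bounded above and below by positive constants on a neighbourhood of $K$.

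The key step is the $L^\infty$ bound. From \eqref{upps} we have $v_\eps(x)\le C\gamma_\eps^{-\al/2}Z(x/\gamma_\eps)$, hence $w_\eps(x)=\gamma_\eps^{-\al/2}v_\eps(x)\le C\gamma_\eps^{-\al}Z(x/\gamma_\eps)$. Since $Z$ is given explicitly by \eqref{ent-U} and decays like $|x|^{-\al}$ at infinity, for $x$ with $|x|\ge r_1$ and $\gamma_\eps$ small we get $Z(x/\gamma_\eps)\le C(|x|/\gamma_\eps)^{-\al}=C\gamma_\eps^\al |x|^{-\al}$, so $w_\eps(x)\le C|x|^{-\al}\le Cr_1^{-\al}$ on $K$. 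This gives the uniform bound $\|w_\eps\|_{L^\infty(K)}\le C$. Actually one obtains the stronger fact that $w_\eps$ is uniformly bounded on every annulus $\{|x|\ge r_1\}\cap\Om$, which will be convenient for the gradient estimate since it lets me work on a slightly larger compact set $K'$ with $K\Subset K'\Subset\Om\setminus\{0\}$ on which $w_\eps$ is uniformly bounded.

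For the gradient bound I would rewrite \eqref{1.37} as $-\De w_\eps=g_\eps$ on $\Om\setminus\{0\}$, where, after expanding $\mathrm{div}(|x|^{-2\nu}\na w_\eps)=|x|^{-2\nu}\De w_\eps-2\nu|x|^{-2\nu-2}x\cdot\na w_\eps$, we get
\[
-\De w_\eps=-\frac{2\nu}{|x|^2}x\cdot\na w_\eps+\ga_\eps^{-\al/2}|x|^{-(2^*-2)\nu}v_\eps^{2^*-1}-\eps\ga_\eps^{-\al/2}|x|^{-(q-1)\nu}v_\eps^{q}.
\]
On $K'$ the terms $\ga_\eps^{-\al/2}v_\eps^{2^*-1}=w_\eps^{2^*-1}(\ga_\eps^{-\al/2})^{2-2^*}\cdot(\text{power of }\ga_\eps)$ — more simply, $\ga_\eps^{-\al/2}v_\eps^{2^*-1}=\ga_\eps^{-\al/2}(\ga_\eps^{\al/2}w_\eps)^{2^*-1}=\ga_\eps^{\al(2^*-2)/2}w_\eps^{2^*-1}$ is uniformly bounded since $w_\eps$ is, and likewise $\eps\ga_\eps^{-\al/2}v_\eps^q=\eps\ga_\eps^{\al(q-1)/2-\al/2}w_\eps^q=\big(\eps\ga_\eps^{\frac{(N+2)-q(N-2)}{2}}\big)w_\eps^q$, whose prefactor tends to $0$ by Lemma \ref{l:Z}(i). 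Thus, provided I first establish a uniform $L^\infty$ bound on $\na w_\eps$ over $K'$, the right-hand side $g_\eps$ is uniformly bounded in $L^\infty(K')$, and standard interior elliptic estimates ($C^{1,\gamma}$ or $W^{2,m}$ estimates for the Laplacian, then Morrey/Sobolev embedding) on balls $B_{2R}(x_0)\subset K'$ give $\|\na w_\eps\|_{L^\infty(B_R(x_0))}\le C(\|w_\eps\|_{L^\infty(B_{2R}(x_0))}+\|g_\eps\|_{L^\infty(B_{2R}(x_0))})\le C$; covering $K$ by finitely many such balls finishes the proof.

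The circularity in the last step — needing $\na w_\eps$ bounded to bound $g_\eps$ — is the main obstacle, and I would resolve it by a bootstrap. Since $w_\eps$ is uniformly bounded in $L^\infty(K')$ and the ``bad'' term $\frac{2\nu}{|x|^2}x\cdot\na w_\eps$ has coefficient bounded on $K'$, I treat \eqref{1.37} directly in divergence form: by De Giorgi–Nash–Moser / Schauder theory for the operator $-\mathrm{div}(|x|^{-2\nu}\na\,\cdot\,)$, which has smooth uniformly elliptic coefficients away from $0$, a uniform $L^\infty$ bound on $w_\eps$ together with a uniform $L^\infty$ bound on the source $\ga_\eps^{\al(2^*-2)/2}w_\eps^{2^*-1}-(\eps\ga_\eps^{\frac{(N+2)-q(N-2)}{2}})w_\eps^q$ (both already uniformly bounded) yields a uniform interior $C^{1,\gamma}$ bound, hence a uniform gradient bound, on compact subsets. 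This avoids ever needing an a priori gradient bound to start. Combining, $\|w_\eps\|_{L^\infty(K)}+\|\na w_\eps\|_{L^\infty(K)}\le C$ for every compact $K\subset\Om\setminus\{0\}$, with $C=C(K,N,p,q,\mu)$ independent of $\eps$, which is exactly the assertion.
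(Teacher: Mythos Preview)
Your proof is correct, and your $L^\infty$ bound is obtained by a genuinely simpler route than the paper's. The paper represents $w_\eps$ via the weighted Green function (Lemma~\ref{p}), splits each integral into the regions $|y|\le |x|/2$ and $|y|\ge |x|/2$, and uses \eqref{upps} together with the decay of $Z$ inside the integrals to bound the four resulting pieces. You instead apply \eqref{upps} pointwise: $w_\eps(x)\le C\ga_\eps^{-\al}Z(x/\ga_\eps)\le C|x|^{-\al}$ on any annulus $\{|x|\ge r_1\}$, which immediately gives the uniform bound on $K$. This is shorter and avoids the Green function machinery entirely; the paper's approach, on the other hand, is what feeds directly into the next lemma (Lemma~\ref{p7.1}), where the Green representation is used again to identify the limit $\ga_0 G(x,0)$.

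For the gradient bound the two arguments coincide: once $w_\eps$ is uniformly bounded on a slightly larger compact $K'\Subset\Om\setminus\{0\}$, the divergence-form source in \eqref{1.37} is uniformly bounded there (the weights are bounded and the scalar factors $\ga_\eps^{\al(2^*-2)/2}$ and $\eps\ga_\eps^{\al(q-1)/2}$ are bounded, indeed tend to $0$), and interior $C^{1,\gamma}$ estimates for the uniformly elliptic operator $-\mathrm{div}(|x|^{-2\nu}\na\,\cdot\,)$ on $K'$ finish the job. The paper simply writes ``by the regularity'' for this step. One minor slip: your exponent for the $q$-term should be $\eps\ga_\eps^{\al(q-1)/2}$, not $\eps\ga_\eps^{\al(q-1)/2-\al/2}$, and this is not the same quantity as in Lemma~\ref{l:Z}(i); but since $\eps\to 0$ and $\ga_\eps\to 0$ the term is bounded regardless, so the argument is unaffected.
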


\begin{proof}
Using the Green kernel's representation and Lemma \ref{p}, we have
\begin{eqnarray*}|  w_{\eps}(x)|&=&   \gamma_{\eps}^{-\frac{\al}{2}} \bigg |\int_{\Om} G(x,y)[ |y|^{-2^{\star}\nu}v_{\eps}^{2^*-1} - \eps |y|^{-(q+1)\nu}v_{\eps}^{q} ]dy\bigg|
\no \\&\leq& C \gamma_{\eps}^{-\frac{\al}{2}}   |x|^{2\nu}\int_{\Om}|x-y|^{2-N}|y|^{-2^{\star}\nu}v_{\eps}^{2^*-1} dy\\&+&  C  \gamma_{\eps}^{-\frac{\al}{2}} \int_{\Om}|x-y|^{2+2\nu-N}|y|^{-2^{\star}\nu}v_{\eps}^{2^*-1} dy \\&+& C \eps  \gamma_{\eps}^{-\frac{\al}{2}} |x|^{2\nu}\int_{\Om}|x-y|^{2-N}|y|^{-(q+1)\nu}v_{\eps}^{q} dy\\&+&  C \eps \gamma_{\eps}^{-\frac{\al}{2}} \int_{\Om}|x-y|^{2+2\nu-N}|y|^{-(q+1)\nu}v_{\eps}^{q} dy\\
&=:&I_1+I_2+I_3+I_4.
\end{eqnarray*}
Moreover,
\begin{eqnarray*}  I_1 &:=& C|x|^{2\nu}\gamma_{\eps}^{-\frac{\al}{2}}\int_{\Om}|x-y|^{2-N}|y|^{-2^{\star}\nu}v_{\eps}^{2^*-1} dy\\
&=&C|x|^{2\nu}  \gamma_{\eps}^{-\frac{\al}{2}} \int_{\Om \cap B_{\frac{|x|}{2}}(0)}|x-y|^{2-N}|y|^{-2^{\star}\nu}v_{\eps}^{2^*-1} dy \\&+&C|x|^{2\nu}  \gamma_{\eps}^{-\frac{\al}{2}} \int_{\Om_{}\setminus B_{\frac{|x|}{2}}(0)} |x-y|^{2-N}|y|^{-2^{\star}\nu}v_{\eps}^{2^*-1} dy\\
&=:& I_{11}+I_{12}.
\end{eqnarray*}
Using \eqref{upps} along with the facts that $Z(x)\thicksim |x|^{-\al}$ at infinity and $\ga_{\eps}\to 0$,   we find
\be\lab{24-3-1}  \gamma_{\eps}^{-\frac{\al}{2}} |y|^{-2^{\star}\nu}v_{\eps}^{2^*-1}(y) \leq \frac{C}{|y|^{(N+2)- \frac{4\nu}{(N-2)}}} \quad\text{in}\quad \Om\setminus
B_{\frac{|x|}{2}}(0),
\ee
\be\lab{24-3-2}  \gamma_{\eps}^{-\frac{\al}{2}} |y|^{-(q+1)\nu}v_{\eps}^{q}(y) \leq \frac{C\ga_{\eps}^{(q-1)\f{\al}{2}}}{|y|^{(N-2)q-\nu(q-1)}}  \quad\text{in}\quad \Om\setminus
B_{\frac{|x|}{2}}(0).
\ee
 Hence
\bea  I_{12} &\leq &  C|x|^{2\nu} \int_{\Om\setminus B_{\frac{|x|}{2}}(0)}\frac{1}{|x-y|^{N-2}|y|^{(N+2)- \frac{4\nu}{N-2}}} dy\no\\& \leq&  \frac{C|x|^{2\nu}}{|x|^{(N+2)- \frac{4\nu}{N-2}}} \int_{\Om\setminus B_{\frac{|x|}{2}}(0)} |x-y|^{2-N} dy \no . \eea
When $y\in \Om\cap B_{\frac{|x|}{2}}(0)$, we have
$|x-y|\geq |x|-|y|\geq \frac{1}{2}|x|$. Therefore using \eqref{upps}, we get
\bea\lab{24-3-3} I_{11}&\leq& \frac{C|x|^{2\nu}  \gamma_{\eps}^{-\frac{\al}{2}}}{|x|^{N-2}}  \int_{\Om\cap B_{\frac{|x|}{2}}(0)}|y|^{-2^{\star}\nu}v_{\eps}^{2^*-1}(y)dy \no \\&\leq& \frac{C\ga_{\eps}^{-\f{2^*\al}{2}}}{|x|^{\al}}  \int_{\Om\cap B_{\frac{|x|}{2}}(0)}|y|^{-2^{\star}\nu}Z\displaystyle\left(\f{y}{\ga_{\eps}}\right)^{2^*-1} dy\no \\&\leq& \frac{C}{|x|^{\al}}\ga_{\eps}^{-\f{2^*\al}{2}-2^*\nu+N}
\int_{\Rn}|y|^{-2^{\star}\nu}Z(y)^{2^*-1}dy\no\\
&=&\displaystyle\frac{C}{|x|^{\al}} \int_{\Rn}|y|^{-2^{\star}\nu}Z(y)^{2^*-1}dy \no\\
&=& \displaystyle\frac{C}{|x|^{\al}}\om_N\al^{N-1}\left(\f{N}{N-2}\right)^\f{N-2}{2},
\eea
where the last integral can be computed as in  \eqref{ap:ga-0'} in Lemma \ref{A.3}.
Similarly $I_2$ can be written as
\bea
I_2 &=& C  \ga_{\eps}^{-\frac{\al}{2}} \int_{\Om \cap B_{\frac{|x|}{2}}(0)}|x-y|^{2+\nu-N}|y|^{-2^{\star}\nu}v_{\eps}^{2^*-1} dy\no\\
&+& C\ga_{\eps}^{-\frac{\al}{2}} \int_{\Om_{}\setminus B_{\frac{|x|}{2}}(0)} |x-y|^{2+\nu-N}|y|^{-2^{\star}\nu}v_{\eps}^{2^*-1} dy\no\\
&=:&I_{21}+I_{22}.
\eea
Proceeding similarly as we did for $I_{12}$ and $I_{11}$, we have
\bea  I_{22}&\leq &  C \int_{\Om\setminus B_{\frac{|x|}{2}}(0)}\frac{1}{|x-y|^{N-2\nu-2}|y|^{(N+2)- \frac{4\nu}{N-2}}} dy\no\\& \leq&  \frac{C}{|x|^{(N+2)- \frac{4\nu}{N-2}}} \int_{\Om\setminus B_{\frac{|x|}{2}}(0)} |x-y|^{2+2\nu -N} dy; \no \\
 I_{21} &\leq& \displaystyle\frac{C}{|x|^{\al}}\om_N\al^{N-1}\left(\f{N}{N-2}\right)^\f{N-2}{2}. \eea
Similarly to compute $I_3$ and $I_4$, we break those integral into two parts, namely in $\Om\cap B_{\f{|x|}{2}}(0)$ and $\Om\setminus B_{\f{|x|}{2}}(0)$. Using \eqref{24-3-2}, integral in $\Om\setminus B_{\f{|x|}{2}}(0)$ can be computed as before.
Proceeding as in \eqref{24-3-3}, we have
\bea
&& \eps  \gamma_{\eps}^{-\frac{\al}{2}} |x|^{2\nu}\int_{\Om\cap B_{\f{|x|}{2}}(0)}|x-y|^{2-N}|y|^{-(q+1)\nu}v_{\eps}^{q} dy \no\\
&&\leq
\frac{C\eps \gamma_{\eps}^{-\frac{\al}{2}}}{|x|^{\al}}  \int_{\Om\cap B_{\frac{|x|}{2}}(0)}|y|^{-(q+1)\nu}v_{\eps}^{q}(y)dy\no\\
&&\leq \frac{C\eps \gamma_{\eps}^{\f{(N+2)-q(N-2)}{2}}}{|x|^{\al}}\int_{\Rn}|y|^{-(q+1)\nu}Z^q(y)dy\no
\eea
By a straight forward computation using the expression of $Z$ from Lemma \ref{l:Z}, it can shown that $\displaystyle\int_{\Rn}|y|^{-(q+1)\nu}Z^q(y)dy<\infty$.
Moreover, as $\eps \gamma_{\eps}^{\f{(N+2)-q(N-2)}{2}}\to 0$ (see Lemma \ref{l:Z}), we can conclude that for any compact set $K\subset \Om \setminus\{0\},$
we have $\|w_{\eps}\|_{L^\infty(K)}\leq C$ and  by the regularity $\|\nabla w_{\eps}\|_{L^\infty(K)}\leq C.$
\end{proof}
\begin{lemma}
\label{p7.1} Let $\nu,q, w_{\eps}$ be as in Lemma \ref{p8}. Then there exists $\gamma_{0}>0$  such that
\be\label{p7.1'}\lim_{\eps \rightarrow 0}w_{\eps}(x)=\gamma_{0} G(x, 0)~\text{in } C^{1}_{loc}(\overline{\Om} \backslash \{0\}).\ee
\end{lemma}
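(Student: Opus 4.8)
The plan is to pass to the limit in the Green's kernel representation of $w_{\eps}$. First I would record the formula (valid exactly as in the proof of Lemma \ref{p8}): for $x\in\Om\setminus\{0\}$,
\begin{equation*}
w_{\eps}(x)=\int_{\Om}G(x,y)\Big(\ga_{\eps}^{-\f{\al}{2}}|y|^{-2^{*}\nu}v_{\eps}^{2^{*}-1}(y)-\eps\,\ga_{\eps}^{-\f{\al}{2}}|y|^{-(q+1)\nu}v_{\eps}^{q}(y)\Big)\,dy .
\end{equation*}
The argument then splits into three steps: (a) the second integral tends to $0$; (b) the first integral tends to $\ga_{0}G(x,0)$, where $\ga_{0}:=\int_{\Rn}|z|^{-2^{*}\nu}Z^{2^{*}-1}(z)\,dz\in(0,\infty)$ with $Z$ the limit profile of Lemma \ref{l:Z}; (c) this (locally uniform) pointwise convergence is bootstrapped to $C^{1}_{loc}(\overline{\Om}\setminus\{0\})$.

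For step (a) I would rerun the estimates for the terms $I_{3},I_{4}$ in the proof of Lemma \ref{p8}: splitting the integral over $\Om\cap B_{|x|/2}(0)$ and $\Om\setminus B_{|x|/2}(0)$ and using the global upper bound \eqref{upps} together with the decay $Z(y)\sim|y|^{-\al}$ at infinity, the second integral is bounded by $C|x|^{-\al}\,\eps\,\ga_{\eps}^{\f{(N+2)-q(N-2)}{2}}$ plus contributions carrying a factor $\eps\,\ga_{\eps}^{\f{\al}{2}(q-1)}$; all of these tend to $0$, the first by Lemma \ref{l:Z}(i) and the rest because $\ga_{\eps}\to0$.

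For step (b), fix $\de$ with $0<\de<|x|/2$ and split the first integral at $|y|=\de$. On $\{|y|\ge\de\}\cap\Om$ one has $\ga_{\eps}^{-\al/2}v_{\eps}^{2^{*}-1}=\ga_{\eps}^{\f{\al}{2}(2^{*}-2)}w_{\eps}^{2^{*}-1}$, which is at most $C\ga_{\eps}^{\f{\al}{2}(2^{*}-2)}$ by Lemma \ref{p8} (note $2^{*}-2=\f{4}{N-2}>0$); since $G(x,\cdot)\in L^{1}(\Om)$ by Lemma \ref{p}, this piece is $O\big(\ga_{\eps}^{\f{\al}{2}(2^{*}-2)}\big)\to0$. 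On $\{|y|<\de\}$ I change variables $y=\ga_{\eps}z$, use $v_{\eps}(\ga_{\eps}z)=\ga_{\eps}^{-\al/2}z_{\eps}(z)$, and observe that $\al+2\nu=N-2$ forces $2^{*}\big(\f{\al}{2}+\nu\big)=N$, so that all powers of $\ga_{\eps}$ and the Jacobian cancel and this piece equals
\begin{equation*}
\int_{\{|z|<\de/\ga_{\eps}\}}G(x,\ga_{\eps}z)\,|z|^{-2^{*}\nu}z_{\eps}^{2^{*}-1}(z)\,dz .
\end{equation*}
Here for $|y|<\de<|x|/2$ Lemma \ref{p} gives $G(x,\ga_{\eps}z)\le C$ uniformly, while $G(x,\ga_{\eps}z)\to G(x,0)$ pointwise by continuity of $G(x,\cdot)$ at the origin; moreover $z_{\eps}\to Z$ pointwise with $z_{\eps}\le CZ$ on $\Om_{\eps}$ (the Remark following Lemma \ref{lb}), and $|z|^{-2^{*}\nu}Z^{2^{*}-1}\in L^{1}(\Rn)$ --- integrable at the origin since $2^{*}\nu<N$, and at infinity since $Z^{2^{*}-1}\sim|z|^{-\al(2^{*}-1)}$ and $2^{*}\nu+\al(2^{*}-1)-N=2-\f{4\nu}{N-2}>0$ as $\nu<\f{N-2}{2}$. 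Dominated convergence then yields, letting $\eps\to0$ and then $\de\to0$, that the first integral converges to $G(x,0)\int_{\Rn}|z|^{-2^{*}\nu}Z^{2^{*}-1}(z)\,dz=\ga_{0}$, with $\ga_{0}\in(0,\infty)$ since $Z>0$; a direct computation as in \eqref{24-3-3} (cf.\ Lemma \ref{A.3}) in fact gives $\ga_{0}=\om_{N}\al^{N-1}\big(\f{N}{N-2}\big)^{\f{N-2}{2}}$, which matches \eqref{nsd2}.

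Finally, for step (c): (a) and (b) give $w_{\eps}(x)\to\ga_{0}G(x,0)$ pointwise on $\Om\setminus\{0\}$, and the same dominated convergence, combined with the uniform bounds of Lemma \ref{p8}, makes the convergence locally uniform on $\overline{\Om}\setminus\{0\}$. Since $w_{\eps}$ solves $-\text{div}(|x|^{-2\nu}\na w_{\eps})=f_{\eps}$ with $f_{\eps}=\ga_{\eps}^{\f{\al}{2}(2^{*}-2)}|x|^{-2^{*}\nu}w_{\eps}^{2^{*}-1}-\eps\,\ga_{\eps}^{\f{\al}{2}(q-1)}|x|^{-(q+1)\nu}w_{\eps}^{q}\to0$ in $L^{\infty}_{loc}(\overline{\Om}\setminus\{0\})$, and the coefficient $|x|^{-2\nu}$ is smooth and bounded above and below away from the origin, interior and boundary Schauder estimates bound $\{w_{\eps}\}$ in $C^{1,\beta}_{loc}(\overline{\Om}\setminus\{0\})$; Arzel\`a--Ascoli together with uniqueness of the limit upgrades the convergence to $C^{1}_{loc}(\overline{\Om}\setminus\{0\})$. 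I expect step (b) to be the main obstacle: it amounts to proving that the concentrating source $\ga_{\eps}^{-\al/2}|y|^{-2^{*}\nu}v_{\eps}^{2^{*}-1}(y)\,dy$ converges weak-$\ast$ to $\ga_{0}\,\de_{0}$ \emph{with no loss of mass}, which is precisely where the global bound \eqref{upps} and the $L^{1}(\Rn)$-integrability of $|z|^{-2^{*}\nu}Z^{2^{*}-1}$ (and thus the hypotheses $p=2^{*}-1$ and $\nu<\f{N-2}{2}$) are essential.
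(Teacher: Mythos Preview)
Your proposal is correct and follows essentially the same strategy as the paper: represent $w_{\eps}$ via the Green kernel, split the source term, kill the outer and $\eps$-contributions using the global bound \eqref{upps} and the $Z$-decay, and identify the concentrating mass at the origin. The only organizational difference is that you perform the rescaling $y=\ga_{\eps}z$ and the dominated-convergence argument directly inside step~(b), whereas the paper factors out $G(x,0)$ by continuity, defines $\ga_{0}$ as the double limit $\lim_{r\to0}\lim_{\eps\to0}\ga_{\eps}^{-\al/2}\int_{B_{r}(0)}|y|^{-2^{*}\nu}v_{\eps}^{2^{*}-1}\,dy$, and only later (in Lemma~\ref{A.3}) carries out the change of variables to evaluate it; your route is slightly more direct but uses the same ingredients (continuity of $G(x,\cdot)$ at $0$, the pointwise bound $z_{\eps}\le CZ$, and the $L^{1}(\Rn)$-integrability of $|z|^{-2^{*}\nu}Z^{2^{*}-1}$).
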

\begin{proof}
Define $$f_{\eps}:=\gamma_{\eps}^{-\frac{\al}{2}}  |x|^{-2^{\star}\nu}v_{\eps}^{2^*-1}-\eps   \gamma_{\eps}^{-\frac{\al}{2}}  |x|^{-(q+1)\nu} v_{\eps}^q.$$
Choose  $R>0$ such that $\Om'=\Om\setminus \overline{B_{R}}(0)$ is connected.
Then $ |w_{\eps}|+ |\nabla w_{\eps}|\leq C$ for all $x\in \Om'$. Let
$x'\in \pa \Om \cap \pa \Om',$ then $|w_\eps(x)- w_{\eps}(x')|\leq C$ for
all $x\in \Om'.$ But this implies  $w_{\eps}$ is uniformly bounded in
$\Om'\cap \overline{\Om}.$  By the standard regularity, we have
$w_{\eps}\rightarrow w$ as $\eps \to 0$ in
$C^{1}_{loc}(\overline{\Om}\setminus 0)$. If $K\subseteq\bar\Om\setminus\{0\}$, then for any $x\in K$ and $r>0$ small, using the fact  $\gamma_{\eps}\to 0$, we have
\bea\lab{27-3-4}
w_{\eps}(x)&=&\int_{\Om} G(x,y) f_{\eps}(y)dy\no \\&=&  \int_{B_{r}(0)} G(x,y)  f_{\eps}(y) dy+ \int_{\Om\setminus B_{r}(0)} G(x,y)  f_{\eps}(y)dy \no \\&=&   \gamma_{\eps}^{-\frac{\al}{2}} \int_{B_{r}(0)} G(x,y) |y|^{-2^{\star}\nu}v_{\eps}^{2^*-1}(y)dy\no\\&+&
\gamma_{\eps}^{-\frac{\al}{2}} \int_{\Om\setminus B_{r}(0)} G(x,y) |y|^{-2^{\star}\nu}v_{\eps}^{2^*-1}(y)dy\no\\&+&
 \eps \gamma_{\eps}^{-\frac{\al}{2}} \int_{B_{r}(0)} G(x,y) |y|^{-(q+1)\nu}v_{\eps}^{q}(y)dy\no\\&+&
 \eps \gamma_{\eps}^{-\frac{\al}{2}} \int_{\Om\setminus B_{r}(0)} G(x,y) |y|^{-(q+1)\nu}v_{\eps}^{q}(y)dy
 \eea
{\bf Claim:}(i) $\eps \gamma_{\eps}^{-\frac{\al}{2}} \displaystyle\int_{\Om\setminus B_{r}(0)} G(x,y) |y|^{-(q+1)\nu}v_{\eps}^{q}(y)dy=o(1)$ and \\ (ii) $\gamma_{\eps}^{-\frac{\al}{2}} \int_{\Om\setminus B_{r}(0)} G(x,y) |y|^{-2^{\star}\nu}v_{\eps}^{2^*-1}(y)dy=o(1).$ \\

To see this,
\bea
&&\lim_{\eps\to 0}\eps \gamma_{\eps}^{-\frac{\al}{2}} \displaystyle\int_{\Om\setminus B_{r}(0)} G(x,y) |y|^{-(q+1)\nu}v_{\eps}^{q}(y)dy\no\\
&&
\leq\lim_{\eps\to 0} C\eps\ga_{\eps}^{-\f{\al}{2}(q+1)}\int_{\Om\setminus B_r(0)}\big[|x|^{2\nu}|x-y|^{2-N}+|x-y|^{-\al}\big]|y|^{-(q+1)\nu}Z^q(\f{y}{\ga_{\eps}})dy\no\\
&&\leq\lim_{\eps\to 0}C\eps\ga_{\eps}^{-\f{\al}{2}(q+1)}\int_{\Om\setminus B_r(0)}\big[|x|^{2\nu}|x-y|^{2-N}+|x-y|^{-\al}\big]|y|^{-(q+1)\nu}|\f{y}{\ga_{\eps}}|^{-\al q}dy\no\\
&&\leq\lim_{\eps\to 0}C\eps\ga_{\eps}^{\f{\al}{2}(q-1)}\int_{\Om\setminus B_r(0)}\big[|x|^{2\nu}|x-y|^{2-N}+|x-y|^{-\al}\big]|y|^{-(q+1)\nu-\al q}dy\no\\
&&= o(1) \no.
\eea
Similarly (ii) follows. Therefore  from \eqref{27-3-4}, we obtain
\bea \lim_{\eps\to 0} w_{\eps}(x)&=&
\gamma_{\eps}^{-\frac{\al}{2}} \int_{B_{r}(0)} G(x,y) |y|^{-2^{\star}\nu}v_{\eps}^{2^*-1}(y)dy\no\\&+&
 \eps \gamma_{\eps}^{-\frac{\al}{2}} \int_{B_{r}(0)} G(x,y) |y|^{-(q+1)\nu}v_{\eps}^{q}(y)dy+o(1).\no
\eea
Furthermore $G(x,.)$ is continuous in
$\overline{\Om}\setminus\{x\},$ we obtain
\be w_{\eps}(x)= \gamma_{\eps}^{-\frac{\al}{2}} G(x,0)  \int_{B_{r}(0)} |y|^{-2^{\star}\nu}v_{\eps}^{2^*-1}dy+ L+ o(1).\ee
where
\Bea L =\eps \gamma_{\eps}^{-\frac{\al}{2}} G(x,0)  \int_{B_{r}(0)} |y|^{-(q+1)\nu}v_{\eps}^{q}dy. \Eea
By doing a straight forward computation using \eqref{upps}, it follows
$$L\leq \eps\ga_{\eps}^{\f{(N+2)-q(N-2)}{2}}\displaystyle\int_{\Rn}|y|^{-(q+1)\nu}Z^q(y)dy.$$

Consequently, a direct computation using Lemma \ref{l:Z} yields $L=o(1)$. \\
Define
$$ \gamma_{0} :=\lim_{r \to 0}\lim_{\eps\to 0} \gamma_{\eps}^{-\frac{\al}{2}} \int_{B_{r}(0)}  |y|^{-2^{\star}\nu}v_{\eps}^{2^*-1}dy. $$
Then
\be \lim_{\eps \to 0} w_{\eps}(x)= \gamma_{0} G(x, 0).\ee
Moreover from Lemma \ref{A.3} we get $\ga_0=\om_{N}(N-2-2\nu)^{N-1}\bigg(\frac{N}{N-2}\bigg)^\frac{N-2}{2}.$

Using the same  procedure as above we can show that $$  w_{\eps} \to \gamma_{0}G(x, 0) \text{ in }  C^{1}_{loc}(\overline{\Om}\setminus 0).
$$
\end{proof}

\begin{lemma}\label{A.3}
Let  $v_{\eps}$  be as in Theorem \ref{main2} and $\ga_{\eps}$ be as defined in \eqref{ga-eps}.  Define $\ga_0:=\displaystyle{\lim_{r \to 0}\lim_{\eps\to 0}\gamma_{\eps}^{-\frac{\al}{2}} \displaystyle\int_{B_{r}(0)}  |y|^{-2^{\star}\nu}v_{\eps}^{2^*-1}dy. }$ Then
\begin{equation}\lab{ap:ga-0}
\ga_0=\om_{N}(N-2-2\nu)^{N-1}\bigg(\frac{N}{N-2}\bigg)^\frac{N-2}{2}.
\end{equation}
\end{lemma}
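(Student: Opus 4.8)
The quantity $\gamma_0$ is a limit of the $L^1$-mass of the right-hand side of the rescaled equation concentrated near the origin, so the natural strategy is to change variables to the $z_\eps$-scale, pass to the limit using the convergence $z_\eps \to Z$ from Lemma \ref{l:Z}, and then compute an explicit integral of a power of $Z$. First I would perform the substitution $y = \gamma_\eps x$ in the defining integral:
\[
\gamma_{\eps}^{-\frac{\al}{2}} \int_{B_{r}(0)}  |y|^{-2^{\star}\nu}v_{\eps}^{2^*-1}(y)\,dy
= \gamma_{\eps}^{-\frac{\al}{2}} \int_{B_{r/\gamma_\eps}(0)}  |\gamma_\eps x|^{-2^{\star}\nu}v_{\eps}^{2^*-1}(\gamma_\eps x)\,\gamma_\eps^{N}\,dx.
\]
Using $v_\eps(\gamma_\eps x) = \gamma_\eps^{-\al/2} z_\eps(x)$ (from \eqref{z-eps}) and collecting powers of $\gamma_\eps$, one checks that the exponent of $\gamma_\eps$ is exactly $0$: indeed $-\tfrac{\al}{2} - 2^*\nu + N - \tfrac{\al}{2}(2^*-1) = N - 2^*\nu - \tfrac{\al}{2}\cdot 2^* = 0$ since $\tfrac{\al}{2}2^* = \tfrac{(N-2-2\nu)}{2}\cdot\tfrac{2N}{N-2}$ does not quite vanish on its own, but combined with $N - 2^*\nu$ it does — this algebraic cancellation (which is the whole point of the scaling \eqref{z-eps}) must be verified carefully. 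After cancellation the integral becomes $\int_{B_{r/\gamma_\eps}(0)} |x|^{-2^*\nu} z_\eps^{2^*-1}(x)\,dx$.

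\textbf{Passing to the limit.} Next I would take $\eps \to 0$ and then $r \to 0$. Since $r/\gamma_\eps \to \infty$, and since by the Remark following Lemma \ref{lb} we have the uniform bound $z_\eps(x) \le C Z(x)$ on $\Om_\eps$ with $Z(x) \sim |x|^{-\al}$ at infinity, the integrand is dominated by $C|x|^{-2^*\nu} Z^{2^*-1}(x)$, which is integrable over $\R^N$ (near $0$ because $2^*\nu < N$ as $\nu < (N-2)/2$, and near $\infty$ because $2^*\nu + \al(2^*-1) = 2^*\nu + \tfrac{2N}{N-2}\cdot\tfrac{4}{N-2}\cdot\ldots > N$ — again a computation to confirm, using $\al(2^*-1) = (N-2-2\nu)\cdot\tfrac{4}{N-2}$). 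Combined with $z_\eps \to Z$ in $C^2_{loc}(\R^N\setminus\{0\}) \cap L^\infty$ from Lemma \ref{l:Z}(ii), dominated convergence gives
\[
\lim_{\eps\to 0} \int_{B_{r/\gamma_\eps}(0)} |x|^{-2^*\nu} z_\eps^{2^*-1}\,dx = \int_{\R^N} |x|^{-2^*\nu} Z^{2^*-1}(x)\,dx
\]
for each fixed $r$ (the tail outside $B_{r/\gamma_\eps}$ being absorbed since $r/\gamma_\eps\to\infty$); since the limit is already independent of $r$, the outer limit $r\to 0$ is harmless. Thus $\gamma_0 = \int_{\R^N} |x|^{-2^*\nu} Z^{2^*-1}(x)\,dx$.

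\textbf{The explicit computation.} Finally I would evaluate this integral using the closed form $Z(x) = \big(1 + \tfrac{N-2}{N\al^2}|x|^{\frac{2\al}{N-2}}\big)^{-\frac{N-2}{2}}$ from Lemma \ref{l:Z}(iii) (equivalently \eqref{ent-U} with the right normalization). Passing to polar coordinates, $\int_{\R^N}|x|^{-2^*\nu}Z^{2^*-1}dx = \om_N \int_0^\infty r^{N-1-2^*\nu}\big(1+\tfrac{N-2}{N\al^2}r^{\frac{2\al}{N-2}}\big)^{-\frac{N+2}{2}}dr$ (using $Z^{2^*-1} = Z^{\frac{N+2}{N-2}}$ and $(2^*-1)\cdot\tfrac{N-2}{2} = \tfrac{N+2}{2}$), and a substitution $t = c\, r^{\frac{2\al}{N-2}}$ reduces it to a Beta integral. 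The exponents conspire so that the Beta function arguments are $\tfrac{N}{2}$ and $1$ (equivalently the integral is elementary, equal to a constant times $\big(\tfrac{N\al^2}{N-2}\big)^{\frac{N-2}{2}}/\al$), which after simplification yields $\om_N (N-2-2\nu)^{N-1}\big(\tfrac{N}{N-2}\big)^{\frac{N-2}{2}}$. \textbf{The main obstacle} is bookkeeping: tracking the powers of $\gamma_\eps$ through the rescaling so that the cancellation to $0$ is transparent, and then carrying out the Beta-function reduction so the messy exponents collapse to the clean answer — both are routine but error-prone, and the dominated-convergence justification for interchanging $\lim_\eps$, $\lim_r$ and $\int$ needs the uniform bound $z_\eps \le CZ$ to be invoked precisely.
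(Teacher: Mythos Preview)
Your proposal is correct and follows essentially the same route as the paper: the paper also substitutes $y=\gamma_\eps x$ to rewrite the integral as $\int_{B_r(0)/\gamma_\eps}|x|^{-2^*\nu}z_\eps^{2^*-1}\,dx$, lets $\eps\to 0$ and then $r\to 0$ to obtain $\gamma_0=\int_{\R^N}|x|^{-2^*\nu}Z^{2^*-1}\,dx$, and evaluates this via a Beta integral yielding $B(\tfrac{N}{2},1)=\tfrac{2}{N}$ and hence the stated value. Your write-up is in fact slightly more thorough than the paper's, since you spell out the dominated-convergence justification using the uniform bound $z_\eps\le CZ$, whereas the paper passes to the limit without further comment.
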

\begin{proof}
We define $I_{\eps, r}:=\gamma_{\eps}^{-\frac{\al}{2}} \displaystyle\int_{B_{r}(0)}  |y|^{-2^{\star}\nu}v_{\eps}^{2^*-1}dy$.
Since $v_{\eps}$ and $z_{\eps}$ are related by \eqref{z-eps}, we have $v_{\eps}(x)=\ga_{\eps}^{-\frac{\al}{2}}z_{\eps}\big(\frac{x}{\ga_{\eps}}\big)$. Thus
\begin{equation}
I_{\eps,r}=\ga_{\eps}^{-\frac{\al}{2}-2^*\nu-\frac{\al}{2}(2^*-1)+N}\int_{\frac{B_r(0)}{\ga_{\eps}}}|x|^{-2^*\nu}z_{\eps}^{2^*-1}(x)dx=\int_{\frac{B_r(0)}{\ga_{\eps}}}|x|^{-2^*\nu}z_{\eps}^{2^*-1}(x)dx
\end{equation}
Since $\eps\to 0$ implies $\ga_{\eps}\to 0$, we have
\be\lab{ap:ga-0'}
\ga_0=\lim_{r\to 0}\lim_{\eps\to 0}I_{\eps, r}=\int_{\Rn}|x|^{-2^*\nu}Z^{2^*-1}dx,
\ee
 where $Z$ is as in Lemma \ref{l:Z}. Therefore by doing a straight forward computation, we obtain
$$\ga_0=\frac{\om_N N\al}{2}\bigg(\frac{N\al^2}{N-2}\bigg)^\frac{N-2}{2} B\bigg(\frac{N}{2}, 1\bigg),$$
where $B(a, b)=\displaystyle\int_{0}^{\infty}t^{a-1}(1+t)^{-a-b}dt$ is the Beta function. Recall that $B(a, b)=\f{\Ga(a)\Ga(b)}{\Ga(a+b)}$.
Thus $B\big(\frac{N}{2}, 1\big)=\frac{\Ga(\frac{N}{2})}{\Gamma(\frac{N}{2}+1)}=\frac{\Ga(\frac{N}{2})}{\f{N}{2}\Gamma(\frac{N}{2})}=\frac{2}{N}$, the lemma follows.
\end{proof}

\vspace{2mm}

\begin{proof}[{\bf Proof of Theorem \ref{main2} }] From \eqref{poh2} we have
$$\frac{1}{2} \int_{\pa \Om}  |x|^{-2\nu}\langle x, n \rangle |\nabla v_{\eps}|^2 dS=  \eps  \bigg( \frac{N-2}{2}- \frac{N}{q+1}\bigg) \int_{\Om}  |x|^{-(q+1)\nu} v_{\eps}^{q+1} dx .
$$
Using $w_{\eps}=||v_{\eps}||_{\infty}v_{\eps}$ in the above expression, we have
\Bea && \int_{\pa \Om} |x|^{-2\nu}|\nabla w_{\eps}|^2\langle x, n\rangle dS= 2\eps\bigg( \frac{N-2}{2}- \frac{N}{q+1}\bigg)  \|v_{\eps}\|_{\infty}^{2}  \int_{\Om}  |x|^{-(q+1)\nu} v_{\eps}^{q+1} dx \no \\&=& 2\eps \bigg( \frac{N-2}{2}- \frac{N}{q+1}\bigg)  \|v_{\eps}\|_{\infty}^{\frac{q(N-2)-(N+2)+2\al}{\al}}  \int_{\Om_{\eps}} |x|^{-(q+1)\nu} z_{\eps}^{q+1} dx .\Eea
Since $z_{\eps}\to Z$ a,e and $z_{\eps}\leq CZ$, by the dominated convergence theorem it follows $\displaystyle\int_{\Om_{\eps}} |x|^{-(q+1)\nu} z_{\eps}^{q+1} dx\to \int_{\Rn} |x|^{-(q+1)\nu} Z^{q+1} dx$.
Therefore taking limit $\eps\to 0$ and using \eqref{p7.1'} and Lemma \ref{dub1}, we obtain
\be\label{6-i} \lim_{\eps \rightarrow 0 }\eps\|v_{\eps}\|_{\infty}^{\frac{q(N-2)-(N+2)+2\al}{\al}}    = \frac{(N-2-2\nu) \gamma_{0}^2|R(0)|}{2\bigg( \frac{N-2}{2}- \frac{N}{q+1}\bigg)  \displaystyle\int_{\R^N}|x|^{-(q+1)\nu}Z ^{q+1}dx }.\ee
From Lemma \ref{l:Z}, we know $Z(x)=\bigg(1+\frac{|x|^\frac{2\alpha}{N-2}}{\frac{N{\alpha}^2}{N-2}}\bigg)^{-\frac{N-2}{2}}$.  Therefore a straight forward calculation yields
\begin{eqnarray}\label{6-ii}
\displaystyle\int_{\R^N}|x|^{-(q+1)\nu}Z ^{q+1}dx &=&\frac{\om_N N\al}{2}\left(\frac{N\al^2}{N-2}\right)^{\big(N-(q+1)\nu\big)\frac{N-2}{2\al}-1}\no\\
&\times& B\left(\frac{N-2}{2\al}(N-(q+1)\nu), \frac{N-2}{2\al}\{q(N-2-\nu)-(2+\nu)\}\right),
\end{eqnarray}
where $B(a, b)=\displaystyle\int_0^{\infty}t^{a-1}(1+t)^{-a-b}dt$ and $\om_N$ is the surface measure of unit sphere in $\Rn$.
From Lemma \ref{A.3}, it is known that
$\ga_0=\om_N\al^{N-1}\bigg(\frac{N}{N-2}\bigg)^\frac{N-2}{2}$.
Substituting the value of  $\ga_0$, \  $\al$, and $\displaystyle\int_{\R^N}|x|^{-(q+1)\nu}Z ^{q+1}dx$  in \eqref{6-i}, we have RHS of \eqref{6-i} as
\begin{eqnarray}
RHS &=\frac{\al(q+1)|R(0)|}{\{(N-2)q-(N+2)\}}\cdot \frac{\om_N^2\al^{2(N-1)}\big(\frac{N}{N-2}\big)^{N-2}}{\om_N\frac{N\al}{2}\big(\frac{N\al^2}{N-2}\big)^{\big(N-(q+1)\nu\big)\big(\frac{N-2}{2\al}\big)-1}}\times\no\\
&\bigg[B\bigg(\frac{N-2}{2\al}(N-(q+1)\nu), \frac{N-2}{2\al}\{q(N-2-\nu)-(2+\nu)\}\bigg)\bigg]^{-1}\no\\
&=\frac{\om_N|R(0)|}{C_{q,N}}\frac{(N-2-2\nu)^\frac{\big(N-(q+1)\nu\big)(N-2)-4N\nu}{\al}(N-2)^\frac{\big(N-(q+1)\nu\big)(N-2)-2\al(N-1)}{2\al}}{N^\frac{\big(N-(q+1)\nu-2\al\big)(N-2)}{2\al}}\no\\
&\times\bigg[B\bigg(\frac{N-2}{2\al}(N-(q+1)\nu), \frac{N-2}{2\al}\{q(N-2-\nu)-(2+\nu)\}\bigg)\bigg]^{-1},\no
\end{eqnarray}
where $C_{q,N}$ is as in \eqref{C_qn}.

Furthermore, \eqref{nsd2} follows from \eqref{p7.1'}.
\end{proof}

\appendix
\section{}
In this section we consider the following problem:
\bea\lab{2-4-3}
-\text{div}(|x|^{-2\nu}\na u)+|x|^{-(q+1)\nu}u^q &=& 0 \quad\text{in}\quad\Rn,\no \\
u &\geq& 0 \quad\text{in}\quad \Rn,
\eea
where $\f{N+2}{N-2}<q<\f{1+\nu}{\nu}$ and $0<\nu<\f{N-2}{4}$.
\begin{definition}
If $u$ is a solution of \eqref{2-4-3} in a domain $\Om$ such that $u(x)\to\infty$ as $x\to\pa\Om$, then $u$ is called a large solution.
\end{definition}
\begin{theorem}\lab{t:ap-B1}
Suppose $u$ is any solution of Eq.\eqref{2-4-3}. Then $u=0$.
\end{theorem}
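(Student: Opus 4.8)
The plan is to show that any nonnegative solution $u$ of the weighted equation \eqref{2-4-3} on all of $\Rn$ must vanish identically, by exploiting the fact that the reaction term $-|x|^{-(q+1)\nu}u^q$ is \emph{negative}, so that $u$ is a nonnegative weighted-subharmonic function in the sense that $-\text{div}(|x|^{-2\nu}\na u)\le 0$. First I would reduce to the unweighted setting via the Emden--Fowler / Kelvin-type transformation already used repeatedly in the paper: setting $w=|x|^{-\nu}\cdot(\text{something})$ or, better for a radial reduction, using $t=(\al/r)^\al$, $y(t)=\al^\nu w(r)$ with $\al=N-2-2\nu$, reduces the radial version of \eqref{2-4-3} to $y''(t)=A\,t^{\sigma}y^q$ with a negative-exponent power of $t$ on the right. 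But before invoking radial structure one must first establish that a solution on $\Rn$ is radial, or bypass symmetry entirely; I expect the cleanest route is the second one.

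The key steps, in order, would be: (1) Observe that since $u\ge 0$ and $-\text{div}(|x|^{-2\nu}\na u)=-|x|^{-(q+1)\nu}u^q\le 0$, the function $u$ satisfies a weighted mean-value \emph{sub}-property; combined with $u\in D^{1,2}(\Rn,|x|^{-2\nu})$ (which any finite-energy solution lies in, because testing against $u$ gives $\int|x|^{-2\nu}|\na u|^2+\int|x|^{-(q+1)\nu}u^{q+1}=0$, forcing $\na u\equiv 0$ immediately if the solution is assumed to have finite Dirichlet energy). Indeed, if the solution is a priori in $D^{1,2}(\Rn,|x|^{-2\nu})\cap L^{q+1}(\Rn,|x|^{-(q+1)\nu})$, the theorem is a one-line energy argument. (2) The content must therefore be that $u$ is merely a \emph{pointwise/classical} solution with no global integrability assumed (this is how it is applied in Lemma~\ref{l:Z}, Case 2, where $\tilde Z$ is only known to be bounded). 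So the real argument is: bound $u$ near the origin using the Caffarelli--Kohn--Nirenberg / Moser iteration machinery of Theorem~\ref{t:up-est} to get $u(x)\le C|x|^{-\nu}$ near $0$ — wait, that uses the positive term; instead use that the equation here has \emph{no} positive forcing, so the comparison argument of Lemma~\ref{l:4-2} / Theorem~\ref{t:low-est} runs in reverse and forces a bound. (3) Use a comparison with the explicit weighted harmonic functions $|x|^{-\al}$ and constants: since $\text{div}(|x|^{-2\nu}\na |x|^{-\al})=0$ away from $0$, and $u$ is weighted-subharmonic, a Phragmén--Lindelöf / maximum-principle argument on annuli $B_R\setminus B_r$ shows $\sup_{\pa B_\rho}u$ is a convex function of the weighted fundamental solution, hence either constant or monotone; boundedness at one end then forces $u$ to be bounded, and the strong negativity of the absorption term $-|x|^{-(q+1)\nu}u^q$ then drags $u$ to $0$. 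Concretely: if $M_\rho:=\sup_{|x|=\rho}u$, the ODE comparison (as in the proof of Theorem~\ref{t:critical}, where $y'(t)\to 0$ is forced) shows $u$ cannot stay positive; integrating the absorption inequality over growing balls produces a logarithmically divergent lower bound contradicting boundedness, exactly as in the "$c\neq 0$" contradiction in the proof of Theorem~\ref{t:critical}.

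The main obstacle will be step (2)--(3): handling the origin, where the weight $|x|^{-2\nu}$ is singular and where a solution could a priori blow up, so one cannot naively apply the classical Liouville theorem for subharmonic functions. I expect one must first prove a local bound near $0$ of the form $u(x)\le C|x|^{-\nu}$ (adapting the Moser iteration of Theorem~\ref{t:up-est}, which only used the subcritical absorption structure and the CKN inequality \eqref{CKN}, both available here), then a local bound near $\infty$ via the Kelvin transform $\hat u(x)=|x|^{-\al}u(x/|x|^2)$ which turns \eqref{2-4-3} into an equation of the same type near $0$ (note $(q+1)\nu<2+2\nu$ since $q<\tfrac{2+\nu}{\nu}$... here one needs the sharper range $q<\tfrac{1+\nu}{\nu}$ and $\nu<\tfrac{N-2}{4}$ precisely so the transformed exponent stays in the admissible window), giving $u(x)\le C|x|^{-\al}$ near $\infty$. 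With these two-sided decay bounds, $u\in D^{1,2}(\Rn,|x|^{-2\nu})\cap L^{q+1}(\Rn,|x|^{-(q+1)\nu})$ after all — the energy computation $\int|x|^{-2\nu}|\na u|^2+\int|x|^{-(q+1)\nu}u^{q+1}=0$ becomes legitimate, and $u\equiv 0$ follows. So the heart of the matter is establishing those decay rates, and the restrictions $\tfrac{N+2}{N-2}<q<\tfrac{1+\nu}{\nu}$, $0<\nu<\tfrac{N-2}{4}$ are exactly what make the Kelvin-transformed exponents land in the range where the earlier local-boundedness theorems apply.
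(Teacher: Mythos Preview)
Your proposal sketches a genuinely different route from the paper's, and it has a real gap at the decisive step.

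The paper does \emph{not} argue via decay estimates and energy identities. Instead it runs a Keller--Osserman comparison: the companion Theorem~\ref{t:large-sol} produces a large solution $U_1$ of \eqref{2-4-3} on $B_1(0)$ with $U_1\in L^\infty_{loc}(B_1(0))$ and $U_1\to+\infty$ on $\pa B_1(0)$. The scaling $U_R(x):=R^{\nu-\frac{2}{q-1}}U_1(x/R)$ is then a large solution on $B_R(0)$, and since $q<\frac{2+\nu}{\nu}$ forces $\nu-\frac{2}{q-1}<0$, one has $U_R\to 0$ locally uniformly as $R\to\infty$. For any solution $u$ of \eqref{2-4-3}, testing the difference equation against $(u-U_R)^+$ on $B_R(0)$ gives $u\le U_R$ there; sending $R\to\infty$ yields $u\le 0$, hence $u\equiv 0$. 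The hypotheses $q<\frac{1+\nu}{\nu}$ and $\nu<\frac{N-2}{4}$ enter in Theorem~\ref{t:large-sol}, where they guarantee the ODE governing the radial large solution is well posed with $u'(0)=0$ and $C^1$ regularity up to the blow-up radius.

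The gap in your plan is the step ``Kelvin transform $+$ Moser iteration gives $u(x)\le C|x|^{-\al}$ near $\infty$''. After the inversion, $\hat u(x)=|x|^{-\al}u(x/|x|^2)$ satisfies a pure absorption equation and is weighted-subharmonic near $0$, but you have no a priori integrability for $\hat u$ there: from $u$ bounded you only know $\hat u(x)\le C|x|^{-\al}$, which is \emph{not} in the weighted $L^2$ or $L^{2^*}$ space needed to launch the Moser iteration of Theorem~\ref{t:up-est} (that argument starts from $v\in H^1_0(\Om,|x|^{-2\nu})\cap L^{q+1}$ and uses it to control the first step of the bootstrap). Subharmonicity alone does not prevent a singularity of order $|x|^{-\al}$, since $|x|^{-\al}$ is exactly the weighted fundamental solution. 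So the circle does not close: to conclude $\hat u$ is bounded at $0$ you would need precisely a Keller--Osserman type universal bound for the absorption equation --- which is what the paper supplies via large solutions. Your final energy identity $\int|x|^{-2\nu}|\na u|^2+\int|x|^{-(q+1)\nu}u^{q+1}=0$ is correct \emph{once} $u$ is known to lie in the right global spaces, but that membership is the whole content of the theorem, and your outlined path to it does not reach it.
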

\begin{proof}
Let $U_1$ be a large solution of \eqref{2-4-3} in $B_1(0)$ such that $U_1\in L^{\infty}_{loc}(B_1(0))$ (existence of such solution follows from Theorem \ref{t:large-sol}). Define
$$U_R(x):=R^{\nu-\f{2}{q-1}}U_1(\f{x}{R}).$$
By a straight forward computation, it follows $U_R$ is a large solution of \eqref{2-4-3} in $B_R(0)$. Moreover, $U_R\to 0$ as $R\to\infty$. If $u$ is any solution of \eqref{2-4-3} in $\Rn$, then $u$ is a solution of \eqref{2-4-3} in $B_R(0)$ as well. Consequently,
$$-\text{div}(|x|^{-2\nu}\na (u-U_R))+|x|^{-(q+1)\nu}(u^q-U_R^q) = 0 \quad\text{in}\quad B_R(0).$$
Clearly $u\leq U_R$ on $\pa B_R(0)$.
Therefore by taking $(u-U_R)^+$ as a test function for the above equation we obtain
$$\int_{B_R(0)}|x|^{-2\nu}|\na(u-U_R)^+|^2+\int_{B_R(0)}|x|^{-(q+1)\nu}\f{(u^q-U^q_R)}{u-U_R}|(u-U_R)^+|^2=0,$$ which in turn implies $(u-U_R)^+=0$ in $B_R(0)$. Thus $u\leq U_R$ in $B_R(0)$. Hence by taking limit $R\to\infty$, we have $u\leq 0$ in $\Rn$. Since $u$ is a nonnegative solution, we get $u=0$ in $\Rn$.
\end{proof}
\begin{theorem}\lab{t:large-sol}
There exists a large solution $u$ to the equation \eqref{2-4-3} in $B_1(0)$. Moreover, $u\in L^{\infty}_{loc}(B_1(0))$.
\end{theorem}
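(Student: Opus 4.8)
The plan is to obtain the large solution as the monotone limit of solutions of Dirichlet problems on $B_1(0)$ with constant boundary data tending to $+\infty$, the uniform interior control being supplied by an \emph{explicit radial supersolution which is smooth on $B_1(0)$ and blows up on $\pa B_1(0)$}; the smoothness at the origin is exactly what makes the construction compatible with the singular weight.

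\textbf{Step 1 (approximating problems).} For each $k>0$ I would solve
\begin{equation*}
-\text{div}(|x|^{-2\nu}\na u_k)+|x|^{-(q+1)\nu}u_k^q=0\ \text{ in } B_1(0),\qquad u_k=k\ \text{ on } \pa B_1(0).
\end{equation*}
Since $|x|^{-2\nu}$ is an $A_2$ Muckenhoupt weight (as $2\nu<N$) and $|x|^{-(q+1)\nu}\in L^1(B_1(0))$ (as $(q+1)\nu<1+2\nu<N$), the degenerate-elliptic theory of Fabes--Kenig--Serapioni together with the method of sub- and supersolutions ($0$ is a subsolution, the constant $k$ is a supersolution, and $t\mapsto|x|^{-(q+1)\nu}t^q$ is nondecreasing) yields a weak solution $u_k$ with $0\le u_k\le k$, smooth in $B_1(0)\setminus\{0\}$, bounded near $0$, and continuous up to $\pa B_1(0)$ with $u_k\equiv k$ there. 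Testing the difference of two such equations against $(u_k-u_{k'})^+$ shows $u_k\le u_{k'}$ whenever $k\le k'$.

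\textbf{Step 2 (the barrier).} Put $\ba=\f{2}{q-1}$ and
\begin{equation*}
V(x):=c\,\big(1-|x|^2\big)^{-\ba},\qquad c:=M^{\f{1}{q-1}},\quad M:=2\ba(N-2\nu)+4\ba(\ba+1).
\end{equation*}
Because $V$ is a function of $|x|^2$ it is smooth on $B_1(0)$ (this replaces the naive choice $\mathrm{dist}(\cdot,\pa B_1(0))^{-\ba}$, which is singular at the origin). A direct computation gives, with $r=|x|$,
\begin{equation*}
V''+\f{N-1-2\nu}{r}V'=2c\ba(N-2\nu)(1-r^2)^{-\ba-1}+4c\ba(\ba+1)r^2(1-r^2)^{-\ba-2},
\end{equation*}
and since $\ba q=\ba+2$ one has $(1-r^2)^{-\ba q}=(1-r^2)^{-\ba-2}$, whence
\begin{equation*}
-\text{div}(|x|^{-2\nu}\na V)+|x|^{-(q+1)\nu}V^q=r^{-2\nu}(1-r^2)^{-\ba-2}\Big(c^q r^{-(q-1)\nu}-2c\ba(N-2\nu)(1-r^2)-4c\ba(\ba+1)r^2\Big).
\end{equation*}
For $0<r<1$ the parenthesis is $\ge c^q-cM\ge0$ (using $r^{-(q-1)\nu}>1$, $1-r^2,r^2<1$, and $N-2\nu>0$), so $V$ is a classical supersolution of \eqref{2-4-3} on $B_1(0)$ that blows up on $\pa B_1(0)$. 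Testing $-\text{div}(|x|^{-2\nu}\na(u_k-V))+|x|^{-(q+1)\nu}(u_k^q-V^q)\le0$ against $(u_k-V)^+$ — which has compact support in $B_1(0)$ because $V\to+\infty$ at $\pa B_1(0)$ while $u_k\le k$ — and using that $(u_k^q-V^q)(u_k-V)^+\ge0$, I get $u_k\le V$ in $B_1(0)$ for every $k$.

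\textbf{Step 3 (limit and conclusion).} Let $u:=\sup_k u_k=\lim_{k\to\infty}u_k$. By Step~2, $u\le V$, so $u$ is finite and for each $\de\in(0,1)$ bounded by $\sup_{\overline{B_{1-\de}(0)}}V<\infty$; this is the asserted $L^{\infty}_{loc}(B_1(0))$ bound. Interior elliptic estimates on $B_1(0)\setminus\{0\}$ (where the operator is uniformly elliptic) together with the uniform bound $u_k\le V$ give $u_k\to u$ in $C^2_{loc}(B_1(0)\setminus\{0\})$, so $u$ solves \eqref{2-4-3} classically there and is positive by the strong maximum principle. Since $u$ is bounded near $0$ and has finite weighted $H^1$ and $L^{q+1}$ norms on compact subsets of $B_1(0)$, the cut-off argument used in the proof of Lemma~\ref{l:4-1} removes the singularity at the origin, so $u$ is a weak solution of \eqref{2-4-3} in all of $B_1(0)$. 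Finally $u\ge u_k$, and $u_k$ is continuous up to $\pa B_1(0)$ with value $k$, so $\liminf_{x\to\pa B_1(0)}u(x)\ge k$ for every $k$, i.e.\ $u(x)\to+\infty$ as $x\to\pa B_1(0)$; hence $u$ is a large solution, lying in $L^{\infty}_{loc}(B_1(0))$. The delicate point in this plan is the removable-singularity step at the origin; it is handled exactly as in Lemma~\ref{l:4-1} thanks to the bound $u\le V$ near $0$, while the verification that $V$ is a supersolution is the elementary (but essential) computation above.
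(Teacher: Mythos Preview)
Your proof is correct and follows a genuinely different strategy from the paper's. The paper proceeds via ODE methods: it looks for a radial solution, passes to the Emden--Fowler variable $t=(\al/r)^{\al}$, $y(t)=\al^{-\nu}u(r)$, and shows by a contradiction argument (comparing with a large solution of the constant-coefficient equation $v''=Mv^q$ on a bounded $t$-interval, using V\'azquez's a~priori estimates) that the trajectory with $y(\infty)>0$, $y'(\infty)=0$ must blow up at some finite $t^*>0$; the resulting radial solution on $B_{r^*}(0)$ is then rescaled to $B_1(0)$.

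Your approach is the Keller--Osserman scheme adapted to the degenerate weight: approximate by Dirichlet problems with boundary value $k$, dominate them uniformly by the explicit smooth barrier $V(x)=c(1-|x|^2)^{-2/(q-1)}$, and pass to the monotone limit. The barrier computation is clean and, crucially, $V$ is $C^\infty$ across the origin, so the singular weight causes no trouble; the $L^\infty_{loc}$ bound is immediate from $u\le V$. Two remarks: (i) once each $u_k$ is a weak solution on all of $B_1(0)$ (which the degenerate theory gives), the uniform Caccioppoli bound coming from $u_k\le V$ lets you pass to the limit directly in the weak formulation, so the removable-singularity step you flag at the end is not really needed; (ii) your barrier argument uses only $(q+1)\nu<N$ and $N-2\nu>0$, both of which are implied by the standing hypotheses, so the construction is in fact somewhat more robust than the paper's ODE route, which uses $q<\tfrac{1+\nu}{\nu}$ to make sense of $u'(0)=0$. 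Conversely, the paper's argument yields radiality for free and connects naturally to the Emden--Fowler analysis used elsewhere in the paper.
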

We essentially follow the classical method of Veron-Vazquez \cite[Lemma 2.1]{VV} to prove this result.
\begin{proof}
We will show that there exists a radial large solution. Towards this goal, let us consider the following ode
\begin{equation}
\left\{\begin{aligned}
&u''+\f{N-1-2\nu}{r}u'(r)=r^{-(q-1)\nu}u^q \quad\text{in}\quad (0,1)\\
&u>0 \quad\text{in}\quad (0,1)\\
&u(0) =1\quad u'(0)=0.
\end{aligned}
\right.
\end{equation}
Then we can write the solution as
$$u(r)=1+\int_{0}^{r}s^{2\nu+1-N}\int_{0}^{s}t^{N-1-(q+1)\nu}u^q(t)dt ds.$$
Since $q<\f{1+\nu}{\nu}$ implies $q<\f{2+\nu}{\nu}<\f{N-\nu}{\nu}$, by the standard existence of ode theory, it follows that solution $u(r)$ exists in a neighborhood of $0$. Moreover, $q<\f{1+\nu}{\nu}$ implies $u'(0)=0$ and $u$ is $C^1$ up to the blow-up time. \\

{\bf Claim:} There exists a solution $u$ of the following ode
$$u''+\f{N-1-2\nu}{r}u'(r)=r^{-(q-1)\nu}u^q $$ in $[0, r^*)$ such that $\lim_{r\uparrow r^*}u(r)=+\infty$ for some $r^*>0$.\\

To see the claim, we use generalised Emden--Fowler transform $t=(\frac{\al}{r})^{\al}$ and $y (t)= \al^{-\nu} u(r)$, where $\al=N-2-2\nu$.   Therefore we obtain
\be\lab{ap:y-t}
y''(t)= t^\frac{-(2\al+2)+(q-1)\nu}{\al}y^{q} \quad\text{in}\quad  R< t<+\infty.
\ee
Existence of $u(r)$ in the neighbourhood of $0$ implies, Eq. \eqref{ap:y-t} has a solution $y(t)$ in $(R, \infty)$ for some large $R>0$ with $y'(\infty)=0$, $y(\infty)>0$.  To prove the claim, it is equivalent to show that there exists a  solution $y$ of \eqref{ap:y-t} in $(t^*, \infty)$ such that $\lim_{t\downarrow t^*}y(t)=\infty$ for some $t^*\in (0,\infty)$. Suppose this is not true, then
$y(t)$ can be continued as a solution of \eqref{ap:y-t} to the left of $\infty$ till $0$, i.e $y(t)$ can be defined on $(0,\infty)$.\\
Set $f(t):=t^\frac{-(2\al+2)+(q-1)\nu}{\al}$ and let $0<R<R'<\infty$. As $f$ is continuous and positive we get there exists $m, M>0$ such that $0<m\leq f(t)\leq M$ for $t\in [R, R']$. Now consider the ode
\be\lab{ap:v-t} v''(t)=M v^q(t) \quad\text{in}\quad (R, R'); \quad v>0  \quad\text{in}\quad (R, R').\ee
Rename the nonlinear term in \eqref{ap:v-t} as $h(v)$, that is $h(t):=Mt^q$. Then
$$H(t):=\int_{0}^t h(s)ds,\quad \psi(a):=\int_{a}^{\infty}\f{ds}{\sqrt{H(s)}}<\infty,$$ for any $a>0$. Therefore, applying Vazquez's classical a-priori estimates \cite{V} (also see \cite[Lemma 2.1]{VV}) we find a large solution $v(t)$ of \eqref{ap:v-t}. That is, $\lim_{t\downarrow R}v(t)=\infty=\lim_{t\uparrow R'}v(t)$. Using comparison principle it is easy to check that any solution $y$ of \eqref{ap:y-t} satisfies
\be\lab{ap:yv}y(t)\leq v(t) \quad\text{in}\quad (R, R').\ee
From \eqref{ap:v-t}, it also follows that $v$ is a convex function. If $y$ is a solution of \eqref{ap:y-t} in $(T,\infty)$ for some large $T$ with the initial value
$y(\infty)>\min_{R<t<R'} v(t)$ and $y'(\infty)=0$, then graph of $y$ must lie above all of it's tangent as $y$ is a convex decreasing function. Consequently, $y(t)> \min_{R<s<R'} v(s)$ for all $t<\infty$. Since $y$ can be extended till $0$, it in turn implies, there exists $t_1, t_2$ such that $R<t_1<t_2<R'$ and $y(t)>v(t)$ in $(t_1,t_2)$. This is a contradiction to \eqref{ap:yv}. Hence $y$
 can not be defined to the left of $\infty$ till $R$, that is, there must exist $t^*>R$ such that $\lim_{t\downarrow t^*}y(t)=\infty$. This proves the claim.
Since we have proved existence of a large solution $u$ of \eqref{2-4-3} in the ball $B_r(0)$, we use similarity transformation $T_r$ to get large solution in the unit ball $B_1(0)$. More precisely,
$U_1(x):=T_r u(x):=r^{\f{2}{q-1}-\nu}u(rx)$. This completes the proof of the theorem.
\end{proof}

\section{}
\begin{lemma}\lab{l:17-4}
Define $w({B_{t}(x)}):= \displaystyle\int_{|x-y|< t} |y|^{-2\nu}dy$. Then
\be \lab{16-4-1}
w(B_{2^{k}r}(x)) \geq C 2^{k(N-2 \nu)} w(B_{r}(x)) \ee
\end{lemma}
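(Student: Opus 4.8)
The plan is to exploit the exact homogeneity of the weight $g(y):=|y|^{-2\nu}$ together with a sharp two-sided estimate for the weighted measure of a single ball. (Throughout, $c,C$ denote positive constants depending only on $N$ and $\nu$; recall $0<2\nu<N-2<N$, so $g$ is radially symmetric, radially non-increasing and locally integrable, and $w(B_\rho(0))=c_N\rho^{N-2\nu}$ for all $\rho>0$.) First I would record the scaling identity obtained from the substitution $y=\la z$, namely $w(B_{\la\rho}(\la x))=\la^{N-2\nu}w(B_\rho(x))$; with $\la=2^k$ and $\rho=r$ this reads
\begin{equation*}
w(B_{2^k r}(x))=2^{k(N-2\nu)}\,w\big(B_r(2^{-k}x)\big).
\end{equation*}
Since $|2^{-k}x|\le|x|$, the inequality \eqref{16-4-1} reduces to the purely local statement that $w(B_r(x'))\ge C\,w(B_r(x))$ whenever $|x'|\le|x|$.

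To prove that comparison I would first establish, for all $x\in\R^N$ and all $\rho>0$, the uniform two-sided bound $c\,\phi_\rho(|x|)\le w(B_\rho(x))\le C\,\phi_\rho(|x|)$, where $\phi_\rho(d):=\rho^{N-2\nu}$ for $d\le\rho$ and $\phi_\rho(d):=\rho^{N}d^{-2\nu}$ for $d\ge\rho$ is continuous and non-increasing in $d$ (equivalently $\phi_\rho(d)=\min\{\rho^{N-2\nu},\rho^N d^{-2\nu}\}$). For the upper bound, if $|x|\le 2\rho$ I would use the elementary layer-cake argument: writing $g(y)=\int_0^\infty\mathbf 1_{\{|y|<s(t)\}}\,dt$ and noting $|B_\rho(x)\cap B_{s}(0)|\le|B_\rho(0)\cap B_{s}(0)|$ for every $s$, one gets $w(B_\rho(x))\le w(B_\rho(0))=c_N\rho^{N-2\nu}\le C\phi_\rho(|x|)$; if $|x|>2\rho$ then $|y|>|x|/2$ on $B_\rho(x)$, so $w(B_\rho(x))\le(|x|/2)^{-2\nu}|B_\rho|\le C\rho^N|x|^{-2\nu}=C\phi_\rho(|x|)$. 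For the lower bound I would inscribe in $B_\rho(x)$ a ball of radius $\rho/2$ displaced toward the origin: if $|x|\le\rho$, center it at the midpoint of $[0,x]$, where $|y|\le\rho$ throughout; if $|x|>\rho$, center it at $x-\tfrac{\rho}{2}\tfrac{x}{|x|}$, where $|y|\le|x|$ throughout. In either case $g$ is bounded below on that subball by $\rho^{-2\nu}$, resp. $|x|^{-2\nu}$, on a set of volume $\ge c\rho^N$, which yields $w(B_\rho(x))\ge c\,\phi_\rho(|x|)$.

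Granting this, the reduction from the first step closes the argument at once: since $\phi_r$ is non-increasing, $|x'|\le|x|$ gives $w(B_r(x'))\ge c\,\phi_r(|x'|)\ge c\,\phi_r(|x|)\ge (c/C)\,w(B_r(x))$, and feeding this into the scaling identity produces $w(B_{2^k r}(x))\ge C\,2^{k(N-2\nu)}w(B_r(x))$ with $C=C(N,\nu)$ independent of $x,k,r$. I do not expect any real obstacle here; the only delicate point is the transition regime $|x|\sim\rho$, in which neither ``$B_\rho(x)$ contains a fixed fraction of a ball about the origin'' nor ``$g$ is essentially constant on $B_\rho(x)$'' is literally valid --- this is exactly what the comparison function $\phi_\rho$ and the displaced-subball construction are designed to absorb uniformly. (One could alternatively dispense with the rearrangement step altogether by case-splitting directly on $|x|\le r$, $r<|x|\le 2^{k+1}r$, and $|x|>2^{k+1}r$, the last case giving the even larger gain $2^{kN}\ge 2^{k(N-2\nu)}$ because $\nu>0$.)
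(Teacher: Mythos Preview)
Your proof is correct and takes a genuinely different, more structural route than the paper. The paper argues by a direct three-way case split on the relative sizes of $r$, $2^kr$ and $|x|$ (essentially $r\ge|x|/10$; $2^kr<|x|/10$; and the mixed case), handling each with elementary inclusions and the doubling property of the weight. You instead exploit the exact homogeneity of $|y|^{-2\nu}$ to write the scaling identity $w(B_{2^kr}(x))=2^{k(N-2\nu)}w(B_r(2^{-k}x))$, which immediately produces the factor $2^{k(N-2\nu)}$ and reduces the lemma to a single radius-$r$ comparison $w(B_r(x'))\ge C\,w(B_r(x))$ for $|x'|\le|x|$; you then settle that comparison via the sharp two-sided estimate $w(B_\rho(x))\asymp\phi_\rho(|x|)$ with $\phi_\rho$ non-increasing. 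This is cleaner conceptually --- it isolates exactly why the exponent $N-2\nu$ appears (pure scaling) and why the constant is uniform in $x$ (monotonicity of $\phi_\rho$) --- whereas the paper's case analysis is slightly more hands-on but entirely self-contained, needing only the doubling property rather than the explicit comparison function. Your parenthetical alternative (splitting on $|x|\le r$, $r<|x|\le 2^{k+1}r$, $|x|>2^{k+1}r$) is in fact close in spirit to what the paper actually does.
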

\begin{proof}
We prove the lemma considering into three cases. \\
Case 1:  Suppose $r\geq \frac{|x|}{10}.$  \\
Then $2^k r\geq \frac{|x|}{10}$. We claim $B_{2^kr}(0)\subset B_{2^k\cdot10r}(x)$. Indeed,
$y\in B_{2^kr}(0)$ implies $|y-x|\leq|y|+|x|\leq(2^kr+10r)\leq (2^k\cdot 10)r$. Thus the claim follows . Therefore using doubling measure  property of $|y|^{-2\nu}$, we get
$$w(B_{2^kr}(x))\geq c_1w(B_{2^k\cdot10r}(x))\geq c_1w(B_{2^kr}(0)),$$
where $c_1$ does not depend on $k, r, x$. As a consequence,
$$w(B_{2^kr}(x))\geq  c_1\om_N(2^kr)^{N-2\nu}=c_12^{k(N-2\nu)}w(B_r(0))\geq c_12^{k(N-2\nu)}w(B_r(x)).$$

Case 2: $r< \frac{|x|}{10}$ and $2^kr<\f{|x|}{10}$.\\
Then $y\in B_{2^kr}(x)$ implies  $$|x|\leq|x-y|+|y|\leq 2^kr+|y|\leq\f{|x|}{10}+|y|\Longrightarrow \f{9}{10}|x|\leq|y|.$$ Similarly it follows $|y|\leq\f{11}{10}|x|$. Thus $c_1|x|\leq|y|\leq c_2|x|$.
If $y\in B_r(x)$, then using $r< \frac{|x|}{10}$ and doing the calculation as above we get there exists positive constants $c_3, c_4$, independent of $r, x, k$ such that $c_3|x|\leq|y|\leq c_4|x|$.
Consequently,
$$w(B_{2^kr}(x))\geq  \om_Nc_2^{-2\nu}|x|^{-2\nu}(2^kr)^N\geq \om_Nc_2^{-2\nu}2^{k(N-2\nu)}|x|^{-2\nu}r^N$$
Moreover,
$$w(B_r(x))\leq \om_Nc_3^{-2\nu}|x|^{-2\nu}r^N.$$
Hence \eqref{16-4-1} holds with $C=(\f{c_2}{c_3})^{-2\nu}$.\\

Case 3: $r< \frac{|x|}{10}$ and $2^kr\geq\f{|x|}{10}$.\\
This case is similar to Case 1 and we skip the proof.
\end{proof}

\paragraph{{\bf Acknowledgement} The results in Appendix A are based on the ideas given by Prof Laurent V\'{e}ron. The authors are indebted to him for his suggestions and also for providing the references  \cite{V} and \cite{VV}. The authors also would like to thank Dr. Anup Biswas
for the idea of the proof of Theorem \ref{t:low-est}. \\
The first author is supported by INSPIRE research grant DST/INSPIRE 04/2013/000152 and the second author acknowledges funding from LMAP UMR CNRS 5142, Universit\'e Pau et des Pays de l'Adour.}

\end{document}